\numberwithin{equation}{section}
\newtheorem{theorem}{Theorem}[section]
\newtheorem{definition}[theorem]{Definition}
\newtheorem{lemma}[theorem]{Lemma}
\newtheorem{corollary}[theorem]{Corollary}
\newtheorem{notation}[theorem]{Notation}
\newtheorem{remark}[theorem]{Remark}
\theoremstyle{definition}
\newtheorem{example}[theorem]{Example}
\begin{document}


\title{$(\sigma, \tau)$-Derivations of Group Rings with Applications}

\author{Praveen Manju and Rajendra Kumar Sharma}
\date{}
\maketitle

\begin{center}
\noindent{\small Department of Mathematics, \\Indian Institute of Technology Delhi, \\ Hauz Khas, New Delhi-110016, India$^{1}$}
\end{center}

\footnotetext[1]{{\em E-mail addresses:} \url{praveenmanjuiitd@gmail.com}(Praveen Manju), \url{rksharmaiitd@gmail.com}(Rajendra Kumar Sharma).}

\medskip

\begin{abstract}
Leo Creedon and Kieran Hughes in \cite{Creedon2019} studied derivations of a group ring $RG$ (of a group $G$ over a commutative unital ring $R$) in terms of generators and relators of group $G$. In this article, we do that for $(\sigma, \tau)$-derivations. We develop a necessary and sufficient condition such that a map $f:X \rightarrow RG$ can be extended uniquely to a $(\sigma, \tau)$-derivation $D$ of $RG$, where $R$ is a commutative ring with unity, $G$ is a group having a presentation $\langle X \mid Y \rangle$ ($X$ the set of generators and $Y$ the set of relators) and $(\sigma, \tau)$ is a pair of $R$-algebra endomorphisms of $RG$ which are $R$-linear extensions of the group endomorphisms of $G$. Further, we classify all inner $(\sigma, \tau)$-derivations of the group algebra $RG$ of an arbitrary group $G$ over an arbitrary commutative unital ring $R$ in terms of the rank and a basis of the corresponding $R$-module consisting of all inner $(\sigma, \tau)$-derivations of $RG$. We obtain several corollaries, particularly when $G$ is a $(\sigma, \tau)$-FC group or a finite group $G$ and when $R$ is a field. We also prove that if $R$ is a unital ring and $G$ is a group whose order is invertible in $R$, then every $(\sigma, \tau)$-derivation of $RG$ is inner. We apply the results obtained above to study $\sigma$-derivations of commutative group algebras over a field of positive characteristic and to classify all inner and outer $\sigma$-derivations of dihedral group algebras $\mathbb{F}D_{2n}$ ($D_{2n} = \langle a, b \mid a^{n} = b^{2} = 1, b^{-1}ab = a^{-1}\rangle$, $n \geq 3$) over an arbitrary field $\mathbb{F}$ of any characteristic. Finally, we give the applications of these twisted derivations in coding theory by giving a formal construction with examples of a new code called IDD code.
\end{abstract}

\textbf{Keywords:} $(\sigma, \tau)$-derivation; Inner $(\sigma, \tau)$-derivation; Outer $(\sigma, \tau)$-derivation; Group ring; Group Algebra; $(\sigma, \tau)$-conjugacy class; $(\sigma, \tau)$-center; Centralizer; Anti-centralizer; Dihedral; Coding Theory; IDD code

\textbf{Mathematics Subject Classification (2020):} 16S34, 16W25, 13N15, 94B05

\section{Introduction}\label{section 1}
The notion of derivations, introduced from analytic theory, is old and plays a significant role in the research of structure and property in algebraic systems. Derivations play an essential role in mathematics and physics. The theory of derivations has been developed in rings and numerous algebras and helps to study and understand their structure. For example, BCI-algebras \cite{Muhiuddin2012}, MV-algebras \cite{KamaliArdakani2013, Mustafa2013}, Banach algebras \cite{Raza2016}, von Neumann algebras \cite{Brear1992}, incline algebras that have many applications \cite{Kim2014}, lattices that are very important in fields such as information theory: information recovery, information access management, and cryptanalysis \cite{Chaudhry2011}. Differentiable manifolds, operator algebras, $\mathbb{C}^{*}$-algebras, and representation theory of Lie groups are being studied using derivations \cite{Klimek2021}. For a historical account and further applications of derivations, we refer the reader to \cite{Atteya2019, Haetinger2011, MohammadAshraf2006}.

In this article, we consider the pure algebraic structure, namely, the group ring and its $(\sigma, \tau)$-derivations which have numerous applications (see \cite{AleksandrAlekseev2020} and the references within). For a history of group rings, we refer the reader to {\cite[Chapter 3]{CPM2002}}, and for a history of derivations, we refer the reader to the survey articles  \cite{Haetinger2011, MohammadAshraf2006}. The idea of an $(s_{1}, s_{2})$-derivation was introduced by Jacobson \cite{Jacobson1964}. These derivations were later on commonly called as $(\sigma, \tau)$- or $(\theta, \phi)$-derivations. These have been highly studied in prime and semiprime rings and have been principally used in solving functional equations \cite{Brear1992}. Twisted derivations have numerous applications. Derivations, especially $(\sigma, \tau)$-derivations of group rings have various applications in coding theory \cite{Creedon2019, Boucher2014}. The explicit description of derivations is useful in constructing codes (see \cite{Creedon2019}). They are used in multiplicative deformations and discretizations of derivatives that have many applications in models of quantum phenomena and the analysis of complex systems and processes. They are extensively investigated in physics and engineering. Using twisted derivations, Lie algebras are generalized to hom-Lie algebras, and the central extension theory is developed for hom-Lie algebras analogously to that for Lie algebras. Just as Lie algebras were initially studied as algebras of derivations, Hom-Lie algebras were defined as algebras of twisted derivations. The generalizations (deformations and analogs) of the Witt algebra, the complex Lie algebra of derivations on the algebra of Laurent polynomials $\mathbb{C}[t, t^{-1}]$ in one variable, are obtained using twisted derivations. Deformed Witt and Virasoro-type algebras have applications in analysis, numerical mathematics, algebraic geometry, arithmetic geometry, number theory, and physics. We refer the reader to \cite{Hartwig2006, ilwale2023noncommutative, Larsson2017, Siebert1996} for details. Twisted derivations have been used to generalize Galois theory over division rings and in the study of $q$-difference operators in number theory. For more applications of $(\sigma, \tau)$-derivations, we refer the reader to \cite{AleksandrAlekseev2020} and the references within.

Derivations and $(\sigma, \tau)$-derivations of group rings (defined purely algebraically) have not received much attention. The study of derivations of group rings begins with the paper \cite{Smith1978}, where the author studies derivations of group rings of a finitely-generated, torsion-free, nilpotent group $G$ over a field $\mathbb{F}$. For instance, it is shown that such group rings always contain an outer derivation. She, using the notion of derivations of $\mathbb{F}G$, proved that the Krull dimension of such a group ring $\mathbb{F}G$ is finite and equals $\mathbb{F}$-rank of $G$ if $G$ satisfies certain additional conditions. In {\cite[Theorem 1]{Spiegel1994}}, the main theorem of \cite{Spiegel1994}, it is proved that every derivation of an integral group ring of a finite group is inner. In {\cite[Theorem 1.1]{MiguelFerrero1995}}, the main theorem of \cite{MiguelFerrero1995}, the authors demonstrate that if $G$ is a torsion group whose center $Z(G)$ has a finite index in $G$ and $R$ is a semiprime ring such that $\text{char}(R)$ is either $0$ or does not divide the order of any element of $G$, then every $R$-derivation of the group ring $RG$ is inner. In \cite{Chaudhuri2019} and \cite{Chaudhuri2021}, the author generalizes the above results of \cite{Spiegel1994} and \cite{MiguelFerrero1995} respectively to $(\sigma, \tau)$-derivations of group rings of finite groups over a field and an integral domain where $\sigma, \tau$ satisfy certain conditions. In \cite{Chaudhuri2019}, the author proves the following main theorem:
\begin{theorem}[{\cite[Theorem 1.1]{Chaudhuri2019}}]
Let $G$ be a finite group and $R$ be an integral domain with unity such that $|G|$ is invertible in $R$. Let $\sigma, \tau$ be $R$-algebra endomorphisms of $RG$ such that they fix $Z(RG)$ elementwise.
\begin{enumerate}
\item[(i)] If $R$ is a field, then every $(\sigma, \tau)$-derivation of $RG$ is inner.
\item[(ii)] If $R$ is an integral domain which is not a field and if $\sigma, \tau$ are $R$-linear extensions of the group homomorphisms of $G$, then every $(\sigma, \tau)$-derivation of $RG$ is inner.
\end{enumerate}
\end{theorem}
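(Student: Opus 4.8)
The plan is to prove both parts uniformly by a single averaging (transfer) argument. The only facts it uses are that $G$ is finite, that $|G|$ is invertible in $R$, and that $\sigma,\tau$ are unital $R$-algebra endomorphisms of $RG$ --- the latter being guaranteed by the hypothesis that they fix $Z(RG)$ elementwise (and also, in case (ii), by their being $R$-linear extensions of group endomorphisms, since $1\in Z(RG)$). In particular no structure theory of $RG$ (Wedderburn decomposition, Skolem--Noether, semisimplicity) is needed.

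Let $D$ be an $R$-linear $(\sigma,\tau)$-derivation, so $D(xy)=D(x)\sigma(y)+\tau(x)D(y)$ for all $x,y\in RG$ (the opposite convention is handled symmetrically). First observe that for every $g\in G$ the element $\sigma(g)$ is a unit of $RG$ with inverse $\sigma(g^{-1})$, because $\sigma(g)\sigma(g^{-1})=\sigma(1)=1$; similarly for $\tau$. Applying the Leibniz rule to $gh$ and solving for $D(g)$ gives, for each $h\in G$,
\[
D(g)\;=\;D(gh)\,\sigma(h)^{-1}\;-\;\tau(g)\,D(h)\,\sigma(h)^{-1}.
\]
Summing this over $h\in G$ and multiplying by $|G|^{-1}\in R$ leaves $D(g)$ unchanged on the left; in the first resulting sum substitute $k=gh$ and use $\sigma(h)^{-1}=\sigma(g^{-1}k)^{-1}=\sigma(k)^{-1}\sigma(g)$ to move $\sigma(g)$ to the far right, turning that sum into $\bigl(\tfrac{1}{|G|}\sum_{k\in G}D(k)\sigma(k)^{-1}\bigr)\sigma(g)$. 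Hence, setting
\[
a\;:=\;\frac{1}{|G|}\sum_{k\in G}D(k)\,\sigma(k)^{-1}\;\in\;RG ,
\]
we obtain $D(g)=a\,\sigma(g)-\tau(g)\,a$ for all $g\in G$. Since $D$ and the inner $(\sigma,\tau)$-derivation $x\mapsto a\sigma(x)-\tau(x)a$ are both $R$-linear maps on $RG$ agreeing on the $R$-basis $G$, they coincide, and $D$ is inner.

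Taking $R$ to be a field yields case (i) (and a good deal more). For case (ii), with $R$ an integral domain that is not a field but $\sigma,\tau$ the $R$-linear extensions of group endomorphisms $\sigma_0,\tau_0$, the identical computation applies, with the bonus that $a$ manifestly lies in $RG$: here $\sigma(k)^{-1}=\sigma_0(k)^{-1}\in G\subseteq RG$ and $|G|^{-1}\in R$, so one never has to pass to the field of fractions and then clear denominators --- the transfer formula already lands in $RG$.

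There is no genuine obstacle; the only thing to notice is that the finiteness of $G$ and the invertibility of $|G|$ are exactly what make the average $\tfrac{1}{|G|}\sum_{k}D(k)\sigma(k)^{-1}$ meaningful, after which the substitution $k=gh$ and the multiplicativity of $\sigma$ do everything. As a byproduct one sees that, under these hypotheses, the $R$-linear map sending $a\in RG$ to the inner $(\sigma,\tau)$-derivation $x\mapsto a\sigma(x)-\tau(x)a$ is surjective onto the $R$-module of all $(\sigma,\tau)$-derivations of $RG$; coupled with a computation of its kernel (the $(\sigma,\tau)$-center), this is the shape in which the theorem is later refined into the classification of inner $(\sigma,\tau)$-derivations by rank and basis.
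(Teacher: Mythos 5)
Your proof is correct and is essentially the paper's own route: the paper obtains this statement as a special case of its Theorem \ref{theorem 3.14}, whose proof constructs the same averaged element $\gamma=\frac{1}{|G|}\sum_{g\in G}D(g^{-1})\tau(g)$ (your $a$, up to your swapped $(\sigma,\tau)$ convention) and checks $D=D_{\gamma}$ on group elements before extending $R$-linearly. As in your argument, the only hypotheses actually used are the finiteness of $G$, the invertibility of $|G|$ in $R$, and the unitality of $\sigma,\tau$ (which here follows from their fixing $Z(RG)$ elementwise).
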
 
\noindent She also gives the following as a corollary of the above theorem. If $G$ is a finite group, $R = \mathbb{Z}$ and $\sigma, \tau$ satisfy the conditions of part (ii) of the above theorem, then every $(\sigma, \tau)$-derivation of $\mathbb{Z}G$ is inner. In {\cite[Theorem 1.1]{Chaudhuri2021}}, the main theorem of \cite{Chaudhuri2021}, the author proved that if $R$ is a semiprime ring with unity such that either $R$ does not have torsion elements or that if $R$ has $p$-torsion elements, then $p$ does not divide $|G|$, where $G$ is a torsion group such that $[G:Z(G)] < \infty$ and if $\sigma, \tau$ are $R$-algebra endomorphisms of $RG$ which fix $Z(RG)$ elementwise, then there is a ring $T$ such that $R \subseteq T$, $Z(R) \subseteq Z(T)$ and for the natural extensions of $\sigma, \tau$ to $TG$, $H^{1}(TG, \leftindex_{\sigma} {TG}_{\tau}) = \{0\}$, the degree $1$ Hochschild cohomology, that is, there exists a ring extension $T$ of the semiprime ring $R$ such that all $(\sigma, \tau)$-derivations of $TG$ are inner for the natural extensions $\sigma, \tau$ to $TG$. A. A. Arutyunov, in his several papers (\cite{AleksandrAlekseev2020, Arutyunov2021, A.A.Arutyunov2020, Arutyunov2023, Arutyunov2020a, Arutyunov2020, arutyunov2019smooth}), studies ordinary and twisted derivations using topology and characters. For details, we refer the reader to the end of Subsections \ref{subsection 2.3} and \ref{subsection 3.2}, where we have highlighted the work done in these papers. In \cite{OrestD.Artemovych2020}, the authors considered the group ring of a group $G$ over a unital ring $R$ such that all prime divisors of orders of elements in $G$ are invertible in $R$. They proved that if $R$ is finite and $G$ is a torsion FC-group, all derivations of $RG$ are inner. They obtained similar results for other classes of groups $G$ and rings $R$. In \cite{Creedon2019}, the authors studied the derivations of group rings over a commutative unital ring $R$ in terms of the generators and relators of the group. In {\cite[Theorem 2.5]{Creedon2019}}, which is the main theorem of their article, the authors gave a necessary and sufficient condition under which a map from the generating set of the group $G$ to the group ring $RG$ can be extended to a derivation of $RG$. Applying this characterization, the authors in {\cite[Theorem 3.4]{Creedon2019}} classified the ordinary derivations of commutative group algebras over a field of prime characteristic $p$ by giving the dimension and a basis for the vector space of all derivations over the underlying field of prime characteristic. In {\cite[Theorem 3.11 and Theorem 3.13]{Creedon2019}}, the authors also gave an explicit description of the ordinary derivations and ordinary inner derivations of dihedral group algebras over a field of characteristic $2$. In our article, we study the problems or questions for more generalized $(\sigma, \tau)$-derivations of group rings. This paper will work with $(\sigma, \tau)$-derivations and inner $(\sigma, \tau)$-derivations of group algebras over arbitrary characteristics. In this article, we also consider another problem, namely, the twisted derivation problem.  The derivation problem for group rings asks if all the derivations in a group ring are inner or if the space of outer derivations is trivial. We refer the reader to \cite{AleksandrAlekseev2020, A.A.Arutyunov2020, Arutyunov2020b, Arutyunov2020} for the history and importance of the derivation problem. In this article, we consider the analogous problem for $(\sigma, \tau)$-derivations.

We have divided the manuscript into seven sections. Section \ref{section 2(NN)} states some basic definitions and facts. In Section \ref{section 2}, we study $(\sigma, \tau)$-derivations of a group ring $RG$ of a group $G$ over a commutative unital ring $R$ in terms of the generators and relators of group $G$. The section has been further subdivided into four subsections. In Subsection \ref{subsection 2.1}, we state and prove useful results that we will need later in the proof of the section's main Theorems \ref{theorem 2.4(N)} and \ref{theorem 2.4}. In Subsection \ref{subsection 2.2(N)}, we prove the main Theorem \ref{theorem 2.4(N)} for $\sigma$-derivations with the help of a crucial Lemma \ref{lemma 2.3(N)}. In Subsection \ref{subsection 2.2}, we generalize the results of Subsection \ref{subsection 2.2(N)} to $(\sigma, \tau)$-derivations. We prove the main Theorem \ref{theorem 2.4} with the help of an important Lemma \ref{lemma 2.3}. The basic idea for proofs of Theorems \ref{theorem 2.4(N)} and \ref{theorem 2.4} is from \cite{Creedon2019}. However, the use of the universal property of free groups in our proofs of the main theorems of Section \ref{section 2} makes the proofs different from those of \cite{Creedon2019}. We will see that the universal property of free groups plays a crucial role in the proof of the main theorems (Theorems \ref{theorem 2.4(N)} and \ref{theorem 2.4}) of this section. In Subsection \ref{subsection 2.3}, we apply the results obtained in studying $\sigma$-derivations of commutative group algebras.

In Section \ref{section 3}, we study the inner $(\sigma, \tau)$-derivations of group algebras. We do that using the notion of doubly-twisted conjugacy classes, namely, $(\sigma, \tau)$-conjugacy classes, which extend the concept of usual conjugacy classes in classic group theory when $\sigma = id = \tau$. The notion of twisted conjugacy classes in group theory was introduced in the twentieth century. In Subsection \ref{subsection 3.1}, we observe some results and terminologies based on doubly-twisted conjugacy classes analogous to that in classical group theory. In Subsection \ref{subsection 3.2}, we obtain the main results of the section. In the section's main theorem, namely, Theorem \ref{theorem 3.13}, we classify all inner $(\sigma, \tau)$-derivations of the group ring $RG$ of an arbitrary group $G$ over an arbitrary commutative unital ring $R$ by finding the rank of the $R$-module  $\text{Inn}_{(\sigma, \tau)}(\mathbb{F}G)$ of all inner $(\sigma, \tau)$-derivations and a basis of it. Before this, no result seems to be available in the literature that classifies all inner $(\sigma, \tau)$-derivations of the group algebra $RG$ of an arbitrary group $G$ over an arbitrary commutative unital ring $R$. We obtain several corollaries of Theorem \ref{theorem 3.13}. Corollary \ref{corollary 3.13(1)} is stated for the inner $(\sigma, \tau)$-derivations of $RG$ when $G$ is a $(\sigma, \tau)$-FC group. Corollary \ref{corollary 3.13(2)} is stated for the inner $(\sigma, \tau)$-derivations of $RG$ when $G$ is a finite group. Corollary \ref{corollary 3.13(3)} is stated for the inner $(\sigma, \tau)$-derivations of $RG$, when $R$ is a field and $G$ is a finite group. In Theorem \ref{theorem 3.14}, we prove that every $(\sigma, \tau)$-derivation of $RG$ is an inner $(\sigma, \tau)$-derivation if $R$ is a unital ring and $G$ is a finite group whose order is invertible in $R$. Corollary \ref{corollary 3.15} is one of the consequences of Corollary \ref{corollary 3.13(2)} and Theorem \ref{theorem 3.14}.

In Section \ref{section 4}, we study the application of the results obtained in Sections \ref{section 2} and \ref{section 3} in classifying all inner and outer $\sigma$-derivations of the dihedral group algebra $\mathbb{F}D_{2n}$ ($n \geq 3$) over an arbitrary field $\mathbb{F}$ of any characteristic. Describing the derivation algebra consisting of the derivations of group algebra is a well-known problem. This section describes the $\sigma$-derivation algebras of dihedral group algebras over an arbitrary field. For this, we subdivide the section into four subsections. In Subsection \ref{subsection 4.1}, we consider the group algebra $\mathbb{F}D_{2n}$ of $D_{2n}$ over a field $\mathbb{F}$ of characteristic $0$ or an odd rational prime $p$. We classify all $\sigma$-derivations of $\mathbb{F}D_{2n}$ explicitly by providing the dimension and an $\mathbb{F}$-basis of the $\sigma$-derivation algebra $\mathcal{D}_{\sigma}(\mathbb{F}D_{2n})$. In Subsection \ref{subsection 4.2}, we do the same for the group algebra $\mathbb{F}D_{2n}$ of $D_{2n}$ over a field $\mathbb{F}$ of characteristic $2$. In Subsection \ref{subsection 4.1}, we determine all $\sigma$-conjugacy classes of $D_{2n}$. In Subsection \ref{subsection 4.4}, we classify all inner and outer $\sigma$-derivations of $\mathbb{F}D_{2n}$ over an arbitrary field $\mathbb{F}$ thus solving the $\sigma$-twisted derivation problem for dihedral group algebras over an arbitrary field. This section thus illustrates how important the results of Sections \ref{section 2} and \ref{section 3} are in the theory of twisted derivations of group rings.

In Section \ref{section 6}, we discuss the applications of our results in coding theory. We give the notion of a new code in coding theory, namely, an Image of Derivation-Derived (IDD) code. We give the definition and construction and illustrate via some examples. Finally, in Section \ref{section 7}, we conclude our findings.

\section{Basic Definitions}\label{section 2(NN)}
Below, we state some common knowledge. Let $R$ be a commutative unital ring and $\mathcal{A}$ be an associative $R$-algebra. Let $(\sigma, \tau)$ be a pair of $R$-algebra endomorphisms of $\mathcal{A}$. 

\begin{definition}\label{definition 2.1(NN)}
An $R$-linear map $d:\mathcal{A} \rightarrow \mathcal{A}$ that satisfies $d(\alpha \beta) = d(\alpha) \beta + \alpha d(\beta)$ for all $\alpha, \beta \in \mathcal{A}$, is called a derivation of $\mathcal{A}$. It is called inner if there exists some $\beta \in \mathcal{A}$ such that $d(\alpha) = \beta \alpha - \alpha \beta$ for all $\alpha \in \mathcal{A}$, and then we denote it by $d_{\beta}$. The elements of the quotient of the $R$-module of all derivations of $\mathcal{A}$ by the $R$-submodule of all inner derivations are called outer derivations.
\end{definition}

\begin{definition}\label{definition 2.2(NN)}
A $(\sigma, \tau)$-derivation $D:\mathcal{A} \rightarrow \mathcal{A}$ is an $R$-linear map that satisfies the $(\sigma, \tau)$-twisted generalized identity: $D(\alpha \beta) = D(\alpha)\tau(\beta) + \sigma(\alpha) D(\beta)$ for all $\alpha, \beta \in \mathcal{A}$. It is called inner if there exists some $\beta \in \mathcal{A}$ such that $D(\alpha) = \beta \tau(\alpha) - \sigma(\alpha) \beta$ for all $\alpha \in \mathcal{A}$, and then we denote it by $D_{\beta}$. The elements of the quotient of the $R$-module of all $(\sigma, \tau)$-derivations of $\mathcal{A}$ by the $R$-submodule of all inner $(\sigma, \tau)$-derivations are called outer $(\sigma, \tau)$-derivations.
\end{definition}

\begin{definition}\label{definition 2.3(NN)}
If $\tau = \sigma$, then the generalized identity becomes $D(\alpha \beta) = D(\alpha) \sigma(\beta) + \sigma(\alpha) D(\beta)$; and for the sake of convenience, we call this $(\sigma, \sigma)$-derivation as a $\sigma$-derivation. A $\sigma$-derivation $D:\mathcal{A} \rightarrow \mathcal{A}$ is called an inner $\sigma$-derivation if there exists some $\beta \in \mathcal{A}$ such that $D(\alpha) = \beta \sigma(\alpha) - \sigma(\alpha) \beta$ for all $\alpha \in \mathcal{A}$. The elements of the quotient of the $R$-module of all $\sigma$-derivations of $\mathcal{A}$ by the $R$-submodule of all inner $\sigma$-derivations are called outer $\sigma$-derivations.
\end{definition}

\begin{notation}\label{notation 2.4(NN)}
We denote the set of all $(\sigma, \tau)$-derivations on $\mathcal{A}$ by $\mathcal{D}_{(\sigma, \tau)}(\mathcal{A})$, the set of all $\sigma$-derivations on $\mathcal{A}$ by $\mathcal{D}_{\sigma}(\mathcal{A})$ and the set of all ordinary derivations on $\mathcal{A}$ by $\mathcal{D}(\mathcal{A})$. We denote the corresponding set of inner $(\sigma, \tau)$-derivations, inner $\sigma$-derivations and ordinary inner derivations on $\mathcal{A}$ by $\text{Inn}_{(\sigma, \tau)}(\mathcal{A})$, $\text{Inn}_{\sigma}(\mathcal{A})$ and $\text{Inn}(\mathcal{A})$ respectively. We denote the corresponding set of outer $(\sigma, \tau)$-derivations, outer $\sigma$-derivations and ordinary outer derivations on $\mathcal{A}$ by $\text{Out}_{(\sigma, \tau)}(\mathcal{A})$, $\text{Out}_{\sigma}(\mathcal{A})$ and $\text{Out}(\mathcal{A})$ respectively.
\end{notation}

\begin{remark}\label{remark 2.5(NN)}
If $\sigma = \tau = id_{\mathcal{A}}$ (the identity map on $\mathcal{A}$), then the usual Leibniz identity holds, and $(\sigma, \tau)$-derivation, inner $(\sigma, \tau)$-derivation, and outer $(\sigma, \tau)$-derivation respectively become the ordinary derivation, ordinary inner derivation and ordinary outer derivation on $\mathcal{A}$. Defining componentwise sum and module action, $\mathcal{D}_{(\sigma, \tau)}(\mathcal{A})$ becomes an $R$- as well as $\mathcal{A}$-module, and  $\text{Inn}_{(\sigma, \tau)}(\mathcal{A})$ become its submodule. If $1$ is the unity in $\mathcal{A}$ and $D$ is a $(\sigma, \tau)$-derivation of $\mathcal{A}$, then $D(1) = 0$.
\end{remark}

In view of the the above Definition \ref{definition 2.2(NN)} and the Remark \ref{remark 2.5(NN)}, the outer $(\sigma, \tau)$-derivations are precisely the elements of the factor module $\text{Out}_{(\sigma, \tau)}(\mathcal{A}) = \frac{\mathcal{D}_{(\sigma, \tau)}(\mathcal{A})}{\text{Inn}_{(\sigma, \tau)}(\mathcal{A})}$. Also, note that the set $\mathcal{D}_{(\sigma, \tau)}(\mathcal{A}) \setminus \text{Inn}_{(\sigma, \tau)}(\mathcal{A})$ is the set of all non-inner $(\sigma, \tau)$-derivations of $\mathcal{A}$. The following lemma establishes a connection between our notions of outer $(\sigma, \tau)$-derivations and non-inner $(\sigma, \tau)$-derivations of $\mathcal{A}$.

\begin{lemma}\label{lemma 2.6(NN)}
Let $T = \{D_{i} \in \mathcal{D}_{(\sigma, \tau)}(\mathcal{A}) \mid i \in I\}$ ($I$ some indexing set) be a left transversal of $\text{Inn}_{(\sigma, \tau)}(\mathcal{A})$ in $\mathcal{D}_{(\sigma, \tau)}(\mathcal{A})$ with $0$ as the coset representative of the coset $\text{Inn}_{(\sigma, \tau)}(\mathcal{A})$. Then the non-inner $(\sigma, \tau)$-derivations of $\mathcal{A}$ correspond to the elements in the set $\bigcup_{D_{i} \in T \setminus \{0\}} (D_{i} + \text{Inn}_{(\sigma, \tau)}(\mathcal{A}))$. More precisely, $\mathcal{D}_{(\sigma, \tau)}(\mathcal{A}) \setminus \text{Inn}_{(\sigma, \tau)}(\mathcal{A}) = \bigcup_{D_{i} \in T \setminus \{0\}} (D_{i} + \text{Inn}_{(\sigma, \tau)}(\mathcal{A}))$.
\end{lemma}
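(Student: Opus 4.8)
The plan is to prove this by the standard coset-partition argument, applied to the additive abelian group underlying $\mathcal{D}_{(\sigma,\tau)}(\mathcal{A})$. First I would invoke Remark \ref{remark 2.5(NN)} to recall that $\text{Inn}_{(\sigma,\tau)}(\mathcal{A})$ is an $R$-submodule of $\mathcal{D}_{(\sigma,\tau)}(\mathcal{A})$, hence in particular a subgroup of its additive group; therefore the cosets $D + \text{Inn}_{(\sigma,\tau)}(\mathcal{A})$, $D \in \mathcal{D}_{(\sigma,\tau)}(\mathcal{A})$, partition $\mathcal{D}_{(\sigma,\tau)}(\mathcal{A})$. (Since this group is abelian, left cosets are the only cosets, which is why the word ``left'' in the statement is immaterial here.) By the definition of a transversal, $T$ meets each such coset in exactly one element, so one obtains the disjoint decomposition
\[
\mathcal{D}_{(\sigma,\tau)}(\mathcal{A}) = \bigsqcup_{D_i \in T}\bigl(D_i + \text{Inn}_{(\sigma,\tau)}(\mathcal{A})\bigr).
\]

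Next I would isolate the single coset that has to be deleted. The hypothesis fixes $0$ as the representative in $T$ of the coset $\text{Inn}_{(\sigma,\tau)}(\mathcal{A})$ itself, using $0 + \text{Inn}_{(\sigma,\tau)}(\mathcal{A}) = \text{Inn}_{(\sigma,\tau)}(\mathcal{A})$. Because $T$ contains exactly one representative per coset, for every $D_i \in T \setminus \{0\}$ the coset $D_i + \text{Inn}_{(\sigma,\tau)}(\mathcal{A})$ is distinct from $\text{Inn}_{(\sigma,\tau)}(\mathcal{A})$; equivalently $D_i \notin \text{Inn}_{(\sigma,\tau)}(\mathcal{A})$, and by disjointness of cosets $\bigl(D_i + \text{Inn}_{(\sigma,\tau)}(\mathcal{A})\bigr) \cap \text{Inn}_{(\sigma,\tau)}(\mathcal{A}) = \varnothing$. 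Removing the coset $\text{Inn}_{(\sigma,\tau)}(\mathcal{A})$ from both sides of the displayed partition then gives
\[
\mathcal{D}_{(\sigma,\tau)}(\mathcal{A}) \setminus \text{Inn}_{(\sigma,\tau)}(\mathcal{A}) = \bigcup_{D_i \in T \setminus \{0\}}\bigl(D_i + \text{Inn}_{(\sigma,\tau)}(\mathcal{A})\bigr),
\]
which is the asserted equality. For completeness I would also spell out the two inclusions directly: if $D$ is non-inner, then $D$ lies in a unique coset $D_i + \text{Inn}_{(\sigma,\tau)}(\mathcal{A})$ with $D_i \in T$, and this coset cannot be $\text{Inn}_{(\sigma,\tau)}(\mathcal{A})$ (else $D$ would be inner), so $D_i \neq 0$; conversely any element of $D_i + \text{Inn}_{(\sigma,\tau)}(\mathcal{A})$ with $D_i \in T \setminus \{0\}$ is non-inner by the disjointness observed above. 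Finally, the identification of these elements with outer $(\sigma,\tau)$-derivations is precisely the statement $\text{Out}_{(\sigma,\tau)}(\mathcal{A}) = \mathcal{D}_{(\sigma,\tau)}(\mathcal{A})/\text{Inn}_{(\sigma,\tau)}(\mathcal{A})$ recorded just before the lemma, the nonzero cosets being in bijection with $T \setminus \{0\}$.

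There is no genuine obstacle in this argument; it is entirely formal. The only point requiring a little care is bookkeeping: one must use that $T$ is a \emph{transversal} (so it hits every coset exactly once, guaranteeing both that the union on the right is all of the complement and that distinct $D_i$ contribute disjoint pieces) and that the prescribed representative of the trivial coset is $0$ (so that it is exactly the index $D_i = 0$ that must be omitted). I would present the proof in two or three lines following the outline above.
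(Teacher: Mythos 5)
Your proof is correct and, once you spell out the two inclusions, it is essentially the same element-wise coset argument the paper gives; the preliminary remark that the cosets of $\text{Inn}_{(\sigma,\tau)}(\mathcal{A})$ partition $\mathcal{D}_{(\sigma,\tau)}(\mathcal{A})$ and that $T$ meets each coset exactly once is just a cleaner packaging of the same facts the paper uses implicitly.
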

\begin{proof}
Let $D \in \mathcal{D}_{(\sigma, \tau)}(\mathcal{A}) \setminus \text{Inn}_{(\sigma, \tau)}(\mathcal{A})$. Then $D \in \mathcal{D}_{(\sigma, \tau)}(\mathcal{A})$ but $D \notin \text{Inn}_{(\sigma, \tau)}(\mathcal{A})$. This implies that $D + \text{Inn}_{(\sigma, \tau)}(\mathcal{A}) \neq \text{Inn}_{(\sigma, \tau)}(\mathcal{A})$ so that $D + \text{Inn}_{(\sigma, \tau)}(\mathcal{A}) = D_{i} + \text{Inn}_{(\sigma, \tau)}(\mathcal{A})$ for some $D_{i} \in T \setminus \{0\}$. Therefore, $D - D_{i} \in \text{Inn}_{(\sigma, \tau)}(\mathcal{A})$ so that $D - D_{i} = D_{0}$ for some $D_{0} \in \text{Inn}_{(\sigma, \tau)}(\mathcal{A})$. Finally, we get that $D = D_{i} + D_{0}$ so that $D \in D_{i} + \text{Inn}_{(\sigma, \tau)}(\mathcal{A})$. Hence, $D \in \bigcup_{D_{i} \in T \setminus \{0\}} (D_{i} + \text{Inn}_{(\sigma, \tau)}(\mathcal{A}))$. Therefore, $\mathcal{D}_{(\sigma, \tau)}(\mathcal{A}) \setminus \text{Inn}_{(\sigma, \tau)}(\mathcal{A}) \subseteq \bigcup_{D_{i} \in T \setminus \{0\}} (D_{i} + \text{Inn}_{(\sigma, \tau)}(\mathcal{A}))$.

Conversely, let $D \in \bigcup_{D_{i} \in T \setminus \{0\}} (D_{i} + \text{Inn}_{(\sigma, \tau)}(\mathcal{A}))$. So $D \in D_{i} + \text{Inn}_{(\sigma, \tau)}(\mathcal{A})$ for some $D_{i} \in T \setminus \{0\}$. So $D + \text{Inn}_{(\sigma, \tau)}(\mathcal{A}) = D_{i} + \text{Inn}_{(\sigma, \tau)}(\mathcal{A})$. But since $D_{i} + \text{Inn}_{(\sigma, \tau)}(\mathcal{A}) \neq \text{Inn}_{(\sigma, \tau)}(\mathcal{A})$, so $D + \text{Inn}_{(\sigma, \tau)}(\mathcal{A}) \neq \text{Inn}_{(\sigma, \tau)}(\mathcal{A})$ so that $D \notin \text{Inn}_{(\sigma, \tau)}(\mathcal{A})$, that is, $D \in \mathcal{D}_{(\sigma, \tau)}(\mathcal{A}) \setminus \text{Inn}_{(\sigma, \tau)}(\mathcal{A})$. Therefore, $\bigcup_{D_{i} \in T \setminus \{0\}} (D_{i} + \text{Inn}_{(\sigma, \tau)}(\mathcal{A})) \subseteq \mathcal{D}_{(\sigma, \tau)}(\mathcal{A}) \setminus \text{Inn}_{(\sigma, \tau)}(\mathcal{A})$.
\end{proof}

\begin{remark}\label{remark 2.7(NN)}
In view of Lemma \ref{lemma 2.6(NN)}, $D$ is a non-inner derivation of $\mathcal{A}$ if and only if $D + \text{Inn}_{(\sigma, \tau)}(\mathcal{A})$ is a non-zero element of $\text{Out}_{(\sigma, \tau)}(\mathcal{A}) = \frac{\mathcal{D}_{(\sigma, \tau)}(\mathcal{A})}{\text{Inn}_{(\sigma, \tau)}(\mathcal{A})}$. In other words, $D$ is a non-inner derivation of $\mathcal{A}$ if and only if $D + \text{Inn}_{(\sigma, \tau)}(\mathcal{A})$ is a non-trivial outer derivation of $\mathcal{A}$. Therefore, studying the non-trivial outer derivations of $\mathcal{A}$ is equivalent to studying the non-inner derivations of $\mathcal{A}$ (see {\cite[Chapter 11]{pierce}} for details).
\end{remark} 

Many authors have defined an outer derivation of a ring $R$ (or algebra $\mathcal{A}$) to be a non-inner derivation of $R$ (or $\mathcal{A}$). Some references in this regard are \cite{batty1978derivations, chuang2005identities, dhara2022note, eroǧlu2017images, hall1972derivations, miles1964derivations, prajapati2022b, sakai1966derivations, weisfeld1960derivations}. In \cite{arutyunov2019smooth} and \cite{mishchenko2020description}, the authors initially describe the set $\mathcal{D}(\mathcal{A}) \setminus \text{Inn}(\mathcal{A})$ as the set of outer derivations but then argue that it is more natural to call the quotient module $\text{Out}(\mathcal{A}) = \frac{\mathcal{D}(\mathcal{A})}{\text{Inn}(\mathcal{A})}$ as the set of outer derivations because this module can be interpreted as $1^{\text{st}}$ Hochschild cohomology module of $\mathcal{A}$ with coefficients in $\mathcal{A}$. Similarly, the quotient module $\text{Out}_{(\sigma, \tau)}(\mathcal{A}) = \frac{\mathcal{D}_{(\sigma, \tau)}(\mathcal{A})}{\text{Inn}_{(\sigma, \tau)}(\mathcal{A})}$ is our set of outer $(\sigma, \tau)$-derivations because this module can be interpreted as $1^{\text{st}}$ $(\sigma, \tau)$-Hochschild cohomology module of $\mathcal{A}$ with coefficients in $\mathcal{A}$ (see \cite{ilwale2023noncommutative}).

\begin{definition}\label{definition 2.8(NN)}
$\mathcal{A}$ is said to be $(\sigma, \tau)$-differentially trivial if $\mathcal{A}$ has only zero $(\sigma, \tau)$-derivation, that is, $\mathcal{D}_{(\sigma, \tau)}(\mathcal{A}) = \{0\}$.
\end{definition}

\begin{definition}\label{definition 2.9(NN)}
If $R$ is a ring and $G$ is a group, then the group ring of $G$ over $R$ is defined as the set $$RG = \{\sum_{g \in G} a_{g} g \mid a_{g} \in R, \forall g \in G \hspace{0.2cm} \text{and} \hspace{0.2cm} |\text{supp}(\alpha)| < \infty \},$$ where for $\alpha = \sum_{g \in G} a_{g} g$, $\text{supp}(\alpha)$ denotes the support of $\alpha$ that consists of elements from $G$ that appear in the expression of $\alpha$. The set $RG$ is a ring concerning the componentwise addition and multiplication defined respectively by: For $\alpha = \sum_{g \in G} a_{g} g$, $\beta = \sum_{g \in G} b_{g} g$ in $RG$, $$(\sum_{g \in G} a_{g} g ) + (\sum_{g \in G} b_{g} g) = \sum_{g \in G}(a_{g} + b_{g}) g \hspace{0.2cm} \text{and} \hspace{0.2cm} \alpha \beta = \sum_{g, h \in G} a_{g} b_{h} gh.$$ If the ring $R$ is commutative having unity $1$ and the group $G$ is abelian having identity $e$, then $RG$ becomes a commutative unital algebra over $R$ with identity $1 = 1e$. We adopt the convention that empty sums are $0$, and empty products are $1$.
\end{definition}

\section{$(\sigma, \tau)$-Derivations of Group Rings}\label{section 2}
In this section, we establish a necessary and sufficient condition on a map $f:X \rightarrow RG$ such that $f$ can be extended to a $(\sigma, \tau)$-derivation of the group ring $RG$, where $X$ is a set of generators of group $G$, $R$ is a commutative unital ring and $(\sigma, \tau)$ is a pair of unital (map the unity to itself) $R$-algebra endomorphisms of $RG$ that are $R$-linear extensions of the group endomorphisms of $G$. However, we first obtain the results for the case $\tau = \sigma$. Then, we use this to classify all $\sigma$-derivations of a commutative group algebra over a field of positive characteristic.

\subsection{Some Useful Results}\label{subsection 2.1}
We will use the universal property of free groups in the proof of our main Theorems \ref{theorem 2.4(N)} and \ref{theorem 2.4}.

\begin{theorem}\th\label{theorem 2.1}
(Universal Property of a Free Group): Let $X$ be an alphabet and $F(X)$ be the free group on the alphabet $X$. Let $G$ be a group and $f:X \rightarrow G$ be a set function. Then there exists a unique group homomorphism $\tilde{f}:F(X) \rightarrow G$ such that the following diagram commutes:
\[
  \begin{tikzcd}
    X \arrow{r}{i} \arrow[swap]{dr}{f} & F(X) \arrow{d}{\tilde{f}} \\
     & G
  \end{tikzcd}
\] where $i:X \rightarrow F(X)$ is the inclusion map, that is, $\tilde{f}(x) = f(x)$ for all $x \in X$.
\end{theorem}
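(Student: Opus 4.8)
The plan is to work with the explicit construction of $F(X)$ as the set of reduced words over the alphabet $X \sqcup X^{-1}$, where $X^{-1} = \{x^{-1} : x \in X\}$ is a disjoint set of formal inverse symbols, the multiplication is concatenation of words followed by reduction (iteratively deleting any adjacent subword $xx^{-1}$ or $x^{-1}x$), the identity is the empty word, and $i : X \to F(X)$ sends $x$ to the length-one word $x$. Recall that every element of $F(X)$ has a \emph{unique} reduced expression, which is what makes the following definition unambiguous.

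First I would define the candidate map. For a reduced word $w = x_{i_1}^{\varepsilon_1}\cdots x_{i_n}^{\varepsilon_n}$ with each $\varepsilon_j \in \{+1,-1\}$, set $\tilde f(w) = f(x_{i_1})^{\varepsilon_1}\cdots f(x_{i_n})^{\varepsilon_n} \in G$, with the empty word sent to the identity $e_G$. Uniqueness of reduced forms makes $\tilde f$ a well-defined function $F(X) \to G$, and the commutativity of the diagram is immediate: $\tilde f(i(x)) = \tilde f(x) = f(x)$ for every $x \in X$.

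Next I would verify that $\tilde f$ is a group homomorphism. The key observation is that the formula $w \mapsto f(x_{i_1})^{\varepsilon_1}\cdots f(x_{i_n})^{\varepsilon_n}$, applied to \emph{arbitrary} (not necessarily reduced) words, is invariant under an elementary reduction, since deleting an adjacent pair $xx^{-1}$ or $x^{-1}x$ removes only a factor $f(x)f(x)^{-1} = e_G$, respectively $f(x)^{-1}f(x) = e_G$, from the product in $G$. Hence the value of this formula on a word equals its value on the associated reduced word. Now, given $u, v \in F(X)$, the product $uv$ in $F(X)$ is by definition the reduced form of the concatenation of $u$ and $v$ as words; applying the formula to that concatenation and splitting the product in $G$ at the concatenation point gives exactly $\tilde f(u)\tilde f(v)$. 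Therefore $\tilde f(uv) = \tilde f(u)\tilde f(v)$.

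Finally, for uniqueness, suppose $\phi : F(X) \to G$ is any homomorphism with $\phi \circ i = f$. Then $\phi(x) = f(x) = \tilde f(x)$ for all $x \in X$; since $X$ generates $F(X)$ and both $\phi$ and $\tilde f$ are homomorphisms agreeing on a generating set, they agree on all of $F(X)$, so $\phi = \tilde f$. The only step requiring genuine care is the well-definedness/homomorphism argument, which rests on knowing that reduced forms are unique; if one wishes to sidestep that fact, the same conclusion follows from a van der Waerden-style argument realizing $F(X)$ as a subgroup of the symmetric group on the set of reduced words. Everything else is routine.
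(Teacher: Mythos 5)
Your proof is correct: it is the classical reduced-word argument, and you correctly isolate the only delicate point, namely that well-definedness and the homomorphism property rest on uniqueness of reduced forms (with the van der Waerden embedding noted as an alternative). The paper states this universal property as standard background and gives no proof of its own, so there is no approach to compare against; your write-up simply supplies the textbook argument that the paper takes for granted.
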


\begin{lemma}\th\label{lemma 2.2}
Let $\mathcal{A}$ be an associative algebra with unity over a unital ring $R$, and let $\sigma$ and $\tau$ be two algebra endomorphisms of $\mathcal{A}$ which map unity $1$ of $\mathcal{A}$ to $1$ itself. Let $D$ be a $(\sigma, \tau)$-derivation of $\mathcal{A}$. Then
\begin{enumerate}
\item[(i)] $D\left(\prod_{i=1}^{k} \alpha_{i}\right) =  \sum_{i=1}^{k} \left(\prod_{j=1}^{i-1}\sigma(\alpha_{j})\right) D(\alpha_{i}) \left(\prod_{j=i+1}^{k}\tau(\alpha_{j})\right)$ for all $\alpha_{i} \in \mathcal{A}$.
\item[(ii)] $D(\alpha^{k}) = \sum_{i=0}^{k-1} (\sigma(\alpha))^{i} D(\alpha) (\tau(\alpha))^{k-i-1}$ for all $\alpha \in \mathcal{A}$ and $k \in \mathbb{N}$.
\item[(iii)] $\sum_{i=0}^{r-1} (\sigma(\alpha))^{i} D(\alpha) (\tau(\alpha))^{r-1-i} = 0$ for all units $\alpha$ in $\mathcal{A}$ of order $r$.
\item[(iv)] $D(\alpha^{k}) = \left(\sum_{i=0}^{k-1}(\sigma(\alpha))^{i} (\tau(\alpha))^{k-i-1}\right)D(\alpha)$ for all $\alpha \in \mathcal{A}$ such that $\sigma(\alpha)$ and $\tau(\alpha)$ commute with $D(\alpha)$ and $k \in \mathbb{N}$. In particular, if $\sigma = \tau$ and $\sigma(\alpha)$ commutes with $D(\alpha)$, then $D(\alpha^{k}) = k (\sigma(\alpha))^{k-1}D(\alpha)$.
\item[(v)] $D(\alpha^{k}) = k (\sigma(\alpha))^{k-1}D(\alpha)$ for any $\alpha$ which is a unit in $\mathcal{A}$ such that $\sigma(\alpha)$ commutes with $D(\alpha)$ and for any $k \in \mathbb{Z}$.
\end{enumerate}
\end{lemma}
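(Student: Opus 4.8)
The plan is to establish (i) by induction on $k$, then derive (ii)–(v) as special cases or easy consequences. For the base case $k=1$ of (i), both sides equal $D(\alpha_1)$ (the products over empty index ranges being $1$, and $\sigma(1)=\tau(1)=1$). For the inductive step, I would write $\prod_{i=1}^{k+1}\alpha_i = \left(\prod_{i=1}^{k}\alpha_i\right)\alpha_{k+1}$, apply the defining $(\sigma,\tau)$-Leibniz rule $D(\gamma\delta) = D(\gamma)\tau(\delta) + \sigma(\gamma)D(\delta)$ with $\gamma = \prod_{i=1}^k\alpha_i$ and $\delta = \alpha_{k+1}$, and then feed in the induction hypothesis for $D(\gamma)$. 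The only point requiring a little care is that $\sigma$ (being an algebra endomorphism) is multiplicative, so $\sigma\!\left(\prod_{i=1}^k\alpha_i\right) = \prod_{i=1}^k\sigma(\alpha_i)$; combining the two terms and re-indexing then gives exactly the sum with $k+1$ summands. I do not expect any genuine obstacle here — it is a routine telescoping induction, and the main thing to get right is bookkeeping of the empty-product conventions at the two ends of the sum.

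For (ii), I would simply specialize (i) to $\alpha_1 = \cdots = \alpha_k = \alpha$: the factor $\prod_{j=1}^{i-1}\sigma(\alpha_j)$ becomes $(\sigma(\alpha))^{i-1}$ and $\prod_{j=i+1}^{k}\tau(\alpha_j)$ becomes $(\tau(\alpha))^{k-i}$, and re-indexing the sum (replacing $i$ by $i+1$) yields $\sum_{i=0}^{k-1}(\sigma(\alpha))^{i}D(\alpha)(\tau(\alpha))^{k-i-1}$. For (iii), if $\alpha$ is a unit of order $r$ then $\alpha^r = 1$, so $D(\alpha^r) = D(1) = 0$ by Remark \ref{remark 2.5(NN)}; applying (ii) with $k=r$ gives the stated vanishing sum. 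For (iv), I start from (ii) and use the hypothesis that $D(\alpha)$ commutes with both $\sigma(\alpha)$ and $\tau(\alpha)$ to pull $D(\alpha)$ out to the right of each summand, collecting $\sum_{i=0}^{k-1}(\sigma(\alpha))^{i}(\tau(\alpha))^{k-i-1}$ as a coefficient; when $\sigma=\tau$ this coefficient is $\sum_{i=0}^{k-1}(\sigma(\alpha))^{k-1} = k(\sigma(\alpha))^{k-1}$, giving the ``in particular'' clause.

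For (v), the case $k\ge 1$ is the special case $\sigma=\tau$ of (iv) (a unit certainly lies in $\mathcal{A}$, and the commutation hypothesis is assumed). The case $k=0$ is immediate since $D(\alpha^0)=D(1)=0 = 0\cdot(\sigma(\alpha))^{-1}D(\alpha)$. For negative $k$, write $k=-m$ with $m\in\mathbb{N}$; from $\alpha^m\alpha^{-m}=1$ and $D(1)=0$ we get $0 = D(\alpha^m)\tau(\alpha^{-m}) + \sigma(\alpha^m)D(\alpha^{-m})$, i.e. $D(\alpha^{-m}) = -\sigma(\alpha)^{-m}D(\alpha^m)\sigma(\alpha)^{-m}$ (using $\sigma=\tau$ and that $\sigma$ of a unit is a unit with $\sigma(\alpha^{-1})=\sigma(\alpha)^{-1}$). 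Substituting the already-proved formula $D(\alpha^m) = m\sigma(\alpha)^{m-1}D(\alpha)$ and using that $D(\alpha)$ commutes with $\sigma(\alpha)$ (hence with $\sigma(\alpha)^{-1}$) collapses this to $-m\,\sigma(\alpha)^{-m-1}D(\alpha) = k\,(\sigma(\alpha))^{k-1}D(\alpha)$, as desired. The only mildly delicate step in the whole lemma is this last manipulation with negative exponents — one must know that an algebra endomorphism sends units to units and commutes with inversion, and must track the commutation hypothesis carefully so that the two copies of $\sigma(\alpha)^{-m}$ can be merged with the positive power $\sigma(\alpha)^{m-1}$.
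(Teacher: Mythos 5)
Your proposal is correct and follows essentially the same route as the paper: induction for (i), specialization for (ii)--(iv), and the unit/inverse manipulation for (v). The only cosmetic difference is in the negative-exponent case of (v), where you substitute the positive-power formula directly into $D(\alpha^{-m}) = -\sigma(\alpha)^{-m}D(\alpha^{m})\sigma(\alpha)^{-m}$, while the paper first checks that $\alpha^{-1}$ satisfies the commutation hypothesis and then applies (iv) to $(\alpha^{-1})^{k}$ — the two computations are equivalent.
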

\begin{proof}
\textbf{(i)} We prove the equality by using induction on $k$. For $k = 1$, the left side is $D(\alpha_{1})$ and the right side is $\sum_{i=1}^{1} 1 D(\alpha_{1}) 1 = D(\alpha_{1})$ which are both equal. Now, assume that the result holds for $k-1$. Then \begin{eqnarray*}
D(\prod_{i=1}^{k} \alpha_{i}) & = & D(\prod_{i=1}^{k-1}\alpha_{i})\tau(\alpha_{k}) + \sigma(\prod_{i=1}^{k-1}\alpha_{i}) D(\alpha_{k}) \\ & = & \left(\sum_{i=1}^{k-1} \left(\prod_{j=1}^{i-1}\sigma(\alpha_{j})\right) D(\alpha_{i}) \left(\prod_{j=i+1}^{k-1}\tau(\alpha_{j})\right)\right)\tau(\alpha_{k}) + \sigma(\prod_{i=1}^{k-1}\alpha_{i}) D(\alpha_{k}) \\ & = & \sum_{i=1}^{k} \left(\prod_{j=1}^{i-1}\sigma(\alpha_{j})\right) D(\alpha_{i}) \left(\prod_{j=i+1}^{k}\tau(\alpha_{j})\right).
\end{eqnarray*}

Therefore, by induction, the result holds for all $k \in \mathbb{N}$.

\textbf{(ii)} Putting $\alpha_{i} = \alpha$ for all $i \in \{1, ..., k\}$ in (i), we get that \begin{eqnarray*}
D(\alpha^{k}) & = & \sum_{i=1}^{k} \left(\left(\prod_{j=1}^{i-1}\sigma(\alpha)\right)D(\alpha) \left(\prod_{j=i+1}^{k}\tau(\alpha)\right)\right) = \sum_{i=0}^{k-1}(\sigma(\alpha))^{i} D(\alpha) (\tau(\alpha))^{k-i-1}.
\end{eqnarray*}

\textbf{(iii)} Let $\alpha \in \mathcal{A}$ be a unit of order $r$ so that $\alpha^{r} = 1$. Then by (ii), $$0 = D(1) = D(\alpha^{r}) = \sum_{i=0}^{r-1} (\sigma(\alpha))^{i} D(\alpha) (\tau(\alpha))^{r-1-i}.$$

\textbf{(iv)} Let $\alpha \in \mathcal{A}$ such that $\sigma(\alpha)$ and $\tau(\alpha)$ commute with $D(\alpha)$. Then by (ii), $$D(\alpha^{k}) = \left(\sum_{i=0}^{k-1}(\sigma(\alpha))^{i} (\tau(\alpha))^{k-i-1}\right)D(\alpha)$$ for $k \in \mathbb{N}$. If $\tau = \sigma$, then $$D(\alpha^{k}) = \left(\sum_{i=0}^{k-1}(\sigma(\alpha))^{k-1}\right) D(\alpha) \\ = k (\sigma(\alpha))^{k-1}D(\alpha).$$

\textbf{(v)} Let $\alpha$ be a unit in $\mathcal{A}$ such that $\sigma(\alpha)$ and $D(\alpha)$ commute with each other. The result holds for all $k \in \mathbb{N}$ by (iv). For $k = 0$, $D(\alpha^{0}) = D(1) = 0$ and the right side is trivially $0$ being an empty sum. So, the equality also holds for $k=0$.

We have $$\sigma(\alpha^{-1})D(\alpha) = \sigma(\alpha^{-1})D(\alpha)\sigma(\alpha \alpha^{-1}) = \sigma(\alpha^{-1})\sigma(\alpha)D(\alpha)\sigma(\alpha^{-1}) = D(\alpha)\sigma(\alpha^{-1}).$$ Therefore, $\alpha^{-1}$ is also a unit in $\mathcal{A}$ such that $\sigma(\alpha^{-1})$ and $D(\alpha)$ commute with each other. Thus, $$0 = D(1) = D(\alpha^{-1} \alpha) = D(\alpha^{-1})\sigma(\alpha) + \sigma(\alpha^{-1})D(\alpha)$$ so that $D(\alpha^{-1}) = - \sigma(\alpha^{-2}) D(\alpha).$
Further, $$\sigma(\alpha^{-1})D(\alpha^{-1}) = \sigma(\alpha^{-1}) (- \sigma(\alpha^{-2} )D(\alpha)) = (- \sigma(\alpha^{-2}) D(\alpha))\sigma(\alpha^{-1}) = D(\alpha^{-1}) \sigma(\alpha^{-1}).$$
Now, by (iv), we have that for any $k \in \mathbb{N}$, 
\begin{eqnarray*}
D(\alpha^{-k}) & = & D((\alpha^{-1})^{k}) \\ & = & k (\sigma(\alpha^{-1}))^{k-1}D(\alpha^{-1}) \\ & = & k (\sigma(\alpha))^{-k+1}(- (\sigma(\alpha))^{-2} D(\alpha)) \\ & = & -k (\sigma(\alpha))^{-k-1}D(\alpha).
\end{eqnarray*}
Therefore, the result in (v) is true for all $k \in \mathbb{Z}$.
\end{proof}

\subsection{The Main Theorem for $\sigma$-Derivations}\label{subsection 2.2(N)}
\begin{lemma}\th\label{lemma 2.3(N)}
Let $G = \langle X \mid Y \rangle$ be a group with $X$ as its set of generators and $Y$ as its set of relators. Let $R$ be a commutative unital ring, $\sigma$ be an $R$-algebra endomorphism of $RG$, and an $R$-linear extension of a group endomorphism of $G$. Let $F(X)$ be the free group on $X$. Then any map $f:X \rightarrow RG$ can be extended uniquely to a map $\tilde{f}:F(X) \rightarrow RG$ such that \begin{equation}\label{eq 2.1(N)}
\tilde{f}(vw) = \tilde{f}(v)\tilde{\sigma}(w) + \tilde{\sigma}(v)\tilde{f}(w), \hspace{0.1cm} \forall \hspace{0.1cm} v, w \in F(X),\end{equation} where $\tilde{\sigma}:F(X) \rightarrow G$ is the unique group homomorphism such that $\tilde{\sigma}(x) = \sigma(x)$  for all $x \in X$.
\end{lemma}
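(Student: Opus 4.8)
The plan is to encode the pair $\bigl(\tilde\sigma(w),\tilde f(w)\bigr)$ as a single $2\times 2$ matrix over $RG$ in such a way that the twisted Leibniz recursion \eqref{eq 2.1(N)} becomes ordinary matrix multiplication, and then to invoke the universal property of the free group (Theorem~\ref{theorem 2.1}). Concretely, I would work in
\[
T=\left\{\begin{pmatrix} g & \alpha\\ 0 & g\end{pmatrix}\ :\ g\in G,\ \alpha\in RG\right\}\subseteq M_2(RG),
\]
and first check that $T$ is a group under matrix multiplication: the product of $\begin{pmatrix} g_1 & \alpha_1\\ 0 & g_1\end{pmatrix}$ and $\begin{pmatrix} g_2 & \alpha_2\\ 0 & g_2\end{pmatrix}$ equals $\begin{pmatrix} g_1g_2 & g_1\alpha_2+\alpha_1 g_2\\ 0 & g_1g_2\end{pmatrix}\in T$, the identity is $\begin{pmatrix} e & 0\\ 0 & e\end{pmatrix}$, and $\begin{pmatrix} g & \alpha\\ 0 & g\end{pmatrix}^{-1}=\begin{pmatrix} g^{-1} & -g^{-1}\alpha g^{-1}\\ 0 & g^{-1}\end{pmatrix}\in T$. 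I would also record that the top-left projection $\pi:T\to G$, $\begin{pmatrix} g & \alpha\\ 0 & g\end{pmatrix}\mapsto g$, is a group homomorphism.

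\textbf{Existence.} Next I would define $\phi:X\to T$ by $\phi(x)=\begin{pmatrix}\sigma(x) & f(x)\\ 0 & \sigma(x)\end{pmatrix}$, which is legitimate because $\sigma$ restricts to a group endomorphism of $G$, so $\sigma(x)\in G$. By Theorem~\ref{theorem 2.1} there is a unique group homomorphism $\tilde\phi:F(X)\to T$ with $\tilde\phi|_X=\phi$; writing $\tilde\phi(w)=\begin{pmatrix} a(w) & \tilde f(w)\\ 0 & a(w)\end{pmatrix}$ (the two diagonal entries coincide by the definition of $T$) defines a map $\tilde f:F(X)\to RG$ with $\tilde f(x)=f(x)$. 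Since $\pi\circ\tilde\phi:F(X)\to G$ is a group homomorphism sending each $x$ to $\sigma(x)$, the uniqueness clause of Theorem~\ref{theorem 2.1} forces $\pi\circ\tilde\phi=\tilde\sigma$, i.e.\ $a(w)=\tilde\sigma(w)$ for all $w$. Comparing the $(1,2)$-entries on the two sides of $\tilde\phi(vw)=\tilde\phi(v)\tilde\phi(w)$ then gives exactly \eqref{eq 2.1(N)}.

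\textbf{Uniqueness.} Suppose $g:F(X)\to RG$ also extends $f$ and satisfies \eqref{eq 2.1(N)}. Applying \eqref{eq 2.1(N)} with $v=w=1$ gives $g(1)=2g(1)$, hence $g(1)=0$; I would then check that $\Psi(w)=\begin{pmatrix}\tilde\sigma(w) & g(w)\\ 0 & \tilde\sigma(w)\end{pmatrix}$ is a group homomorphism $F(X)\to T$, the identity $\Psi(vw)=\Psi(v)\Psi(w)$ being precisely \eqref{eq 2.1(N)} together with $\tilde\sigma(vw)=\tilde\sigma(v)\tilde\sigma(w)$. Since both $\Psi$ and $\tilde\phi$ agree with $\phi$ on $X$, uniqueness in Theorem~\ref{theorem 2.1} yields $\Psi=\tilde\phi$, whence $g=\tilde f$.

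\textbf{Expected obstacle.} I do not expect a genuine obstruction once the auxiliary group is chosen correctly; the one point that must be gotten exactly right is the shape of the matrices, so that the $(1,2)$-entry of a product reproduces the \emph{asymmetric} expression $\tilde f(v)\tilde\sigma(w)+\tilde\sigma(v)\tilde f(w)$ rather than something like $\tilde f(v)+\tilde\sigma(v)\tilde f(w)$. Beyond that, the only care needed is to keep the three layers distinct — elements of $G$, elements of $RG$, and words of $F(X)$ — remembering in particular that $\tilde\sigma$ is a homomorphism $F(X)\to G$ (so each $\tilde\sigma(w)$ is a single group element), not an endomorphism of $RG$. An alternative, matrix-free route would be to define $\tilde f$ on reduced words by the explicit formula of Lemma~\ref{lemma 2.2}(i), after first pinning down $\tilde f(x^{-1})=-\tilde\sigma(x^{-1})f(x)\tilde\sigma(x^{-1})$ from $0=\tilde f(1)=\tilde f(xx^{-1})$; there the real work becomes proving independence of the chosen word (invariance under inserting $xx^{-1}$ and $x^{-1}x$), which is exactly the bookkeeping that the universal-property argument circumvents.
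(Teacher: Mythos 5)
Your proof is correct, and it takes a genuinely different route from the paper's. The paper builds $\tilde{f}$ by hand: it prescribes $\tilde{f}$ on $X\cup X^{-1}\cup\{1\}$ by (\ref{eq 2.2(N)}), extends to arbitrary words by the sum formula (\ref{eq 2.3(N)}), verifies (\ref{eq 2.1(N)}) by splitting a word into two blocks, then proves well-definedness ($\tilde{f}(w)=\tilde{f}(\overline{w})$, i.e.\ invariance under inserting or cancelling $xx^{-1}$) and finally uniqueness by a shortest-counterexample induction on word length; the universal property of free groups enters there only to produce $\tilde{\sigma}$. You instead encode the pair $(\tilde{\sigma},\tilde{f})$ as a single homomorphism into the group $T$ of matrices $\left(\begin{smallmatrix} g & \alpha\\ 0 & g\end{smallmatrix}\right)$ with $g\in G$, $\alpha\in RG$, and apply the universal property once to $x\mapsto\left(\begin{smallmatrix}\sigma(x) & f(x)\\ 0 & \sigma(x)\end{smallmatrix}\right)$; existence, well-definedness and uniqueness of $\tilde{f}$ then all drop out of Theorem \ref{theorem 2.1} (your identification $a(w)=\tilde{\sigma}(w)$ via the projection $\pi$ is the needed bridge, and your uniqueness step, turning any solution $g$ of (\ref{eq 2.1(N)}) into a second homomorphism $\Psi$ into $T$ agreeing with $\phi$ on $X$, is exactly right). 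This is shorter and eliminates precisely the reduced-word bookkeeping you flag at the end. What the paper's explicit construction buys is the closed formula (\ref{eq 2.3(N)}) for $\tilde{f}$ on an arbitrary word, which is quoted verbatim later (in the proofs of Theorems \ref{theorem 2.4(N)} and \ref{theorem 2.9} and throughout Section \ref{section 4}); if your argument were adopted, that formula should still be recorded separately --- it follows from (\ref{eq 2.1(N)}) by an easy induction, or by expanding the matrix product --- and, as a consistency check, the $(1,2)$-entry of $\phi(x)^{-1}$ in your group $T$ is $-\sigma(x^{-1})f(x)\sigma(x^{-1})$, exactly the paper's value of $\tilde{f}(x^{-1})$ in (\ref{eq 2.2(N)}).
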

\begin{proof}
By \th\ref{theorem 2.1}, $\sigma$ can be extended uniquely to a group homomorphism $\tilde{\sigma}:F(X) \rightarrow G$ such that $\tilde{\sigma}(x) = \sigma(x)$ for all $x \in X$. Put $X^{-1} = \{x^{-1} \mid x \in X\}$. Then for any $x \in X$, $$1 = \tilde{\sigma}(1) = \tilde{\sigma}(x) \tilde{\sigma}(x^{-1}) = \sigma(x) \tilde{\sigma}(x^{-1})$$ so that $\tilde{\sigma}(x^{-1}) = \sigma(x^{-1})$. Therefore, $\tilde{\sigma}(g) = \sigma(g)$ for all $g \in G$.

Define $\tilde{f}:F(X) \rightarrow RG$ as \begin{equation}\label{eq 2.2(N)}
\tilde{f}(x) = \begin{cases}
f(x) & \text{if $x \in X$} \\
-\sigma(x)f(x^{-1})\sigma(x) & \text{if $x \in X^{-1}$} \\
0 & \text{if $x = 1$}
\end{cases}
\end{equation} and if $w = \prod_{i=1}^{m}x_{i}$, for $x_{i} \in X \cup X^{-1}$ ($1 \leq i \leq m$), then \begin{equation}\label{eq 2.3(N)}
\tilde{f}(w) = \sum_{i=1}^{m} \left(\left(\prod_{j=1}^{i-1} \sigma(x_{j})\right)\tilde{f}(x_{i})\left(\prod_{j=i+1}^{m} \sigma(x_{j})\right)\right).\end{equation}
Let $0 \leq m \leq n$, $v = \prod_{i=1}^{m} x_{i}$ and $w = \prod_{i=m+1}^{n}x_{i}$. Then by (\ref{eq 2.2(N)}) and (\ref{eq 2.3(N)}), 
\begin{equation*}
\begin{aligned}
\tilde{f}(vw) & = \tilde{f}((\prod_{i=1}^{m} x_{i})(\prod_{i=m+1}^{n}x_{i})) =  \tilde{f}(\prod_{i=1}^{n}x_{i}) = \sum_{i=1}^{n} \left(\left(\prod_{j=1}^{i-1} \sigma(x_{j})\right)\tilde{f}(x_{i})\left(\prod_{j=i+1}^{n} \sigma(x_{j})\right)\right)
\\ & = \left(\sum_{i=1}^{m} \left(\prod_{j=1}^{i-1}\sigma(x_{j})\right)\tilde{f}(x_{i})\left(\prod_{j=i+1}^{m}\sigma(x_{j})\right)\right)\left(\prod_{j=m+1}^{n}\sigma(x_{j})\right) \\ &\quad + (\prod_{i=1}^{m}\sigma(x_{j}))(\sum_{i=m+1}^{n}(\prod_{j=m+1}^{i-1}\sigma(x_{j}))\tilde{f}(x_{i})(\prod_{j=i+1}^{n}\sigma(x_{j}))) 
\\ & = \tilde{f}(v)\tilde{\sigma}(w) + \tilde{\sigma}(v)\tilde{f}(w).
\end{aligned}
\end{equation*}
Therefore, $\tilde{f}$ satisfies (\ref{eq 2.1(N)}).

For any word $w$ on $X$, let $\overline{w}$ denote the reduced word on $X$. To show that the map $\tilde{f}$ is well-defined, it must be shown that $\tilde{f}(w) = \tilde{f}(\overline{w})$ for all words $w$ on $X$.

Using (\ref{eq 2.1(N)}) and (\ref{eq 2.2(N)}), note that for any $x \in X$, $$\tilde{f}(xx^{-1}) = \tilde{f}(x)\tilde{\sigma}(x^{-1}) + \tilde{\sigma}(x) \tilde{f}(x^{-1}) = f(x)\sigma(x^{-1}) - \sigma(x) \sigma(x^{-1}) f(x) \sigma(x^{-1}) = 0.$$ Similarly, for any $x \in X^{-1}$, $\tilde{f}(xx^{-1}) = 0$. Therefore, for any two words $v$ and $w$ on $X$ and for any $x \in X \cup X^{-1}$, \begin{eqnarray*}
\tilde{f}(vxx^{-1}w) & = & \tilde{f}(v)\tilde{\sigma}(xx^{-1}w) + \tilde{\sigma}(v)\tilde{f}(xx^{-1}w) \\ & = & \tilde{f}(v)\tilde{\sigma}(w) + \tilde{\sigma}(v)(\tilde{f}(xx^{-1})\tilde{\sigma}(w) + \tilde{\sigma}(xx^{-1})\tilde{f}(w)) \\ & = & \tilde{f}(v)\tilde{\sigma}(w) +  \tilde{\sigma}(v)\tilde{f}(xx^{-1})\tilde{\sigma}(w) + \tilde{\sigma}(v)\tilde{f}(w) \\ & = & \tilde{f}(v)\tilde{\sigma}(w) + \tilde{\sigma}(v)\tilde{f}(w) \\ & = & \tilde{f}(vw).
\end{eqnarray*}
Therefore, $\tilde{f}(w) = \tilde{f}(\overline{w})$ for all words $w$ on $X$.

Now, it remains to show the uniqueness of $\tilde{f}$. If possible, suppose that $\tilde{g}:F(X) \rightarrow RG$ is another extension of $f$ different from $\tilde{f}$ that satisfies $$\tilde{g}(vw) = \tilde{g}(v)\tilde{\sigma}(w) + \tilde{\sigma}(v)\tilde{g}(w)$$ for all $v, w \in F(X)$.

Note that $$\tilde{g}(1) = \tilde{g}(1)\tilde{\sigma}(1) + \tilde{\sigma}(1)\tilde{g}(1) = \tilde{g}(1) + \tilde{g}(1)$$ so that $\tilde{g}(1) = 0$. Therefore, $\tilde{g}(1) = 0 = \tilde{f}(1)$.

Again, since $\tilde{f}$ and $\tilde{g}$ are extensions of $f:X \rightarrow RG$, therefore, $$\tilde{g}(x) = f(x) = \tilde{f}(x)$$ for all $x \in X$.

Further, for any $x \in X$, $$0 = \tilde{g}(1) = \tilde{g}(xx^{-1}) = \tilde{g}(x)\tilde{\sigma}(x^{-1}) + \tilde{\sigma}(x)\tilde{g}(x^{-1})$$ so that $$\tilde{g}(x^{-1}) = - \sigma(x^{-1}) f(x) \sigma(x^{-1}) = \tilde{f}(x^{-1}).$$ Therefore, $\tilde{g}(x) = \tilde{f}(x)$ for all $x \in X \cup X^{-1}$.

Since $\tilde{g} \neq \tilde{f}$, so there exists some $w_{0} \in F(X)$ such that $\tilde{g}(w_{0}) \neq \tilde{f}(w_{0})$ and $\tilde{g}(w) = \tilde{f}(w)$ for all words $w \in F(X)$ whose length is strictly less than that of $w_{0}$. Now, as $w_{0} \in F(X)$, so $w_{0} = \prod_{i=1}^{m_{0}}x_{i}$ for some $x_{i} \in X \cup X^{-1}$ ($1 \leq i \leq m_{0}$), where $m_{0}$ is the length of the word $w_{0}$. Since $\prod_{i=1}^{m_{0}-1}x_{j}, x_{m_{0}} \in F(X)$ such that their lengths are strictly less than the length $m_{0}$ of $w_{0}$, so by our choice of $w_{0}$, $$\tilde{g}(\prod_{i=1}^{m_{0}-1}x_{j}) = \tilde{f}(\prod_{i=1}^{m_{0}-1}x_{j}) ~~~~~~ \text{and} ~~~~~~ \tilde{g}(x_{m_{0}}) = \tilde{f}(x_{m_{0}}).$$

This gives 
\begin{eqnarray*}
\tilde{g}(w_{0}) & = & \tilde{g}(\prod_{i=1}^{m_{0}}x_{i}) = \tilde{g}((\prod_{i=1}^{m_{0}-1}x_{i})x_{m_{0}}) \\ & = & \tilde{g}(\prod_{i=1}^{m_{0}-1}x_{j})\tilde{\sigma}(x_{m_{0}}) + \tilde{\sigma}(\prod_{i=1}^{m_{0}-1}x_{j}) \tilde{g}(x_{m_{0}}) \\ & = & \tilde{f}(\prod_{i=1}^{m_{0}-1}x_{j})\tilde{\sigma}(x_{m_{0}}) + \tilde{\sigma}(\prod_{i=1}^{m_{0}-1}x_{j}) \tilde{f}(x_{m_{0}}) \\ & = & \tilde{f}(\prod_{i=1}^{m_{0}}x_{i}) = \tilde{f}((\prod_{i=1}^{m_{0}-1}x_{i})x_{m_{0}}) \\ & = & \tilde{f}(w_{0})
\end{eqnarray*}
But this is a contradiction to the fact that $\tilde{g}(w_{0}) \neq \tilde{f}(w_{0})$. Therefore, $\tilde{f}$ is the unique extension of $f:X \rightarrow RG$ from $F(X)$ to $RG$ that satisfies (\ref{eq 2.1(N)}).
\end{proof}

\begin{theorem}\th\label{theorem 2.4(N)}
Under the hypotheses of \th\ref{lemma 2.3(N)}, a map $f:X \rightarrow RG$ can be extended to a $\sigma$-derivation $D$ of $RG$ if and only if $\tilde{f}(y) = 0$ for all $y \in Y$.
\end{theorem}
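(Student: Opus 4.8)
The plan is to use Lemma~\ref{lemma 2.3(N)} as the backbone and reduce the theorem to a statement about the free group $F(X)$ and the normal closure $N$ of $Y$ in $F(X)$, so that $G = F(X)/N$. Recall from Lemma~\ref{lemma 2.3(N)} that any $f : X \to RG$ lifts uniquely to $\tilde{f} : F(X) \to RG$ satisfying the twisted Leibniz rule \eqref{eq 2.1(N)} with respect to the group homomorphism $\tilde{\sigma} : F(X) \to G$. A $\sigma$-derivation $D$ of $RG$ extending $f$ must, by $R$-linearity and Lemma~\ref{lemma 2.2}(i) (with $\tau = \sigma$), be completely determined on group elements: if $g = \overline{w}$ for a word $w = \prod_{i=1}^m x_i$, then $D(g) = \sum_{i=1}^m \big(\prod_{j<i}\sigma(x_j)\big) D(x_i) \big(\prod_{j>i}\sigma(x_j)\big) = \tilde{f}(w)$ — but only provided this is well-defined, i.e. independent of the word representing $g$. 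So the whole theorem hinges on: $\tilde{f}$ descends to a well-defined function $G \to RG$ if and only if $\tilde{f}(y) = 0$ for all $y \in Y$.

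First I would prove necessity, which is the easy direction. If $f$ extends to a $\sigma$-derivation $D$ of $RG$, then for each relator $y \in Y$ we have $y = 1$ in $G$, so viewing $y$ as an element of $RG$ via $\tilde{\sigma}$, and using that $\tilde{f}(y)$ equals $D$ applied to the image of $y$ (by the word-expansion above and $R$-linearity), we get $\tilde{f}(y) = D(\tilde{\sigma}(y)) = D(1) = 0$ by Remark~\ref{remark 2.5(NN)}. One has to be slightly careful to phrase this correctly: the point is that $\tilde f(y)$, computed in $F(X)$, only depends on the image in $G$ once we know $D$ exists, because $D \circ (\text{natural map}) $ agrees with $\tilde f$ on words.

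Next, sufficiency. Assume $\tilde{f}(y) = 0$ for all $y \in Y$. The key step is to show $\tilde{f}$ is constant on cosets of $N = \langle\langle Y \rangle\rangle$, i.e. $\tilde{f}(wn) = \tilde{f}(w)$ for all $w \in F(X)$, $n \in N$. Since $\tilde\sigma$ factors through $G$, $\tilde\sigma(N) = \{1\}$, so $\tilde\sigma$ is trivial on $N$. Using \eqref{eq 2.1(N)}: $\tilde{f}(wn) = \tilde{f}(w)\tilde\sigma(n) + \tilde\sigma(w)\tilde{f}(n) = \tilde{f}(w) + \tilde\sigma(w)\tilde{f}(n)$, so it suffices to show $\tilde{f}(n) = 0$ for all $n \in N$. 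Now $N$ is generated as a group by conjugates $u y^{\pm 1} u^{-1}$ with $u \in F(X)$, $y \in Y$; a general element is a product of such. So I would show (a) $\tilde{f}(uyu^{-1}) = 0$ and $\tilde{f}(uy^{-1}u^{-1}) = 0$ for $y \in Y$, using \eqref{eq 2.1(N)}, the vanishing $\tilde{f}(y) = 0$, the fact proved in Lemma~\ref{lemma 2.3(N)} that $\tilde f(v v^{-1}) = 0$ and the formula $\tilde f(v^{-1}) = -\sigma(v^{-1})\tilde f(v)\sigma(v^{-1})$ type identities, plus $\tilde\sigma(y) = 1$; and (b) if $\tilde{f}(n_1) = \tilde{f}(n_2) = 0$ with $n_1, n_2 \in N$, then $\tilde{f}(n_1 n_2) = \tilde{f}(n_1)\tilde\sigma(n_2) + \tilde\sigma(n_1)\tilde{f}(n_2) = 0$ since $\tilde\sigma(n_i) = 1$. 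Together (a) and (b), by induction on the length of the product expressing $n$, give $\tilde{f}(n) = 0$ for all $n \in N$. Hence $\bar{f} : G \to RG$, $\bar f(gN) := \tilde f(g)$, is well-defined; extend it $R$-linearly to $D : RG \to RG$. Finally verify $D$ is a $\sigma$-derivation: $R$-linearity is built in, and the twisted Leibniz identity $D(\alpha\beta) = D(\alpha)\sigma(\beta) + \sigma(\alpha)D(\beta)$ follows by bilinearity from the group-element case $D(gh) = \bar f(gh) = \tilde f(w_g w_h) = \tilde f(w_g)\tilde\sigma(w_h) + \tilde\sigma(w_g)\tilde f(w_h) = D(g)\sigma(h) + \sigma(g)D(h)$, where $w_g, w_h$ are words representing $g, h$ and we use that $\sigma$ restricted to $G$ equals $\tilde\sigma$ composed with the natural map (established in the proof of Lemma~\ref{lemma 2.3(N)}).

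The main obstacle I anticipate is step (a): carefully showing $\tilde f$ vanishes on conjugates $uyu^{-1}$. The computation $\tilde{f}(uyu^{-1}) = \tilde f(u)\tilde\sigma(yu^{-1}) + \tilde\sigma(u)\tilde f(yu^{-1}) = \tilde f(u)\tilde\sigma(u^{-1}) + \tilde\sigma(u)\big(\tilde f(y)\tilde\sigma(u^{-1}) + \tilde\sigma(y)\tilde f(u^{-1})\big) = \tilde f(u)\tilde\sigma(u^{-1}) + \tilde\sigma(u)\tilde f(u^{-1}) = \tilde f(uu^{-1}) = \tilde f(1) = 0$ works cleanly because $\tilde f(y) = 0$ and $\tilde\sigma(y) = 1$; the genuinely fiddly part is handling $y^{-1}$ (one needs $\tilde f(y^{-1}) = -\sigma(\tilde\sigma(y))\,\tilde f(y)\,\cdots$-type relation forcing $\tilde f(y^{-1}) = 0$ from $\tilde f(y) = 0$ and $\tilde\sigma(y)=1$) and making the induction on word length in $N$ rigorous. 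Everything else is routine bookkeeping with \eqref{eq 2.1(N)}.
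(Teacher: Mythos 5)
Your proposal is correct and follows essentially the same route as the paper: necessity via the word-expansion/Lemma \th\ref{lemma 2.2}(i) argument (using $D(x^{\pm 1})=\tilde f(x^{\pm 1})$ and $D(1)=0$), and sufficiency by showing $\tilde f$ vanishes on the normal closure of $Y$ (first on conjugates $w y^{\pm 1} w^{-1}$, then on products by induction, using $\tilde\sigma\equiv 1$ there), so that $\tilde f$ descends to $G$ and extends $R$-bilinearly to a $\sigma$-derivation. The only differences are cosmetic (conjugates written as $uy^{\pm1}u^{-1}$ rather than $w^{-1}y^{\varepsilon}w$, and uniqueness omitted, which the stated theorem does not require).
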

\begin{proof}
First, suppose that the given map $f:X \rightarrow RG$ can be extended to a $\sigma$-derivation $D$ of $RG$. So $D(x) = f(x)$ for all $x \in X$. Also, for any $x \in X$, $$0 = D(1) = D(xx^{-1}) = D(x)\sigma(x^{-1}) + \sigma(x)D(x^{-1})$$ so that $$D(x^{-1}) = - \sigma(x^{-1})D(x) \sigma(x^{-1}) = - \sigma(x^{-1})f(x)\sigma(x^{-1}) = \tilde{f}(x^{-1}).$$ Therefore, $D(x) = \tilde{f}(x)$ for all $x \in X \cup X^{-1}$.

Now, let $y \in Y$. Then $y = \prod_{i=1}^{k}y_{i}$ for some $y_{i} \in X \cup X^{-1}$ ($1 \leq i \leq k$). By (\ref{eq 2.3(N)}) and \th\ref{lemma 2.2} (i),
\begin{eqnarray*}
\tilde{f}(y) = \tilde{f}(\prod_{i=1}^{k}y_{i}) & = & \sum_{i=1}^{k}\left(\left(\prod_{j=1}^{i-1}\sigma(y_{j})\right)\tilde{f}(y_{i})\left(\prod_{j=i+1}^{k} \sigma(y_{j})\right)\right) \\ & = & \sum_{i=1}^{k}\left(\left(\prod_{j=1}^{i-1}\sigma(y_{j})\right)D(y_{i})\left(\prod_{j=i+1}^{k} \sigma(y_{j})\right)\right) \\ & = & D(\prod_{i=1}^{k}y_{i}) = D(y) = D(1) = 0.
\end{eqnarray*}
Therefore, $\tilde{f}(y) = 0$ for all $y \in Y$.

Conversely, assume that $\tilde{f}(y) = 0$ for all $y \in Y$. Note that $\tilde{\sigma}(y) = \sigma(y) = 1$ for all $y \in Y$. Therefore, for any $y \in Y$, $$0 = \tilde{f}(1) = \tilde{f}(yy^{-1}) = \tilde{f}(y)\tilde{\sigma}(y^{-1}) + \tilde{\sigma}(y)\tilde{f}(y^{-1}) = \tilde{f}(y^{-1}).$$ Therefore, $\tilde{f}(y^{-1}) = 0$ for all $y \in Y$.

Let $Y^{F(X)}$ be the normal closure of $Y$ in $F(X)$. Then $$Y^{F(X)} = \langle w^{-1}yw \mid w \in F(X), y \in Y\rangle$$ and it is the kernel of the unique onto group homomorphism $\phi: F(X) \rightarrow G$ which is the identity on $X$, that is, $G \cong \frac{F(X)}{Y^{F(X)}}$. Let $\varepsilon \in \{1, -1\}$. Then using (\ref{eq 2.1(N)}) and the fact that $\tilde{f}(y^{\varepsilon}) = 0$ for all $y \in Y$, we get that for any $w \in F(X)$, \begin{eqnarray*}\tilde{f}(w^{-1}y^{\varepsilon}w) & = & \tilde{f}(w^{-1})\tilde{\sigma}(y^{\varepsilon}w) + \tilde{\sigma}(w^{-1})\tilde{f}(y^{\varepsilon}w) \\ & = & \tilde{f}(w^{-1})\tilde{\sigma}(w) + \tilde{\sigma}(w^{-1})(\tilde{f}(y^{\varepsilon})\tilde{\sigma}(w) + \tilde{\sigma}(y^{\varepsilon})\tilde{f}(w))\\ & = &  \tilde{f}(w^{-1})\tilde{\sigma}(w) + \tilde{\sigma}(w^{-1})\tilde{f}(w) \\ & = & \tilde{f}(w^{-1}w) = \tilde{f}(1) = 0. \end{eqnarray*}
Now, let $v \in Y^{F(X)}$. Then $v = \prod_{i=1}^{m}w_{i}^{-1}y_{i}^{\varepsilon}w_{i}$ for some $m \in \mathbb{N}$, $w_{i} \in F(X)$, $y_{i} \in Y$ ($1 \leq i \leq m$). As shown above, the result holds for $m=1$. Assume that the result holds for $m=k-1$, that is, $\tilde{f}(\prod_{i=1}^{k-1}w_{i}^{-1}y_{i}^{\varepsilon}w_{i}) = 0$. Then using (\ref{eq 2.3(N)}) and the induction hypothesis, we get that for $m=k$,
\begin{eqnarray*}
\tilde{f}(v) & = & \tilde{f}(\prod_{i=1}^{k}w_{i}^{-1}y_{i}^{\varepsilon}w_{i}) = \tilde{f}((\prod_{i=1}^{k-1}w_{i}^{-1}y_{i}^{\varepsilon}w_{i})(w_{k}^{-1}y_{k}^{\varepsilon}w_{k})) \\ & = & \tilde{f}(\prod_{i=1}^{k-1}w_{i}^{-1}y_{i}^{\varepsilon}w_{i})\tilde{\sigma}(w_{k}^{-1}y_{k}^{\varepsilon}w_{k}) + \tilde{\sigma}(\prod_{i=1}^{k-1}w_{i}^{-1}y_{i}^{\varepsilon}w_{i}) \tilde{f}(w_{k}^{-1}y_{k}^{\varepsilon}w_{k}) \\ & = & 0\tilde{\sigma}(w_{k}^{-1}y_{k}^{\varepsilon}w_{k}) + \tilde{\sigma}(\prod_{i=1}^{k-1}w_{i}^{-1}y_{i}^{\varepsilon}w_{i})0 = 0.
\end{eqnarray*}
Therefore, $\tilde{f}(v) = 0$ for all $v \in Y^{F(X)}$. Also, $\tilde{\sigma}(v) = 1$ for all $v \in Y^{F(X)}$. Therefore, for any $w \in F(X)$ and $v \in Y^{F(X)}$, $$\tilde{f}(wv) = \tilde{f}(w)\tilde{\sigma}(v) + \tilde{\sigma}(w)\tilde{f}(v) = \tilde{f}(w)1 + \tilde{\sigma}(w)0 = \tilde{f}(w).$$

Let $g \in G$ and $g = \prod_{i=1}^{m}x_{i}$ for some $m \in \mathbb{N}$, $x_{i} \in X \cup X^{-1}$ ($1 \leq i \leq m$). So $g$ is a word on $X$ so that $g \in F(X)$. In fact, $g$ is the reduced word in $F(X)$ with $\phi(g) = g$. If $w \in F(X)$ such that $\phi(w) = g$, then $w$ is either equivalent to the reduced word $g$ or the expression for $w$ (as a product of elements from $X \cup X^{-1}$) contains elements from $Y^{F(X)}$. In any case, $\tilde{\sigma}(w) = \sigma(g)$.

Now, define $D:G \rightarrow RG$ by $$D(g) = \tilde{f}(w)$$ where for $g \in G$, $w \in F(X)$ is such that $\phi(w) = g$. Such a $w$ exists as $\phi: F(X) \rightarrow G$ is an onto group homomorphism, or as explained in the above paragraph, we can take $w=g$. The map $D$ is well-defined because as shown already, $\tilde{f}:F(X) \rightarrow RG$ is a well-defined map with $\tilde{f}(wv) = \tilde{f}(w)$ for all $v \in Y^{F(X)}$.

Now, let $g, h \in G$. Then there exists some $u, w \in F(X)$ such that $$\phi(u) = g ~~~~~~ \text{and} ~~~~~~ \phi(w) = h.$$ Then $$\tilde{\sigma}(u) = \sigma(g), ~~~~ \tilde{\sigma}(w) = \sigma(h) ~~~~ \text{and} ~~~~ \phi(uw) = \phi(u)\phi(w) = gh.$$ Therefore, by the definition of $D$, $$D(g) = \tilde{f}(u), ~~~~ D(h) = \tilde{f}(w) ~~~~ \text{and} ~~~~ D(gh) = \tilde{f}(uw).$$ Finally, using (\ref{eq 2.1(N)}), we get that
$$D(gh) = \tilde{f}(uw) = \tilde{f}(u)\tilde{\sigma}(w) + \tilde{\sigma}(u)\tilde{f}(w) = D(g)\sigma(h) + \sigma(g)D(h).$$
Therefore, $D(gh) = D(g)\sigma(h) + \sigma(g)D(h)$ for all $g, h \in G$.

Now, extend $D$ $R$-linearly to the whole of $RG$. We denote the extended map also by $D$. So if $\lambda \in R$, and $\alpha = \sum_{g \in G}\lambda_{g}g$, $\beta = \sum_{h \in G}\mu_{h}h \in RG$ for some $\lambda_{g}, \mu_{h} \in R$ ($g, h \in G$), then $D(\alpha + \beta) = D(\alpha) + D(\beta)$ and $D(\lambda \alpha) = \lambda D(\alpha)$ as $D$ is an $R$-linear map. Also, \begin{eqnarray*}
D(\alpha \beta) & = & D((\sum_{g \in G}\lambda_{g}g)(\sum_{g \in G}\mu_{h}h)) \\ & = & D(\sum_{g,h \in G}\lambda_{g} \mu_{h}(gh)) = \sum_{g,h \in G}\lambda_{g} \mu_{h}D(gh) \\ & = & \sum_{g,h \in G}\lambda_{g} \mu_{h}(D(g)\sigma(h) + \sigma(g)D(h)) \\ & = & \sum_{g,h \in G}\lambda_{g} \mu_{h}D(g)\sigma(h) + \sum_{g,h \in G}\lambda_{g} \mu_{h}\sigma(g)D(h) \\ & = & D(\sum_{g \in G} \lambda_{g}g) \sigma(\sum_{h \in G} \mu_{h}h) + \sigma(\sum_{g \in G} \lambda_{g}g) D(\sum_{h \in G} \mu_{h}h) \\ & = & D(\alpha)\sigma(\beta) + \sigma(\alpha)D(\beta).
\end{eqnarray*}
Therefore, $D$ is a $\sigma$-derivation of $RG$ that extends $f:X \rightarrow RG$ to the whole of $RG$.

Now, it remains to show the uniqueness of $D$. If possible, suppose that $\tilde{D}: RG \rightarrow RG$ with $\tilde{D} \neq D$ is another $\sigma$-derivation of $RG$ that extends $f:X \rightarrow RG$. Then there exists some $g_{0} \in G$ such that $\tilde{D}(g_{0}) \neq D(g_{0})$ and $\tilde{D}(g) = D(g)$ for all $g \in G$ such that the length of $g$ is strictly less than that of $g_{0}$.

Note that $D(1) = 0 = \tilde{D}(1)$. Also, $\tilde{D}(x) = f(x) = D(x)$ for all $x \in X$. Now, for any $x \in X$, $$0 = \tilde{D}(1) = \tilde{D}(xx^{-1}) = \tilde{D}(x)\sigma(x^{-1}) + \sigma(x)\tilde{D}(x^{-1}) = f(x)\sigma(x^{-1}) + \sigma(x)\tilde{D}(x^{-1})$$ so that $\tilde{D}(x^{-1}) = -\sigma(x^{-1})f(x)\sigma(x^{-1}) = \tilde{f}(x^{-1}) = D(x^{-1})$.

We can write $g_{0} \in G$ as $g_{0} = x_{0}y_{0}$ for some $x_{0}, y_{0} \in G$ such that the lengths of $x_{0}$ and $y_{0}$ are strictly less than that of $g_{0}$. Then in view of our assumption, $\tilde{D}(x_{0}) = D(x_{0})$ and $\tilde{D}(y_{0}) = D(y_{0})$. Therefore, \begin{eqnarray*}
\tilde{D}(g_{0}) & = & \tilde{D}(x_{0}y_{0}) \\ & = & \tilde{D}(x_{0})\sigma(y_{0}) + \sigma(x_{0})\tilde{D}(y_{0}) \\ & = & D(x_{0})\sigma(y_{0}) + \sigma(x_{0})D(y_{0}) \\ & = & D(x_{0}y_{0}) \\ & = & D(g_{0}).
\end{eqnarray*} But this is a contradiction to the fact that $\tilde{D}(g_{0}) \neq D(g_{0})$. Therefore, $D: RG \rightarrow RG$ is the unique $\sigma$-derivation that extends the given map $f:X \rightarrow RG$.

Hence proved.
\end{proof}

\subsection{The Main Theorem for $(\sigma, \tau)$-Derivations}\label{subsection 2.2}
In this Subsection, we prove Lemma \ref{lemma 2.3(N)} and Theorem \ref{theorem 2.4(N)}, in general, for $(\sigma, \tau)$-derivations. The idea of the proofs is almost similar, but we still provide the proofs for the sake of completeness.

\begin{lemma}\th\label{lemma 2.3}
Let $G = \langle X \mid Y \rangle$ be a group with $X$ as its set of generators and $Y$ as its set of relators. Let $R$ be a commutative unital ring, and $\sigma$ and $\tau$ be $R$-algebra endomorphisms of $RG$, which are $R$-linear extensions of group endomorphisms of $G$. Let $F(X)$ be the free group on $X$. Then any map $f:X \rightarrow RG$ can be extended uniquely to a map $\tilde{f}:F(X) \rightarrow RG$ such that \begin{equation}\label{eq 2.1}
\tilde{f}(vw) = \tilde{f}(v)\tilde{\tau}(w) + \tilde{\sigma}(v)\tilde{f}(w), \hspace{0.1cm} \forall \hspace{0.1cm} v, w \in F(X),\end{equation} where $\tilde{\sigma}, \tilde{\tau}:F(X) \rightarrow G$ are the unique group homomorphisms such that $\tilde{\sigma}(x) = \sigma(x)$ and $\tilde{\tau}(x) = \tau(x)$ for all $x \in X$.
\end{lemma}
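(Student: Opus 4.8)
The plan is to follow the proof of Lemma~\ref{lemma 2.3(N)} almost verbatim, with $\tau$ taking over the role played by $\sigma$ in every right-hand factor. First I would invoke Theorem~\ref{theorem 2.1} to extend $\sigma$ and $\tau$ from $X$ to unique group homomorphisms $\tilde{\sigma},\tilde{\tau}\colon F(X)\to G$, and record that $\tilde{\sigma}(x^{-1})=\sigma(x^{-1})$ and $\tilde{\tau}(x^{-1})=\tau(x^{-1})$ for $x\in X$ (since $\tilde{\sigma},\tilde{\tau}$ are homomorphisms and $\sigma,\tau$ restrict to group endomorphisms of $G$), so that $\tilde{\sigma},\tilde{\tau}$ agree with $\sigma,\tau$ on every $g\in G$ regarded as a reduced word. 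Writing $X^{-1}=\{x^{-1}\mid x\in X\}$, the identity \eqref{eq 2.1} forces the definition of $\tilde{f}$ on words of length $\le 1$: namely $\tilde{f}(1)=0$, $\tilde{f}(x)=f(x)$ for $x\in X$, and $\tilde{f}(x)=-\sigma(x)f(x^{-1})\tau(x)$ for $x\in X^{-1}$ (obtained from $0=\tilde{f}(xx^{-1})=\tilde{f}(x)\tilde{\tau}(x^{-1})+\tilde{\sigma}(x)\tilde{f}(x^{-1})$). For a general word $w=\prod_{i=1}^{m}x_i$ with $x_i\in X\cup X^{-1}$, I would then set
\[
\tilde{f}(w)=\sum_{i=1}^{m}\Bigl(\prod_{j=1}^{i-1}\sigma(x_j)\Bigr)\tilde{f}(x_i)\Bigl(\prod_{j=i+1}^{m}\tau(x_j)\Bigr),
\]
i.e. the $(\sigma,\tau)$-twisted Leibniz sum with all leading factors twisted by $\sigma$ and all trailing factors by $\tau$.

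Next I would verify that this $\tilde{f}$ satisfies \eqref{eq 2.1}. Writing $v=\prod_{i=1}^{m}x_i$ and $w=\prod_{i=m+1}^{n}x_i$, one expands $\tilde{f}(vw)=\tilde{f}\bigl(\prod_{i=1}^{n}x_i\bigr)$ via the displayed formula, splits the sum at $i=m$, and uses the multiplicativity of $\tilde{\sigma}$ and $\tilde{\tau}$ to recognise the first block as $\tilde{f}(v)\tilde{\tau}(w)$ and the second as $\tilde{\sigma}(v)\tilde{f}(w)$; this is exactly the computation in the proof of Lemma~\ref{lemma 2.3(N)}, with $\sigma$ replaced by $\tau$ in the trailing products. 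The crucial point is well-definedness on $F(X)$: one must show $\tilde{f}(w)=\tilde{f}(\overline{w})$ for the reduced word $\overline{w}$, and since free reduction is generated by deletions of subwords $xx^{-1}$, it suffices to check $\tilde{f}(xx^{-1})=0$ for each $x\in X\cup X^{-1}$ — a short computation from the definition of $\tilde{f}$ on $X\cup X^{-1}$ — and then to propagate it: for any words $v,w$ and $x\in X\cup X^{-1}$, the identity $\tilde{f}(vxx^{-1}w)=\tilde{f}(vw)$ follows from \eqref{eq 2.1} together with $\tilde{f}(xx^{-1})=0$ and $\tilde{\sigma}(xx^{-1})=\tilde{\tau}(xx^{-1})=1$, exactly as before. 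Consequently $\tilde{f}$ descends to a well-defined map on $F(X)$ still satisfying \eqref{eq 2.1}.

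Finally, for uniqueness, I would argue by a minimal-counterexample / induction on word length. Any extension $\tilde{g}$ of $f$ satisfying \eqref{eq 2.1} has $\tilde{g}(1)=0$ (from $\tilde{g}(1)=\tilde{g}(1)\tilde{\tau}(1)+\tilde{\sigma}(1)\tilde{g}(1)$), has $\tilde{g}(x)=f(x)=\tilde{f}(x)$ for $x\in X$, and has $\tilde{g}(x^{-1})=-\sigma(x^{-1})f(x)\tau(x^{-1})=\tilde{f}(x^{-1})$ for $x\in X$. If $\tilde{g}\ne\tilde{f}$, choose $w_0$ of least length with $\tilde{g}(w_0)\ne\tilde{f}(w_0)$ and write $w_0=\bigl(\prod_{i=1}^{m_0-1}x_i\bigr)x_{m_0}$; applying \eqref{eq 2.1} to both $\tilde{f}$ and $\tilde{g}$ at this factorisation, and using minimality of $w_0$ on the two shorter words, yields $\tilde{g}(w_0)=\tilde{f}(w_0)$, a contradiction.

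I do not expect a genuine obstacle here: the argument is structurally identical to that of Lemma~\ref{lemma 2.3(N)}, and the only thing requiring care is the asymmetry in the twist — $\sigma$ appearing on the left of $\tilde{f}(x_i)$ and $\tau$ on the right — together with the consistent use of the fact that $\tilde{\sigma}$ and $\tilde{\tau}$ are honest (multiplicative) homomorphisms on $F(X)$, which is precisely what makes the splitting at the concatenation point and the reduction-invariance step go through. (Unlike in the proof of Theorem~\ref{theorem 2.4(N)}, no appeal to Lemma~\ref{lemma 2.2} is needed for the lemma itself.)
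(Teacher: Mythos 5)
Your proposal is correct and follows essentially the same route as the paper: the same forced definition of $\tilde{f}$ on $X\cup X^{-1}\cup\{1\}$ and the same twisted Leibniz sum for words, the same splitting argument for \eqref{eq 2.1}, the same reduction-invariance check via $\tilde{f}(xx^{-1})=0$, and the same minimal-length uniqueness argument. No gaps; the paper's proof of Lemma \ref{lemma 2.3} is precisely this adaptation of Lemma \ref{lemma 2.3(N)} with $\tau$ on the trailing factors.
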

\begin{proof}
By \th\ref{theorem 2.1}, $\sigma$ and $\tau$ can be extended uniquely to group homomorphisms $\tilde{\sigma}:F(X) \rightarrow G$ and $\tilde{\tau}:F(X) \rightarrow G$ such that $\tilde{\sigma}(x) = \sigma(x)$ and $\tilde{\tau}(x) = \tau(x)$ for all $x \in X$. 

As shown in the proof of Lemma \ref{lemma 2.3(N)}, $\tilde{\sigma}(g) = \sigma(g)$ for all $g \in G$. On similar lines, it can be shown that $\tilde{\tau}(g) = \tau(g)$ for all $g \in G$.

Define $\tilde{f}:F(X) \rightarrow RG$ as \begin{equation}\label{eq 2.2}
\tilde{f}(x) = \begin{cases}
f(x) & \text{if $x \in X$} \\
-\sigma(x)f(x^{-1})\tau(x) & \text{if $x \in X^{-1}$} \\
0 & \text{if $x = 1$}
\end{cases}
\end{equation} and if $w = \prod_{i=1}^{m}x_{i}$, for $x_{i} \in X \cup X^{-1}$ ($1 \leq i \leq m$), then \begin{equation}\label{eq 2.3}
\tilde{f}(w) = \sum_{i=1}^{m} \left(\left(\prod_{j=1}^{i-1} \sigma(x_{j})\right)\tilde{f}(x_{i})\left(\prod_{j=i+1}^{m} \tau(x_{j})\right)\right).\end{equation}
Let $0 \leq m \leq n$, $v = \prod_{i=1}^{m} x_{i}$ and $w = \prod_{i=m+1}^{n}x_{i}$. Then by (\ref{eq 2.2}) and (\ref{eq 2.3}), 
\begin{equation*}
\begin{aligned}
\tilde{f}(vw) & = \tilde{f}((\prod_{i=1}^{m} x_{i})(\prod_{i=m+1}^{n}x_{i})) =  \tilde{f}(\prod_{i=1}^{n}x_{i}) = \sum_{i=1}^{n} \left(\left(\prod_{j=1}^{i-1} \sigma(x_{j})\right)\tilde{f}(x_{i})\left(\prod_{j=i+1}^{n} \tau(x_{j})\right)\right)
\\ & = \left(\sum_{i=1}^{m} \left(\prod_{j=1}^{i-1}\sigma(x_{j})\right)\tilde{f}(x_{i})\left(\prod_{j=i+1}^{m}\tau(x_{j})\right)\right)\left(\prod_{j=m+1}^{n}\tau(x_{j})\right) \\ &\quad + (\prod_{i=1}^{m}\sigma(x_{j}))(\sum_{i=m+1}^{n}(\prod_{j=m+1}^{i-1}\sigma(x_{j}))\tilde{f}(x_{i})(\prod_{j=i+1}^{n}\tau(x_{j}))) 
\\ & = \tilde{f}(v)\tilde{\tau}(w) + \tilde{\sigma}(v)\tilde{f}(w).
\end{aligned}
\end{equation*}
Therefore, $\tilde{f}$ satisfies (\ref{eq 2.1}).

For any word $w$ on $X$, let $\overline{w}$ denote the reduced word on $X$. To show that the map $\tilde{f}$ is well-defined, it must be shown that $\tilde{f}(w) = \tilde{f}(\overline{w})$ for all words $w$ on $X$.

Using (\ref{eq 2.1}) and (\ref{eq 2.2}), note that for any $x \in X$, $$\tilde{f}(xx^{-1}) = \tilde{f}(x)\tilde{\tau}(x^{-1}) + \tilde{\sigma}(x) \tilde{f}(x^{-1}) = f(x)\tau(x^{-1}) - \sigma(x) \sigma(x^{-1}) f(x) \tau(x^{-1}) = 0.$$ Similarly, for any $x \in X^{-1}$, $\tilde{f}(xx^{-1}) = 0$. Therefore, for any two words $v$ and $w$ on $X$ and for any $x \in X \cup X^{-1}$, \begin{eqnarray*}\tilde{f}(vxx^{-1}w) & = & \tilde{f}(v)\tilde{\tau}(xx^{-1}w) + \tilde{\sigma}(v)\tilde{f}(xx^{-1}w) \\ & = & \tilde{f}(v)\tilde{\tau}(w) + \tilde{\sigma}(v)(\tilde{f}(xx^{-1})\tilde{\tau}(w) + \tilde{\sigma}(xx^{-1})\tilde{f}(w)) \\ & = & \tilde{f}(v)\tilde{\tau}(w) +  \tilde{\sigma}(v)\tilde{f}(xx^{-1})\tilde{\tau}(w) + \tilde{\sigma}(v)\tilde{f}(w) \\ & = & \tilde{f}(v)\tilde{\tau}(w) + \tilde{\sigma}(v)\tilde{f}(w) \\ & = & \tilde{f}(vw).
\end{eqnarray*} Therefore, $\tilde{f}(w) = \tilde{f}(\overline{w})$ for all words $w$ on $X$.

Now, as in the proof of Lemma \ref{lemma 2.3(N)}, we show the uniqueness of $\tilde{f}$. If possible, suppose that $\tilde{g}:F(X) \rightarrow RG$ is another extension of $f$ different from $\tilde{f}$ that satisfies $$\tilde{g}(vw) = \tilde{g}(v)\tilde{\tau}(w) + \tilde{\sigma}(v)\tilde{g}(w)$$ for all $v, w \in F(X)$.

Note that $$\tilde{g}(1) = \tilde{g}(1)\tilde{\tau}(1) + \tilde{\sigma}(1)\tilde{g}(1) = \tilde{g}(1) + \tilde{g}(1)$$ so that $\tilde{g}(1) = 0$. Therefore, $\tilde{g}(1) = 0 = \tilde{f}(1)$.

Again, since $\tilde{f}$ and $\tilde{g}$ are extensions of $f:X \rightarrow RG$, therefore, $$\tilde{g}(x) = f(x) = \tilde{f}(x)$$ for all $x \in X$.

Further, for any $x \in X$, $$0 = \tilde{g}(1) = \tilde{g}(xx^{-1}) = \tilde{g}(x)\tilde{\tau}(x^{-1}) + \tilde{\sigma}(x)\tilde{g}(x^{-1})$$ so that $$\tilde{g}(x^{-1}) = - \sigma(x^{-1}) f(x) \tau(x^{-1}) = \tilde{f}(x^{-1}).$$ Therefore, $\tilde{g}(x) = \tilde{f}(x)$ for all $x \in X \cup X^{-1}$.

Following the steps as in the proof of Lemma \ref{lemma 2.3(N)}, since $\tilde{g} \neq \tilde{f}$, so there exists some $w_{0} \in F(X)$ such that $\tilde{g}(w_{0}) \neq \tilde{f}(w_{0})$ and $\tilde{g}(w) = \tilde{f}(w)$ for all words $w \in F(X)$ whose length is strictly less than that of $w_{0}$. As $w_{0} \in F(X)$, so $w_{0} = \prod_{i=1}^{m_{0}}x_{i}$ for some $x_{i} \in X \cup X^{-1}$ ($1 \leq i \leq m_{0}$), where $m_{0}$ is the length of the word $w_{0}$. Since $\prod_{i=1}^{m_{0}-1}x_{j}, x_{m_{0}} \in F(X)$ such that their lengths are strictly less than the length $m_{0}$ of $w_{0}$, so by our choice of $w_{0}$, $$\tilde{g}(\prod_{i=1}^{m_{0}-1}x_{j}) = \tilde{f}(\prod_{i=1}^{m_{0}-1}x_{j}) ~~~~~~ \text{and} ~~~~~~ \tilde{g}(x_{m_{0}}) = \tilde{f}(x_{m_{0}}).$$

This gives \begin{eqnarray*}
\tilde{g}(w_{0}) & = & \tilde{g}(\prod_{i=1}^{m_{0}}x_{i}) = \tilde{g}((\prod_{i=1}^{m_{0}-1}x_{i})x_{m_{0}}) \\ & = & \tilde{g}(\prod_{i=1}^{m_{0}-1}x_{j})\tilde{\tau}(x_{m_{0}}) + \tilde{\sigma}(\prod_{i=1}^{m_{0}-1}x_{j}) \tilde{g}(x_{m_{0}}) \\ & = & \tilde{f}(\prod_{i=1}^{m_{0}-1}x_{j})\tilde{\tau}(x_{m_{0}}) + \tilde{\sigma}(\prod_{i=1}^{m_{0}-1}x_{j}) \tilde{f}(x_{m_{0}})\\ & = & \tilde{f}(\prod_{i=1}^{m_{0}}x_{i}) = \tilde{f}((\prod_{i=1}^{m_{0}-1}x_{i})x_{m_{0}}) \\ & = & \tilde{f}(w_{0}).
\end{eqnarray*} But this is a contradiction to the fact that $\tilde{g}(w_{0}) \neq \tilde{f}(w_{0})$. Therefore, $\tilde{f}$ is the unique extension of $f:X \rightarrow RG$ from $F(X)$ to $RG$ that satisfies (\ref{eq 2.1}).
\end{proof}

\begin{theorem}\th\label{theorem 2.4}
Under the hypotheses of \th\ref{lemma 2.3}, a map $f:X \rightarrow RG$ can be extended to a $(\sigma, \tau)$-derivation $D$ of $RG$ if and only if $\tilde{f}(y) = 0$ for all $y \in Y$.
\end{theorem}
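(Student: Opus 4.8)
The plan is to mirror the two-part argument already used for \th\ref{theorem 2.4(N)}, now carrying $\sigma$ on the left and $\tau$ on the right throughout, invoking \th\ref{lemma 2.3} in place of \th\ref{lemma 2.3(N)} and part (i) of \th\ref{lemma 2.2} for the general $(\sigma,\tau)$-identity.

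\emph{Necessity.} Suppose $f$ extends to a $(\sigma,\tau)$-derivation $D$ of $RG$. Then $D(x)=f(x)$ for all $x\in X$, and from $0=D(1)=D(xx^{-1})=D(x)\tau(x^{-1})+\sigma(x)D(x^{-1})$ I would solve for $D(x^{-1})=-\sigma(x^{-1})f(x)\tau(x^{-1})=\tilde f(x^{-1})$, using that $\sigma,\tau$ restrict to group endomorphisms of $G$ so that $\sigma(x^{-1})=\sigma(x)^{-1}$ and similarly for $\tau$. Hence $D$ and $\tilde f$ agree on $X\cup X^{-1}$. For a relator $y=\prod_{i=1}^{k}y_i\in Y$ with $y_i\in X\cup X^{-1}$, formula (\ref{eq 2.3}) for $\tilde f$ together with \th\ref{lemma 2.2}(i) applied to $D$ gives $\tilde f(y)=\sum_i\big(\prod_{j<i}\sigma(y_j)\big)D(y_i)\big(\prod_{j>i}\tau(y_j)\big)=D\big(\prod_i y_i\big)=D(y)=D(1)=0$.

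\emph{Sufficiency.} Assume $\tilde f(y)=0$ for all $y\in Y$. Since each $y$ is a relator, $\tilde\sigma(y)=\sigma(y)=1=\tau(y)=\tilde\tau(y)$, and $0=\tilde f(1)=\tilde f(yy^{-1})=\tilde f(y)\tilde\tau(y^{-1})+\tilde\sigma(y)\tilde f(y^{-1})$ gives $\tilde f(y^{-1})=0$. For the normal closure $Y^{F(X)}=\langle w^{-1}y^{\varepsilon}w\mid w\in F(X),\,y\in Y,\,\varepsilon\in\{\pm1\}\rangle$, I would first check vanishing on a single conjugate via (\ref{eq 2.1}):
\[
\tilde f(w^{-1}y^{\varepsilon}w)=\tilde f(w^{-1})\tilde\tau(y^{\varepsilon}w)+\tilde\sigma(w^{-1})\big(\tilde f(y^{\varepsilon})\tilde\tau(w)+\tilde\sigma(y^{\varepsilon})\tilde f(w)\big)=\tilde f(w^{-1})\tilde\tau(w)+\tilde\sigma(w^{-1})\tilde f(w)=\tilde f(w^{-1}w)=0,
\]
using $\tilde\sigma(y^{\varepsilon})=\tilde\tau(y^{\varepsilon})=1$ and $\tilde f(y^{\varepsilon})=0$; then a short induction on the number of conjugate factors (using (\ref{eq 2.3}) and that $\tilde\sigma,\tilde\tau$ kill each factor) shows $\tilde f(v)=0$ for all $v\in Y^{F(X)}$, with $\tilde\sigma(v)=\tilde\tau(v)=1$. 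Consequently $\tilde f(wv)=\tilde f(w)\tilde\tau(v)+\tilde\sigma(w)\tilde f(v)=\tilde f(w)$ for every $w\in F(X)$, $v\in Y^{F(X)}$. Since $G\cong F(X)/Y^{F(X)}$ via the canonical epimorphism $\phi$, this allows me to define $D\colon G\to RG$ by $D(g)=\tilde f(w)$ for any $w$ with $\phi(w)=g$, well-defined because $\tilde f$ is constant on cosets of $Y^{F(X)}$. For $g,h\in G$, choosing $u,w$ with $\phi(u)=g$, $\phi(w)=h$, one has $\phi(uw)=gh$, $\tilde\sigma(u)=\sigma(g)$, $\tilde\tau(w)=\tau(h)$, so (\ref{eq 2.1}) yields $D(gh)=\tilde f(uw)=\tilde f(u)\tilde\tau(w)+\tilde\sigma(u)\tilde f(w)=D(g)\tau(h)+\sigma(g)D(h)$. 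Extending $D$ $R$-linearly to $RG$ and expanding $\alpha\beta=\sum_{g,h}\lambda_g\mu_h\,gh$ then gives $D(\alpha\beta)=D(\alpha)\tau(\beta)+\sigma(\alpha)D(\beta)$, so $D\in\mathcal D_{(\sigma,\tau)}(RG)$ with $D|_X=f$. Uniqueness follows by the same minimal-word-length induction as in \th\ref{theorem 2.4(N)}: two such derivations agree on $X\cup X^{-1}$, and a shortest $g_0$ where they differ can be written $g_0=x_0y_0$ with $x_0,y_0$ shorter, whence the $(\sigma,\tau)$-Leibniz rule forces agreement at $g_0$, a contradiction.

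\emph{Main obstacle.} The only step demanding care is the sufficiency direction: showing $\tilde f$ actually descends through the relator subgroup, i.e.\ that it annihilates all of $Y^{F(X)}$ and is unchanged under right multiplication by it. This works precisely because $\tilde\sigma$ and $\tilde\tau$, being the homomorphisms $F(X)\to G$ extending $\sigma|_X$ and $\tau|_X$, send every relator — hence every element of the normal closure — to $1$, so the twisting factors collapse and the inductive bookkeeping is formally identical to the untwisted case; one just must be attentive that $\sigma$ sits on the left and $\tau$ on the right in every displayed identity, including the inverse formula (\ref{eq 2.2}).
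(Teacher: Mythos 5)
Your proposal is correct and follows essentially the same route as the paper's own proof: necessity via Lemma \ref{lemma 2.2}(i) applied to relators, sufficiency by showing $\tilde f$ vanishes on the normal closure $Y^{F(X)}$ (and is invariant under right multiplication by it), then defining $D$ on $G$ through the quotient map, extending $R$-linearly, and proving uniqueness by induction on word length. No gaps to report.
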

\begin{proof}
First, suppose that the given map $f:X \rightarrow RG$ can be extended to a $(\sigma, \tau)$-derivation $D$ of $RG$. So $D(x) = f(x)$ for all $x \in X$. Also, for any $x \in X$, $$0 = D(1) = D(xx^{-1}) = D(x)\tau(x^{-1}) + \sigma(x)D(x^{-1})$$ so that $$D(x^{-1}) = - \sigma(x^{-1})D(x) \tau(x^{-1}) = - \sigma(x^{-1})f(x)\tau(x^{-1}) = \tilde{f}(x^{-1}).$$ Therefore, $D(x) = \tilde{f}(x)$ for all $x \in X \cup X^{-1}$.

Now, let $y \in Y$. Then $y = \prod_{i=1}^{k}y_{i}$ for some $y_{i} \in X \cup X^{-1}$ ($1 \leq i \leq k$). By (\ref{eq 2.3}) and \th\ref{lemma 2.2} (i),
\begin{eqnarray*}
\tilde{f}(y) = \tilde{f}(\prod_{i=1}^{k}y_{i}) & = & \sum_{i=1}^{k}\left(\left(\prod_{j=1}^{i-1}\sigma(y_{j})\right)\tilde{f}(y_{i})\left(\prod_{j=i+1}^{k} \tau(y_{j})\right)\right) \\ & = & \sum_{i=1}^{k}\left(\left(\prod_{j=1}^{i-1}\sigma(y_{j})\right)D(y_{i})\left(\prod_{j=i+1}^{k} \tau(y_{j})\right)\right) \\ & = & D(\prod_{i=1}^{k}y_{i}) = D(y) = D(1) = 0.
\end{eqnarray*}
Therefore, $\tilde{f}(y) = 0$ for all $y \in Y$.

Conversely, assume that $\tilde{f}(y) = 0$ for all $y \in Y$. Note that $\tilde{\sigma}(y) = \sigma(y) = 1$ and $\tilde{\tau}(y) = \tau(y) = 1$ for all $y \in Y$. Therefore, for any $y \in Y$, $$0 = \tilde{f}(1) = \tilde{f}(yy^{-1}) = \tilde{f}(y)\tilde{\tau}(y^{-1}) + \tilde{\sigma}(y)\tilde{f}(y^{-1}) = \tilde{f}(y^{-1}).$$ Therefore, $\tilde{f}(y^{-1}) = 0$ for all $y \in Y$.

As in the proof of Theorem \ref{theorem 2.4(N)}, let $Y^{F(X)}$ be the normal closure of $Y$ in $F(X)$. Then $Y^{F(X)} = \langle w^{-1}yw \mid w \in F(X), y \in Y\rangle$ and it is the kernel of the unique onto group homomorphism $\phi: F(X) \rightarrow G$ which is identity on $X$, that is, $G \cong \frac{F(X)}{Y^{F(X)}}$. Let $\varepsilon \in \{1, -1\}$. Then using (\ref{eq 2.1}) and the fact that $\tilde{f}(y^{\varepsilon}) = 0$ for all $y \in Y$, we get that for any $w \in F(X)$, \begin{eqnarray*}\tilde{f}(w^{-1}y^{\varepsilon}w) & = & \tilde{f}(w^{-1})\tilde{\tau}(y^{\varepsilon}w) + \tilde{\sigma}(w^{-1})\tilde{f}(y^{\varepsilon}w) \\ & = & \tilde{f}(w^{-1})\tilde{\tau}(w) + \tilde{\sigma}(w^{-1})(\tilde{f}(y^{\varepsilon})\tilde{\tau}(w) + \tilde{\sigma}(y^{\varepsilon})\tilde{f}(w))\\ & = &  \tilde{f}(w^{-1})\tilde{\tau}(w) + \tilde{\sigma}(w^{-1})\tilde{f}(w) \\ & = & \tilde{f}(w^{-1}w) = \tilde{f}(1) = 0. \end{eqnarray*}

Now, let $v \in Y^{F(X)}$. Then $v = \prod_{i=1}^{m}w_{i}^{-1}y_{i}^{\varepsilon}w_{i}$ for some $m \in \mathbb{N}$, $w_{i} \in F(X)$, $y_{i} \in Y$ ($1 \leq i \leq m$). As shown above, the result holds for $m=1$. Assume that the result holds for $m=k-1$, that is, $\tilde{f}(\prod_{i=1}^{k-1}w_{i}^{-1}y_{i}^{\varepsilon}w_{i}) = 0$. Then using (\ref{eq 2.3}) and the induction hypothesis, we get that for $m=k$,
\begin{eqnarray*}
\tilde{f}(v) & = & \tilde{f}(\prod_{i=1}^{k}w_{i}^{-1}y_{i}^{\varepsilon}w_{i}) = \tilde{f}((\prod_{i=1}^{k-1}w_{i}^{-1}y_{i}^{\varepsilon}w_{i})(w_{k}^{-1}y_{k}^{\varepsilon}w_{k})) \\ & = & \tilde{f}(\prod_{i=1}^{k-1}w_{i}^{-1}y_{i}^{\varepsilon}w_{i})\tilde{\tau}(w_{k}^{-1}y_{k}^{\varepsilon}w_{k}) + \tilde{\sigma}(\prod_{i=1}^{k-1}w_{i}^{-1}y_{i}^{\varepsilon}w_{i}) \tilde{f}(w_{k}^{-1}y_{k}^{\varepsilon}w_{k}) \\ & = & 0\tilde{\tau}(w_{k}^{-1}y_{k}^{\varepsilon}w_{k}) + \tilde{\sigma}(\prod_{i=1}^{k-1}w_{i}^{-1}y_{i}^{\varepsilon}w_{i})0 = 0.
\end{eqnarray*}
Therefore, $\tilde{f}(v) = 0$ for all $v \in Y^{F(X)}$. Also, $\tilde{\sigma}(v) = \tilde{\tau}(v) = 1$ for all $v \in Y^{F(X)}$. Therefore, for any $w \in F(X)$ and $v \in Y^{F(X)}$, $$\tilde{f}(wv) = \tilde{f}(w)\tilde{\tau}(v) + \tilde{\sigma}(w)\tilde{f}(v) = \tilde{f}(w)1 + \tilde{\sigma}(w)0 = \tilde{f}(w).$$

Let $g \in G$ and $g = \prod_{i=1}^{m}x_{i}$ for some $m \in \mathbb{N}$, $x_{i} \in X \cup X^{-1}$ ($1 \leq i \leq m$). So $g$ is a word on $X$ so that $g \in F(X)$. In fact, $g$ is the reduced word in $F(X)$ with $\phi(g) = g$. If $w \in F(X)$ such that $\phi(w) = g$, then $w$ is either equivalent to the reduced word $g$ or the expression for $w$ (as a product of elements from $X \cup X^{-1}$) contains elements from $Y^{F(X)}$. In any case, $\tilde{\sigma}(w) = \sigma(g)$ and $\tilde{\tau}(w) = \tau(g)$.

Now, define $D:G \rightarrow RG$ by $$D(g) = \tilde{f}(w)$$ where for $g \in G$, $w \in F(X)$ is such that $\phi(w) = g$. Such a $w$ exists as $\phi: F(X) \rightarrow G$ is an onto group homomorphism, or as explained in the above paragraph, we can take $w=g$. The map $D$ is well-defined because as shown already, $\tilde{f}:F(X) \rightarrow RG$ is a well-defined map with $\tilde{f}(wv) = \tilde{f}(w)$ for all $v \in Y^{F(X)}$.

Now, let $g, h \in G$. Then there exists some $u, w \in F(X)$ such that $$\phi(u) = g ~~~~~~ \text{and} ~~~~~~ \phi(w) = h.$$ Then $$\tilde{\sigma}(u) = \sigma(g), ~~~~ \tilde{\tau}(w) = \tau(h) ~~~~ \text{and} ~~~~ \phi(uw) = \phi(u)\phi(w) = gh.$$ Therefore, by the definition of $D$, $$D(g) = \tilde{f}(u), ~~~~ D(h) = \tilde{f}(w) ~~~~ \text{and} ~~~~ D(gh) = \tilde{f}(uw).$$ Finally using (\ref{eq 2.1}), we get that
$$D(gh) = \tilde{f}(uw) = \tilde{f}(u)\tilde{\tau}(w) + \tilde{\sigma}(u)\tilde{f}(w) = D(g)\tau(h) + \sigma(g)D(h).$$
Therefore, $D(gh) = D(g)\tau(h) + \sigma(g)D(h)$ for all $g, h \in G$.

Now, extend $D$ $R$-linearly to the whole of $RG$. We denote the extended map also by $D$. So if $\lambda \in R$, and $\alpha = \sum_{g \in G}\lambda_{g}g$, $\beta = \sum_{h \in G}\mu_{h}h \in RG$ for some $\lambda_{g}, \mu_{h} \in R$ ($g, h \in G$), then $D(\alpha + \beta) = D(\alpha) + D(\beta)$ and $D(\lambda \alpha) = \lambda D(\alpha)$ as $D$ is an $R$-linear map. Also, \begin{eqnarray*}
D(\alpha \beta) & = & D((\sum_{g \in G}\lambda_{g}g)(\sum_{g \in G}\mu_{h}h)) \\ & = & D(\sum_{g,h \in G}\lambda_{g} \mu_{h}(gh)) = \sum_{g,h \in G}\lambda_{g} \mu_{h}D(gh) \\ & = & \sum_{g,h \in G}\lambda_{g} \mu_{h}(D(g)\tau(h) + \sigma(g)D(h)) \\ & = & \sum_{g,h \in G}\lambda_{g} \mu_{h}D(g)\tau(h) + \sum_{g,h \in G}\lambda_{g} \mu_{h}\sigma(g)D(h) \\ & = & D(\sum_{g \in G} \lambda_{g}g) \tau(\sum_{h \in G} \mu_{h}h) + \sigma(\sum_{g \in G} \lambda_{g}g) D(\sum_{h \in G} \mu_{h}h) \\ & = & D(\alpha)\tau(\beta) + \sigma(\alpha)D(\beta).
\end{eqnarray*}
Therefore, $D$ is a $(\sigma, \tau)$-derivation of $RG$ that extends $f:X \rightarrow RG$ to the whole of $RG$.

Now, it remains to show the uniqueness of $D$. If possible, suppose that $\tilde{D}: RG \rightarrow RG$ with $\tilde{D} \neq D$ is another $(\sigma, \tau)$-derivation of $RG$ that extends $f:X \rightarrow RG$. Then there exists some $g_{0} \in G$ such that $\tilde{D}(g_{0}) \neq D(g_{0})$ and $\tilde{D}(g) = D(g)$ for all $g \in G$ such that the length of $g$ is strictly less than that of $g_{0}$.

Note that $D(1) = 0 = \tilde{D}(1)$. Also, $\tilde{D}(x) = f(x) = D(x)$ for all $x \in X$. Now, for any $x \in X$, $$0 = \tilde{D}(1) = \tilde{D}(xx^{-1}) = \tilde{D}(x)\tau(x^{-1}) + \sigma(x)\tilde{D}(x^{-1}) = f(x)\tau(x^{-1}) + \sigma(x)\tilde{D}(x^{-1})$$ so that $\tilde{D}(x^{-1}) = -\sigma(x^{-1})f(x)\tau(x^{-1}) = \tilde{f}(x^{-1}) = D(x^{-1})$.

We can write $g_{0} \in G$ as $g_{0} = x_{0}y_{0}$ for some $x_{0}, y_{0} \in G$ such that the lengths of $x_{0}$ and $y_{0}$ are strictly less than that of $g_{0}$. Then in view of our assumption, $\tilde{D}(x_{0}) = D(x_{0})$ and $\tilde{D}(y_{0}) = D(y_{0})$. Therefore, \begin{eqnarray*}
\tilde{D}(g_{0}) & = & \tilde{D}(x_{0}y_{0}) \\ & = & \tilde{D}(x_{0})\tau(y_{0}) + \sigma(x_{0})\tilde{D}(y_{0}) \\ & = & D(x_{0})\tau(y_{0}) + \sigma(x_{0})D(y_{0}) \\ & = & D(x_{0}y_{0}) \\ & = & D(g_{0}).
\end{eqnarray*} But this is a contradiction to the fact that $\tilde{D}(g_{0}) \neq D(g_{0})$. Therefore, $D: RG \rightarrow RG$ is the unique $(\sigma, \tau)$-derivation that extends the given map $f:X \rightarrow RG$.

Hence proved.
\end{proof}

\begin{remark}\th\label{remark 2.5} 
The assumption that the ring $R$ is commutative is necessary in \th\ref{theorem 2.4}.

For example, let $R$ be a non-commutative ring and $a, b \in R$ such that $ab \neq ba$.

Let $X = \{x, y\}$ and $G = F(X)$ be the free group on $X$.

Define $f:X \rightarrow RG$ by $$f(x) = a ~~ \text{and} ~~ f(y) = b.$$

Define two group endomorphisms $\sigma$ and $\tau$ of $G$ by $$\sigma(x) = x, ~ \sigma(y) = y ~~ \text{and} ~~ \tau(x) = y, ~ \tau(y) = x.$$

If possible, suppose that $f$ can be extended to a $(\sigma, \tau)$-derivation $D$ of $RG$. Then \begin{eqnarray*}
D(xby) & = & D(bxy) = bD(xy) \\ & = & b(D(x)\tau(y) + \sigma(x)D(y)) \\ & = & b(f(x)\tau(y) + \sigma(x)f(y)) \\ & = & b(ax + xb) = bax + bxb \\ & = & (ba+b^{2})x
\end{eqnarray*} and 
\begin{eqnarray*}
D(x)\tau(by) + \sigma(x)D(by) & = & D(x)b\tau(y) + \sigma(x)bD(y) \\ & = & f(x)b\tau(y) + \sigma(x)bf(y) \\ & = & abx + xb^{2} \\ & = & (ab + b^{2})x.
\end{eqnarray*}

Since $ab \neq ba$, therefore, $D(xby) \neq D(x)\tau(by) + \sigma(x) D(by)$. This gives a contradiction.
\end{remark}

\subsection{Application: $\sigma$-Derivations of Commutative Group Algebras}\label{subsection 2.3}
In this subsection, we apply the results of Subsection \ref{subsection 2.2(N)} to classify the $\sigma$-derivations of commutative group algebras. In this subsection, we assume that $R$ is a commutative unital ring, $G$ is a group, and $\sigma$ is an $R$-algebra endomorphism of $RG$ that is an $R$-linear extension of a group endomorphism of $G$.

\begin{theorem}\th\label{theorem 2.6}
Let $H$ be a subgroup of $G$ such that $\sigma(H) = \{\sigma(h) \mid h \in H\}$ is a torsion central subgroup of $G$ and the order of every element of $\sigma(H)$ is invertible in $R$. Then every $\sigma$-derivation of $RG$ is an $RH$-derivation, that is, $D(RH) = \{0\}$ for all $D \in \mathcal{D}_{\sigma}(RG)$.
\end{theorem}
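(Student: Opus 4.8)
The plan is to prove the pointwise statement that an arbitrary $\sigma$-derivation $D$ of $RG$ annihilates every element $h$ of $H$; since the elements of $H$ span $RH$ over $R$ and $D$ is $R$-linear, this already yields $D(RH)=\{0\}$, and letting $D$ run over $\mathcal{D}_{\sigma}(RG)$ completes the argument. So the whole proof reduces to a single computation on group elements.

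Concretely, I would fix $D\in\mathcal{D}_{\sigma}(RG)$ and $h\in H$ and look at $\sigma(h)$. By hypothesis $\sigma(h)$ lies in $\sigma(H)$, a torsion central subgroup of $G$ whose elements have order invertible in $R$; thus $\sigma(h)$ is a central unit of $RG$ of some finite order $n$ with $n$ invertible in $R$, and $h^{n}=1$. Being central, $\sigma(h)$ commutes with $D(h)$, so the hypotheses of \th\ref{lemma 2.2}(iv) are satisfied (take $\tau=\sigma$, $\alpha=h$, $k=n$); equivalently one may invoke \th\ref{lemma 2.2}(iii) for the unit $h$ of order $n$. Either way,
$$0=D(1)=D(h^{n})=n\,(\sigma(h))^{n-1}\,D(h).$$
Since $(\sigma(h))^{n-1}$ is a unit of $RG$ and $n$ is invertible in $R$, cancelling these two factors forces $D(h)=0$.

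As $h\in H$ was arbitrary, $D$ vanishes on $H$, hence on $RH$ by $R$-linearity; and as $D$ was an arbitrary $\sigma$-derivation, $D(RH)=\{0\}$ for every $D\in\mathcal{D}_{\sigma}(RG)$, which is exactly the assertion (the phrasing ``every $\sigma$-derivation of $RG$ is an $RH$-derivation'' being, by the statement's own clarification, the same thing).

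The deduction is short, so there is no real obstacle beyond bookkeeping; what one must get right is the interplay of the three conditions placed on $\sigma(H)$ and the distinct role each one plays. Centrality of $\sigma(h)$ is precisely what collapses the twisted Leibniz sum $\sum_{i=0}^{n-1}(\sigma(h))^{i}D(h)(\sigma(h))^{n-1-i}$ coming from \th\ref{lemma 2.2}(ii)/(iii) down to the single term $n(\sigma(h))^{n-1}D(h)$; the torsion hypothesis is what supplies a finite exponent $n$ with $h^{n}=1$ to feed into that formula; and the invertibility of $n$ in $R$ is what legitimises the final cancellation. If any one of these is dropped the computation stalls, so the only care needed is to check that each is used at the right moment.
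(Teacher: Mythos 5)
Your argument is, step for step, the paper's own proof of Theorem \ref{theorem 2.6}: fix $D$ and $h\in H$, let $n=|\sigma(h)|$, use the centrality of $\sigma(h)$ to collapse the twisted Leibniz expansion of Lemma \ref{lemma 2.2} to $D(h^{n})=n(\sigma(h))^{n-1}D(h)$, cancel the unit $(\sigma(h))^{n-1}$ and the invertible scalar $n$, and finish by $R$-linearity on $RH$. In terms of approach there is nothing separating the two.

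The one step that deserves scrutiny is your assertion that $h^{n}=1$, where $n$ is the order of $\sigma(h)$. That does not follow from the stated hypotheses, which constrain only $\sigma(H)$ and not $H$ itself: with $G=\langle x\rangle$ infinite cyclic, $\sigma$ the $R$-linear extension of the trivial endomorphism of $G$, and $H=G$, every stated hypothesis holds ($\sigma(H)=\{1\}$ is torsion, central, with element order $1$ invertible), yet $x$ satisfies no relation $x^{n}=1$, and $D(x^{k})=k\alpha$ (for a fixed $\alpha\in RG$, extended $R$-linearly) is a non-zero $\sigma$-derivation; so the chain $0=D(1)=D(h^{n})$ fails at its first equality. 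Be aware, though, that the paper's proof makes exactly the same implicit move, writing $0=D(1)=D(h^{r})$ with $r=|\sigma(h)|$, so your proposal reproduces the published argument faithfully, this step included. In the settings where the theorem is actually applied (Corollaries \ref{corollary 2.7} and \ref{corollary 2.8}, where $G$ is finite abelian and $m=|h|$ is itself invertible in the field), the computation goes through verbatim with $m$ in place of $n$, which is the repair both your write-up and the paper's would need to be airtight.
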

\begin{proof}
Let $D \in \mathcal{D}_{\sigma}(RG)$ and $h \in H$. The order of $\sigma(h)$ is finite, say, $r$, and is invertible in $R$. Since $\sigma(h)$ commutes with $D(h)$, therefore, by \th\ref{lemma 2.2} (v), $$0 = D(1) = D(h^{r}) = r(\sigma(h))^{r-1}D(h).$$
Since $r = |\sigma(h)|$ is invertible in $R$ and hence invertible in $RG$, we get that $D(h) = 0$ from above. Therefore, $D(h) = 0$ for all $h \in H$.

Now, let $\alpha \in RH$. Then $\alpha = \sum_{h \in H} \lambda_{h}h$ for some $\lambda_{h} \in R$ ($h \in H$). Since $D$ is $R$-linear, so $$D(\alpha) = D(\sum_{h \in H} \lambda_{h}h) = \sum_{h \in H} \lambda_{h} D(h) = \sum_{h \in H} \lambda_{h} 0 = 0.$$ Therefore, $D(RH) = \{0\}$.
\end{proof}

\begin{corollary}\th\label{corollary 2.7}
If $G$ is a finite abelian group and $\mathbb{F}$ is a field of characteristic $0$, then $\mathbb{F}G$ has no non-zero $\sigma$-derivations.
\end{corollary}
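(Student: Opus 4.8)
The plan is to deduce Corollary \ref{corollary 2.7} directly from Theorem \ref{theorem 2.6} by choosing $H = G$. Since $G$ is a finite abelian group, every subgroup is central, and in particular $G$ itself is a torsion central subgroup of $G$. The endomorphism $\sigma$ is a group endomorphism extended $R$-linearly, so $\sigma(G)$ is a subgroup of $G$; being a subgroup of a finite abelian group, $\sigma(G)$ is itself finite (hence torsion) and central in $G$. The remaining hypothesis of Theorem \ref{theorem 2.6} to verify is that the order of every element of $\sigma(G)$ is invertible in $\mathbb{F}$. This is where the characteristic-zero assumption enters: in a field of characteristic $0$, every positive integer $n$ (viewed as $n \cdot 1_{\mathbb{F}}$) is nonzero and therefore invertible, so the order of every element of $\sigma(G) \subseteq G$ — being a positive integer dividing $|G|$ — is invertible in $\mathbb{F}$.

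With all hypotheses of Theorem \ref{theorem 2.6} satisfied for $H = G$, we conclude that every $\sigma$-derivation $D$ of $\mathbb{F}G$ satisfies $D(\mathbb{F}G) = D(\mathbb{F}H) = \{0\}$, i.e. $D = 0$. Hence $\mathbb{F}G$ has no non-zero $\sigma$-derivations, which is exactly the claim. (Equivalently, one could phrase this using Definition \ref{definition 2.8(NN)}: $\mathbb{F}G$ is $\sigma$-differentially trivial.)

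There is no real obstacle here; the corollary is an immediate specialization. The only points requiring a word of care are the two finiteness/invertibility observations: that $\sigma(G)$ is automatically a torsion central subgroup (immediate from $G$ finite abelian), and that "characteristic $0$" forces every element order to be invertible in $\mathbb{F}$ (immediate since the prime subfield $\mathbb{Q}$ sits inside $\mathbb{F}$). Both are one-line verifications, so the proof is just the application of Theorem \ref{theorem 2.6}.
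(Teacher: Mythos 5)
Your proposal is correct and follows exactly the paper's route: the paper's proof is simply to take $H = G$ in Theorem \ref{theorem 2.6}, and your additional verifications (that $\sigma(G)$ is a torsion central subgroup and that element orders are invertible in a field of characteristic $0$) are the same one-line checks implicit in that application.
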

\begin{proof}
Follows by taking $H = G$ in \th\ref{theorem 2.6}.
\end{proof}

Note that Corollary \ref{corollary 2.7} is not a criterion. That is, if $\mathbb{F}$ is any field, $G$ is any group, and $\sigma$ is any endomorphism of $RG$, which is an $R$-linear extension of a group endomorphism of $G$, then it is not necessary that $G$ is a finite abelian group or $\mathbb{F}$ is a field of characteristic $0$. Below, we give some examples to illustrate this.

Examples \ref{example 2.4.1} and \ref{example 2.4.2} illustrate the situation when $\mathbb{F}G$ has no non-zero $\sigma$-derivations and $G$ is a finite abelian group, but field $\mathbb{F}$ is not of characteristic $0$.

A subgroup $H$ of a group is said to be $p$-regular for a rational prime $p$ if every element of $H$ has an order which is not a multiple of $p$.
\begin{example}\label{example 2.4.1}
Let $p$ and $q$ be two distinct rational primes. Let $G = C_{q^{t}} = \langle x \mid x^{q^{t}} = 1 \rangle$ be a cyclic group of order $q^{t}$ (for some $t \in \mathbb{N}$) and $\mathbb{F}$ be a field of characteristic $p$. Then $G$ is a $p$-regular finite abelian group. Further, let $\sigma$ be an $\mathbb{F}$-algebra endomorphism of $\mathbb{F}G$ which is an $\mathbb{F}$-linear extension of a group endomorphism of $G$.

Then by Theorem \ref{theorem 2.6}, every $\sigma$-derivation of $\mathbb{F}G$ is an $\mathbb{F}G$-derivation, that is, $D(\mathbb{F}G) = \{0\}$ for all $D \in \mathcal{D}_{\sigma}(RG)$. In other words, $\mathbb{F}G$ has no non-zero $\sigma$-derivations.

Note that, here $\mathbb{F}G$ has no non-zero $\sigma$-derivations and $G$ is a finite abelian group, but the field $\mathbb{F}$ is not of characteristic $0$; $\mathbb{F}$ is a field of characteristic $p$ (odd rational prime).
\end{example}

The example below is a more generalized version of Example \ref{example 2.4.1}.
\begin{example}\label{example 2.4.2}
Let $p, q_{1}, ..., q_{s}$ ($s \in \mathbb{N}$) be distinct odd rational primes and $t_{1}, ..., t_{s}$ be positive integers. For each $i \in \{1, ..., s\}$, let $C_{q^{t_{i}}} = \langle x_{i} \mid x_{i}^{t_{i}} = 1 \rangle$ denote a cyclic group of order $q^{t_{i}}$. Let $G = C_{q^{t_{1}}} \times ... \times C_{q^{t_{s}}}$ be a finite abelian group of order $q^{t_{1}}...q^{t_{s}}$. Then $G$ becomes a $p$-regular group. Further, let $\mathbb{F}$ be a field of characteristic $p$ and $\sigma$ be an $\mathbb{F}$-algebra endomorphism of $\mathbb{F}G$ which is an $\mathbb{F}$-linear extension of a group endomorphism of $G$.

Then by Theorem \ref{theorem 2.6}, every $\sigma$-derivation of $\mathbb{F}G$ is an $\mathbb{F}G$-derivation, that is, $D(\mathbb{F}G) = \{0\}$ for all $D \in \mathcal{D}_{\sigma}(RG)$. In other words, $\mathbb{F}G$ has no non-zero $\sigma$-derivations.

Again, note that $\mathbb{F}G$ has no non-zero $\sigma$-derivations and $G$ is a finite abelian group, but the field $\mathbb{F}$ is not of characteristic $0$; $\mathbb{F}$ is a field of characteristic $p$ (odd rational prime).
\end{example}

Examples \ref{example 2.4.3} and \ref{example 2.4.4} below illustrate the following two situations:
\begin{enumerate}
\item $\mathbb{F}G$ has no non-zero $\sigma$-derivations and $\mathbb{F}$ is a field of characteristic $0$, but the group $G$ is not finite abelian group.
\item $\mathbb{F}G$ has no non-zero $\sigma$-derivations, but neither the field $\mathbb{F}$ is of characteristic $0$ nor the group $G$ is finite abelian.
\end{enumerate}
\begin{example}\label{example 2.4.3}
Consider the dihedral group $D_{2n} = \langle a, b \mid a^{n} = 1 = b^{2}, (ab)^{2} = 1 \rangle$ of order $2n$ ($n \geq 3$) with $n$ odd. Let $\mathbb{F}$ be a field of characteristic $0$ or an odd rational prime $p$.

The map $\sigma_{-1}:D_{2n} \rightarrow D_{2n}$ given by $\sigma_{-1}(a) = 1$ and $\sigma_{-1}(b) = 1$ is the trivial group endomorphism of $D_{2n}$. Extend $\sigma_{-1}$ $\mathbb{F}$-linearly to an $\mathbb{F}$-algebra endomorphism of $\mathbb{F}D_{2n}$.

Since $\sigma_{-1}(b) = \sigma_{-1}(ab) = 1 \in D_{2n} \cap Z(\mathbb{F} D_{2n})$, so by the note following Definition \ref{definition 4.1}, $\bar{C}(\sigma_{-1}(b)) = \bar{C}(\sigma_{-1}(ab)) = \{0\}$, where for any $\beta \in \mathbb{F}D_{2n}$, $\bar{C}(\beta) = \{\alpha \in \mathbb{F}D_{2n} \mid \alpha \beta = - \beta \alpha\}$ denotes the anti-centralizer of $\beta$ in $\mathbb{F}D_{2n}$ (see Definition \ref{definition 4.1}).

First, we determine $\mathcal{D}_{\sigma_{-1}}(\mathbb{F}D_{2n})$. For this, let $f:X = \{a, b\} \rightarrow \mathbb{F}D_{2n}$ be a map that can be extended to a $\sigma_{-1}$-derivation of $\mathbb{F}D_{2n}$. Then by Theorem \ref{theorem 2.4(N)}, this happens if and only if $$\tilde{f}(a^{n}) = 0, ~~~~ \tilde{f}(b^{2}) = 0 ~~~~ \text{and} ~~~~ \tilde{f}((ab)^{2}) = 0,$$ where $\tilde{f}:F(X) \rightarrow \mathbb{F}D_{2n}$ is the unique extension of $f$ defined in (\ref{eq 2.2(N)}) and (\ref{eq 2.3(N)}) and satisfying (\ref{eq 2.1(N)}), and $F(X)$ is the free group on $X$.

As explained in the paragraph after the proof of Lemma \ref{lemma 4.2}, $\tilde{f}(b^{2}) = 0$ if and only if $f(b) \in \bar{C}(\sigma_{-1}(b))$, and $\tilde{f}((ab)^{2}) = 0$ if and only if $\tilde{f}(ab) \in \bar{C}(\sigma_{-1}(ab))$. But since $\bar{C}(\sigma_{-1}(b)) = \bar{C}(\sigma_{-1}(ab)) = \{0\}$, therefore, $f(b) =  \tilde{f}(ab) = 0$.

Further, by (\ref{eq 4.2}), $f(a) = \tilde{f}(ab) \sigma_{-1}(b) - \sigma_{-1}(a) f(b) \sigma_{-1}(b)$. But since $f(b) = 0 = \tilde{f}(ab)$, therefore, $f(a) = 0$.

Since $f(a) = 0 = f(b)$, therefore, $\mathbb{F}D_{2n}$ has no non-zero $\sigma_{-1}$-derivations, that is, \\ $\mathcal{D}_{\sigma_{-1}}(\mathbb{F}D_{2n}) = \{0\}$. Therefore, $\mathbb{F}D_{2n}$ has no non-zero $\sigma_{-1}$-derivations.

But note that here the group $D_{2n}$ is non-abelian. Also, the field $\mathbb{F}$ can be of characteristic either $0$ or an odd rational prime $p$.
\end{example}

\begin{example}\label{example 2.4.4}
Now, consider the dihedral group $D_{2n}$ with $n$ even and a field $\mathbb{F}$ of characteristic $0$ or an odd rational prime $p$.

As stated in Section \ref{section 4}, the map $\sigma_{2}:D_{2n} \rightarrow D_{2n}$ defined by $\sigma_{2} = a^{s}$ ($s = 0, \frac{n}{2}$) and \\ $\sigma_{2}(b) = a^{t}$ ($t = 0, \frac{n}{2}$) is a group endomorphism of $D_{2n}$. Extend $\sigma_{2}$ $\mathbb{F}$-linearly to an $\mathbb{F}$-algebra endomorphism of $\mathbb{F}D_{2n}$.

Again, since $\sigma_{2}(b) = \sigma_{2}(ab) \in D_{2n} \cap Z(\mathbb{F} D_{2n})$, so by the note following Definition \ref{definition 4.1}, $\bar{C}(\sigma_{2}(b)) = \bar{C}(\sigma_{2}(ab)) = \{0\}$. Therefore, as shown in the above Example \ref{example 2.4.3}, it can be shown on the similar lines that $\mathcal{D}_{\sigma_{2}}(\mathbb{F}D_{2n}) = \{0\}$. Therefore, $\mathbb{F}D_{2n}$ has no non-zero $\sigma_{2}$-derivations.

Again, note that here the group $D_{2n}$ is non-abelian, and the field $\mathbb{F}$ can be of characteristic either $0$ or an odd rational prime $p$.
\end{example}

The following corollary is an immediate consequence of \th\ref{theorem 2.6}, and generalizes Examples \ref{example 2.4.1} and \ref{example 2.4.2}.

\begin{corollary}\th\label{corollary 2.8}
Let $G$ be a finite abelian group and $H$ be a $p$-regular subgroup of $G$, where $p$ is a rational prime. Let $\mathbb{F}$ be a field of characteristic $p$. Then every $\sigma$-derivation of $\mathbb{F}G$ is an $\mathbb{F}H$-derivation. 
\end{corollary}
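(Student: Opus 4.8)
The plan is to obtain this as a direct specialization of \th\ref{theorem 2.6} with $R = \mathbb{F}$: all that is needed is to check that the given subgroup $H$ satisfies the three hypotheses of that theorem, namely that $\sigma(H)$ is torsion, that it is central in $G$, and that the order of every element of $\sigma(H)$ is invertible in $\mathbb{F}$. Since $\sigma$ restricts to a group endomorphism of $G$, the set $\sigma(H)$ is the homomorphic image of the subgroup $H$, hence is a subgroup of $G$; because $G$ is finite this subgroup is automatically torsion, and because $G$ is abelian it is automatically central. So the only hypothesis that requires an argument is the invertibility of the orders of elements of $\sigma(H)$ in $\mathbb{F}$.

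For that step I would fix $h \in H$, let $n$ be the order of $h$, and use $p$-regularity of $H$ to conclude $p \nmid n$. Applying $\sigma$ (which is the identity on $G$ up to a group endomorphism and in particular sends $1$ to $1$) to the relation $h^{n} = 1$ gives $\sigma(h)^{n} = 1$, so the order of $\sigma(h)$ divides $n$ and in particular is not a multiple of $p$. Finally, any positive integer $m$ with $p \nmid m$ satisfies $m \cdot 1_{\mathbb{F}} \neq 0$ in a field $\mathbb{F}$ of characteristic $p$, hence $m \cdot 1_{\mathbb{F}}$ is a unit of $\mathbb{F}$; taking $m$ to be the order of $\sigma(h)$ gives exactly the required invertibility.

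Once the hypotheses are verified, \th\ref{theorem 2.6} applies verbatim and yields $D(\mathbb{F}H) = \{0\}$ for every $D \in \mathcal{D}_{\sigma}(\mathbb{F}G)$, which is precisely the statement that every $\sigma$-derivation of $\mathbb{F}G$ is an $\mathbb{F}H$-derivation. There is no genuine obstacle here, since the corollary is little more than a restatement of \th\ref{theorem 2.6} in a convenient special case; the only point needing a short justification is the passage from $p$-regularity of $H$ to invertibility in $\mathbb{F}$ of the orders of elements of $\sigma(H)$, and that rests entirely on the elementary fact that a group endomorphism cannot increase the order of an element. (As a sanity check, taking $H = G$ recovers Examples \ref{example 2.4.1} and \ref{example 2.4.2}.)
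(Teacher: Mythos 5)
Your proposal is correct and follows exactly the paper's route: the paper treats Corollary \ref{corollary 2.8} as an immediate consequence of \th\ref{theorem 2.6}, and your verification that $\sigma(H)$ is a torsion central subgroup whose element orders (being coprime to $p$, since an endomorphism cannot increase element orders) are invertible in $\mathbb{F}$ is precisely the hypothesis-check the paper leaves implicit.
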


By the Fundamental Theorem of Finite Abelian Groups, every finite Abelian group $G$ is isomorphic to a direct product of a $p$-regular group and a $p$-group, where $p$ is a rational prime, that is, $G \cong H \times K$, where $H$ is a $p$-regular group and $K$ is a $p$-group. The group $K$ then has the presentation $K = \langle X \mid Y\rangle$, where $X = \{x_{1}, ..., x_{r}\}$ is a set of generators of $K$ for some $r \in \mathbb{N}$ and $Y = \{x_{i}^{p^{r_{i}}}, x_{i}^{-1}x_{j}^{-1}x_{i}x_{j},  \hspace{0.1cm} \text{for some} \hspace{0.1cm} r_{i} \in \mathbb{N}, \hspace{0.1cm} i \in \{1, ..., r\}\}$. Now, we have the following theorem, which classifies all $\sigma$-derivations of $\mathbb{F}G$, where $\mathbb{F}$ is a field of characteristic $p$ and $G$ is a finite abelian group in the above form.

\begin{theorem}\th\label{theorem 2.9}
The dimension of $\mathcal{D}_{\sigma}(\mathbb{F}G)$ is $r|G|$ and a basis is $$\mathcal{B} = \{gD_{i} \mid g \in G, 1 \leq i \leq r\},$$ where for each $i \in \{1, ..., r\}$, $D_{i}:\mathbb{F}G \rightarrow \mathbb{F}G$ is a $\sigma$-derivation of $\mathbb{F}G$ defined by $D_{i}(x_{j}) = \delta_{ij}$, the Kronecker delta.
\end{theorem}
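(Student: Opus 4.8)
The plan is to deduce the statement from the extension criterion of Theorem~\ref{theorem 2.4(N)} together with Corollary~\ref{corollary 2.8}. Recall from the paragraph preceding the statement that $G \cong H \times K$ with $H$ a $p$-regular finite abelian subgroup and $K = \langle X \mid Y \rangle$ a finite abelian $p$-group, $X = \{x_{1}, \dots, x_{r}\}$. Fix a generating set $X_{H}$ of $H$ and a set $Y_{H}$ of relators of $H$, so that $G$ admits the presentation $\langle X_{H} \cup X \mid Y_{H} \cup Y \cup C\rangle$, where $C$ is the set of commutators $z^{-1}x^{-1}zx$ with $z \in X_{H}$ and $x \in X$. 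By Corollary~\ref{corollary 2.8}, every $D \in \mathcal{D}_{\sigma}(\mathbb{F}G)$ is an $\mathbb{F}H$-derivation; in particular $D(z) = 0$ for all $z \in X_{H}$. Since a $\sigma$-derivation is determined by its values on a generating set (the uniqueness assertion of Theorem~\ref{theorem 2.4(N)}), the $\mathbb{F}$-linear map $D \mapsto (D(x_{1}), \dots, D(x_{r}))$ from $\mathcal{D}_{\sigma}(\mathbb{F}G)$ to $(\mathbb{F}G)^{r}$ is injective, whence $\dim_{\mathbb{F}} \mathcal{D}_{\sigma}(\mathbb{F}G) \le r|G|$.

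Next I would prove that this linear map is onto: for \emph{any} $\beta_{1}, \dots, \beta_{r} \in \mathbb{F}G$ there is a $\sigma$-derivation $D$ of $\mathbb{F}G$ with $D(x_{i}) = \beta_{i}$ and $D|_{\mathbb{F}H} = 0$. Define $f$ on $X_{H} \cup X$ by $f(z) = 0$ for $z \in X_{H}$ and $f(x_{i}) = \beta_{i}$, and let $\tilde{f}$ be the map provided by Lemma~\ref{lemma 2.3(N)}. By Theorem~\ref{theorem 2.4(N)} it suffices to verify $\tilde{f}(y) = 0$ for every $y \in Y_{H} \cup Y \cup C$. If $y \in Y_{H}$, then $y$ is a word in $X_{H} \cup X_{H}^{-1}$, and by \eqref{eq 2.2(N)} every factor of the form $\tilde{f}(x^{\pm 1})$ occurring in the expansion \eqref{eq 2.3(N)} of $\tilde{f}(y)$ is zero, so $\tilde{f}(y) = 0$. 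For a relator $x_{i}^{p^{r_{i}}} \in Y$, arguing as in Lemma~\ref{lemma 2.2}(ii) and using that $\mathbb{F}G$ is commutative (because $G$ is abelian) gives $\tilde{f}(x_{i}^{p^{r_{i}}}) = p^{r_{i}}\,\sigma(x_{i})^{p^{r_{i}}-1}\beta_{i}$, which vanishes since $\mathrm{char}\,\mathbb{F} = p$. Finally, for a commutator relator $u^{-1}v^{-1}uv$ (with $u, v \in X_{H} \cup X$), expanding $\tilde{f}(u^{-1}v^{-1}uv)$ by \eqref{eq 2.3(N)}, computing $\tilde{f}$ on the inverse letters via \eqref{eq 2.2(N)}, and using the commutativity of $\mathbb{F}G$ to collapse $\sigma$ of the conjugated generators, the four resulting terms cancel in two pairs (terms carrying $f$ of an $X_{H}$-generator being already zero). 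Hence $\tilde{f}$ vanishes on all relators, $f$ extends to a $\sigma$-derivation, and $\dim_{\mathbb{F}} \mathcal{D}_{\sigma}(\mathbb{F}G) = r|G|$.

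It then remains to check that $\mathcal{B}$ is a basis. Taking $\beta_{j} = \delta_{ij}$ above shows that each $D_{i}$ exists as the unique $\sigma$-derivation with $D_{i}(x_{j}) = \delta_{ij}$ and $D_{i}|_{\mathbb{F}H} = 0$; since $\mathbb{F}G$ is commutative, the $\mathbb{F}G$-module action carries $D_{i}$ to the $\sigma$-derivation $g D_{i}$, so $\mathcal{B} \subseteq \mathcal{D}_{\sigma}(\mathbb{F}G)$. Given $D \in \mathcal{D}_{\sigma}(\mathbb{F}G)$, write $D(x_{i}) = \sum_{g \in G} c_{g,i}\, g$ with $c_{g,i} \in \mathbb{F}$; then $D$ and $\sum_{i,g} c_{g,i}(g D_{i})$ agree on every element of $X_{H} \cup X$ (both vanish on $X_{H}$) and therefore coincide by uniqueness, so $\mathcal{B}$ spans $\mathcal{D}_{\sigma}(\mathbb{F}G)$. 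Applying a relation $\sum_{i,g} c_{g,i}(g D_{i}) = 0$ to $x_{j}$ yields $\sum_{g \in G} c_{g,j}\, g = 0$ in $\mathbb{F}G$, forcing all $c_{g,j} = 0$; hence $\mathcal{B}$ is $\mathbb{F}$-linearly independent, and since the pair $(g, i)$ is recovered from $g D_{i}$ by evaluation at $x_{i}$, we get $|\mathcal{B}| = r|G|$. Thus $\mathcal{B}$ is an $\mathbb{F}$-basis of $\mathcal{D}_{\sigma}(\mathbb{F}G)$ and $\dim_{\mathbb{F}} \mathcal{D}_{\sigma}(\mathbb{F}G) = r|G|$.

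The only step carrying real content is the relator verification in the second paragraph: the $p$-power relators die automatically from $\mathrm{char}\,\mathbb{F} = p$, independently of the chosen $\beta_{i}$, whereas the commutator relators — both those within $K$ and those between $X_{H}$ and $X$ — need the commutativity of $\mathbb{F}G$ for the cross terms to cancel. Everything else is an application of Theorem~\ref{theorem 2.4(N)} and Corollary~\ref{corollary 2.8} plus linear bookkeeping.
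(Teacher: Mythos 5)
Your proposal is correct and follows essentially the same route as the paper: Corollary \ref{corollary 2.8} to force vanishing on the $H$-part, the relator criterion of Theorem \ref{theorem 2.4(N)} with the $p$-power relators dying because $\text{char}(\mathbb{F}) = p$ and the commutator relators cancelling in pairs by commutativity of $\mathbb{F}G$, and the same linear bookkeeping with the derivations $gD_{i}$ to get spanning and independence. The one genuine difference is that you apply Theorem \ref{theorem 2.4(N)} to a full presentation $\langle X_{H} \cup X \mid Y_{H} \cup Y \cup C\rangle$ of $G$ and for arbitrary prescribed values $\beta_{1}, \dots, \beta_{r}$, whereas the paper checks only the relators of the $p$-part $K$ with the specific values $\delta_{ij}$ and leaves implicit how the resulting map becomes a $\sigma$-derivation of all of $\mathbb{F}G$ (the relators $Y_{H}$ and the commutators between the generators of $H$ and those of $K$ are never verified there). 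Your version closes that small gap, and by establishing surjectivity of $D \mapsto (D(x_{1}), \dots, D(x_{r}))$ onto $(\mathbb{F}G)^{r}$ it yields the dimension count $r|G|$ directly, at the cost only of the extra routine relator checks you describe.
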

\begin{proof}
By \th\ref{corollary 2.8}, every $\sigma$-derivation of $\mathbb{F}G$ is an $\mathbb{F}H$-derivation.

For each $i \in \{1, ..., r\}$, define $f_{i}:X \rightarrow \mathbb{F}G$ by $$f_{i}(x_{j}) = \delta_{ij}, ~~ \text{that is}, ~~ f_{i}(x_{j}) = \begin{cases}
1 & \text{if $j = i$} \\
0 & \text{if $j \neq i$}
\end{cases}.$$ Then by \th\ref{lemma 2.3(N)}, each $f_{i}$ can be extended uniquely to a map $\tilde{f}_{i}:F(X) \rightarrow \mathbb{F}G$ that satisfies (\ref{eq 2.1(N)}). Let $i, k, l \in \{1, ..., r\}$. Then by (\ref{eq 2.3(N)}), $$\tilde{f}_{i}(x_{k}^{p^{r_{k}}}) = \sum_{t=1}^{p^{r_{k}}}\left(\left(\prod_{j=1}^{t-1}\sigma(x_{k})\right)\tilde{f}_{i}(x_{k})\left(\prod_{j=t+1}^{p^{r_{k}}}\sigma(x_{k})\right)\right) = p^{r_{k}}((\sigma(x_{k}))^{p^{r_{k}}-1}f_{i}(x_{k}) = 0,$$ and
\begin{equation*}
\begin{aligned}
\tilde{f}_{i}(x_{k}^{-1}x_{l}^{-1}x_{k}x_{l}) & = \tilde{f}_{i}(x_{k}^{-1})\tilde{\sigma}(x_{l}^{-1}x_{k}x_{l}) + \tilde{\sigma}(x_{k}^{-1})\tilde{f}_{i}(x_{l}^{-1}x_{k}x_{l})
\\ & = \tilde{f}_{i}(x_{k}^{-1})\sigma(x_{l}^{-1}x_{k}x_{l}) + \sigma(x_{k}^{-1})(\tilde{f}_{i}(x_{l}^{-1})\tilde{\sigma}(x_{k}x_{l}) + \tilde{\sigma}(x_{l}^{-1})\tilde{f}_{i}(x_{k}x_{l}))
\\ & = -\sigma(x_{k}^{-1})f_{i}(x_{k})\sigma(x_{k}^{-1})\sigma(x_{l}^{-1}x_{k}x_{l}) + \sigma(x_{k}^{-1})\tilde{f}_{i}(x_{l}^{-1})\sigma(x_{k}x_{l}) \\ &\quad + \sigma(x_{k}^{-1})\sigma(x_{l}^{-1})\tilde{f}_{i}(x_{k}x_{l})
\\ & = -\sigma(x_{k}^{-1})f_{i}(x_{k}) - \sigma(x_{l}^{-1})f_{i}(x_{l})\sigma(x_{l}^{-1}) \sigma(x_{l}) + \sigma(x_{k}^{-1})\sigma(x_{l}^{-1})(\tilde{f}_{i}(x_{k})\tilde{\sigma}(x_{l}) \\ &\quad + \tilde{\sigma}(x_{k})\tilde{f}_{i}(x_{l}))
\\ & = -\sigma(x_{k}^{-1})f_{i}(x_{k}) - \sigma(x_{l}^{-1})f_{i}(x_{l}) + \sigma(x_{k}^{-1})f_{i}(x_{k}) + \sigma(x_{l}^{-1})f_{i}(x_{l}) = 0.
\end{aligned}
\end{equation*}
So we have proved that $\tilde{f}_{i}(y) = 0$ for all $y \in Y$ and $i \in \{1, ..., r\}$. Therefore, by \th\ref{theorem 2.4(N)}, for each $i \in \{1, ..., r\}$, the map $f_{i}:X \rightarrow \mathbb{F}G$ can be extended uniquely to a $\sigma$-derivation $D_{i}$ of $\mathbb{F}G$.

Put $\mathcal{B} = \{gD_{i} \mid g \in G, 1 \leq i \leq r\}$. We prove that  $\mathcal{B}$ is a basis of the $\mathbb{F}$-vector space $\mathcal{D}_{\sigma}(\mathbb{F}G)$.

Since for each $i \in \{1, ..., r\}$, $D_{i}$ is an extension of $f_{i}$, therefore, $D_{i}(x_{j}) = f_{i}(x_{j}) = \delta_{ij}$ for all $1 \leq i, j \leq r$.

Now, let $D \in \mathcal{D}_{\sigma}(\mathbb{F}G)$ and $i, j \in \{1, ..., r\}$. Let $D(x_{j}) = \sum_{g \in G}\lambda_{jg} g$ for some $\lambda_{jg} \in \mathbb{F}$ ($g \in G$). Then $$D(x_{j}) = \sum_{g \in G}\lambda_{jg} g1 = \sum_{g \in G}\lambda_{jg} gD_{j}(x_{j}) = \left(\sum_{g \in G}\lambda_{jg} (gD_{j})\right)(x_{j}).$$

Further, $$0 = D_{i}(1) = D_{i}(x_{j}x_{j}^{-1}) = D_{i}(x_{j}) \sigma(x_{j}^{-1}) + \sigma(x_{j}) D_{i}(x_{j}^{-1})$$ so that $$D_{i}(x_{j}^{-1}) = - \sigma(x_{j}^{-1})D_{i}(x_{j})\sigma(x_{j}^{-1}) = - \sigma(x_{j}^{-2}) D_{i}(x_{j}) = - \sigma(x_{j}^{-2}) \delta_{ij}.$$

Similarly, $D(x_{j}^{-1}) = - \sigma(x_{j}^{-2}) D(x_{j})$ so that $$D(x_{j}^{-1}) = \sum_{g \in G}\lambda_{jg} g (- \sigma(x_{j}^{-2})) = \sum_{g \in G}\lambda_{jg} g D_{j}(x_{j}^{-1}) = (\sum_{g \in G}\lambda_{jg} (g D_{j}))(x_{j}^{-1}).$$

Now, since $X$ is a generating set of the group $K$, therefore, for any $g_{0} \in G$, as $D$ is an $\mathbb{F}H$-derivation, $D(g_{0}) = D(x_{i_{1}}^{\varepsilon_{1}} x_{i_{2}}^{\varepsilon_{2}} ... x_{i_{s}}^{\varepsilon_{s}})$ for some $s \in \mathbb{N}$, $i_{l} \in \{1, ..., r\}$, $x_{i_{l}} \in X$ and $\varepsilon_{l} \in \{-1, 1\}$ ($1 \leq l \leq s$). So now, it immediately follows that $$D = \sum_{i=1}^{r} \sum_{g \in G} \lambda_{ig} (gD_{i}).$$ Therefore, the set $\mathcal{B}$ spans $\mathcal{D}_{\sigma}(\mathbb{F}G)$ over $\mathbb{F}$.

Now, assume that $$\sum_{i=1}^{r} \sum_{g \in G}c_{ig}(gD_{i}) = 0$$ for some $c_{ig} \in \mathbb{F}$ ($1 \leq i \leq r$ and $g \in G$). Then $$\sum_{i=1}^{r} \sum_{g \in G}c_{ig}(gD_{i})(x_{j}) = 0$$ for all $1 \leq j \leq r$. Then using the fact that $D_{i}(x_{j}) = \delta_{ij}$ ($1 \leq i, j \leq r$), we get that $$\sum_{g \in G}c_{jg}g = 0$$ for all $1 \leq j \leq r$. This and the fact that $G$ is an $\mathbb{F}$-linearly independent subset of $\mathbb{F}G$ together imply that for all $1 \leq j \leq r$, $c_{jg} = 0$ for all $g \in G$.
\end{proof}

Finally, we compare our results in this section with some of the earlier known results on twisted derivations of commutative algebras. Before this, there have been several attempts to describe the $R$-module of ordinary or twisted derivations of an $R$-algebra $\mathcal{A}$ ($R$ a ring). For example, in {\cite[Subsection 3.1]{Creedon2019}}, the authors studied ordinary derivations of commutative group algebras. Subsection \ref{subsection 2.3} of our article generalizes the results of {\cite[Subsection 3.1]{Creedon2019}} to $\sigma$-derivations with a more elaborate and complete justification. In \cite{Chaudhuri}, the author studied $(\sigma, \tau)$-derivations of commutative algebras over a commutative unital ring. In {\cite[Theorem 3.1]{Chaudhuri}}, the author studied the universal mapping properties of $(\sigma, \tau)$-derivations of commutative algebras over a commutative ring with unity. The author also studied the $(\sigma, \tau)$-derivations of the ring of algebraic integers, $O_{K} = \mathbb{Z}[\sqrt{d}]$ ($d$ a square-free integer) of a quadratic number field $K$ by considering $O_{K}$ as a (commutative) $\mathbb{Z}$-algebra. More precisely, in {\cite[Theorem 4.2]{Chaudhuri}}, the author showed that if $\sigma, \tau$ are two different ring endomorphisms of $O_{K}$, then any $\mathbb{Z}$-linear map $D:O_{K} \rightarrow O_{K}$ with $D(1) = 0$ is a $(\sigma, \tau)$-derivation of $O_{K}$, and also gave a sufficient condition under which a $(\sigma, \tau)$-derivation of $O_{K}$ is inner. These rings $O_{K}$ provide several examples of non-UFDs.  In \cite{Spiegel1994}, the author described the derivations of $\mathbb{Z}G$ for any abelian group $G$. In {\cite[Theorem 4.6]{Chaudhuri2021}}, the author gives an element in $RG$ (a commutative group algebra of an abelian group $G$ over a commutative unital ring $R$ with certain conditions) that makes a $(\sigma, \tau)$-derivation of $RG$ inner under the condition that there exists some $b \in RG$ with $(\tau - \sigma)(b)$ invertible in $RG$, where $\sigma, \tau$ are $R$-algebra endomorphisms of $RG$. In \cite{richard2008quasi}, the authors proved that a certain bracket on the $\sigma$-derivations of a commutative algebra preserves inner derivations (see Proposition 2.3.1), and based on this obtained structural results providing new insights into $\sigma$-derivations on Laurent polynomials in one variable, $\mathbb{C}[t, t^{-1}]$ which is a UFD. In \cite{Hartwig2006}, the authors developed an approach to deformations of the Witt and Virasoro algebras based on twisted derivations. The authors proved that if $\mathcal{A}$ (an algebra over $\mathbb{C}$) is a unique factorization domain (UFD), then the $\mathcal{A}$-module $\mathcal{D}_{(\sigma, id_{\mathcal{A}})}(\mathcal{A})$ of all $(\sigma, id_{\mathcal{A}})$-derivations of $\mathcal{A}$ is a free $\mathcal{A}$-module of rank one. Here $\sigma$ and $\tau$ are two different algebra endomorphisms of $\mathcal{A}$ and the generator is a $(\sigma, \tau)$-derivation given by $\Delta = \frac{\tau - \sigma}{g}$, where $g = \text{gcd}((\tau - \sigma)(\mathcal{A}))$ (see Theorem 4). The authors also defined generalized Witt algebra using a twisted derivation. In \cite{richard2008quasi}, the authors further noted that a $(\sigma, id_{\mathcal{A}})$-derivation $a \Delta$ is inner if and only if $g$ divides $a$. More precisely, the $\mathcal{A}$-submodule of all inner $(\sigma, id_{\mathcal{A}})$-derivations is of rank one with generator $g \mathcal{A} \Delta$ (see Proposition 2.4.1). Then the authors obtained some more deep and precise results for the algebra $\mathcal{A} = \mathbb{C}[t, t^{-1}]$, which is also a UFD. Note that, in this article, we have considered our set of $(\sigma, \tau)$-derivations of the group ring $RG$ ($R$ a commutative unital ring, $G$ a group) as an $R$-module rather than an $RG$-module.

There are several papers, especially by A. A. Arutyunov (as mentioned in the Introduction Section \ref{section 1}), in which the description of the derivations of group algebras is given. In \cite{AleksandrAlekseev2020}, the authors considered the $(\sigma, \tau)$ and $(\sigma, id)$-derivations of the group algebra $\mathbb{C}G$ of a discrete countable group $G$, where $\sigma, \tau$ are $\mathbb{C}$-algebra endomorphisms of $\mathbb{C}G$ which are $\mathbb{C}$-linear extensions of the group endomorphisms of $G$. In {\cite[Theorem 4]{AleksandrAlekseev2020}}, the authors proved a decomposition theorem expressing the $\mathbb{C}$-vector space of all $(\sigma, \tau)$-derivations of $\mathbb{C}G$ of a finitely generated $(\sigma, \tau)$-FC group, where a $(\sigma, \tau)$-FC group is a group in which every $(\sigma, \tau)$-conjugacy class is of finite size.

In \cite{Arutyunov2021}, the authors studied the ordinary derivations of finite and FC groups using topological and character techniques. They obtained decomposition theorems for derivations of the group ring $AG$ of a finite group or FC group (a group in which every conjugacy class is of finite size) $G$ over a commutative unital ring $A$ (see {\cite[Theorems 3.2, 4.1]{Arutyunov2021}}). More precisely, the authors proved that for a finitely generated FC-group $G$, the $A$-module of all derivations of $AG$ is isomorphic to the direct sum of its $A$-submodule of all inner derivations of $AG$ and $\bigoplus_{[u] \in G^{G}} \text{Hom}_{Ab}(Z(u),A),$ where $\text{Hom}_{Ab}(Z(u),A)$ is the set of additive homomorphisms from the centralizer $Z(u)$ of a fixed element $u \in G$ to the ring $A$ and $G^{G}$ is the set of all possible distinct conjugacy classes $[u]$ ($u \in G$) in $G$ (see {\cite[Theorems 3.2]{Arutyunov2021}}). Furthermore, if $G$ is a finite group, then the $A$-module of all outer derivations of $AG$ is isomorphic to $\bigoplus_{[u] \in G^{G}} \text{Hom}_{Ab}(Z(u),A)$ (see {\cite[Theorems 4.1]{Arutyunov2021}}).

In \cite{A.A.Arutyunov2020} also, a description of derivations is given in terms of characters of the groupoid of the adjoint action of the group. A method of describing the space of all outer derivations of the complex group algebra $\mathbb{C}G$ of a finitely presentable discrete group $G$ is given. In \cite{Arutyunov2023}, the author studied derivations of a complex group algebra $\mathbb{C}G$ of a finitely generated group $G$. The author constructed the ideals of inner and quasi-inner derivations and established a connection between derivations and characters on the groupoid of the adjoint action (see Proposition 2.11 and Corollary 3.2). The author described the space of outer and quasi-outer derivations of $\mathbb{C}G$ using the methods of combinatorial group theory, in particular, the number of ends of the group $G$ and the number of ends of the connected components of a conjugacy diagram (see, for example, Theorem 3.6, Corollary 3.7, Theorem 4.2, Corollary 4.3, Corollary 4.5, Corollary 4.7, Proposition 5.1). In \cite{Arutyunov2020a}, the authors described the space of all inner and outer derivations of a group algebra of a finitely presented discrete group in terms of the character spaces of the 2-groupoid of the adjoint action of the group. In \cite{Arutyunov2020b}, the author established a connection between derivations of complex group algebras and the theory that studies the ends of topological spaces. In {\cite[Theorems 5 and 6]{Arutyunov2020b}}, the author described the space of outer and quasi-outer derivations of a complex group algebra.

In \cite{Arutyunov2020}, the author studied derivation spaces in the group algebra $\mathbb{C}G$ of a generally infinite non-commutative discrete group $G$ in terms of characters on a groupoid associated with the group. In {\cite[Theorem 2]{Arutyunov2020}}, the author constructed a subalgebra of non-inner derivations of $\mathbb{C}G$, which can be embedded in the algebra of all outer derivations. In {\cite[Theorem 3]{Arutyunov2020}}, the author obtained the necessary conditions under which a character defines a derivation. In {\cite[Proposition 8]{Arutyunov2020}}, the author described the space of all derivations of a complex group algebra of a free abelian group of finite rank. In {\cite[Theorem 4 and Theorem 5]{Arutyunov2020}}, the author gave an explicit description of the space of all derivations and outer derivations of the complex group algebra of a rank two nilpotent group.
In \cite{arutyunov2019smooth}, the authors described the space of all outer derivations of the group algebra $\mathbb{C}G$ of a finitely presented discrete group in terms of the Cayley complex of the groupoid of the adjoint action of the group (see Theorem 3, Theorem 4, Corollary 1, Corollary 2). They showed that the space of outer derivations is isomorphic to the one-dimensional compactly supported group of the Cayley complex over $\mathbb{C}$. In \cite{mishchenko2020description}, the author studied the derivations of a complex group algebra $\mathbb{C}G$ of a finitely generated discrete group. In {\cite[Section 3]{mishchenko2020description}}, the author described the derivations of $\mathbb{C}G$ using characters on the groupoid of the adjoint action of the group (see Theorems 2 and 3). In {\cite[Section 4]{mishchenko2020description}}, the author described the space of outer derivations by establishing the isomorphism of this space with the one-dimensional cohomology of the Cayley complex of the groupoid (see Corollary 1). In {\cite[Section 5]{arutyunov2019smooth}} and {\cite[Section 5]{mishchenko2020description}}, the author described the space of ordinary derivations of a complex group algebra of the additive group $\mathbb{Z}$, a finitely generated free abelian group and a finitely generated free group.

\section{Inner $(\sigma, \tau)$-Derivations of Group Rings}\label{section 3}
Let $G$ be a group and $R$ be a commutative unital ring. Unless stated otherwise, in this section, we always assume that $\sigma$ and $\tau$ are any two unital $R$-algebra endomorphisms of the group algebra $RG$, which are $R$-linear extensions of the group endomorphisms of $G$. This section classifies all inner $(\sigma, \tau)$-derivations of $RG$ by giving the rank and a basis of the $R$-module of all inner $(\sigma, \tau)$-derivations of $RG$. Also, we prove that every $(\sigma, \tau)$-derivation of $RG$ is an inner $(\sigma, \tau)$-derivation if the order of $G$ is invertible in $R$, where in this result, $\sigma$ and $\tau$ are any two unital $R$-algebra endomorphisms of $RG$ and need not necessarily be group endomorphisms of $G$. Consequently, we explicitly classify all $(\sigma, \tau)$-derivations of $RG$ when $G$ is a finite group whose order is invertible in $R$. To prove our results, we need the concept of doubly-twisted conjugacy classes introduced in Subsection \ref{subsection 3.1}.

\subsection{Doubly-Twisted Conjugacy Classes}\label{subsection 3.1}
\begin{definition}\th\label{definition 3.1}
For $x, y \in G$, $x$ is said to be $(\sigma, \tau)$-conjugate to $y$ in $G$ if $$y = \sigma(g)x(\tau(g))^{-1}$$ for some $g \in G$. The set of all $(\sigma, \tau)$-conjugates to $x$ is denoted by $$x_{(\sigma, \tau)}^{G} = \{\sigma(g)x (\tau(g))^{-1} \mid g \in G\}$$ and is called the $(\sigma, \tau)$-conjugacy class of $x$ in $G$.
\end{definition}

In particular, if $\tau = \sigma$, then we simply refer the term $(\sigma, \tau)$-conjugation as $\sigma$-conjugation. Note that $(\sigma, \tau)$-conjugacy is an equivalence relation. Below, we state some results that are analogous to those in classic group theory.

\begin{lemma}\th\label{lemma 3.2}
Any two $(\sigma, \tau)$-conjugacy classes of $G$ are either equal or disjoint, that is, if $x, y \in G$, then either $x_{(\sigma, \tau)}^{G} = y_{(\sigma, \tau)}^{G}$ or $x_{(\sigma, \tau)}^{G} \cap y_{(\sigma, \tau)}^{G} = \emptyset$.
\end{lemma}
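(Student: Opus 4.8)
The plan is to exploit the fact, noted just before the statement, that $(\sigma,\tau)$-conjugacy is an equivalence relation on $G$; the lemma is then the familiar assertion that distinct equivalence classes are disjoint, and for completeness I would spell out a direct argument. Concretely, suppose $x, y \in G$ with $x_{(\sigma,\tau)}^{G} \cap y_{(\sigma,\tau)}^{G} \neq \emptyset$, and pick $z$ in the intersection. Then there exist $g, h \in G$ with $z = \sigma(g) x (\tau(g))^{-1}$ and $z = \sigma(h) y (\tau(h))^{-1}$.

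The next step is to solve for $x$ in terms of $y$. Since $\sigma$ and $\tau$ are $R$-linear extensions of group endomorphisms of $G$, their restrictions to $G$ are group homomorphisms, so $\sigma(g)^{-1} = \sigma(g^{-1})$, $\tau(g)^{-1} = \tau(g^{-1})$, and both are multiplicative on $G$. Hence from $\sigma(g) x (\tau(g))^{-1} = \sigma(h) y (\tau(h))^{-1}$ one obtains
$$x = \sigma(g^{-1}) \sigma(h)\, y\, (\tau(h))^{-1} \tau(g) = \sigma(g^{-1}h)\, y\, (\tau(g^{-1}h))^{-1},$$
so in particular $x \in y_{(\sigma,\tau)}^{G}$.

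Finally I would show the two classes coincide. Take an arbitrary $w \in x_{(\sigma,\tau)}^{G}$, say $w = \sigma(k) x (\tau(k))^{-1}$ for some $k \in G$. Substituting the expression for $x$ obtained above and again using multiplicativity of $\sigma$ and $\tau$ on $G$,
$$w = \sigma(k)\sigma(g^{-1}h)\, y\, (\tau(g^{-1}h))^{-1}(\tau(k))^{-1} = \sigma(kg^{-1}h)\, y\, (\tau(kg^{-1}h))^{-1} \in y_{(\sigma,\tau)}^{G},$$
so $x_{(\sigma,\tau)}^{G} \subseteq y_{(\sigma,\tau)}^{G}$. Since the hypothesis $x_{(\sigma,\tau)}^{G} \cap y_{(\sigma,\tau)}^{G} \neq \emptyset$ is symmetric in $x$ and $y$, the reverse inclusion follows by interchanging their roles, giving $x_{(\sigma,\tau)}^{G} = y_{(\sigma,\tau)}^{G}$. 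Thus, given any two $(\sigma,\tau)$-conjugacy classes, either they are disjoint or they are equal. There is no substantive obstacle here; the only point needing a moment's care is the repeated use of the hypothesis that $\sigma$ and $\tau$ arise from group endomorphisms of $G$, which is exactly what licenses $\sigma(g^{-1}) = \sigma(g)^{-1}$ (similarly for $\tau$) and the recombination of $g, h, k$ into single arguments of $\sigma$ and $\tau$.
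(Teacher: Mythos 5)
Your proof is correct, and it follows exactly the route the paper intends: the paper states this lemma without proof, relying on the preceding remark that $(\sigma,\tau)$-conjugacy is an equivalence relation, and your argument simply spells out that standard verification, with the key manipulations (e.g.\ $\sigma(g)^{-1}\sigma(h)=\sigma(g^{-1}h)$ and the analogous identities for $\tau$) correctly justified by the hypothesis that $\sigma$ and $\tau$ restrict to group endomorphisms of $G$.
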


Note that if $1$ is the identity element in $G$, then for any $x \in G$, $x = \sigma(1)x(\tau(1))^{-1}$ so that $x \in x_{(\sigma, \tau)}^{G}$. Therefore, $G$ is a union of its all $(\sigma, \tau)$-conjugacy classes.

\begin{corollary}\th\label{corollary 3.3}
$G$ is union of its all $(\sigma, \tau)$-conjugacy classes and distinct $(\sigma, \tau)$-conjugacy classes are pairwise disjoint.
\end{corollary}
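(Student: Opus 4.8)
The plan is to derive both assertions directly: the covering statement from the fact that $\sigma$ and $\tau$ are unital endomorphisms, and the disjointness statement verbatim from \th\ref{lemma 3.2}.

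First I would establish that $G = \bigcup_{x \in G} x_{(\sigma, \tau)}^{G}$. Since every $(\sigma, \tau)$-conjugacy class is by \th\ref{definition 3.1} a subset of $G$, the inclusion $\bigcup_{x \in G} x_{(\sigma, \tau)}^{G} \subseteq G$ is automatic. For the reverse inclusion, fix $x \in G$. Under the standing assumptions of this section, $\sigma$ and $\tau$ are unital $R$-algebra endomorphisms of $RG$ that restrict to group endomorphisms of $G$, so in particular $\sigma(1) = 1 = \tau(1)$, and hence $\sigma(1)\, x\, (\tau(1))^{-1} = x$. By \th\ref{definition 3.1} this exhibits $x$ as a member of $x_{(\sigma, \tau)}^{G}$, so $x \in \bigcup_{x \in G} x_{(\sigma, \tau)}^{G}$. (This is exactly the observation already recorded immediately before the statement.)

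Next I would obtain the disjointness claim as an immediate consequence of \th\ref{lemma 3.2}: for any $x, y \in G$ either $x_{(\sigma, \tau)}^{G} = y_{(\sigma, \tau)}^{G}$ or $x_{(\sigma, \tau)}^{G} \cap y_{(\sigma, \tau)}^{G} = \emptyset$. Taking the contrapositive of the first alternative, if $x_{(\sigma, \tau)}^{G} \neq y_{(\sigma, \tau)}^{G}$ then $x_{(\sigma, \tau)}^{G} \cap y_{(\sigma, \tau)}^{G} = \emptyset$; that is, distinct $(\sigma, \tau)$-conjugacy classes are pairwise disjoint. Combining the two paragraphs yields the corollary.

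There is essentially no obstacle here: the statement simply packages \th\ref{lemma 3.2} together with the one-line remark that every $x$ lies in its own $(\sigma, \tau)$-conjugacy class. The only point to be careful about is that the verification $x \in x_{(\sigma, \tau)}^{G}$ uses the unitality of $\sigma$ and $\tau$ (so that $\sigma(1) = \tau(1) = 1$), which is part of the running hypotheses of Section \ref{section 3}.
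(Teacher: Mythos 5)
Your proposal is correct and matches the paper's own argument: the paper proves the covering claim in the remark immediately preceding the corollary (using $x = \sigma(1)x(\tau(1))^{-1}$, hence $x \in x_{(\sigma,\tau)}^{G}$) and obtains disjointness directly from \th\ref{lemma 3.2}. Nothing is missing.
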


\begin{definition}\th\label{definition 3.4}
If $G = \bigcup_{i=1}^{r} (x_{i})_{(\sigma, \tau)}^{G}$ such that the $(\sigma, \tau)$-conjugacy classes $(x_{i})_{(\sigma, \tau)}^{G}$'s \\ ($1 \leq i \leq r$) are distinct, then $x_{1}, ..., x_{r}$ are called representatives of the $(\sigma, \tau)$-conjugacy classes $(x_{1})_{(\sigma, \tau)}^{G}$, ..., $(x_{r})_{(\sigma, \tau)}^{G}$ of $G$.
\end{definition}

We establish more results on the concept of $(\sigma, \tau)$-conjugation.

\begin{lemma}\th\label{lemma 3.5}
Let $x, y \in G$ be such that $x$ is $\sigma$-conjugate to $y$. Then for any $n \in \mathbb{Z}$, $x^{n}$ is $\sigma$-conjugate to $y^{n}$. Also, $x$ and $y$ have the same order.
\end{lemma}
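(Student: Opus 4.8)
The plan is to observe that when $\tau = \sigma$, being $\sigma$-conjugate is nothing but being conjugate by a single group element, namely an element lying in the image of $\sigma$. Indeed, if $x$ is $\sigma$-conjugate to $y$, then by \th\ref{definition 3.1} there is $g \in G$ with $y = \sigma(g)\, x\, (\sigma(g))^{-1}$; setting $h = \sigma(g) \in G$, this reads $y = h x h^{-1}$, an ordinary conjugation.

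First I would prove $y^{n} = h x^{n} h^{-1}$ for every $n \in \mathbb{Z}$. For $n = 0$ both sides equal $1$. For $n \geq 1$ this follows by a one-line induction, using $y^{n} = y^{n-1} y = (h x^{n-1} h^{-1})(h x h^{-1}) = h x^{n} h^{-1}$. For $n < 0$, first invert $y = h x h^{-1}$ to get $y^{-1} = h x^{-1} h^{-1}$, and then apply the already established case of positive exponents to $x^{-1}$ and $y^{-1}$. Since $h = \sigma(g)$, the identity $y^{n} = \sigma(g)\, x^{n}\, (\sigma(g))^{-1}$ shows, again by \th\ref{definition 3.1}, that $x^{n}$ is $\sigma$-conjugate to $y^{n}$, with the same conjugating element $g$.

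For the equality of orders I would reuse the relation $y^{n} = h x^{n} h^{-1}$: for $n \in \mathbb{Z}$ we have $x^{n} = 1$ if and only if $h x^{n} h^{-1} = 1$ if and only if $y^{n} = 1$. Hence $x$ and $y$ are annihilated by exactly the same integer exponents, so they have the same order (finite, and then equal, or both infinite). Equivalently, conjugation by $h$ is an automorphism of $G$ carrying $x$ to $y$, and automorphisms preserve the order of an element.

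There is essentially no real obstacle here; the only point requiring a moment's care is the passage to negative exponents in the claim for all $n \in \mathbb{Z}$, which is handled by first inverting $y = h x h^{-1}$. The conceptual content is simply that, in the case $\tau = \sigma$, $\sigma$-conjugacy is ordinary conjugacy with conjugators restricted to $\sigma(G)$, so the familiar facts about conjugacy classes transfer verbatim.
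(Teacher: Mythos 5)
Your proof is correct, and it is exactly the intended argument: the paper states \th\ref{lemma 3.5} without proof (as one of the facts "analogous to those in classic group theory"), and the natural justification is precisely your reduction — with $\tau=\sigma$, the relation $y=\sigma(g)x(\sigma(g))^{-1}$ is ordinary conjugation by $h=\sigma(g)$, so $y^{n}=\sigma(g)x^{n}(\sigma(g))^{-1}$ for all $n\in\mathbb{Z}$ and conjugation preserves orders. Nothing further is needed.
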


\begin{definition}\th\label{definition 3.6}
Let $x \in G$, then the $(\sigma, \tau)$-centraliser of $x$ in $G$ is defined to be the set $$C_{(\sigma, \tau)}(x) = \{g \in G \mid x\tau(g) = \sigma(g)x\}.$$
\end{definition}

Note that $C_{(\sigma, \tau)}(x)$ is a monoid. In particular, if $G$ is finite, then $C_{(\sigma, \tau)}(x)$ is a subgroup of $G$. In general, $x$ need not be an element of $C_{(\sigma, \tau)}(x)$.

\begin{lemma}\th\label{lemma 3.7}
Let $G$ be a finite group and $x \in G$. Then $$|x_{(\sigma, \tau)}^{G}| = \bigg\vert \frac{G}{C_{(\sigma, \tau)}(x)} \bigg\vert = \frac{|G|}{|C_{(\sigma, \tau)}(x)|}.$$ In particular, $|x_{(\sigma, \tau)}^{G}|$ divides $|G|$.
\end{lemma}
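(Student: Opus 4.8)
The plan is to run the standard orbit–stabiliser argument in the twisted setting. Since $\sigma$ and $\tau$ are $R$-linear extensions of group endomorphisms of $G$, their restrictions to $G$ are genuine group homomorphisms, and I shall use $\sigma(gh) = \sigma(g)\sigma(h)$, $\sigma(g^{-1}) = \sigma(g)^{-1}$ (and the analogues for $\tau$) freely. First I would note that, as recorded after \th\ref{definition 3.6}, $C_{(\sigma,\tau)}(x)$ is a submonoid of $G$ containing $1$; because $G$ is finite, a finite submonoid of a group is a subgroup, so $C_{(\sigma,\tau)}(x) \leq G$ and its left cosets partition $G$.

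Next, define $\phi : G \to x_{(\sigma,\tau)}^{G}$ by $\phi(g) = \sigma(g)\, x\, (\tau(g))^{-1}$; by \th\ref{definition 3.1} this map is surjective. The heart of the argument is the fibre computation: I would show $\phi(g) = \phi(h)$ if and only if $g$ and $h$ lie in the same left coset of $C_{(\sigma,\tau)}(x)$. Indeed, from $\sigma(g)x(\tau(g))^{-1} = \sigma(h)x(\tau(h))^{-1}$, multiplying on the left by $\sigma(h)^{-1}$ and on the right by $\tau(h)$ and using the homomorphism property (note $\tau(g)^{-1}\tau(h) = \tau(g^{-1}h) = (\tau(h^{-1}g))^{-1}$) yields $\sigma(h^{-1}g)\,x\,(\tau(h^{-1}g))^{-1} = x$, i.e.\ $x\tau(h^{-1}g) = \sigma(h^{-1}g)x$, i.e.\ $h^{-1}g \in C_{(\sigma,\tau)}(x)$; every step is reversible, so the equivalence holds, and $h^{-1}g \in C_{(\sigma,\tau)}(x)$ is exactly the statement $gC_{(\sigma,\tau)}(x) = hC_{(\sigma,\tau)}(x)$. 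Consequently $\phi$ induces a well-defined bijection from the set of left cosets $G/C_{(\sigma,\tau)}(x)$ onto $x_{(\sigma,\tau)}^{G}$, whence $|x_{(\sigma,\tau)}^{G}| = |G/C_{(\sigma,\tau)}(x)|$.

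Finally, Lagrange's theorem gives $|G/C_{(\sigma,\tau)}(x)| = |G|/|C_{(\sigma,\tau)}(x)|$, and in particular $|x_{(\sigma,\tau)}^{G}|$ divides $|G|$, completing the proof. There is no serious obstacle here; the only place demanding care is the coset bookkeeping in the fibre computation — one must track that the relation $h^{-1}g \in C_{(\sigma,\tau)}(x)$ corresponds to equality of \emph{left} cosets — together with the small observation that finiteness of $G$ upgrades the monoid $C_{(\sigma,\tau)}(x)$ to a subgroup, which is what makes Lagrange applicable.
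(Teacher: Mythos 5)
Your proof is correct. The paper states this lemma without proof (it is listed among the observations "analogous to those in classic group theory"), and your argument — the surjection $\phi(g)=\sigma(g)x(\tau(g))^{-1}$, the fibre computation showing $\phi(g)=\phi(h)$ if and only if $h^{-1}g\in C_{(\sigma,\tau)}(x)$, and the remark that finiteness upgrades the monoid $C_{(\sigma,\tau)}(x)$ to a subgroup so that Lagrange applies — is precisely the standard orbit–stabiliser argument the paper implicitly intends.
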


\begin{definition}\th\label{definition 3.8}
The set $$Z_{(\sigma, \tau)}(G) = \{z \in G \mid z \tau(g) = \sigma(g) z, \hspace{0.1cm} \forall \hspace{0.1cm} g \in G\}$$ is called the $(\sigma, \tau)$-center of $G$.
\end{definition}

Observe that $Z_{(\sigma, \tau)}(G) = \{z \in G \mid C_{(\sigma, \tau)}(z) = G\}$. If $\tau = \sigma$, then we denote the $(\sigma, \tau)$-center of $G$ simply by $Z_{\sigma}(G)$ and it is a monoid. If $\tau = \sigma = id_{G}$ (identity map on $G$), then the $(\sigma, \tau)$-center of $G$ becomes the usual center of $G$ denoted by $Z(G)$. In particular, if $G$ is finite, then $Z_{\sigma}(G)$ becomes a subgroup of $G$. By \th\ref{lemma 3.7}, $|x_{(\sigma, \tau)}^{G}| = 1$ if and only if $x \in Z_{(\sigma, \tau)}(G)$. Note that we have established the following theorem.

\begin{theorem}\th\label{theorem 3.9}
Let $G$ be a finite group and $x_{1}, ..., x_{r}$ be representatives of the $(\sigma, \tau)$-conjugacy classes of $G$. Then $$|G| = |Z_{(\sigma, \tau)}(G)| + \sum_{x_{i} \notin Z_{(\sigma, \tau)}(G)} |(x_{i})_{(\sigma, \tau)}^{G}|,$$ where $|(x_{i})_{(\sigma, \tau)}^{G}| = \frac{|G|}{|C_{(\sigma, \tau)}(x_{i})|}$ ($1 \leq i \leq r$).
\end{theorem}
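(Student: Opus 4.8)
The plan is to derive \th\ref{theorem 3.9} as a routine consequence of the partition of $G$ into $(\sigma,\tau)$-conjugacy classes together with the orbit-counting formula of \th\ref{lemma 3.7}. By \th\ref{corollary 3.3}, $G$ is the disjoint union of its distinct $(\sigma,\tau)$-conjugacy classes; since $G$ is finite and $x_{1}, \dots, x_{r}$ are representatives of these classes, counting elements on both sides gives
$$|G| = \sum_{i=1}^{r} |(x_{i})_{(\sigma, \tau)}^{G}|.$$
The remaining work is to split this sum according to whether $x_{i}$ lies in $Z_{(\sigma,\tau)}(G)$.

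First I would handle the central representatives. If $z \in Z_{(\sigma,\tau)}(G)$, then $z\tau(g) = \sigma(g)z$ for all $g \in G$, so $\sigma(g)z(\tau(g))^{-1} = z$ for every $g$ and hence $z_{(\sigma,\tau)}^{G} = \{z\}$ is a singleton; conversely, by \th\ref{lemma 3.7} a $(\sigma,\tau)$-conjugacy class has size $1$ exactly when its representative lies in $Z_{(\sigma,\tau)}(G)$. Thus the singleton classes are precisely the $\{z\}$ for $z \in Z_{(\sigma,\tau)}(G)$, they are pairwise distinct, and each such $z$ is its own (unique) representative. Therefore exactly $|Z_{(\sigma,\tau)}(G)|$ of the $x_{i}$ lie in $Z_{(\sigma,\tau)}(G)$, and together they contribute $|Z_{(\sigma,\tau)}(G)| \cdot 1 = |Z_{(\sigma,\tau)}(G)|$ to the sum.

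For the remaining representatives $x_{i} \notin Z_{(\sigma,\tau)}(G)$, I would invoke \th\ref{lemma 3.7} directly to write $|(x_{i})_{(\sigma,\tau)}^{G}| = |G|/|C_{(\sigma,\tau)}(x_{i})|$. Substituting both contributions back into the displayed count yields
$$|G| = |Z_{(\sigma,\tau)}(G)| + \sum_{x_{i} \notin Z_{(\sigma,\tau)}(G)} |(x_{i})_{(\sigma,\tau)}^{G}|, \qquad |(x_{i})_{(\sigma,\tau)}^{G}| = \frac{|G|}{|C_{(\sigma,\tau)}(x_{i})|},$$
which is the claimed identity. There is no genuine obstacle here: the only point requiring a moment's care is the bookkeeping in the previous paragraph --- namely that the central elements correspond bijectively to the singleton classes, so that they contribute \emph{exactly} $|Z_{(\sigma,\tau)}(G)|$ rather than merely at most that --- and this follows at once from the ``size $1$ iff central'' characterization noted after \th\ref{definition 3.8}.
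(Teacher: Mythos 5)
Your proposal is correct and matches the paper's (implicit) argument: the paper gives no separate proof, simply noting that the theorem follows from Corollary \ref{corollary 3.3}, Lemma \ref{lemma 3.7}, and the observation after Definition \ref{definition 3.8} that a class is a singleton exactly when its representative is $(\sigma,\tau)$-central, which is precisely the bookkeeping you carry out.
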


Note that the concept of $(\sigma, \tau)$-conjugation is analogous to the idea of usual conjugation in classic group theory. More such analogies may be drawn.

\subsection{The Main Results}\label{subsection 3.2}
\begin{definition}\th\label{definition 3.10}
The set $$Z_{(\sigma, \tau)}(RG) = \{z \in RG \mid z \tau(\alpha) = \sigma(\alpha) z, \hspace{0.1cm} \forall \hspace{0.1cm} \alpha \in RG\}$$ is called the $(\sigma, \tau)$-center of $RG$.
\end{definition}

It can be easily checked that $Z_{(\sigma, \tau)}(RG)$ is an $R$-submodule of the $R$-module $RG$. If $\tau = \sigma$, then we denote the $(\sigma, \tau)$-center of $RG$ by $Z_{\sigma}(RG)$ and $Z_{\sigma}(RG)$ is a subalgebra of $RG$. If $\tau = \sigma = id_{RG}$ (identity map on $RG$), then the $(\sigma, \tau)$-center of $RG$ becomes the usual center of $RG$ denoted by $Z(RG)$. Now, we need to find the rank and a basis of the $R$-module $Z_{(\sigma, \tau)}(RG)$.

\begin{definition}\th\label{definition 3.11}
Let $\{C_{i}\}_{i \in I}$ be the set of all $(\sigma, \tau)$-conjugacy classes of $G$ which are of finite size. For each index $i \in I$, define the sum $$\hat{C_{i}} = \sum_{g \in C_{i}} g.$$ These sums $\hat{C_{i}}$'s are elements in $RG$ and are called $(\sigma, \tau)$-class sums.
\end{definition}

\begin{lemma}\th\label{lemma 3.12}
Let $\{C_{i}\}_{i \in I}$ be the set of all $(\sigma, \tau)$-conjugacy classes of $G$ which are of finite size. Then the $(\sigma, \tau)$-class sums $\{\hat{C_{i}}\}_{i \in I}$ form an $R$-basis of $Z_{(\sigma, \tau)}(RG)$. In particular, if the group $G$ is finite having exactly $r$ distinct $(\sigma, \tau)$-conjugacy classes, then the rank of $Z_{(\sigma, \tau)}(RG)$ is $r$.
\end{lemma}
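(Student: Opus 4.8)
The plan is to adapt the classical argument that conjugacy class sums form an $R$-basis of the center of a group ring to the doubly-twisted setting. First I would reduce the defining condition of $Z_{(\sigma, \tau)}(RG)$ to a condition on group elements: since $\sigma$ and $\tau$ are $R$-linear and multiplication in $RG$ is $R$-bilinear, an element $z \in RG$ satisfies $z\tau(\alpha) = \sigma(\alpha)z$ for all $\alpha \in RG$ if and only if $z\tau(g) = \sigma(g)z$ for all $g \in G$. Note that here $\sigma(g), \tau(g) \in G$ because $\sigma,\tau$ are extensions of group endomorphisms.

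Next, writing $z = \sum_{h \in G} a_h h$ with $a_h \in R$ and $\text{supp}(z)$ finite, I would expand both sides of $z\tau(g) = \sigma(g)z$ and compare coefficients: reindexing shows that the coefficient of a fixed $k \in G$ is $a_{k\tau(g)^{-1}}$ on the left and $a_{\sigma(g)^{-1}k}$ on the right, so the condition becomes $a_h = a_{\sigma(g)^{-1}h\tau(g)}$ for all $g, h \in G$. Here I would use that $\sigma$ and $\tau$, being restrictions of group endomorphisms, commute with inversion, so $\sigma(g)^{-1}h\tau(g) = \sigma(g^{-1})\,h\,(\tau(g^{-1}))^{-1}$ is precisely the $(\sigma, \tau)$-conjugate of $h$ by $g^{-1}$. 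Since $g$ ranges over all of $G$, this says exactly that the function $h \mapsto a_h$ is constant on each $(\sigma, \tau)$-conjugacy class of $G$.

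Combining this with the finiteness of $\text{supp}(z)$ forces $a_h = 0$ whenever $h$ lies in an infinite $(\sigma, \tau)$-conjugacy class; hence $z = \sum_{i \in I} c_i \hat{C_i}$ is a finite $R$-linear combination of the class sums, and conversely each such combination visibly lies in $Z_{(\sigma, \tau)}(RG)$. This gives spanning. For linear independence I would invoke \th\ref{lemma 3.2}: distinct $(\sigma, \tau)$-conjugacy classes are pairwise disjoint, so the $\hat{C_i}$ have pairwise disjoint supports, and since $G$ is $R$-linearly independent in $RG$, any finite relation $\sum_i c_i \hat{C_i} = 0$ forces every $c_i = 0$. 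Thus $\{\hat{C_i}\}_{i \in I}$ is an $R$-basis. Finally, when $G$ is finite every $(\sigma, \tau)$-conjugacy class is finite, so $I$ indexes the $r$ distinct $(\sigma, \tau)$-conjugacy classes of $G$, and the rank of $Z_{(\sigma, \tau)}(RG)$ is $r$.

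I expect the only genuinely delicate point to be the coefficient-comparison/reindexing step, and within it the observation that $\sigma(g)^{-1}h\tau(g)$ is a bona fide $(\sigma, \tau)$-conjugate of $h$ — this is exactly where the hypothesis that $\sigma, \tau$ are extensions of group endomorphisms (hence commute with inversion on $G$) is essential; without it the $(\sigma,\tau)$-center would not decompose along $(\sigma, \tau)$-conjugacy classes. Everything else is routine bookkeeping with finite supports and the $R$-freeness of $G$ inside $RG$.
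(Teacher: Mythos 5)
Your proposal is correct and follows essentially the same route as the paper, which simply adapts the classical class-sum argument (it cites Proposition 12.22 of James--Liebeck and Theorem 3.6.2 of Milies--Sehgal); your coefficient comparison $a_h = a_{\sigma(g)^{-1}h\tau(g)}$, the reduction to constancy on $(\sigma,\tau)$-conjugacy classes, and the disjoint-support independence argument are exactly that adaptation, spelled out. No gaps.
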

\begin{proof}
The proof follows on similar lines as that of Proposition $12.22$ in \cite{GordonJames2003} or that of Theorem $3.6.2$ in \cite{CPM2002}.
\end{proof}

\begin{theorem}\th\label{theorem 3.13}
Let $R$ be a commutative ring with unity, $G$ be a group and $\{C_{i}\}_{i \in I}$ ($I$ some indexing set) be the collection of all $(\sigma, \tau)$-conjugacy classes of $G$ which are of finite size with respective representatives collection $\{x_{i}\}_{i \in I}$. Let $J \subseteq I$ such that $|C_{j}| = 1$ for all $j \in J$ and $|C_{i}| \geq 2$ for all $i \in I \setminus J$. Further, let $\{C_{i'}\}_{i' \in I'}$ be the collection of all those $(\sigma, \tau)$-conjugacy classes of $G$ which are of infinite size. Then the set $$\mathcal{B}_{0} = \left\{D_{g} ~ \bigg| ~ g \in \left(\bigcup_{i \in I \setminus J} (C_{i} \setminus \{x_{i}\})\right) \cup \left(\bigcup_{i' \in I'} C_{i'}\right) \right\}$$ forms an $R$-basis of $\text{Inn}_{(\sigma, \tau)}(RG)$.
\end{theorem}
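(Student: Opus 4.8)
The plan is to show that $\mathcal{B}_0$ both spans $\text{Inn}_{(\sigma, \tau)}(RG)$ over $R$ and is $R$-linearly independent, exploiting the description of the $(\sigma,\tau)$-center obtained in \th\ref{lemma 3.12}. Recall that the map $\beta \mapsto D_\beta$ is an $R$-module surjection from $RG$ onto $\text{Inn}_{(\sigma, \tau)}(RG)$, so the first task is to identify its kernel: $D_\beta = 0$ means $\beta \tau(\alpha) = \sigma(\alpha)\beta$ for all $\alpha \in RG$, i.e. $\beta \in Z_{(\sigma, \tau)}(RG)$. Hence $\text{Inn}_{(\sigma, \tau)}(RG) \cong RG / Z_{(\sigma, \tau)}(RG)$ as $R$-modules, and it suffices to produce an $R$-basis of this quotient whose image under $\beta \mapsto D_\beta$ is exactly $\mathcal{B}_0$. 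First I would note that $G$, as a set, is an $R$-basis of $RG$, and I would partition $G$ according to its $(\sigma,\tau)$-conjugacy classes (by \th\ref{corollary 3.3}): the finite ones $\{C_i\}_{i \in I}$ and the infinite ones $\{C_{i'}\}_{i' \in I'}$.

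The key observation for spanning is that for a finite $(\sigma,\tau)$-conjugacy class $C_i$ with representative $x_i$, every $g \in C_i$ satisfies $g = \sigma(a) x_i (\tau(a))^{-1}$ for some $a \in G$, and a direct computation gives $D_g - D_{x_i} = D_{g - x_i}$, while modulo $Z_{(\sigma,\tau)}(RG)$ one checks that $g - x_i$ can be rewritten so that the whole class collapses; more precisely, for each finite class $C_i$ the class sum $\hat{C_i}$ lies in $Z_{(\sigma,\tau)}(RG)$ by \th\ref{lemma 3.12}, so in the quotient the relation $\sum_{g \in C_i}(g - x_i) \equiv -(|C_i|x_i - \hat{C_i}) $ is not quite what we want — instead the cleaner route is to show directly that $\{\,g - x_i : i \in I\setminus J,\ g \in C_i\setminus\{x_i\}\,\} \cup \{\,g : i' \in I',\ g \in C_{i'}\,\} \cup \{\hat{C_i} : i \in I\}$ is an $R$-basis of $RG$, obtained from the standard basis $G$ by an invertible (block-triangular, unipotent on each finite block) change of coordinates. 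Since the last batch $\{\hat{C_i}\}_{i \in I}$ is precisely an $R$-basis of $Z_{(\sigma,\tau)}(RG)$ by \th\ref{lemma 3.12}, the remaining vectors descend to an $R$-basis of the quotient $RG/Z_{(\sigma,\tau)}(RG)$. Applying the isomorphism $\beta \mapsto D_\beta$, and noting $D_{g - x_i} = D_g - D_{x_i}$, one must still check that the images $\{D_g - D_{x_i}\}$ together with $\{D_g\}_{g \in C_{i'}}$ are $R$-equivalent as a spanning/independent set to $\mathcal{B}_0 = \{D_g\}$ with $g$ ranging over the stated union; this follows because within each finite block $D_g - D_{x_i}$ and $D_g$ differ by the single vector $D_{x_i}$ which is killed in the quotient anyway (indeed one shows $D_{x_i} \in \text{span}_R\{D_g - D_{x_i} : g \in C_i \setminus \{x_i\}\}$ is false in general, so the correct statement is that $\mathcal{B}_0$ and the ``difference'' basis have the same $R$-span and the same cardinality, so both are bases). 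I would phrase this last comparison carefully to avoid a gap: the cleanest formulation is that $\{\,g : g \in (\bigcup_{i\in I\setminus J}(C_i\setminus\{x_i\})) \cup (\bigcup_{i'\in I'}C_{i'})\,\} \cup \{\hat C_i : i\in I\}$ is itself an $R$-basis of $RG$ (again an invertible change from $G$, now just replacing each $x_i$ in a finite class of size $\geq 2$ by $\hat C_i$, and each $x_j$ with $|C_j|=1$ satisfies $x_j = \hat C_j$ already), whence the first part descends to a basis of the quotient.

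For linear independence of $\mathcal{B}_0$ in $\text{Inn}_{(\sigma,\tau)}(RG)$: suppose $\sum r_g D_g = 0$ over the index set, i.e. $D_\beta = 0$ for $\beta = \sum r_g g \in Z_{(\sigma,\tau)}(RG)$. By \th\ref{lemma 3.12}, $\beta = \sum_{i \in I} s_i \hat{C_i}$ for unique $s_i \in R$; comparing coefficients of the basis $G$ of $RG$ on the support of $\beta$ (which avoids all representatives $x_i$ of the finite classes of size $\geq 2$ and contains no contribution from singleton classes at all), and using that $\hat{C_i}$ involves $x_i$ with coefficient $1$, forces every $s_i = 0$, hence every $r_g = 0$. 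This is where the exclusion of the representatives $x_i$ (for $|C_i|\geq 2$) and of all singleton classes in the index set of $\mathcal{B}_0$ is essential.

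The main obstacle I anticipate is the bookkeeping in the ``change of basis'' argument when $G$ is infinite: one must be careful that the transition between the standard basis $G$ and the rearranged family is given by a well-defined, invertible $R$-linear map even though there are infinitely many (finite) blocks — this is fine because each basis vector is moved within its own finite class plus the corresponding class sum, so the map is locally finite and its inverse is given by the obvious formula $x_i \mapsto \hat C_i - \sum_{g \in C_i\setminus\{x_i\}} g$. The only genuinely delicate point is making the identification $\text{Inn}_{(\sigma,\tau)}(RG) \cong RG/Z_{(\sigma,\tau)}(RG)$ airtight (surjectivity is immediate from the definition of inner $(\sigma,\tau)$-derivation; the kernel computation is the short calculation above) and then transporting a basis of the quotient back to the family $\{D_g\}$ indexed as claimed, which reduces to the elementary remark that $\{D_g\}_{g\in G}$ spans $\text{Inn}_{(\sigma,\tau)}(RG)$ and that the $R$-linear relations among them are exactly those coming from $Z_{(\sigma,\tau)}(RG)$.
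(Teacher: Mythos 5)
Your proposal is correct and follows essentially the same route as the paper's proof: you identify the kernel of $\beta \mapsto D_{\beta}$ as $Z_{(\sigma,\tau)}(RG)$, invoke \th\ref{lemma 3.12} for the class-sum basis, and extend it to an $R$-basis of $RG$ by explicitly replacing each representative $x_{i}$ of a finite class of size at least $2$ by $\hat{C}_{i}$ (with inverse $x_{i} = \hat{C}_{i} - \sum_{h \in C_{i}\setminus\{x_{i}\}} h$), which is exactly the paper's two-step argument repackaged as a quotient-module statement. Your explicit construction also correctly sidesteps the issue, noted in the paper, that over an arbitrary commutative unital ring a basis of a submodule need not extend to a basis of the module.
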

\begin{proof}
By Lemma \ref{lemma 3.12}, the set $\mathcal{B}_{1} = \{\hat{C_{i}} \mid i \in I\}$ is an $R$-basis of $Z_{(\sigma, \tau)}(RG)$. We divide the proof of this theorem into two steps.\vspace{6pt}

\textbf{STEP 1: Constructing an $R$-basis of $\text{Inn}_{(\sigma, \tau)}(RG)$.}

Assume that the basis $\mathcal{B}_{1}$ can be extended to an $R$-basis, say, $$\mathcal{B}_{2} = \{\hat{C}_{i}\}_{i \in I} \cup \{\gamma_{i''}\}_{i'' \in I''}$$ of $RG$ for some indexing set $I''$. (Note that we have to make this assumption to proceed because this is not true in general that a basis of an $R$-submodule $N$ of an $R$-module $M$ can be extended to a basis of $M$, in case of an arbitrary commutative unital ring $R$. However, if $R$ is a field, then a basis of $N$ can always be extended to a basis of $M$.)

Put $\mathcal{B} = \{D_{\gamma_{i''}} \mid i'' \in I''\}$. We show that $\mathcal{B}$ is an $R$-basis of $\text{Inn}_{(\sigma, \tau)}(RG)$.

Let $\lambda \in R$ and $\delta, \delta_{1}, \delta_{2} \in RG$. Then for all $\alpha \in RG$, $$D_{\lambda \delta}(\alpha) = (\lambda \delta)\tau(\alpha) - \sigma(\alpha)(\lambda \delta) = \lambda (\delta \tau(\alpha) - \sigma(\alpha) \delta) = \lambda D_{\delta}(\alpha) = (\lambda D_{\delta})(\alpha)$$ and 
\begin{eqnarray*}
(D_{\delta_{1}} + D_{\delta_{2}})(\alpha) & = & D_{\delta_{1}}(\alpha) + D_{\delta_{2}}(\alpha) \\ & = & (\delta_{1}\tau(\alpha) - \sigma( \alpha)\delta_{1}) + (\delta_{2} \tau(\alpha) - \sigma(\alpha)\delta_{2}) \\ & = &  (\delta_{1} + \delta_{2})\tau(\alpha) - \sigma(\alpha)(\delta_{1} + \delta_{2}) \\ & = & D_{\delta_{1} + \delta_{2}}(\alpha)
\end{eqnarray*}
Therefore, $D_{\lambda \delta} = \lambda D_{\delta}$ and $D_{\delta_{1} + \delta_{2}} = D_{\delta_{1}} + D_{\delta_{2}}$.

Now, let $D \in \text{Inn}_{(\sigma, \tau)}(RG)$. Then $D = D_{\delta}$ for some $$\delta = \sum_{k=1}^{s} \lambda_{i_{k}} \hat{C}_{i_{k}} + \sum_{l=1}^{t} \mu_{i''_{l}} \gamma_{i''_{l}}$$ in $RG$, for some $\{i_{k}\}_{k=1}^{s} \subseteq I$ (for some $s \in \mathbb{N}$) and $\{i''_{l}\}_{l=1}^{t} \subseteq I''$ (for some $t \in \mathbb{N}$). So $$
D_{\delta} = \sum_{k=1}^{s} \lambda_{i_{k}} D_{\hat{C}_{i_{k}}} + \sum_{l=1}^{t} \mu_{i''_{l}} D_{\gamma_{i''_{l}}}.$$ Note that $D_{\hat{C}_{i_{k}}} = 0$ for all $k \in \{1, ..., s\}$. So $D_{\delta} = \sum_{l=1}^{t} \mu_{i''_{l}} D_{\gamma_{i''_{l}}}$. Therefore, $\mathcal{B}$ spans $\text{Inn}_{(\sigma, \tau)}(RG)$ over $R$.

Now, let $\lambda_{i''_{l}} \in R$ for some $\{i''_{l}\}_{l=1}^{t} \subseteq I''$ (for some $t \in \mathbb{N}$) such that $\sum_{l=1}^{t} \lambda_{i''_{l}} D_{\gamma_{i''_{l}}} = 0$.

$\Rightarrow D_{\sum_{l=1}^{t} \lambda_{i''_{l}} \gamma_{i''_{l}}}(\alpha) = 0, \hspace{0.1cm} \forall \hspace{0.1cm} \alpha \in RG$.

$\Rightarrow \left(\sum_{l=1}^{t} \lambda_{i''_{l}} \gamma_{i''_{l}}\right) \tau(\alpha) - \sigma(\alpha) \left(\sum_{l=1}^{t} \lambda_{i''_{l}} \gamma_{i''_{l}}\right) = 0, \hspace{0.1cm} \forall \hspace{0.1cm} \alpha \in RG$.

$\Rightarrow \sum_{l=1}^{t} \lambda_{i''_{l}} \gamma_{i''_{l}} \in Z_{(\sigma, \tau)}(RG)$.

$\Rightarrow \sum_{l=1}^{t} \lambda_{i''_{l}} \gamma_{i''_{l}} = \sum_{k=1}^{s} \mu_{i_{k}} \hat{C}_{i_{k}}$ for some $\mu_{i_{k}} \in R$, for some $\{i_{k}\}_{k=1}^{s} \subseteq I$ (for some $s \in \mathbb{N}$).

$\Rightarrow \mu_{i_{k}} = 0, \hspace{0.1cm} \forall \hspace{0.1cm} k \in \{1, ..., s\}$ and $\lambda_{i''_{l}}=0, \hspace{0.1cm} \forall \hspace{0.1cm} l \in \{1, ..., t\}$.

Therefore, every finite subset of $\mathcal{B}$ is linearly independent over $R$. Therefore, the set $\mathcal{B}$ is $R$-linearly independent. Hence, $\mathcal{B}$ forms an $R$-basis of $\text{Inn}_{(\sigma, \tau)}(RG)$.\vspace{6pt}

\textbf{STEP 2: Constructing $\{\gamma_{i''}\}_{i'' \in I''}$.}

Put $$\mathcal{B}_{3} = \left\{g ~ \bigg| ~ g \in \bigcup_{i \in I \setminus J} (C_{i} \setminus \{x_{i}\})\right\} \bigcup \left\{g ~ \bigg| ~ g \in \bigcup_{i' \in I'} C_{i'}\right\}.$$

Since $G$ is an $R$-linearly independent subset of $RG$ and $\mathcal{B}_{3}$ is a subset of $G$, therefore, $\mathcal{B}_{3}$ is also an $R$-linearly independent subset of $RG$.

Since the $R$-spans of $\mathcal{B}_{1}$ and $\mathcal{B}_{3}$ intersect trivially, and the sets $\mathcal{B}_{1}, \mathcal{B}_{3}$ are both $R$-linearly independent, therefore, the set $\mathcal{B}_{2}' = \mathcal{B}_{1} \cup \mathcal{B}_{3}$ is an $R$-linearly independent subset of $RG$.

Now, we show that $\mathcal{B}_{2}'$ spans $RG$ over $R$.

Let $g \in G$ be arbitrary but fixed. Then, we have the following possible cases.\vspace{4pt}

\textbf{Case 1: $g \in Z_{(\sigma, \tau)}(G)$.}

Then by the observation just after Definition \ref{definition 3.8}, $|g_{(\sigma, \tau)}^{G}| = 1$. Thus, $g_{(\sigma, \tau)}^{G} = C_{j_{0}}$, that is, $C_{j_{0}} = \{g\}$ for some $j_{0} \in J$. So $\hat{C}_{j_{0}} = g$. Therefore, $g \in \mathcal{B}_{1} \subseteq \mathcal{B}_{2}'$. Hence, $g$ belongs to the $R$-span of $\mathcal{B}_{2}'$.\vspace{4pt}

\textbf{Case 2: $g \in G \setminus Z_{(\sigma, \tau)}(G)$.}

Then by the observation following Definition \ref{definition 3.8}, $|g_{(\sigma, \tau)}^{G}| \geq 2$. Therefore, $$g \in \left(\bigcup_{i \in I \setminus J} C_{i}\right) \bigcup \left(\bigcup_{i' \in I'} C_{i'}\right).$$

Then we have the following subcases.\vspace{2pt}

\textbf{Subcase 2.1: $g \in \bigcup_{i' \in I'} C_{i'}$.}

Then $g \in \mathcal{B}_{3} \subseteq \mathcal{B}_{2}'$. Hence, $g$ belongs to the $R$-span of $\mathcal{B}_{2}'$.\vspace{2pt}

\textbf{Subcase 2.2: $g \in \bigcup_{i \in I \setminus J} C_{i}$.}

Then $g \in C_{i}$ for some $i \in I \setminus J$.

Now, since $C_{i} = (x_{i})_{(\sigma, \tau)}^G$, so $C_{i} = (C_{i} \setminus \{x_{i}\}) \cup \{x_{i}\}$. Therefore, $g \in C_{i}$ implies that $g \in C_{i} \setminus \{x_{i}\}$ or $g \in \{x_{i}\}$.

If $g \in C_{i} \setminus \{x_{i}\}$, then $g \in \mathcal{B}_{3} \subseteq \mathcal{B}_{2}'$. Hence, $g$ belongs to the $R$-span of $\mathcal{B}_{2}'$.

If $g = x_{i}$, then observe that $g = \hat{C}_{i} - (\sum_{h \in C_{i} \setminus \{x_{i}\}} h)$ so that $g$ belongs to the $R$-span of $\mathcal{B}_{2}'$.

So, we have finally exhausted all the cases. In all possible cases, we have proved that $g$ belongs to the $R$-span of $\mathcal{B}_{2}'$.

Since $g \in G$ is arbitrary, therefore, every element in $G$ lies in the $R$-span of $\mathcal{B}_{2}'$. That is, $G$ is contained in the $R$-span of $\mathcal{B}_{2}'$. Thus, the $R$-span of $G$ is contained in the $R$-span of $\mathcal{B}_{2}'$.

Since $G$ is an $R$-basis of $RG$, so its $R$-span equals the whole of $RG$. So, we have proved that $RG$ is contained in the $R$-span of $\mathcal{B}_{2}'$.

Since $\mathcal{B}_{2}'$ is a subset of $RG$, therefore, its $R$-span is automatically contained in $RG$.

Therefore, the $R$-span of $\mathcal{B}_{2}'$ is equal to $RG$, that is, $\mathcal{B}_{2}'$ spans $RG$ over $R$.

Therefore, $\mathcal{B}_{2}'$ becomes an $R$-linearly independent subset of $RG$ which spans $RG$ over $R$. Therefore, $\mathcal{B}_{2}'$ is an $R$-basis of $RG$.

Therefore, $\mathcal{B}_{2}' = \mathcal{B}_{1} \cup \mathcal{B}_{3}$ is an $R$-basis of $RG$ which extends the $R$-basis $\mathcal{B}_{1}$ of $Z_{(\sigma, \tau)}(RG)$.

So we can take our set $\{\gamma_{i''}\}_{i'' \in I''}$ (as in STEP 1) to be $\mathcal{B}_{3}$ (that is, $I'' = (I \setminus J) \cup I'$).

Therefore, by STEP 1, the set $\mathcal{B}_{0} = \{D_{g} \mid g \in \mathcal{B}_{3}\}$ is an $R$-basis of $\text{Inn}_{(\sigma, \tau)}(\mathbb{F}G)$. Note that $\mathcal{B}_{0}$ is the same set as given in the statement of the theorem.

Hence proved.
\end{proof}

Recall the definition of a $(\sigma, \tau)$-FC group from \cite{AleksandrAlekseev2020} (see Section $6$). If $G$ is a group and $\sigma, \tau$ are its group endomorphisms, then $G$ is said to be a $(\sigma, \tau)$-FC group if every $(\sigma, \tau)$-conjugacy class of $G$ is of finite size.

The corollary given below is the immediate consequence of Theorem \ref{theorem 3.13}.

\begin{corollary}\th\label{corollary 3.13(1)}
Let $R$ be a commutative ring with unity, $G$ be a $(\sigma, \tau)$-FC group and $\{C_{i}\}_{i \in I}$ ($I$ some indexing set) be the collection of all $(\sigma, \tau)$-conjugacy classes of $G$ with respective representatives collection $\{x_{i}\}_{i \in I}$. Let $J \subseteq I$ such that $|C_{j}| = 1$ for all $j \in J$ and $|C_{i}| \geq 2$ for all $i \in I \setminus J$. Then the set $$\mathcal{B}_{0} = \left\{D_{g} ~ \bigg| ~ g \in \bigcup_{i \in I \setminus J} (C_{i} \setminus \{x_{i}\})\right\}$$ forms an $R$-basis of $\text{Inn}_{(\sigma, \tau)}(RG)$.
\end{corollary}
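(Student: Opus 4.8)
The plan is to obtain this as an immediate specialization of Theorem \ref{theorem 3.13} to the situation in which $G$ has no infinite $(\sigma, \tau)$-conjugacy class. First I would invoke the hypothesis: since $G$ is a $(\sigma, \tau)$-FC group, by the definition recalled just before the corollary every $(\sigma, \tau)$-conjugacy class of $G$ is of finite size. Consequently the collection $\{C_{i'}\}_{i' \in I'}$ of all infinite-size $(\sigma, \tau)$-conjugacy classes that appears in the statement of Theorem \ref{theorem 3.13} is empty, i.e.\ $I' = \emptyset$. Moreover, the collection $\{C_i\}_{i \in I}$ of all finite-size classes now coincides with the collection of \emph{all} $(\sigma, \tau)$-conjugacy classes of $G$, so the indexing data $I$, $J \subseteq I$, and the representatives $\{x_i\}_{i \in I}$ can be taken to be exactly those in the present corollary.

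Next I would feed these two observations into the conclusion of Theorem \ref{theorem 3.13}. That theorem produces the set
\[
\mathcal{B}_0 = \left\{ D_g \mid g \in \left(\bigcup_{i \in I \setminus J}(C_i \setminus \{x_i\})\right) \cup \left(\bigcup_{i' \in I'} C_{i'}\right) \right\}
\]
as an $R$-basis of $\text{Inn}_{(\sigma, \tau)}(RG)$. Since $I' = \emptyset$, the second union $\bigcup_{i' \in I'} C_{i'}$ is empty, so $\mathcal{B}_0$ collapses to $\{ D_g \mid g \in \bigcup_{i \in I \setminus J}(C_i \setminus \{x_i\}) \}$, which is precisely the set asserted in the corollary. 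Hence this set is an $R$-basis of $\text{Inn}_{(\sigma, \tau)}(RG)$, as claimed.

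There is essentially no obstacle here; the corollary is a direct consequence of the main theorem. The only point worth a line of care is the bookkeeping of index sets — verifying that ``all finite-size classes'' and ``all classes'' genuinely coincide under the $(\sigma, \tau)$-FC hypothesis, so that the same symbols $I$, $J$, $\{x_i\}$ may be reused verbatim — and this is immediate from the definition. I would therefore present the argument in two short sentences: reduce $I'$ to $\emptyset$, then read off the basis from Theorem \ref{theorem 3.13}.
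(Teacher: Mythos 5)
Your proposal is correct and matches the paper exactly: the paper states this corollary as an immediate consequence of Theorem \ref{theorem 3.13} with no separate proof, and your reduction (the $(\sigma,\tau)$-FC hypothesis forces $I' = \emptyset$, so the basis of Theorem \ref{theorem 3.13} collapses to the stated set) is precisely the intended specialization.
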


Below, we have another corollary for a finite group, which is a special case of a $(\sigma, \tau)$-FC group.

\begin{corollary}\th\label{corollary 3.13(2)}
Let $R$ be a commutative ring with unity, $G$ be a finite group of order $n$ and $C_{1}, ..., C_{r}$ be the all possible distinct $(\sigma, \tau)$-conjugacy classes of $G$ with representatives $x_{1}, ..., x_{r}$, respectively. Then the rank of the $R$-module $\text{Inn}_{(\sigma, \tau)}(RG)$ is $n-r$. Furthermore, if $s$ is a positive integer such that $|C_{i}| = 1$ for all $i \in \{1, ..., s\}$ and $|C_{i}| \geq 2$ for all $i \in \{s+1, ..., r\}$, then the set $$\mathcal{B}_{0} = \left\{D_{g} ~ \bigg| ~ g \in \bigcup_{i=s+1}^{r} (C_{i} \setminus \{x_{i}\})\right\}$$ is an $R$-basis of $\text{Inn}_{(\sigma, \tau)}(RG)$.
\end{corollary}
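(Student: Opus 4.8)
The plan is to obtain this statement as the finite-group specialization of Theorem \ref{theorem 3.13}. The first step is the routine observation that when $G$ is finite, every $(\sigma, \tau)$-conjugacy class is automatically of finite size, so in the notation of Theorem \ref{theorem 3.13} one may take the indexing set $I$ to be $\{1, \ldots, r\}$, the family $\{C_{i'}\}_{i' \in I'}$ of infinite classes is empty, and the subset $J \subseteq I$ of indices with $|C_j| = 1$ is precisely $\{1, \ldots, s\}$ under the relabelling hypothesis that the singleton ($(\sigma,\tau)$-central) classes are listed first. With these identifications, Theorem \ref{theorem 3.13} gives immediately that
$$\mathcal{B}_0 = \left\{D_g ~ \bigg| ~ g \in \bigcup_{i = s+1}^{r} (C_i \setminus \{x_i\}) \right\}$$
is an $R$-basis of $\text{Inn}_{(\sigma, \tau)}(RG)$, which is the second assertion. (Alternatively one could cite Corollary \ref{corollary 3.13(1)}, since a finite group is trivially a $(\sigma, \tau)$-FC group.)

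It then remains only to count $|\mathcal{B}_0|$ in order to read off the rank. Since $\mathcal{B}_0$ is indexed by the group elements $g \in \bigcup_{i=s+1}^{r} (C_i \setminus \{x_i\})$ and the correspondence $g \mapsto D_g$ is injective on this set (which one checks using that $D_g = D_h$ forces $g - h \in Z_{(\sigma,\tau)}(RG)$, impossible for distinct $g,h$ drawn from non-singleton classes with their representatives removed — this is already implicit in the linear-independence part of the proof of Theorem \ref{theorem 3.13}), we have
$$|\mathcal{B}_0| = \sum_{i=s+1}^{r} \bigl(|C_i| - 1\bigr) = \left(\sum_{i=s+1}^{r} |C_i|\right) - (r - s).$$
By Corollary \ref{corollary 3.3} the classes $C_1, \ldots, C_r$ partition $G$, so $\sum_{i=1}^{r} |C_i| = |G| = n$; combining this with $|C_i| = 1$ for $1 \le i \le s$ gives $\sum_{i=s+1}^{r} |C_i| = n - s$, whence $|\mathcal{B}_0| = (n - s) - (r - s) = n - r$. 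As $\mathcal{B}_0$ is an $R$-basis, $\text{Inn}_{(\sigma, \tau)}(RG)$ is a free $R$-module of rank $n - r$.

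I do not anticipate a genuine obstacle: this is a bookkeeping corollary of Theorem \ref{theorem 3.13}, requiring no new idea. The only points needing a moment's care are notational — verifying that the triple $(J,\ I\setminus J,\ I')$ of Theorem \ref{theorem 3.13} matches up with $(\{1,\ldots,s\},\ \{s+1,\ldots,r\},\ \emptyset)$ after relabelling the finitely many classes so the central ones come first — together with the elementary class-equation count $\sum_{i=1}^{r}|C_i| = n$. If anything is mildly delicate it is the injectivity of $g \mapsto D_g$ used in the cardinality computation, but this follows from the structure already exhibited in the proof of Theorem \ref{theorem 3.13}.
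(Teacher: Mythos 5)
Your proposal is correct and matches the paper's route: the paper states this corollary as an immediate specialization of Theorem \ref{theorem 3.13} (equivalently Corollary \ref{corollary 3.13(1)}, since a finite group is a $(\sigma,\tau)$-FC group), and the cardinality count $|\mathcal{B}_{0}| = \sum_{i=s+1}^{r}(|C_i|-1) = (n-s)-(r-s) = n-r$ is exactly the computation the paper carries out (in the proof of Corollary \ref{corollary 3.13(3)}). Your remark on the injectivity of $g \mapsto D_g$ is indeed already covered by the linear-independence step of Theorem \ref{theorem 3.13}, so nothing is missing.
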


The following corollary is stated in particular, when $R = \mathbb{F}$ is a field.
\begin{corollary}\th\label{corollary 3.13(3)}
Let $\mathbb{F}$ be a field, $G$ be a finite group of order $n$ and $C_{1}, ..., C_{r}$ be the all possible distinct $(\sigma, \tau)$-conjugacy classes of $G$ with representatives $x_{1}, ..., x_{r}$, respectively. Then the dimension of $\text{Inn}_{(\sigma, \tau)}(\mathbb{F}G)$ over $\mathbb{F}$ is $n-r$. If $s$ is a positive integer such that $|C_{i}| = 1$ for all $i \in \{1, ..., s\}$ and $|C_{i}| \geq 2$ for all $i \in \{s+1, ..., r\}$, then the set $$\mathcal{B}_{0} = \left\{D_{g} \mid g \in \bigcup_{i=s+1}^{r} (C_{i} \setminus \{x_{i}\})\right\}$$ is an $\mathbb{F}$-basis of $\text{Inn}_{(\sigma, \tau)}(\mathbb{F}G)$.
\end{corollary}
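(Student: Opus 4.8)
The plan is to obtain this statement as a direct specialization of Corollary \ref{corollary 3.13(2)} (equivalently of Theorem \ref{theorem 3.13}) to the case where the commutative unital ring $R$ is the field $\mathbb{F}$. First I would note that a field is in particular a commutative ring with unity, so the hypotheses of Corollary \ref{corollary 3.13(2)} are satisfied; moreover, since $G$ is finite, every $(\sigma,\tau)$-conjugacy class of $G$ is finite, so in the notation of Theorem \ref{theorem 3.13} the index set $I'$ (indexing the infinite classes) is empty and $I = \{1,\dots,r\}$. A point worth recording explicitly is that when $R=\mathbb{F}$ is a field, the provisional assumption in Step 1 of the proof of Theorem \ref{theorem 3.13} — that an $\mathbb{F}$-basis of the submodule $Z_{(\sigma,\tau)}(\mathbb{F}G)$ extends to an $\mathbb{F}$-basis of $\mathbb{F}G$ — is automatic, so the conclusion applies without any extra hypothesis; in any case Step 2 of that proof constructs such an extension explicitly.

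Next I would translate the module-theoretic conclusion into vector-space language: Corollary \ref{corollary 3.13(2)} says that $\text{Inn}_{(\sigma,\tau)}(\mathbb{F}G)$ is a free $\mathbb{F}$-module of rank $n-r$ with free basis $\mathcal{B}_{0}$, and over a field this is precisely the assertion that $\text{Inn}_{(\sigma,\tau)}(\mathbb{F}G)$ is an $(n-r)$-dimensional $\mathbb{F}$-vector space with $\mathbb{F}$-basis $\mathcal{B}_{0}$. Thus $\dim_{\mathbb{F}}\text{Inn}_{(\sigma,\tau)}(\mathbb{F}G) = n-r$ and $\mathcal{B}_{0}$ is an $\mathbb{F}$-basis, which is exactly the claim.

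For completeness I would also verify the internal bookkeeping $|\mathcal{B}_{0}| = n-r$ so that the two halves of the statement are consistent. Since the $(\sigma,\tau)$-conjugacy classes partition $G$ (Corollary \ref{corollary 3.3}) we have $\sum_{i=1}^{r}|C_{i}| = n$, and with $C_{1},\dots,C_{s}$ the singleton classes and $|C_{i}|\ge 2$ for $i>s$,
\[
\left|\bigcup_{i=s+1}^{r}(C_{i}\setminus\{x_{i}\})\right| = \sum_{i=s+1}^{r}(|C_{i}|-1) = \Big(\sum_{i=s+1}^{r}|C_{i}|\Big)-(r-s) = (n-s)-(r-s) = n-r ,
\]
and $g\mapsto D_{g}$ is injective on this set by the linear independence established in Step 1 of the proof of Theorem \ref{theorem 3.13}. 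I do not expect any real obstacle; the only mild subtlety is being explicit about why passing to a field removes the provisional basis-extension hypothesis, which is precisely why the field case can be packaged cleanly as this corollary.
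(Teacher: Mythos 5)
Your proposal is correct and matches the route the paper itself gives first: the proof of Corollary \ref{corollary 3.13(3)} opens by noting that it ``follows directly from Theorem \ref{theorem 3.13} or Corollary \ref{corollary 3.13(2)}'', and your observations that the basis-extension assumption is automatic over a field (and is in any case discharged by Step 2) and that $|\mathcal{B}_{0}| = n-r$ are exactly the points needed. The paper then optionally re-verifies the statement with a slightly different ending (direct $\mathbb{F}$-linear independence of $\mathcal{B}_{0}$ plus the dimension count via the map $\alpha \mapsto D_{\alpha}$ with kernel $Z_{(\sigma,\tau)}(\mathbb{F}G)$), but this adds nothing your argument lacks.
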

\begin{proof}
The proof follows directly from Theorem \ref{theorem 3.13} or Corollary \ref{corollary 3.13(2)}, but we can also prove it by following a slightly different approach in the end.

By \th\ref{lemma 3.12}, $\text{dim}(Z_{(\sigma, \tau)}(\mathbb{F}G)) = r$ and $\mathcal{B}_{1} = \{\hat{C}_{i} \mid 1 \leq i \leq r\}$ is an $\mathbb{F}$-basis of $Z_{(\sigma, \tau)}(\mathbb{F}G)$. The basis $\mathcal{B}_{1}$ can be extended to an $\mathbb{F}$-basis, say, $\mathcal{B}_{2} = \{\hat{C}_{1}, ..., \hat{C}_{r}, \gamma_{r+1}, ..., \gamma_{n}\}$ of $\mathbb{F}G$.

Put $\mathcal{B} = \{D_{\gamma_{i}} \mid r+1 \leq i \leq n\}$. Then as shown in the proof of Theorem \ref{theorem 3.13}, it can be shown that $\mathcal{B}$ is a basis of $\text{Inn}_{(\sigma, \tau)}(\mathbb{F}G)$ over $\mathbb{F}$.

As $|\mathcal{B}| = n-r$, therefore, $\text{dim}(\text{Inn}_{(\sigma, \tau)}(\mathbb{F}G)) = n-r$. Note that this fact also follows from the fact that the map $\theta:\mathbb{F}G \rightarrow \text{Inn}_{(\sigma, \tau)}(\mathbb{F}G)$ ($\alpha \mapsto D_{\alpha}$) is an $\mathbb{F}$-linear map with null space $Z_{(\sigma, \tau)}(\mathbb{F}G)$.

Now, we show that $\mathcal{B}_{0}$ is an $\mathbb{F}$-basis of $\text{Inn}_{(\sigma, \tau)}(\mathbb{F}G)$.

Let $|C_{i}| = n_{i}, \hspace{0.1cm} \forall \hspace{0.1cm} i \in \{s+1, ..., r\}$. As $G = \bigcup_{i=1}^{r} C_{i}$ and $(\sigma, \tau)$-conjugacy classes are pairwise disjoint, so $$n = |G| = |\bigcup_{i=1}^{r} C_{i}| = \sum_{i=1}^{r} |C_{i}| = \sum_{i=1}^{s} |C_{i}| + \sum_{j=s+1}^{r} |C_{j}| = s + \sum_{j=s+1}^{r} n_{j}$$ so that $\sum_{j=s+1}^{r} n_{j} = n-s$.

Suppose that $\sum_{g \in \bigcup_{i=s+1}^{r} C_{i} \setminus \{x_{i}\}} \lambda_{g} D_{g} = 0$ for $\lambda_{g} \in \mathbb{F}$ ($g \in \bigcup_{i=s+1}^{r} C_{i} \setminus \{x_{i}\}$).

$\Rightarrow D_{\sum_{g \in \bigcup_{i=s+1}^{r} C_{i} \setminus \{x_{i}\}} \lambda_{g} g} = 0$.

$\Rightarrow \left(\sum_{g \in \bigcup_{i=s+1}^{r} C_{i} \setminus \{x_{i}\}} \lambda_{g} g\right) \tau(\alpha) = \sigma(\alpha) \left(\sum_{g \in \bigcup_{i=s+1}^{r} C_{i} \setminus \{x_{i}\}} \lambda_{g} g\right), \hspace{0.1cm} \forall \hspace{0.1cm} \alpha \in \mathbb{F}G$.

$\Rightarrow \sum_{g \in \bigcup_{i=s+1}^{r} C_{i} \setminus \{x_{i}\}} \lambda_{g} g \in Z_{(\sigma, \tau)}(\mathbb{F}G)$.

$\Rightarrow \sum_{g \in \bigcup_{i=s+1}^{r} C_{i} \setminus \{x_{i}\}} \lambda_{g} g = \sum_{i=1}^{r} \mu_{i} \hat{C}_{i}$ for some unique $\mu_{i} \in \mathbb{F}$ ($1 \leq i \leq r$).

$\Rightarrow \sum_{i=1}^{s} \mu_{i} x_{i} + \sum_{i=s+1}^{r} \mu_{i} x_{i} + \sum_{i=s+1}^{r} \left(\sum_{g \in C_{i} \setminus \{x_{i}\}} (\mu_{i} - \lambda_{g}) g\right) = 0$.

Since $G$ is an $\mathbb{F}$-linearly independent subset of $\mathbb{F}G$ and the $(\sigma, \tau)$-conjugacy classes $C_{i}$'s ($1 \leq i \leq r$) are pairwise disjoint, so we get that $$\lambda_{g} = 0, ~~~~ \forall ~ g \in \bigcup_{i=s+1}^{r} (C_{i} \setminus \{x_{i}\}).$$ Therefore, the set $\mathcal{B}_{0}$ is $\mathbb{F}$-linearly independent.

Further, since the sets $C_{s+1} \setminus \{x_{s+1}\}, ..., C_{r} \setminus \{x_{r}\}$ are pairwise disjoint, therefore, $$|\mathcal{B}_{0}| = \sum_{i=s+1}^{r} |C_{i} \setminus \{x_{i}\}| = \sum_{i=s+1}^{r} (|C_{i}| - 1) = \left(\sum_{i=s+1}^{r} n_{i}\right) - (r-s) = (n-s) - (r-s) = n-r.$$
So $|\mathcal{B}_{0}| = \text{dim}(\text{Inn}_{(\sigma, \tau)}(\mathbb{F}G))$. Therefore, $\mathcal{B}_{0}$ is a basis of $\text{Inn}_{(\sigma, \tau)}(\mathbb{F}G)$.

Hence proved.
\end{proof}

Note that the set $$\{D_{\gamma} \mid \gamma \in \{g - x_{i} \mid g \in \cup_{i=s+1}^{r} (C_{i} \setminus \{x_{i}\})\}\}$$ is another basis of $\text{Inn}_{(\sigma, \tau)}(\mathbb{F}G)$ over $\mathbb{F}$.

Below, we prove a significant result.

\begin{theorem}\th\label{theorem 3.14}
Let $R$ be a ring with unity and $G$ be a finite group of order $n$, which is invertible in $R$. Let $\sigma, \tau$ be arbitrary unital $R$-algebra endomorphisms of $RG$. Then every $(\sigma, \tau)$-derivation of $RG$ is inner.
\end{theorem}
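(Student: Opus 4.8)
The plan is to run an averaging argument, which amounts to showing that the first $(\sigma,\tau)$-twisted Hochschild cohomology of $RG$ vanishes once $|G|$ is invertible. Fix a $(\sigma,\tau)$-derivation $D$ of $RG$. Since $G$ is an $R$-basis of $RG$ and both $D$ and every inner $(\sigma,\tau)$-derivation $D_{\beta}(\alpha)=\beta\tau(\alpha)-\sigma(\alpha)\beta$ are $R$-linear, it suffices to exhibit a single element $\delta\in RG$ with $D(g)=\delta\tau(g)-\sigma(g)\delta$ for every $g\in G$; then $D=D_{\delta}$ on all of $RG$, so $D$ is inner.

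Before the main computation I would record two elementary facts. First, since $n=|G|$ is invertible in $R$ and $n\cdot 1_{R}$ is central in $R$, its inverse $\tfrac1n$ is central in $R$, hence central in $RG$. Second, because $\sigma$ and $\tau$ are \emph{unital} $R$-algebra endomorphisms, each $\tau(g)$ is a unit of $RG$ with $\tau(g)^{-1}=\tau(g^{-1})$, and likewise for $\sigma(g)$; in particular $D(1)=0$. Note that at no point will we use that $\sigma,\tau$ restrict to group endomorphisms, so the conclusion holds for arbitrary unital $R$-algebra endomorphisms, as claimed.

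Now I would set
\[
\delta \;=\; \frac1n\sum_{h\in G} D(h)\,\tau(h^{-1}) \;\in\; RG ,
\]
and verify the required identity using the defining relation $D(gh)=D(g)\tau(h)+\sigma(g)D(h)$, rewritten as $\sigma(g)D(h)=D(gh)-D(g)\tau(h)$. Substituting,
\[
\sigma(g)\delta \;=\; \frac1n\sum_{h\in G}\bigl(D(gh)-D(g)\tau(h)\bigr)\tau(h^{-1})
\;=\; \frac1n\sum_{h\in G} D(gh)\,\tau(h^{-1}) \;-\; D(g),
\]
where the cancelled term is $\tfrac1n\sum_{h}D(g)\tau(h)\tau(h^{-1})=\tfrac1n\sum_{h}D(g)=D(g)$ because $\tau(h)\tau(h^{-1})=\tau(1)=1$. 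Re-indexing the remaining sum by $k=gh$ (a bijection of $G$), with $h^{-1}=k^{-1}g$ and hence $\tau(h^{-1})=\tau(k^{-1})\tau(g)$, gives $\tfrac1n\sum_{k}D(k)\tau(k^{-1})\tau(g)=\delta\tau(g)$. Therefore $\sigma(g)\delta=\delta\tau(g)-D(g)$, i.e. $D(g)=\delta\tau(g)-\sigma(g)\delta=D_{\delta}(g)$ for all $g\in G$, and extending $R$-linearly yields $D=D_{\delta}$.

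I do not anticipate a genuine obstacle: the only points needing care are the legitimacy of $\tfrac1n$ and of the inverse $\tau(g)^{-1}$ (handled by the two preliminary observations) and the re-indexing step (valid since $h\mapsto gh$ permutes $G$). If a more conceptual phrasing is desired, the same $\delta$ is obtained by averaging the $1$-cocycle $g\mapsto D(g)\tau(g)^{-1}$ for the conjugation-type action $x\mapsto\sigma(g)\,x\,\tau(g)^{-1}$ of $G$ on the bimodule ${}_{\sigma}RG_{\tau}$, but the direct verification above avoids invoking any cohomological machinery.
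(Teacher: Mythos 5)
Your proof is correct and follows essentially the same averaging argument as the paper: your $\delta=\frac1n\sum_{h}D(h)\tau(h^{-1})$ is (after reindexing $h\mapsto h^{-1}$) exactly the paper's element $\gamma=\frac{1}{|G|}\sum_{g}D(g^{-1})\tau(g)$, and your verification via $\sigma(g)D(h)=D(gh)-D(g)\tau(h)$ plus the change of variable $k=gh$ mirrors the paper's computation of $D_{\gamma}(x)$.
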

\begin{proof}
Let $D \in \mathcal{D}_{(\sigma, \tau)}(RG)$. Put $$\gamma = \frac{1}{|G|} \sum_{g \in G} D(g^{-1}) \tau(g) ~~~~ \left(\text{or} - \frac{1}{|G|} \sum_{g \in G} \sigma(g^{-1})D(g)\right).$$ Then $\gamma \in RG$. 

Since $D:RG \rightarrow RG$ is a $(\sigma, \tau)$-derivation, so for any $\alpha, \beta \in RG$, $$D(\alpha \beta) = D(\alpha) \tau(\beta) + \sigma(\alpha) D(\beta)$$ so that $$\sigma(\alpha) D(\beta) = D(\alpha \beta) - D(\alpha) \tau(\beta).$$

Now, for any $x \in G$, \begin{eqnarray*}
D_{\gamma}(x) & = & \gamma \tau(x) - \sigma(x) \gamma \\ & = & \frac{1}{|G|} \sum_{g \in G} D(g^{-1}) \tau(gx) - \frac{1}{|G|} \sum_{g \in G} (D(x g^{-1}) - D(x) \tau(g^{-1})) \tau(g) \\ & = & \frac{1}{|G|} \sum_{y \in G} D(xy^{-1}) \tau(y) - \frac{1}{|G|} \sum_{g \in G} D(xg^{-1}) \tau(g) + \frac{1}{|G|} \sum_{g \in G} D(x) \\ & = & D(x).
\end{eqnarray*}

So $D = D_{\gamma}$. Therefore, $D$ is inner.
Hence proved.
\end{proof}

\begin{corollary}\th\label{corollary 3.15}
Let $R$ be a commutative ring with unity and $G$ be a finite group whose order is invertible in $R$. Let $C_{1}, ..., C_{r}$ be the all possible distinct $(\sigma, \tau)$-conjugacy classes of $G$ with representatives $x_{1}, ..., x_{r}$, respectively. 
Then every $(\sigma, \tau)$-derivation of $RG$ is inner, that is, $\mathcal{D}_{(\sigma, \tau)}(RG) = \text{Inn}_{(\sigma, \tau)}(RG)$. Furthermore, if $s$ is a positive integer such that $|C_{i}| = 1$ for all $i \in \{1, ..., s\}$ and $|C_{i}| \geq 2$ for all $i \in \{s+1, ..., r\}$, then the set $$\mathcal{B}_{0} = \left\{D_{g} \mid g \in \bigcup_{i=s+1}^{r} (C_{i} \setminus \{x_{i}\})\right\}$$ is an $R$-basis of $\mathcal{D}_{(\sigma, \tau)}(RG)$.
\end{corollary}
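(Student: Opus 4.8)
The plan is to deduce this immediately from Theorem \ref{theorem 3.14} together with Corollary \ref{corollary 3.13(2)}; essentially all the work has already been done in these two results, so what remains is only to check that their hypotheses are met and to splice them together.

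First I would prove the equality $\mathcal{D}_{(\sigma, \tau)}(RG) = \text{Inn}_{(\sigma, \tau)}(RG)$. The inclusion $\text{Inn}_{(\sigma, \tau)}(RG) \subseteq \mathcal{D}_{(\sigma, \tau)}(RG)$ is immediate, since every inner $(\sigma, \tau)$-derivation is by definition a $(\sigma, \tau)$-derivation; indeed Remark \ref{remark 2.5(NN)} records that $\text{Inn}_{(\sigma, \tau)}(RG)$ is an $R$-submodule of $\mathcal{D}_{(\sigma, \tau)}(RG)$. For the reverse inclusion I would invoke Theorem \ref{theorem 3.14}: here $R$ is a ring with unity, $G$ is finite with $|G|$ invertible in $R$, and by the standing assumption of this section $\sigma$ and $\tau$ are $R$-linear extensions of group endomorphisms of $G$, hence in particular unital $R$-algebra endomorphisms of $RG$, so the hypotheses of Theorem \ref{theorem 3.14} hold and every $(\sigma, \tau)$-derivation of $RG$ is inner. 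Combining the two inclusions gives $\mathcal{D}_{(\sigma, \tau)}(RG) = \text{Inn}_{(\sigma, \tau)}(RG)$.

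For the remaining assertion I would apply Corollary \ref{corollary 3.13(2)}, whose hypotheses ($R$ commutative with unity, $G$ finite of order $n$, and $\sigma, \tau$ as above) are exactly those of the present statement: if $s$ is a positive integer with $|C_i| = 1$ for $1 \le i \le s$ and $|C_i| \ge 2$ for $s+1 \le i \le r$, then $\mathcal{B}_0 = \{D_g \mid g \in \bigcup_{i=s+1}^{r}(C_i \setminus \{x_i\})\}$ is an $R$-basis of $\text{Inn}_{(\sigma, \tau)}(RG)$. Since we have just shown $\text{Inn}_{(\sigma, \tau)}(RG) = \mathcal{D}_{(\sigma, \tau)}(RG)$, the same set $\mathcal{B}_0$ is an $R$-basis of $\mathcal{D}_{(\sigma, \tau)}(RG)$, which is the claim. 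There is no genuine obstacle in this argument; the only point that deserves a moment's attention is verifying the compatibility of hypotheses — that the section's standing conditions on $\sigma, \tau$ are a special case of the more permissive conditions allowed in Theorem \ref{theorem 3.14}, and that the commutativity of $R$ required by Corollary \ref{corollary 3.13(2)} is already assumed in the present statement.
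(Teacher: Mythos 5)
Your proposal is correct and is exactly the paper's argument: the paper also deduces this corollary directly from Theorem \ref{theorem 3.14} (every $(\sigma, \tau)$-derivation is inner) combined with Corollary \ref{corollary 3.13(2)} (the $R$-basis $\mathcal{B}_{0}$ of $\text{Inn}_{(\sigma, \tau)}(RG)$). Your additional verification of hypothesis compatibility is a harmless elaboration of the same route.
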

\begin{proof}
Follows from Corollary \ref{corollary 3.13(2)} and Theorem \ref{theorem 3.14}.
\end{proof}

Finally, we compare this section's results with those already known in the literature. Theorem \ref{theorem 3.13} is a much broader result that classifies all the inner $(\sigma, \tau)$-derivations of a group algebra $RG$ of an arbitrary group $G$ over an arbitrary commutative unital ring $R$, where $\sigma, \tau$ are $R$-algebra endomorphisms of $RG$ which are $R$-linear extensions of the group endomorphisms of $G$. Also, Corollary \ref{corollary 3.13(3)} generalizes Theorem 2.17 of \cite{doi:10.1142/S0219498825503712}, which was proved for ordinary derivations. In \cite{AleksandrAlekseev2020}, the authors considered the $(\sigma, \tau)$ and $(\sigma, id)$-derivations of the group algebra $\mathbb{C}G$ of a discrete countable group $G$, where $\sigma, \tau$ are $\mathbb{C}$-algebra endomorphisms of $\mathbb{C}G$ which are $\mathbb{C}$-linear extensions of the group endomorphisms of $G$. Theorem \ref{theorem 3.14} generalizes {\cite[Corollary 5]{AleksandrAlekseev2020}} in which the authors proved that every $(\sigma, \tau)$-derivation of the group algebra $\mathbb{C}G$ of a finite group $G$ is inner. In {\cite[Corollary 7]{AleksandrAlekseev2020}}, the authors gave a criterion for a $(\sigma, \tau)$-derivation of the group algebra $\mathbb{C}G$ of a finitely generated $(\sigma, \tau)$-FC group $G$ to be inner. In {\cite[Section 4]{AleksandrAlekseev2020}}, the authors studied quasi-inner $(\sigma, \tau)$-derivations of $\mathbb{C}G$ defined by the authors in {\cite[Definition 6]{AleksandrAlekseev2020}} using characters. In {\cite[Proposition 2]{AleksandrAlekseev2020}} and {\cite[Definition 8]{AleksandrAlekseev2020}}, the authors defined a $(\sigma, \tau)$-central $(\sigma, \tau)$-derivation. In {\cite[Proposition 3]{AleksandrAlekseev2020}}, the authors proved that a non-zero $(\sigma, \tau)$-central $(\sigma, \tau)$-derivation is not quasi-inner. In {\cite[Theorem 2]{AleksandrAlekseev2020}}, the authors gave a way to calculate quasi-inner $(\sigma, \tau)$-derivations.
In \cite{Arutyunov2021} also, the authors studied the ordinary inner derivations of finite and FC groups using topological and character techniques. In {\cite[Theorem 4.3]{Arutyunov2021}}, the authors gave a characterization under which all ordinary derivations of $AG$ are inner for a finite group $G$ and a finite commutative unital ring $A$. In {\cite[Corollary 4.2.4]{Arutyunov2021}}, the authors proved that for a torsion-free commutative unital ring $A$ and a finite group $G$, all derivations of $AG$ are inner. In \cite{Arutyunov2020a}, the authors described the space of all inner and outer derivations of a group algebra of a finitely presented discrete group in terms of the character spaces of the 2-groupoid of the adjoint action of the group. Theorem \ref{theorem 3.14} also generalizes  {\cite[Theorem 1.1]{Chaudhuri2019}}. Also, note that a result similar to Theorem \ref{theorem 3.14} was obtained for ordinary derivations to be locally inner in {\cite[Lemma 3]{OrestD.Artemovych2020}} and earlier in {\cite[Theorem 2.1]{ikeda1994derivation}}. For more details, we refer the reader to the third paragraph of the introduction section (Section \ref{section 1}), where we have mentioned the work done in literature on the problem of determining the conditions under which a derivation or a twisted derivation of a group algebra is inner thus illustrating the fact that this problem has always been of interest to researchers. We also refer the reader to the end of Subsection \ref{subsection 2.3} for more insights.

\section{Application: \texorpdfstring{$\sigma$}{Lg}-Derivations of Dihedral Group \\ Algebras}\label{section 4}
In this section, we apply the results obtained in Sections \ref{section 2} and \ref{section 3} to explicitly classify all inner and outer $\sigma$-derivations of the dihedral group algebra $\mathbb{F}D_{2n}$, where $\mathbb{F}$ is an arbitrary field and $$D_{2n} = \langle a, b \mid a^{n} = 1 = b^{2}, (ab)^{2} = 1 \rangle$$ ($n \geq 3$). In \cite{Johnson2013}, the author has given all group homomorphisms from $D_{2m}$ to $D_{2n}$ for arbitrary positive integers $m$ and $n$. When $n$ is odd, $D_{2n}$ has precisely $n^{2} + 1$ group endomorphisms which are of the following two forms:
\begin{enumerate}
\item[•] $\sigma_{-1}(a) = 1$ and $\sigma_{-1}(b) = 1$.
\item[•] $\sigma_{0}(a) = a^{s}$ ($1 \leq s \leq n-1$) and $\sigma_{0}(b) = a^{t}b$ ($0 \leq t \leq n-1$).
\end{enumerate}
When $n$ is even, $D_{2n}$ has precisely $(n+2)^{2}$ group endomorphisms which are of the following forms:
\begin{enumerate}
\item[•] $\sigma_{1}(a) = a^{s}$ ($0 \leq s \leq n-1$, $s \neq 0, \frac{n}{2}$) and $\sigma_{1}(b) = a^{t}b$ ($0 \leq t \leq n-1$).

\item[•] $\sigma_{2}(a) = a^{s}$ ($s = 0, \frac{n}{2}$) and $\sigma_{2}(b) = a^{t}$ ($t = 0, \frac{n}{2}$).

\item[•] $\sigma_{3}(a) = a^{s}$ ($s = 0, \frac{n}{2}$) and $\sigma_{3}(b) = a^{t}b$ ($0 \leq t \leq n-1$).

\item[•] $\sigma_{4}(a) = a^{s}b$ ($0 \leq s \leq n-1$), $\sigma_{4}(b) = a^{t}b$ ($t = s, s+\frac{n}{2}$).

\item[•] $\sigma_{5}(a) = a^{s}b$ ($0 \leq s \leq n-1$), $\sigma_{5}(b) = a^{t}$ ($t = 0, \frac{n}{2}$).
\end{enumerate}

We will obtain the results for just $\sigma_{0}$ when $n$ is odd and $\sigma_{1}$ when $n$ is even because results for the remaining ones become exercises.

\subsection{$\sigma$-Derivations of $\mathbb{F}D_{2n}$ when $\text{char}(\mathbb{F}) = 0$ or an odd prime $p$}\label{subsection 4.1}

\begin{definition}\th\label{definition 4.1}
Let $R$ be a ring with unity and $G$ be a group. Let $\beta \in RG$. Then the set $$\bar{C}(\beta) = \{\alpha \in RG \mid \alpha \beta = - \beta \alpha\}$$ is called the anti-centralizer of $\beta$ in $RG$.
\end{definition}

$\bar{C}(\beta)$ is a submodule of the $R$-module $RG$. If $R = \mathbb{F}$ is a field of characteristic $0$ or an odd rational prime $p$ and $g \in G \cap Z(\mathbb{F}G)$, then $\bar{C}(g) = \{0\}$. This is so because if $\alpha \in \bar{C}(g)$, then $\alpha g = - g \alpha$ so that $\alpha g = - \alpha g$ (as $g \alpha = \alpha g$). This gives $2 \alpha g = 0$ so that $2 \alpha = 0$. But since $\text{char}(\mathbb{F}) \neq 2$, therefore, $\alpha = 0$. Therefore, $\bar{C}(g) = \{0\}$.

\begin{lemma}\th\label{lemma 4.2}
Let $\mathbb{F}$ be a field of characteristic $0$ or an odd rational prime $p$ and $\sigma = \sigma_{0}$ or $\sigma_{1}$.
\begin{enumerate}
\item[(i)] The set $$\bar{\mathcal{B}}(\sigma(b)) = \{(a^{i} - a^{-i}), ~ a^{t}(a^{i} - a^{-i})b \mid 1 \leq i \leq \lfloor \frac{n-1}{2} \rfloor\}$$ is a basis of $\bar{C}(\sigma(b))$ over $\mathbb{F}$.
\item[(ii)] The set $$\bar{\mathcal{B}}(\sigma(ab)) = \{(a^{i} - a^{-i}), ~ a^{s+t}(a^{i} - a^{-i})b \mid 1 \leq i \leq \lfloor \frac{n-1}{2} \rfloor\}$$ is a basis of $\bar{C}(\sigma(ab))$ over $\mathbb{F}$.
\end{enumerate}

\end{lemma}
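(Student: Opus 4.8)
The plan is to observe that for both choices $\sigma = \sigma_{0}$ and $\sigma = \sigma_{1}$ the elements $\sigma(b)$ and $\sigma(ab)$ are reflections of $D_{2n}$, so that the two parts are instances of a single computation. Indeed $\sigma(b) = a^{t}b$ and $\sigma(ab) = \sigma(a)\sigma(b) = a^{s}a^{t}b = a^{s+t}b$, and any element $g = a^{k}b$ satisfies $g^{2} = a^{k}(ba^{k})b = a^{k}a^{-k}b^{2} = 1$. Thus it suffices to compute $\bar{C}(a^{k}b)$ for an arbitrary exponent $k$ (all exponents read modulo $n$), and then specialise $k = t$ for part (i) and $k = s+t$ for part (ii).

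First I would expand a general element as $\alpha = \sum_{i=0}^{n-1} c_{i}a^{i} + \sum_{i=0}^{n-1} d_{i}a^{i}b$ with all $c_{i}, d_{i} \in \mathbb{F}$, and compute $\alpha g$ and $g\alpha$ using only the relation $ba^{i} = a^{-i}b$. A short calculation gives $\alpha g + g\alpha = \sum_{m}(c_{m-k} + c_{k-m})\,a^{m}b + \sum_{m}(d_{m+k} + d_{k-m})\,a^{m}$. Since $\{a^{m}\}_{m}$ and $\{a^{m}b\}_{m}$ together form an $\mathbb{F}$-basis of $\mathbb{F}D_{2n}$, the condition $\alpha g = -g\alpha$ is equivalent to the two families of scalar equations $c_{j} = -c_{-j}$ for all $j$ and $d_{k+j} = -d_{k-j}$ for all $j$; that is, the ``rotation'' coefficients are antisymmetric about $0$ and the ``reflection'' coefficients are antisymmetric about $k$.

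Next I would use $\mathrm{char}(\mathbb{F}) \neq 2$ to pin down the forced zeros: $c_{0} = -c_{0}$ forces $c_{0} = 0$, and when $n$ is even $c_{n/2} = -c_{-n/2} = -c_{n/2}$ forces $c_{n/2} = 0$; likewise $d_{k} = 0$, and $d_{k+n/2} = 0$ when $n$ is even. (This is precisely where the hypothesis on the characteristic enters; in characteristic $2$ the anti-centraliser would coincide with the centraliser and be strictly larger.) Consequently the rotation part of $\bar{C}(a^{k}b)$ has free parameters $c_{1}, \dots, c_{\lfloor (n-1)/2\rfloor}$ (the remaining $c_{-i} = -c_{i}$ being determined), with basis $\{\,a^{i} - a^{-i} : 1 \le i \le \lfloor (n-1)/2\rfloor\,\}$, and the reflection part has free parameters $d_{k+1}, \dots, d_{k+\lfloor (n-1)/2\rfloor}$, with basis $\{\,a^{k+i}b - a^{k-i}b = a^{k}(a^{i} - a^{-i})b : 1 \le i \le \lfloor (n-1)/2\rfloor\,\}$. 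These two families together span $\bar{C}(a^{k}b)$, and they are $\mathbb{F}$-linearly independent because the first lies in $\mathbb{F}\langle a\rangle$, the second in $\mathbb{F}\langle a\rangle b$, and within each family the listed elements involve pairwise distinct group elements with no cancellation. Specialising $k = t$ and $k = s+t$ then yields the asserted bases $\bar{\mathcal{B}}(\sigma(b))$ and $\bar{\mathcal{B}}(\sigma(ab))$, each of dimension $2\lfloor (n-1)/2\rfloor$. The only delicate points are the bookkeeping of exponents modulo $n$ and the split between $n$ odd (where $\sigma = \sigma_{0}$) and $n$ even (where $\sigma = \sigma_{1}$ and the index $n/2$ must be treated separately); neither presents a genuine obstacle.
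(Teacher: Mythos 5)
Your proposal is correct, and for part (i) it is essentially the paper's argument: expand a general element of the group algebra, impose $\alpha\,(a^{t}b) = -(a^{t}b)\,\alpha$, and read off the antisymmetry conditions on the coefficients (the paper phrases these as $\lambda_{i,0} = -\lambda_{(n-i),0}$ and $\lambda_{(t+i),1} = -\lambda_{(n-i+t),1}$, with the index bookkeeping handled by its convention (A*); your $c_{j} = -c_{-j}$, $d_{k+j} = -d_{k-j}$ are the same relations). Where you genuinely diverge is in how the two parts and the two parities of $n$ are organised: you compute $\bar{C}(a^{k}b)$ once for an arbitrary reflection $a^{k}b$ and then specialise $k = t$ and $k = s+t$, treating $n$ odd and $n$ even uniformly via the forced vanishing $c_{0} = 0$, $d_{k} = 0$ (and $c_{n/2} = d_{k+n/2} = 0$ when $n$ is even, which is exactly why the index range stops at $\lfloor(n-1)/2\rfloor$). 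The paper instead proves part (ii) by transporting part (i) through the algebra automorphism $\theta$ of $\mathbb{F}D_{2n}$ determined by $\theta(a) = a$, $\theta(b) = a^{s}b$, showing $\alpha \in \bar{C}(\sigma(b))$ if and only if $\theta(\alpha) \in \bar{C}(\sigma(ab))$, so that $\theta(\bar{\mathcal{B}}(\sigma(b))) = \bar{\mathcal{B}}(\sigma(ab))$. Your single general computation buys economy and a uniform treatment of both cases and both parities; the paper's automorphism argument avoids redoing the coefficient analysis and makes the structural reason for the coincidence of the two answers explicit (the two anti-centralizers are carried onto one another by an algebra automorphism). Both routes correctly use $\operatorname{char}(\mathbb{F}) \neq 2$ only to force the diagonal coefficients to vanish.
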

\begin{proof}
First, let $n$ be odd.

\textbf{(i)} It can be easily verified that $\bar{\mathcal{B}}(\sigma_{0}(b))$ is an $\mathbb{F}$-linearly independent subset of $\bar{C}(\sigma_{0}(b))$. 

Now, let $\alpha \in \bar{C}(\sigma_{0}(b))$ and $$\alpha = \sum_{\substack{0 \leq i \leq n-1 \\ 0 \leq j \leq 1}} \lambda_{i,j}a^{i}b^{j}$$ for some $\lambda_{i,j} \in \mathbb{F}$ ($0 \leq i \leq n-1$, $0 \leq j \leq 1$). 

We make the following assumption (A*), which will be used to simplify the calculations:
\begin{center}
(A*): For each $j \in \{0, 1\}$, $\lambda_{i,j} = \lambda_{i',j}$ whenever $i \equiv i'$ (mod $n$). 
\end{center}

\noindent Now, $$\alpha \sigma_{0}(b) = \sum_{0 \leq i \leq n-1} \lambda_{i,0} a^{i+t}b + \sum_{0 \leq i \leq n-1} \lambda_{i,1} a^{i-t}$$ and $$\sigma_{0}(b) \alpha = \sum_{0 \leq i \leq n-1} \lambda_{i,0} a^{t-i}b + \sum_{0 \leq i \leq n-1} \lambda_{i,1} a^{t-i}.$$ Therefore, from $\alpha \sigma_{0}(b) = - \sigma_{0}(b) \alpha$, we get that $$\sum_{0 \leq i \leq n-1} \lambda_{i,0} a^{i+t}b = - \sum_{0 \leq i \leq n-1} \lambda_{i,0} a^{t-i}b ~~~~ \text{and} ~~~~ \sum_{0 \leq i \leq n-1} \lambda_{i,1} a^{i-t} = - \sum_{0 \leq i \leq n-1} \lambda_{i,1} a^{t-i}.$$

From the first equality and using (A*),  
\begin{equation*}
\begin{aligned}
\sum_{0 \leq i \leq n-1} \lambda_{i,0} a^{i} = - \sum_{0 \leq i \leq n-1} \lambda_{(n-i),0} a^{i}.
\end{aligned}
\end{equation*} Therefore, $\lambda_{i,0} = - \lambda_{(n-i),0}$ for all $i \in \{0, 1, ..., n-1\}$. In particular, $\lambda_{0,0} = 0$. Therefore,
\begin{align*}
\sum_{0 \leq i \leq n-1} \lambda_{i,0} a^{i} & = \sum_{1 \leq i \leq n-1} \lambda_{i,0} a^{i} = \sum_{1 \leq i \leq \frac{n-1}{2}} \lambda_{i,0} a^{i} + \sum_{\frac{n-1}{2}+1 \leq i \leq n-1} \lambda_{i,0} a^{i} \\ & = \sum_{1 \leq i \leq \frac{n-1}{2}} \lambda_{i,0} a^{i} + \sum_{1 \leq i \leq \frac{n-1}{2}} \lambda_{(n-i),0} a^{-i} \\ & = \sum_{1 \leq i \leq \frac{n-1}{2}} \lambda_{i,0} a^{i} - \sum_{1 \leq i \leq \frac{n-1}{2}} \lambda_{i,0} a^{-i} \\ & = \sum_{1 \leq i \leq \frac{n-1}{2}} \lambda_{i,0} (a^{i} - a^{-i}).
\end{align*}

From the second equality, we get that $$\sum_{0 \leq i \leq n-1} \lambda_{i,1} a^{i-t} = - \sum_{0 \leq i \leq n-1} \lambda_{i,1} a^{n-i+t}.$$

$\Rightarrow \sum_{-t \leq j \leq -t+n-1} \lambda_{(t+j),1} a^{j} = - \sum_{t+1 \leq j \leq t+n} \lambda_{(n-j+t),1} a^{j}.$

$\Rightarrow \sum_{0 \leq i \leq n-1} \lambda_{(t+i),1}a^{i} = - \sum_{0 \leq i \leq n-1} \lambda_{(n-i+t),1} a^{i}$ in view of (A*).

Therefore, $\lambda_{(t+i),1} = - \lambda_{(n-i+t),1}$ for all $i \in \{0, 1, ..., n-1\}$. Note that $\lambda_{t,1} = 0$ (for $i = 0$). Therefore, $$\sum_{0 \leq i \leq n-1} \lambda_{i,1} a^{i}b = \sum_{1 \leq i \leq \frac{n-1}{2}} \lambda_{i,1} a^{i}b + \sum_{1 \leq i \leq \frac{n-1}{2}} \lambda_{(n-i),1} a^{-i}b.$$ 
In view of (A*), the sum $$\sum_{1 \leq i \leq \frac{n-1}{2}} \lambda_{i,1} a^{i}b + \sum_{1 \leq i \leq \frac{n-1}{2}} \lambda_{(n-i),1} a^{-i}b$$ is, in fact, equal to $$\sum_{1 \leq i \leq \frac{n-1}{2}} \lambda_{(t+i),1} a^{t+i}b + \sum_{1 \leq i \leq \frac{n-1}{2}} \lambda_{(n-i+t),1} a^{t-i}b.$$
Therefore, 
\begin{equation*}
\begin{aligned}
\sum_{0 \leq i \leq n-1} \lambda_{i,1} a^{i}b & = \sum_{1 \leq i \leq \frac{n-1}{2}} \lambda_{(t+i),1} a^{t+i}b + \sum_{1 \leq i \leq \frac{n-1}{2}} \lambda_{(n-i+t),1} a^{t-i}b \\ & = \sum_{1 \leq i \leq \frac{n-1}{2}} \lambda_{(t+i),1} a^{t+i}b - \sum_{1 \leq i \leq \frac{n-1}{2}} \lambda_{(t+i),1} a^{t-i}b \\ & = \sum_{1 \leq i \leq \frac{n-1}{2}} \lambda_{(t+i),1} a^{t}(a^{i} - a^{-i})b.
\end{aligned}
\end{equation*}

\noindent Therefore, \begin{eqnarray*}\alpha & = & \sum_{0 \leq i \leq n-1} \lambda_{i,0} a^{i} + \sum_{0 \leq i \leq n-1} \lambda_{i,1} a^{i}b \\ & = & \sum_{1 \leq i \leq \frac{n-1}{2}} \lambda_{i,0} (a^{i} - a^{-i}) + \sum_{1 \leq i \leq \frac{n-1}{2}} \lambda_{(t+i),1} a^{t}(a^{i} - a^{-i})b.\end{eqnarray*}

\noindent Therefore, the set $\bar{\mathcal{B}}(\sigma_{0}(b))$ spans $\bar{C}(\sigma_{0}(b))$ over $\mathbb{F}$. Hence, $\bar{\mathcal{B}}(\sigma_{0}(b))$ is an $\mathbb{F}$-basis of $\bar{C}(\sigma_{0}(b))$.\vspace{10pt}

\textbf{(ii)} The map $\theta:D_{2n} \rightarrow D_{2n}$ defined by $$\theta(a^{i}b^{j}) = a^{i}(a^{s}b)^{j} ~~~~~~ (0 \leq i \leq n-1, ~ 0 \leq j \leq 1),$$ is an automorphism of $D_{2n}$. Extending $\theta$ $\mathbb{F}$-linearly to the whole of $\mathbb{F}D_{2n}$ makes $\theta$ an automorphism of the group algebra $\mathbb{F}D_{2n}$. Note that $\theta(a) = a$ and $\theta(b) = a^{s}b$.

We show that $\alpha \in \bar{C}(\sigma_{0}(b))$ if and only if $\theta(\alpha) \in \bar{C}(\sigma_{0}(ab))$. 

First, let $\alpha \in \bar{C}(\sigma_{0}(b))$. Then $\alpha \in \mathbb{F}D_{2n}$ such that $$\alpha \sigma_{0}(b) = - \sigma_{0}(b) \alpha, ~~~~ \text{that is}, ~~~~ \alpha (a^{t}b) = - (a^{t}b) \alpha.$$ 

Note that $\alpha$ can be written as $\alpha = c_{0} + c_{1}b$ for some $c_{0}, c_{1} \in \mathbb{F} \langle a \rangle$. Then $\theta(\alpha) = c_{0} + c_{1}a^{s}b$. 

As $\alpha (a^{t}b) = - (a^{t}b) \alpha$, so $c_{0}a^{t}b + c_{1}ba^{t}b = - a^{t}bc_{0} - a^{t}bc_{1}b$ so that $$c_{0}a^{t}b = - a^{t}bc_{0} ~~~~~~ \text{and} ~~~~~~ c_{1}ba^{t} = - a^{t}bc_{1}.$$ Therefore, 
\begin{eqnarray*}
\theta(\alpha)\sigma_{0}(ab) & = & (c_{0} + c_{1}a^{s}b)(a^{s+t}b) \\ & = & a^{s}c_{0}a^{t}b + c_{1}ba^{t}b \\ & = & -a^{s}a^{t}bc_{0} - a^{t}bc_{1}b \\ & = & - a^{s+t}b(c_{0} + c_{1}a^{s}b) \\ & = & - \sigma_{0}(ab) \theta(\alpha).
\end{eqnarray*}
Therefore, $\theta(\alpha) \in \bar{C}(\sigma_{0}(ab))$.

Conversely, let $\theta(\alpha) \in \bar{C}(\sigma_{0}(ab))$. Then $\theta(\alpha)\sigma_{0}(ab) = - \sigma_{0}(ab) \theta(\alpha)$. This gives $$a^{s+t}c_{0}b + c_{1}ba^{t}b = - a^{s+t}bc_{0} - a^{t}bc_{1}b$$ so that $$c_{0}b = - bc_{0} ~~~~~~ \text{and} ~~~~~~ c_{1}ba^{t} = - a^{t}bc_{1}.$$ Therefore, 
\begin{eqnarray*}
\alpha \sigma_{0}(b) & = & (c_{0} + c_{1}b) a^{t}b \\ & = & a^{t}c_{0}b + c_{1}ba^{t}b \\ & = & -a^{t}bc_{0} - a^{t}bc_{1}b \\ & = & - \sigma_{0}(b) (c_{0} + c_{1}b) \\ & = & - \sigma_{0}(b) \alpha.
\end{eqnarray*}
Therefore, $\alpha \in \bar{C}(\sigma_{0}(b))$.

Therefore, the set $\theta(\bar{\mathcal{B}}(\sigma_{0}(b))) = \bar{\mathcal{B}}(\sigma_{0}(ab))$ is an $\mathbb{F}$-basis of $\bar{C}(\sigma_{0}(ab))$.

The proof for $n$ even is similar to that for $n$ odd.
\end{proof}

Let $\mathbb{F}$ be a field and $\sigma$ be an $\mathbb{F}$-algebra endomorphism of $\mathbb{F}D_{2n}$ which is an $\mathbb{F}$-linear extension of a group endomorphism of $D_{2n}$. The set $X = \{a, b \}$ is the generating set of $D_{2n}$ and $a^{n}, b^{2}, (ab)^{2}$ are its relators. Let $f:X \rightarrow \mathbb{F}D_{2n}$ be a map that can be extended to a $\sigma$-derivation of $\mathbb{F}D_{2n}$. By \th\ref{theorem 2.4(N)}, this happens if and only if $$\tilde{f}(a^{n}) = 0, ~~~~~~ \tilde{f}(b^{2}) = 0 ~~~~~~ \text{and} ~~~~~~ \tilde{f}((ab)^{2}) = 0,$$ where $\tilde{f}:F(X) \rightarrow \mathbb{F}D_{2n}$ is the unique extension of $f$ defined in (\ref{eq 2.2(N)}) and (\ref{eq 2.3(N)}) and satisfying (\ref{eq 2.1(N)}), and $F(X)$ is the free group on $X$.

Now, in view of \th\ref{lemma 2.3(N)} and (\ref{eq 2.1(N)}), $$\tilde{f}(b^{2}) = \tilde{f}(b) \tilde{\sigma}(b) + \tilde{\sigma}(b) \tilde{f}(b) = f(b)\sigma(b) + \sigma(b)f(b)$$ as $\tilde{\sigma} = \sigma$ on $G$. So $\tilde{f}(b^{2}) = 0$ if and only if $f(b)\sigma(b) = - \sigma(b)f(b)$, that is, $f(b) \in \bar{C}(\sigma(b))$. Similarly, $\tilde{f}((ab)^{2}) = 0$ if and only if $\tilde{f}(ab) \in \bar{C}(\sigma(ab))$.
Since $f(a) \in \mathbb{F}D_{2n}$, so $f(a)$ can be written as $f(a) = c_{0} + c_{1}b$ for some $c_{0}, c_{1} \in \mathbb{F} \langle a \rangle$. Now, using (\ref{eq 2.3(N)}), \begin{equation}\label{eq 4.1}
\begin{aligned}
\tilde{f}(a^{n}) & = \sum_{i=1}^{n}(\prod_{j=1}^{i-1} \sigma(a)) \tilde{f}(a) (\prod_{j=i+1}^{n} \sigma(a)) \\ & = \sum_{i=1}^{n} (\sigma(a))^{i-1} f(a) (\sigma(a))^{n-i} \\ & = \sum_{j=0}^{n-1} (\sigma(a))^{j} c_{0} (\sigma(a))^{n-(j+1)} + \sum_{j=0}^{n-1} (\sigma(a))^{j} c_{1}b (\sigma(a))^{n-(j+1)}.
\end{aligned}
\end{equation}
Further, since $\tilde{f}(ab) = \tilde{f}(a) \tilde{\sigma}(b) + \tilde{\sigma}(a) \tilde{f}(b) = f(a) \sigma(b) + \sigma(a) f(b)$, so \begin{equation}\label{eq 4.2}
f(a) = \tilde{f}(ab) \sigma(b) - \sigma(a)f(b)\sigma(b)
\end{equation}

We note that any $\sigma$-derivation $D$ of the group algebra $\mathbb{F}D_{2n}$ is uniquely determined by its images $D(a)$ and $D(b)$ on the generators $a$ and $b$ of $D_{2n}$. We denote such a derivation by $$D_{(\sigma; \bar{a}, \bar{b})},$$ where $\bar{a} = D(a)$ and $\bar{b} = D(b)$. Also, we know that $\mathcal{D}_{\sigma}(\mathbb{F}D_{2n})$ forms a vector space over $\mathbb{F}$. We determine this vector space's dimension and basis in the following theorems.

Consider $\sigma = \sigma_{0}$ or $\sigma_{1}$. Since $f(b) \in \bar{C}(\sigma(b))$ and by \th\ref{lemma 4.2}, $\bar{\mathcal{B}}(\sigma(b))$ is a basis of $\bar{C}(\sigma(b))$ over $\mathbb{F}$, therefore, \begin{equation}\label{eq 4.3}
f(b) = \sum_{i=1}^{\lfloor \frac{n-1}{2} \rfloor} \mu_{i} (a^{i} - a^{-i}) + \sum_{i=1}^{\lfloor \frac{n-1}{2} \rfloor} \nu_{i} a^{t}(a^{i} - a^{-i})b\end{equation} for some $\mu_{i}, \nu_{i} \in \mathbb{F}$ ($1 \leq i \leq \lfloor \frac{n-1}{2} \rfloor$).

Further, since $\tilde{f}(ab) \in \bar{C}(\sigma(ab))$ and by \th\ref{lemma 4.2}, $\bar{\mathcal{B}}(\sigma(ab))$ is a basis of $\bar{C}(\sigma(ab))$ over $\mathbb{F}$, therefore, \begin{equation}\label{eq 4.4}
\tilde{f}(ab) = \sum_{i=1}^{\lfloor \frac{n-1}{2} \rfloor} \delta_{i} (a^{i} - a^{-i}) + \sum_{i=1}^{\lfloor \frac{n-1}{2} \rfloor} \gamma_{i} a^{s+t}(a^{i} - a^{-i})b\end{equation} for some $\delta_{i}, \gamma_{i} \in \mathbb{F}$ ($1 \leq i \leq \lfloor \frac{n-1}{2} \rfloor$).

\begin{theorem}\th\label{theorem 4.3}
Let $n$ be odd, and $\mathbb{F}$ be a field of characteristic $0$ or an odd rational prime $p$. Denote $|\sigma_{0}(a)| = m$, $d = \frac{n}{m}$ and $t \equiv j_{0}$ (mod $s$) for some $j_{0} \in \{0, 1, ..., s-1\}$. 
\begin{enumerate}
\item[(i)] Suppose $\text{char}(\mathbb{F}) = 0$ or $p$ with $\text{gcd}(n,p) = 1$. Then the dimension of $\mathcal{D}_{\sigma_{0}}(\mathbb{F}D_{2n})$ over $\mathbb{F}$ is $\frac{1}{2}(3n-d)-1$ and 
\begin{equation*}
\begin{aligned}
\mathcal{S}_{0} & = \{D_{(\sigma_{0}; \bar{a}, \bar{b})} \mid (\bar{a}, \bar{b}) \in \{(a^{t}(a^{i}-a^{-i})b, 0),      ~~     (a^{s+t}(a^{i} - a^{-i})b, (a^{i} - a^{-i})),      \\ &\quad       (0, a^{t}(a^{i} - a^{-i})b) \mid 1 \leq i \leq \frac{n-1}{2}\}\}
\end{aligned}
\end{equation*} is a spanning set.
\item[(ii)] Suppose $\text{char}(\mathbb{F}) = p$ with $\text{gcd}(n,p) \neq 1$.
\begin{enumerate}
\item[(a)] If $\text{gcd}(d,p) = 1$, then the dimension of $\mathcal{D}_{\sigma_{0}}(\mathbb{F}D_{2n})$ over $\mathbb{F}$ is $2n - \frac{1}{2}(d+3)$ and 
\begin{equation*}
\begin{aligned}
\mathcal{S}_{0}' & = \{D_{(\sigma_{0}; \bar{a}, \bar{b})} \mid (\bar{a}, \bar{b}) \in \{(a^{s}(a^{i} - a^{-i}), 0),     ~~      (a^{s}(a^{i}-a^{-i}), a^{t}(a^{i}-a^{-i})b),     \\ &\quad     (a^{t}(a^{i}-a^{-i})b, 0),   ~~    (a^{s+t}(a^{i} - a^{-i})b, (a^{i} - a^{-i})) \mid 1 \leq i \leq \frac{n-1}{2}\}\}
\end{aligned}
\end{equation*} is a spanning set.
\item[(b)] If $\text{gcd}(d,p) \neq 1$, then the dimension of $\mathcal{D}_{\sigma_{0}}(\mathbb{F}D_{2n})$ over $\mathbb{F}$ is $4(\frac{n-1}{2})$ and the set $\mathcal{S}_{0}'$ is a basis.
\end{enumerate}
\end{enumerate}
\end{theorem}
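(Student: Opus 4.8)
The plan is to apply Theorem~\ref{theorem 2.4(N)} to the presentation $D_{2n}=\langle a,b\mid a^n,\,b^2,\,(ab)^2\rangle$: a map $f\colon\{a,b\}\to\mathbb FD_{2n}$ extends (uniquely) to a $\sigma_0$-derivation exactly when $\tilde f(a^n)=\tilde f(b^2)=\tilde f((ab)^2)=0$. As recorded in the paragraphs preceding the theorem, $\tilde f(b^2)=0\iff f(b)\in\bar C(\sigma_0(b))$ and $\tilde f((ab)^2)=0\iff\tilde f(ab)\in\bar C(\sigma_0(ab))$, so by the anti-centralizer bases of Lemma~\ref{lemma 4.2} these two conditions are exactly the freedom in the coefficients $\mu_i,\nu_i$ of \eqref{eq 4.3} and $\delta_i,\gamma_i$ of \eqref{eq 4.4} ($1\le i\le\frac{n-1}{2}$), while $f(a)=D(a)$ is then forced by \eqref{eq 4.2}. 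Writing $f(a)=c_0+c_1b$ with $c_0,c_1\in\mathbb F\langle a\rangle$ and substituting \eqref{eq 4.3}, \eqref{eq 4.4} into \eqref{eq 4.2} (using $\sigma_0(b)^{-1}=\sigma_0(b)$) gives $c_0=a^s\sum_i(\gamma_i-\nu_i)(a^i-a^{-i})$ and $c_1=a^t\sum_i\delta_i(a^i-a^{-i})-a^{s+t}\sum_i\mu_i(a^i-a^{-i})$. Thus $\mathcal D_{\sigma_0}(\mathbb FD_{2n})$ is linearly isomorphic to the subspace of tuples $(\mu_i,\nu_i,\delta_i,\gamma_i)\in\mathbb F^{2(n-1)}$ cut out by the single remaining relation $\tilde f(a^n)=0$.

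The next step is to evaluate $\tilde f(a^n)$ from \eqref{eq 4.1}. With $\sigma_0(a)=a^s$, $a^n=1$, commutativity of $\mathbb F\langle a\rangle$, and the orbit-sum identity $\sum_{j=0}^{n-1}a^{2sj}=d\,\hat H$, where $H=\langle a^d\rangle$ has order $m$ and $\hat H:=\sum_{h\in H}h$ (here $n$ odd gives $\gcd(2s,n)=\gcd(s,n)=d$), one obtains $\tilde f(a^n)=n\,c_0\,a^{-s}+d\,c_1\,a^s\hat H\,b$. Since $\mathbb F\langle a\rangle$ and $\mathbb F\langle a\rangle b$ are $\mathbb F$-independent inside $\mathbb FD_{2n}$, the equation $\tilde f(a^n)=0$ is equivalent to the two separate equations $n\,c_0=0$ and $d\,c_1\hat H=0$.

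Now the case split on $\gcd(n,p)$ and $\gcd(d,p)$. In case (i) ($\operatorname{char}\mathbb F=0$ or $p\nmid n$, hence $p\nmid d$) both $n$ and $d$ are units: $c_0=0$ forces $\gamma_i=\nu_i$ for all $i$ (exactly $\tfrac{n-1}{2}$ independent conditions, since the $a^i-a^{-i}$ are independent and $a^s$ invertible), and $c_1\hat H=0$ remains. In case (ii)(a) ($p\mid n$, $p\nmid d$) the equation $n\,c_0=0$ is vacuous while $c_1\hat H=0$ survives. In case (ii)(b) ($p\mid n$, $p\mid d$) both equations are vacuous, so there is no constraint beyond \eqref{eq 4.3}--\eqref{eq 4.4}: the $2(n-1)=4\cdot\tfrac{n-1}{2}$ parameters are free, $\dim\mathcal D_{\sigma_0}(\mathbb FD_{2n})=4\cdot\tfrac{n-1}{2}$, and $\mathcal S_0'$ is the image of the standard parameter basis, hence (having $\dim$ many elements) a basis. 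To treat $c_1\hat H=0$ in the other cases, the key structural observation is that multiplication by $\hat H$ maps $\mathbb F\langle a\rangle$ onto the $d$-dimensional span of the coset sums $\widehat{a^kH}$, and moreover $a^s\hat H=\hat H$ because $d\mid s$, so $\hat H\!\cdot\!(a^sV)=\hat H\!\cdot\!V$ with $V=\operatorname{span}\{a^i-a^{-i}\}$; since $d$ is odd, $\hat H\!\cdot\!V$ is the ``skew'' subspace of $\mathbb F[C_n/H]$ and has dimension exactly $\tfrac{d-1}{2}$. Hence the linear map $(\delta_i,\mu_i)\mapsto c_1\hat H$ has rank $\tfrac{d-1}{2}$, so $c_1\hat H=0$ imposes exactly $\tfrac{d-1}{2}$ independent conditions (disjoint from the $\gamma_i=\nu_i$ ones). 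Adding up gives $2(n-1)-\tfrac{n-1}{2}-\tfrac{d-1}{2}=\tfrac{3n-d}{2}-1$ in case (i), $2(n-1)-\tfrac{d-1}{2}=2n-\tfrac{d+3}{2}$ in case (ii)(a), and $4\cdot\tfrac{n-1}{2}$ in case (ii)(b).

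The main obstacle I foresee is the final bookkeeping: writing down the explicit spanning sets $\mathcal S_0$, $\mathcal S_0'$ and verifying that they span (and are bases where claimed). Concretely this means (a) evaluating the pair $(D(a),D(b))$ through \eqref{eq 4.2}--\eqref{eq 4.4} for each single-nonzero-parameter choice; (b) describing precisely $\ker(\,\cdot\,\hat H)$ inside the $(\delta_i,\mu_i)$-space — where the identity $\hat H\!\cdot\!(a^sV)=\hat H\!\cdot\!V$ and the divisibility of $i$ by $d$ enter — so that a spanning family for the constrained solution space can be extracted; and (c) carrying the three relator sub-cases and the reduction $t\equiv j_0\pmod s$ through cleanly. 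Everything else — well-definedness of $\tilde f$, the universal-property argument of Lemma~\ref{lemma 2.3(N)}, the power formulas of Lemma~\ref{lemma 2.2}, and the anti-centralizer bases of Lemma~\ref{lemma 4.2} — is already available from Section~\ref{section 2}, so the remaining content is a finite-dimensional linear-algebra computation over $\mathbb F\langle a\rangle$ organised around the idempotent-like element $\hat H$.
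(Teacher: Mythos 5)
Your reduction is the same as the paper's: parametrize a candidate derivation by $f(b)\in\bar C(\sigma_0(b))$ and $\tilde f(ab)\in\bar C(\sigma_0(ab))$ via Lemma \ref{lemma 4.2}, recover $f(a)=c_0+c_1b$ from \eqref{eq 4.2}, and observe that the only surviving relator condition is $\tilde f(a^n)=0$, which splits into $nc_0=0$ and the vanishing of $\bigl(\sum_{j}a^{sj}\bigr)c_1$; the case split on $\gcd(n,p)$ and $\gcd(d,p)$ and the final arithmetic are also identical. Where you genuinely depart from the paper is the counting of how many independent conditions the second equation imposes. The paper writes $c_1'=a^{-t}c_1=\sum_i\lambda_ia^i$, shows the condition is the system $\sum_{i=0}^{m-1}\lambda_{di+j}=0$ for $j=1,\dots,d$, and then proves by an explicit index-chasing ``Claim'' that the equations for $j$ and $d-j$ are redundant and the one for $j=d$ is automatic, leaving $r_0=\frac{d-1}{2}$ pairwise coefficient-disjoint equations. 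You instead package the same fact structurally: with $H=\langle a^d\rangle$ and $\hat H$ its coset sum, the condition is $\hat H c_1=0$; since $a^s\in H$ one gets $\hat Hc_1=a^t\hat H\sum_i(\delta_i-\mu_i)(a^i-a^{-i})$, and multiplication by $\hat H$ carries $V=\operatorname{span}\{a^i-a^{-i}\}$ onto the skew part of $\mathbb F[C_n/H]\cong\mathbb FC_d$, which has dimension $\frac{d-1}{2}$ because $d$ is odd; hence the map $(\delta_i,\mu_i)\mapsto\hat Hc_1$ has rank exactly $\frac{d-1}{2}$. This rank argument is correct (I checked the identities $d=\gcd(s,n)$, $a^{\pm s}\hat H=\hat H$, and the independence of the images $(a^r-a^{-r})\hat H$ for $1\le r\le\frac{d-1}{2}$) and it buys transparency: the $\frac{d-1}{2}$ count and its disjointness from the $\gamma_i=\nu_i$ conditions fall out at once, whereas the paper's redundancy Claim requires careful bookkeeping of which $\pm\delta_i,\pm\mu_i$ occur in which coset sum. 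What the paper's hands-on computation buys in return is the explicit description of the surviving relations needed to read off the sets $\mathcal S_0,\mathcal S_0'$ directly; your deferred ``bookkeeping'' step is exactly this, and note that in cases (i) and (ii)(a) the single-parameter pairs you would list (e.g.\ $\delta_i=1$ alone) do not individually satisfy $\hat Hc_1=0$ unless $d\mid i$, so the ``spanning set'' statement must be read, as the paper implicitly does, at the level of the pairs $(D(a),D(b))$ lying in the span of the listed pairs, with the dimension obtained as $|\mathcal S_0|-r_0$ (respectively $|\mathcal S_0'|-r_0$); only in case (ii)(b), where no constraint survives, do the listed elements literally form a basis of $\mathcal D_{\sigma_0}(\mathbb FD_{2n})$.
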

\begin{proof}
By (\ref{eq 4.1}), $$\tilde{f}(a^{n}) = nc_{0} a^{s(n-1)} + \sum_{j=0}^{n-1} a^{2sj} c_{1}b a^{s(n-1)}.$$
\noindent Therefore, $\tilde{f}(a^{n}) = 0$ if and only if $$nc_{0} = 0 ~~~~~~ \text{and} ~~~~~~ \sum_{j=0}^{n-1} a^{sj}c_{1} = 0$$ (as $n$ is odd). Put $c_{1}' = a^{-t}c_{1}$. Then $$\sum_{j=0}^{n-1} a^{sj}c_{1} = 0 ~~~~~~ \text{if and only if} ~~~~~~ \sum_{j=0}^{n-1} a^{sj}c_{1}' = 0.$$ Therefore, $\tilde{f}(a^{n}) = 0$ if and only if $$nc_{0} = 0 ~~~~~~ \text{and} ~~~~~~ \sum_{j=0}^{n-1} a^{sj}c_{1}' = 0.$$ 

Now, we have two possible cases.\vspace{6pt}

\textbf{(i)} $\text{char}(\mathbb{F}) = 0$ or $p$ with $\text{gcd}(n,p) = 1$.

Then $nc_{0} = 0 \Leftrightarrow c_{0} = 0$. 

Since $c_{1}' \in \mathbb{F} \langle a \rangle$, so $$c_{1}' = \sum_{i=1}^{n} \lambda_{i} a^{i}$$ for some $\lambda_{i} \in \mathbb{F}$ ($1 \leq i \leq n$). 

Put $\lambda_{0} = \lambda_{n}$. Since $|\sigma_{0}(a)| = |a^{s}| = m$ and $d = \frac{n}{m}$, therefore,  $$0 = \sum_{j=0}^{n-1} a^{sj}c_{1}' = \sum_{j=1}^{n} a^{sj}c_{1}' = (\sum_{j=1}^{dm}a^{sj})c_{1}' = d(\sum_{j=1}^{m} a^{sj})c_{1}'.$$
Since $d$ divides $n$ and $\text{char}(\mathbb{F}) = 0$ or $p$ with $\text{gcd}(n,p) = 1$, therefore, $$d(\sum_{j=1}^{m} a^{sj})c_{1}' = 0 ~~~~ \text{if and only if} ~~~~ (\sum_{j=1}^{m} a^{sj})c_{1}' = 0.$$ Now, $|a^{s}| = m$ and $|a| = n$, so $n$ must divide $sm$, that is, $sm = nl$ for some $l \in \{1, ..., m-1\}$. This gives $s = dl$. Also, $|a^{d}| = m$. Therefore, 
\begin{align*}
0 & = c_{1}'(\sum_{j=1}^{m} a^{sj}) = (\sum_{i=1}^{n} \lambda_{i} a^{i})(\sum_{j=1}^{m} a^{sj}) = \sum_{i=1}^{n} \lambda_{i} (\sum_{j=1}^{m} a^{sj+i}) = \sum_{i=1}^{md} \lambda_{i} (\sum_{j=1}^{m} a^{dj+i})
\\ & = [\lambda_{1}(\sum_{j=1}^{m} a^{dj+1}) + \lambda_{2} (\sum_{j=1}^{m} a^{dj+2}) + ... + \lambda_{d} (\sum_{j=1}^{m} a^{dj+d})] + [\lambda_{d+1} (\sum_{j=1}^{m} a^{dj+d+1}) \\ &\quad + \lambda_{d+2} (\sum_{j=1}^{m} a^{dj+d+2})  + ... + \lambda_{d+d} (\sum_{j=1}^{m} a^{dj+d+d})] + ... + [\lambda_{(m-1)d+1} (\sum_{j=1}^{m} a^{dj+(m-1)d+1}) \\ &\quad + \lambda_{(m-1)d+2}(\sum_{j=1}^{m} a^{dj+(m-1)d+2}) + ... + \lambda_{(m-1)d+d}(\sum_{j=1}^{m} a^{dj+(m-1)d+d})]
\\ & = [\lambda_{1}(\sum_{j=1}^{m} a^{dj+1}) + \lambda_{2} (\sum_{j=1}^{m} a^{dj+2}) + ... + \lambda_{d} (\sum_{j=1}^{m} a^{dj+d})] + [\lambda_{d+1} (\sum_{j=1}^{m} a^{dj+1}) + \lambda_{d+2} (\sum_{j=1}^{m} a^{dj+2}) \\ &\quad + ... + \lambda_{d+d} (\sum_{j=1}^{m} a^{dj+d})] + ... + [\lambda_{(m-1)d+1} (\sum_{j=1}^{m} a^{dj+1}) + \lambda_{(m-1)d+2}(\sum_{j=1}^{m} a^{dj+2}) \\ &\quad + ... + \lambda_{(m-1)d+d}(\sum_{j=1}^{m} a^{dj+d})]
\\ & = (\sum_{i=0}^{m-1} \lambda_{di+1})(\sum_{j=1}^{m}a^{dj+1}) + (\sum_{i=0}^{m-1} \lambda_{di+2})(\sum_{j=1}^{m}a^{dj+2}) + ... + (\sum_{i=0}^{m-1} \lambda_{di+d})(\sum_{j=1}^{m}a^{dj+d}).
\end{align*}
Therefore, $$(\sum_{j=1}^{m} a^{sj})c_{1}' = 0 ~~~~~~ \text{if and only if} ~~~~~~ \sum_{i=0}^{m-1} \lambda_{di+j} = 0$$ for all $j \in \{1, 2, ..., d\}$. 

Now, using (\ref{eq 4.2}), (\ref{eq 4.3}) and (\ref{eq 4.4}), 
\begin{equation*}
\begin{aligned}
c_{0} + c_{1}b & = f(a) \\ & = \tilde{f}(ab) a^{t}b - a^{s}f(b)a^{t}b
\\ & = \sum_{i=1}^{\frac{n-1}{2}} (\gamma_{i} - \nu_{i})a^{s}(a^{i} - a^{-i}) + a^{t}(\sum_{i=1}^{\frac{n-1}{2}} \delta_{i}(a^{i} - a^{-i}) - \sum_{i=1}^{\frac{n-1}{2}} \mu_{i} a^{s} (a^{i} - a^{-i}))b.
\end{aligned}
\end{equation*}
Therefore, $$c_{0} = \sum_{i=1}^{\frac{n-1}{2}} (\gamma_{i} - \nu_{i})a^{s}(a^{i} - a^{-i}) ~~~~ \text{and} ~~~~ c_{1} = a^{t}(\sum_{i=1}^{\frac{n-1}{2}} \delta_{i}(a^{i} - a^{-i}) - \sum_{i=1}^{\frac{n-1}{2}} \mu_{i} (a^{s+i} - a^{s-i})).$$
Since $c_{0} = 0$, therefore, $\gamma_{i} = \nu_{i}$ for all $i \in \{1, ..., \frac{n-1}{2}\}$. Also, $$c_{1}' = a^{-t}c_{1} = \sum_{i=1}^{\frac{n-1}{2}} \delta_{i}(a^{i} - a^{-i}) - \sum_{i=1}^{\frac{n-1}{2}} \mu_{i} (a^{s+i} - a^{s-i}).$$ Note that $\lambda_{i}$'s belong to the set $\{\delta_{i}, \mu_{i} \mid 1 \leq i \leq \frac{n-1}{2}\}$.

\textbf{Claim:} For each $j \in \{1, 2, ..., d\}$,  the equations $$\sum_{i=0}^{m-1} \lambda_{di+j} = 0 ~~~~~~ \text{and} ~~~~~~ \sum_{i=0}^{m-1} \lambda_{di+(d-j)} = 0$$ are redundant.

\textbf{Proof of the Claim:} First, make the following observations:
\begin{enumerate}
\item[•] For each $j \in \{1, 2, ..., d\}$, $$\sum_{i=0}^{m-1} \lambda_{di+j}$$ is the sum of coefficients $\lambda_{j}, \lambda_{d+j}, ..., \lambda_{(m-1)d+j}$ corresponding to $a^{j}, a^{d+j}, ..., a^{(m-1)d+j}$ respectively.

\item[•] $\sum_{i=0}^{m-1} \lambda_{n-(di+j)}$ is the sum of coefficients $\lambda_{n-j}, \lambda_{n-(d+j)}, ..., \lambda_{n-((m-1)d+j)}$ corresponding to $$a^{n-j} = a^{-j}, ~~~~ a^{n-(d+j)} = a^{-(d+j)}, ~~~~ ..., ~~~~ a^{n-((m-1)d+j)} = a^{-((m-1)d+j)}$$ respectively.

\item[•] Since $n-(di+j) = (m-1-i)d + (d-j)$, therefore, $$\sum_{i=0}^{m-1} \lambda_{n-(di+j)} = \sum_{i=0}^{m-1} \lambda_{di + (d-j)}.$$

\item[•] From the expression $$c_{1}' = \sum_{i=1}^{\frac{n-1}{2}} \delta_{i}(a^{i} - a^{-i}) - \sum_{i=1}^{\frac{n-1}{2}} \mu_{i} (a^{s+i} - a^{s-i}),$$ it is very clear that if $\delta_{i}$ ($1 \leq i \leq \frac{n-1}{2}$) is the coefficient of $a^{di_{0}+j}$ (for some $i_{0} \in \{0, 1, ..., m-1\}$, $j \in \{1, ..., d\}$), then $-\delta_{i}$ is the coefficient of $a^{n-(di_{0}+j)} = a^{-(di_{0}+j)}$.

\item[•] The scalar $- \mu_{i}$ ($1 \leq i \leq \frac{n-1}{2}$) is the coefficient of $a^{s+i}$. Suppose $$s+i \equiv i_{1} ~ (\text{mod} ~ n),$$ where $i_{1} \in \{0, 1, ..., n-1\}$. Then $s+i = i_{1} + nk$ for some non-negative integer $k$. Let $i_{1} = di_{0} + j$ for some $i_{0} \in \{0, 1, ..., m-1\}$, $j \in \{1, ..., d\}$. So $-\mu_{i}$ is the coefficient of $$a^{s+i} = a^{i_{1}} = a^{di_{0}+j}.$$ Also, $\mu_{i}$ is the coefficient of $$a^{s-i} = a^{s+(n-i)}$$ and $$s + (n - i) = n - (di_{0}' + j),$$ where $i_{0}' = mk + i_{0} - 2l$. Now, since $m$ is a positive integer, so by division algorithm, there exists some $i_{0}'' \in \{0, 1, ..., m-1\}$ such that $i_{0}' = mq + i_{0}''$. Then $$a^{s-i} = a^{s+(n-i)} = a^{n - (di_{0}' + j)} = a^{n - (d(mq + i_{0}'') + j)} = a^{n - (di_{0}''+j)}.$$
\end{enumerate}
  
From the above observations, we can conclude that if a scalar (out of $\delta_{i}, - \delta_{i}, \mu_{i}, - \mu_{i}$ for $1 \leq i \leq \frac{n-1}{2}$) appears in the sum $$\sum_{i=0}^{m-1} \lambda_{di+j},$$ then the negative of this scalar will appear in the sum $$\sum_{i=0}^{m-1} \lambda_{n-(di+j)} = \sum_{i=0}^{m-1} \lambda_{di + (d-j)}$$ and vice versa. Therefore, the equations $$\sum_{i=0}^{m-1} \lambda_{di+j} = 0 ~~~~~~ \text{and} ~~~~~~ \sum_{i=0}^{m-1} \lambda_{di + (d-j)} = 0$$ are  redundant. Hence, the claim is proved.

$d$ is odd and $d = 1 + \underbrace{2 + 2 + ... + 2}_{\text{$r_{0}$-times}}$. So if $j \in \{1, ..., d-1\}$, then $d-j \in \{1, ..., d-1\}$, $j \neq d-j$, the sums $$\sum_{i=0}^{m-1} \lambda_{di+j} ~~~~~~ \text{and} ~~~~~~ \sum_{i=0}^{m-1} \lambda_{di + (d-j)}$$ are different and satisfy $$\sum_{i=0}^{m-1} \lambda_{di + (d-j)} = - \sum_{i=0}^{m-1} \lambda_{di+j}$$ so that the equations $$\sum_{i=0}^{m-1} \lambda_{di+j} = 0 ~~~~~~ \text{and} ~~~~~~ \sum_{i=0}^{m-1} \lambda_{di + (d-j)} = 0$$ are redundant. But for $j = d$, the sums $$\sum_{i=0}^{m-1} \lambda_{di+j} ~~~~~~ \text{and} ~~~~~~ \sum_{i=0}^{m-1} \lambda_{di + (d-j)}$$ are same because $$\sum_{i=0}^{m-1} \lambda_{di+j} = \sum_{i=1}^{m} \lambda_{di}, ~~~~ \sum_{i=0}^{m-1} \lambda_{di + (d-j)} = \sum_{i=0}^{m-1} \lambda_{di} ~~~~ \text{and} ~~~~ \sum_{i=1}^{m} \lambda_{di} = \sum_{i=0}^{m-1} \lambda_{di}$$ (as $\lambda_{0} = \lambda_{n}$). Therefore, for $j = d$, there is only one equation which always holds because for each coefficient $\lambda_{dj}$ (corresponding to $a^{dj}$) appearing in the sum $\sum_{i=0}^{m-1} \lambda_{di}$, the negative of this coefficient, namely, $-\lambda_{dj}$ (corresponding to $a^{-dj}$) also appears in the sum $\sum_{i=0}^{m-1} \lambda_{di}$.

So overall, we have $r_{0} = \frac{d-1}{2}$ number of distinct equations each containing $m$ number of distinct coefficients (out of $\delta_{i}, - \delta_{i}, \mu_{i}, - \mu_{i}$ for $1 \leq i \leq \frac{n-1}{2}$). So from each of these $r_{0}$ distinct equations, one coefficient can be expressed linearly in terms of other $m-1$ distinct coefficients present in the equation. Also, no two equations out of these $r_{0}$ distinct equations have any coefficient in common.

Finally, $f(a) = c_{1}b$ (as $c_{0} = 0$), where we have obtained above the conditions on the coefficients appearing in the expression for $c_{1}$ in the form of $r_{0}$ distinct equations. This and the expression for $f(b)$ in (\ref{eq 4.3}) together give the following set.
\begin{equation*}
\begin{aligned}
\mathcal{S}_{0} & = \{D_{(\sigma_{0}; \bar{a}, \bar{b})} \mid (\bar{a}, \bar{b}) \in \{(a^{t}(a^{i}-a^{-i})b, 0),      ~     (a^{s+t}(a^{i} - a^{-i})b, (a^{i} - a^{-i})),       ~      (0, a^{t}(a^{i} - a^{-i})b) \\ &\quad \mid 1 \leq i \leq \frac{n-1}{2}\}\},
\end{aligned}
\end{equation*} where $\bar{a} =  D(a) = f(a) = c_{1}b$ and  $\bar{b} = D(b) = f(b)$.

If $c_{1}, c_{2} \in \mathbb{F}$ and $\alpha_{1}, \alpha_{2}, \beta_{1}, \beta_{2} \in \mathbb{F}D_{2n}$, then 
\begin{eqnarray*}
\left(c_{1}D_{(\sigma_{0}; \alpha_{1}, \beta_{1})} + c_{2}D_{(\sigma_{0}; \alpha_{2}, \beta_{2})}\right)(a) & = & c_{1}D_{(\sigma_{0}; \alpha_{1}, \beta_{1})}(a) + c_{2}D_{(\sigma_{0}; \alpha_{2}, \beta_{2})}(a) \\ & = & c_{1}\alpha_{1} + c_{2}\alpha_{2} \\ & = & D_{(\sigma_{0}; c_{1}\alpha_{1} + c_{2} \alpha_{2}, c_{1}\beta_{1} + c_{2} \beta_{2})}(a).
\end{eqnarray*}
Similarly, $$D_{(\sigma_{0}; c_{1}\alpha_{1} + c_{2} \alpha_{2}, c_{1}\beta_{1} + c_{2} \beta_{2})}(b) = \left(c_{1}D_{(\sigma_{0}; \alpha_{1}, \beta_{1})} + c_{2}D_{(\sigma_{0}; \alpha_{2}, \beta_{2})}\right)(b).$$ Since any $\sigma_{0}$-derivation of $\mathbb{F}D_{2n}$ is completely determined by its image values $D(a)$ and $D(b)$, therefore, from above, we get that $$c_{1}D_{(\sigma_{0}; \alpha_{1}, \beta_{1})} + c_{2}D_{(\sigma_{0}; \alpha_{2}, \beta_{2})} = D_{(\sigma_{0}; c_{1}\alpha_{1} + c_{2} \alpha_{2}, c_{1}\beta_{1} + c_{2} \beta_{2})}.$$ Therefore, $\mathcal{S}_{0}$ spans $\mathcal{D}_{\sigma_{0}}(\mathbb{F}D_{2n})$ over $\mathbb{F}$. Furthermore, in view of the above observations, we have that the dimension of $\mathcal{D}_{\sigma_{0}}(\mathbb{F}D_{2n})$ over $\mathbb{F}$ is $$|\mathcal{S}_{0}| - r_{0} = 3(\frac{n-1}{2}) - (\frac{d-1}{2}) = \frac{1}{2}(3n-d)-1.$$\vspace{6pt}

\textbf{(ii)} $\text{char}(\mathbb{F}) = p$ with $\text{gcd}(n,p) = p$.

Then the equation $nc_{0} = 0$ holds. As computed in part (i), $f(a) = c_{0} + c_{1}b$, where $$c_{0} = \sum_{i=1}^{\frac{n-1}{2}} (\gamma_{i} - \nu_{i})a^{s}(a^{i} - a^{-i}) ~~~~ \text{and} ~~~~ c_{1} = a^{t}(\sum_{i=1}^{\frac{n-1}{2}} \delta_{i}(a^{i} - a^{-i}) - \sum_{i=1}^{\frac{n-1}{2}} \mu_{i} (a^{s+i} - a^{s-i})).$$
In this case, we again have two subcases.\vspace{4pt}

\textbf{(a)} $\text{gcd}(d,p) = 1$.

The proof for this subcase is on the same lines as that of part (i).
Put \begin{equation*}
\begin{aligned}
\mathcal{S}_{0}' & = \{D_{(\sigma_{0}; \bar{a}, \bar{b})} \mid (\bar{a}, \bar{b}) \in \{(a^{s}(a^{i} - a^{-i}), 0),     ~      (a^{s}(a^{i}-a^{-i}), a^{t}(a^{i}-a^{-i})b),     ~     (a^{t}(a^{i}-a^{-i})b, 0),    \\ &\quad     (a^{s+t}(a^{i} - a^{-i})b, (a^{i} - a^{-i})) \mid 1 \leq i \leq \frac{n-1}{2}\}\}.
\end{aligned}
\end{equation*} 
Then, as shown in part (i), it can be shown that $\mathcal{S}_{0}'$ is a spanning set for $\mathcal{D}_{\sigma_{0}}(\mathbb{F}D_{2n})$ over $\mathbb{F}$ so that the dimension of $\mathcal{D}_{\sigma_{0}}(\mathbb{F}D_{2n})$ over $\mathbb{F}$ is $$|\mathcal{S}_{0}'| - r_{0} = 4(\frac{n-1}{2}) - (\frac{d-1}{2}) = 2n - \frac{1}{2}(d+3).$$\vspace{4pt}

\textbf{(b)} $\text{gcd}(d,p) = p$.

Since $$\sum_{j=0}^{n-1} a^{sj}c_{1} = d(\sum_{j=1}^{m} a^{sj})c_{1},$$ $\text{char}(\mathbb{F}) = p$ and $p$ divides $d$, therefore, in this subcase, the equation $$\sum_{j=0}^{n-1} a^{sj}c_{1} = 0$$ holds. Therefore, the set $\mathcal{S}_{0}'$ becomes a basis of $\mathcal{D}_{\sigma_{0}}(\mathbb{F}D_{2n})$ over $\mathbb{F}$. Hence, the dimension of $\mathcal{D}_{\sigma_{0}}(\mathbb{F}D_{2n})$ over $\mathbb{F}$ is $|\mathcal{S}_{0}'| = 4(\frac{n-1}{2})$.

\end{proof}

Taking $\sigma_{0}$ to be the identity endomorphism in \th\ref{theorem 4.3} gives that the dimension of the $\mathbb{F}$-vector space $\mathcal{D}(\mathbb{F}D_{2n})$ of ordinary derivations of $\mathbb{F}D_{2n}$ when $n$ is odd is $3(\frac{n-1}{2})$ if $\text{char}(\mathbb{F}) = 0$ or $p$ with $\text{gcd}(n,p) = 1$ and $2(n-1)$ if $\text{char}(\mathbb{F}) = p$ with $\text{gcd}(n,p) \neq 1$. Further, we also get an $\mathbb{F}$-basis for $\mathcal{D}(\mathbb{F}D_{2n})$.

\begin{theorem}\th\label{theorem 4.4}
Let $n$ be even, and $\mathbb{F}$ be a field of characteristic $0$ or an odd rational prime $p$. Denote $|\sigma_{1}(a)| = m$, $d = \frac{n}{m}$ and $t \equiv j_{0}$ (mod $s$) for some $j_{0} \in \{0, 1, ..., s-1\}$. 
\begin{enumerate}
\item[(i)] Suppose $m$ is odd.
\begin{itemize}
\item[(a)] Suppose $\text{char}(\mathbb{F}) = 0$ or $p$ with $\text{gcd}(n,p) = 1$. Then the dimension of $\mathcal{D}_{\sigma_{1}}(\mathbb{F}D_{2n})$ over $\mathbb{F}$ is $\frac{1}{2}(3n-d) - 2$ and 
\begin{equation*}
\begin{aligned}
\mathcal{S}_{1} & = \{D_{(\sigma_{1}; \bar{a}, \bar{b})} \mid (\bar{a}, \bar{b}) \in \{(a^{t}(a^{i}-a^{-i})b, 0),     ~~      (a^{s+t}(a^{i} - a^{-i})b, (a^{i} - a^{-i})),      \\ &\quad       (0, a^{t}(a^{i} - a^{-i})b) \mid 1 \leq i \leq \frac{n}{2} - 1\}\}
\end{aligned}
\end{equation*} is a spanning set.

\item[(b)] Suppose $\text{char}(\mathbb{F}) = p$ with $\text{gcd}(n,p) \neq 1$.
\begin{itemize}
\item[(b1)] If $\text{gcd}(d,p) = 1$, then the dimension of $\mathcal{D}_{\sigma_{1}}(\mathbb{F}D_{2n})$ over $\mathbb{F}$ is $2n - \frac{d}{2} - 3$ and
\begin{equation*}
\begin{aligned}
\mathcal{S}_{1}' & = \{D_{(\sigma_{1}; \bar{a}, \bar{b})} \mid (\bar{a}, \bar{b}) \in \{(a^{s}(a^{i} - a^{-i}), 0),     ~~      (a^{s}(a^{i}-a^{-i}), a^{t}(a^{i}-a^{-i})b),     \\ &\quad     (a^{t}(a^{i}-a^{-i})b, 0),  ~~  (a^{s+t}(a^{i} - a^{-i})b, (a^{i} - a^{-i})) \mid 1 \leq i \leq \frac{n}{2} - 1\}\}
\end{aligned}
\end{equation*} is a spanning set.
\item[(b2)] If $\text{gcd}(d,p) \neq 1$, then the dimension of $\mathcal{D}_{\sigma_{1}}(\mathbb{F}D_{2n})$ over $\mathbb{F}$ is $2(n-2)$ and the set $\mathcal{S}_{1}'$ is a basis.
\end{itemize}
\end{itemize}

\item[(ii)] Suppose $m$ is even.
\begin{itemize}
\item[(a)] Suppose $\text{char}(\mathbb{F}) = 0$ or $p$ with $\text{gcd}(n,p) = 1$. Then the dimension of $\mathcal{D}_{\sigma_{1}}(\mathbb{F}D_{2n})$ over $\mathbb{F}$ is $\frac{3n}{2} - d - 2$ and $\mathcal{S}_{1}$ is a spanning set.

\item[(b)] Suppose $\text{char}(\mathbb{F}) = p$ with $\text{gcd}(n,p) \neq 1$.
\begin{itemize}
\item[(b1)] If $\text{gcd}(d,p) = 1$, then the dimension of $\mathcal{D}_{\sigma_{1}}(\mathbb{F}D_{2n})$over $\mathbb{F}$ is $2n - d - 3$ and $\mathcal{S}_{1}'$ is a spanning set.
\item[(b2)] If $\text{gcd}(d,p) \neq 1$, then the dimension of $\mathcal{D}_{\sigma_{1}}(\mathbb{F}D_{2n})$ over $\mathbb{F}$ is $2(n-2)$ and the set $\mathcal{S}_{1}'$ is a basis.
\end{itemize}
\end{itemize}
\end{enumerate}
\end{theorem}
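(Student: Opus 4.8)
The plan is to adapt the proof of Theorem~\ref{theorem 4.3} essentially verbatim up to the point where the parity of $n$ enters, and then to account for the two new features of the even case: $2$ is no longer invertible modulo $n$, and this forces a dichotomy between $m = |\sigma_1(a)|$ odd and $m$ even. First I would fix a map $f\colon X = \{a,b\} \to \mathbb{F}D_{2n}$, write $f(a) = c_0 + c_1 b$ with $c_0, c_1 \in \mathbb{F}\langle a\rangle$, and recall from Theorem~\ref{theorem 2.4(N)} that $f$ extends to a $\sigma_1$-derivation exactly when $\tilde{f}(a^n) = \tilde{f}(b^2) = \tilde{f}((ab)^2) = 0$. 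The last two conditions are governed by Lemma~\ref{lemma 4.2}: they say $f(b) \in \bar{C}(\sigma_1(b))$ and $\tilde{f}(ab) \in \bar{C}(\sigma_1(ab))$, giving the expansions (\ref{eq 4.3}) and (\ref{eq 4.4}) of $f(b)$ and $\tilde{f}(ab)$ in the bases $\bar{\mathcal{B}}(\sigma_1(b))$, $\bar{\mathcal{B}}(\sigma_1(ab))$, with the index now running over $1 \le i \le \frac{n}{2}-1$. Feeding these into (\ref{eq 4.2}) expresses $c_0$ and $c_1$ in terms of the scalars $\mu_i, \nu_i, \delta_i, \gamma_i$, exactly as in Theorem~\ref{theorem 4.3}.

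The genuinely new computation is the condition $\tilde{f}(a^n) = 0$. Using (\ref{eq 4.1}) with $\sigma_1(a) = a^s$ one obtains $\tilde{f}(a^n) = n c_0 a^{s(n-1)} + \bigl(\sum_{j=0}^{n-1} a^{2sj}\bigr) c_1\, b\, a^{s(n-1)}$, so $\tilde{f}(a^n) = 0$ is equivalent to $n c_0 = 0$ together with $\bigl(\sum_{j=0}^{n-1} a^{2sj}\bigr) c_1 = 0$. With $d = \gcd(s,n)$ and $m = n/d$, a short computation gives $\gcd(2s,n) = d$ when $m$ is odd and $\gcd(2s,n) = 2d$ when $m$ is even; accordingly $\sum_{j=0}^{n-1} a^{2sj}$ is $d$ times the sum over the subgroup $\langle a^{d}\rangle$ (of order $m$) in the first case and $2d$ times the sum over $\langle a^{2d}\rangle$ (of order $m/2$) in the second. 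After the substitution $c_1' = a^{-t}c_1$ and expansion of $c_1'$ in the basis $\{a^i\}$, the relation $\bigl(\sum_j a^{2sj}\bigr)c_1' = 0$ becomes, as in Theorem~\ref{theorem 4.3}, a family of equations each asserting that a sum of the coefficients $\pm\delta_i, \pm\mu_i$ vanishes — one equation per residue class modulo $d$ (resp. modulo $2d$).

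Next I would split on $\text{char}(\mathbb{F})$, paralleling Theorem~\ref{theorem 4.3}. If $\text{char}(\mathbb{F}) = 0$ or $p \nmid n$, then $n c_0 = 0$ forces $c_0 = 0$, hence $\gamma_i = \nu_i$; since $\gcd(2s,n)$ divides $n$ it is then a unit (recall $\text{char}(\mathbb{F}) \neq 2$), so the coefficient relations persist and one gets the spanning set $\mathcal{S}_1$. If $p \mid n$ but $p \nmid d$, then $n c_0 = 0$ holds automatically, so $c_0$ contributes the two extra families appearing in $\mathcal{S}_1'$, while $\gcd(2s,n)/d \in \{1,2\}$ is still a unit and the coefficient relations again persist. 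If $p \mid d$, the whole sum $\bigl(\sum_j a^{2sj}\bigr) c_1$ vanishes identically, no relation is imposed on $c_1$, and $\mathcal{S}_1'$ is already $\mathbb{F}$-linearly independent, hence a basis, giving the clean answer $2(n-2)$. In each surviving case I would then run the redundancy analysis of the ``Claim'' in Theorem~\ref{theorem 4.3}: the equation attached to a residue class coincides, up to an overall sign, with the one attached to the negated class, and the self-negating classes contribute no constraint because their coefficients occur in $\pm$ pairs; counting leaves $r_0 = \frac{d}{2}-1$ independent relations when $m$ is odd and $r_0 = d-1$ when $m$ is even. Subtracting $r_0$ from $|\mathcal{S}_1| = 3(\frac{n}{2}-1)$ or $|\mathcal{S}_1'| = 4(\frac{n}{2}-1)$ produces the stated dimensions, and the displayed sets are exhibited as spanning sets (or bases) as in the proof of Theorem~\ref{theorem 4.3}.

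The main obstacle is the bookkeeping in the $m$ even case. Because $\sum_{j=0}^{n-1} a^{2sj}$ collapses onto the index-two subgroup $\langle a^{2d}\rangle$, the relations are indexed by residues modulo $2d$ rather than modulo $d$, so one must track carefully which of the $2(\frac{n}{2}-1)$ scalars $\delta_i, \mu_i$ — and which of their negatives, coming from the $a^{-i}$ terms and from the shifts $a^{s\pm i}$ in the expression for $c_1'$ — fall into each class, and verify that the self-negating class $j \equiv d \pmod{2d}$ (as well as the zero class) contributes nothing. Pinning down $r_0$ here, and confirming that it differs numerically from the $m$ odd value, is the delicate point; everything else is a routine transcription of Theorem~\ref{theorem 4.3}, together with the automorphism trick of Lemma~\ref{lemma 4.2}(ii) which transports a basis of $\bar{C}(\sigma_1(b))$ to one of $\bar{C}(\sigma_1(ab))$.
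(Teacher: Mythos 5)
Your proposal is correct and takes essentially the same route as the paper: it reruns the proof of Theorem \ref{theorem 4.3} with $\sum_{j=0}^{n-1}a^{2sj}$ analyzed according to the parity of $m$, yielding coefficient relations indexed by residues mod $d$ (for $m$ odd) or mod $2d$ (for $m$ even), with $\frac{d}{2}-1$ resp. $d-1$ independent relations after the redundancy/self-negating-class analysis — exactly the counts the paper uses, where part (ii) is only sketched via the substitution of $s, d, m$ by $2s, 2d, \frac{m}{2}$. Your characteristic case split ($c_{0}=0$ versus $c_{0}$ free, and the collapse when $p \mid d$) and the resulting dimensions agree with the paper in every case.
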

\begin{proof}
\textbf{(i)} Let $m$ be odd. 

\textbf{(a)} $\text{char}(\mathbb{F}) = 0$ or $p$ with $\text{gcd}(n,p) = 1$. 

Proceed as in the proof of \th\ref{theorem 4.3}. Since $d$ is even, so $d = \underbrace{2 + ... + 2}_\text{$r_{0}$-times} = 2r_{0}$ for some positive integer $r_{0}$. Using the proof of \th\ref{theorem 4.3}, we have the following observations here: 
\begin{enumerate}
\item[•] If $j \in \{1, ..., d-1\} \setminus \{\frac{d}{2}\}$, then $d-j \in \{1, ..., d-1\}$, $d-j \neq \frac{d}{2}$ and $j \neq d-j$.  For such $j$, we will have two redundant equations $$\sum_{i=0}^{m-1} \lambda_{di+j} = 0$$ (that corresponds to $j$) and $$ \sum_{i=0}^{m-1} \lambda_{di + (d-j)} = 0$$ (that corresponds to $d-j$).

\item[•] For $j = \frac{d}{2}$, $$\sum_{i=0}^{m-1} \lambda_{di + j} = \sum_{i=0}^{m-1} \lambda_{di + (d-j)} = \sum_{i=0}^{m-1} \lambda_{di + \frac{d}{2}}.$$ Therefore, for $j = \frac{d}{2}$, there is only one equation, namely, $$\sum_{i=0}^{m-1} \lambda_{di + \frac{d}{2}} = 0.$$ This equation always holds because if a scalar (out of $\delta_{i}, - \delta_{i}, \mu_{i}, - \mu_{i}$ for $1 \leq i \leq \frac{n}{2} - 1$) appears in the sum $$\sum_{i=0}^{m-1} \lambda_{di+\frac{d}{2}},$$ then the negative of this scalar will appear in the sum $$\sum_{i=0}^{m-1} \lambda_{n-(di+\frac{d}{2})} = \sum_{i=0}^{m-1} \lambda_{di + (d-\frac{d}{2})} = \sum_{i=0}^{m-1} \lambda_{di + \frac{d}{2}}$$ so that this sum is always equal to $0$.

\item[•] Also, as seen in the proof of \th\ref{theorem 4.3}, there is only one equation corresponding to $j = d$ and this too always holds.
\end{enumerate}
  
Therefore, out of $d$ equations, two equations corresponding to $j = \frac{d}{2}$ and $j = d$ always hold. Amongst the remaining $d-2$ values of $j$ ($j \in \{1, ..., d-1\} \setminus \{\frac{d}{2}\}$), there are exactly $\frac{d-2}{2}$ number of distinct equations because the equations corresponding to $j$ and $d-j$ are redundant. So overall, we have $\frac{d-2}{2} = \frac{d}{2} - 1 = r_{0} - 1$ number of distinct equations each containing $m$ number of distinct coefficients (out of $\delta_{i}, - \delta_{i}, \mu_{i}, - \mu_{i}$ for $1 \leq i \leq \frac{n}{2} - 1$) so that one coefficient can be expressed linearly in terms of the remaining $m-1$ distinct coefficients present in the equation. Also, no two equations out of these $r_{0}-1$ distinct equations have any coefficient in common.   
Therefore, the set $\mathcal{S}_{1}$ (given in the theorem's statement) becomes a spanning set for $\mathcal{D}_{\sigma_{1}}(\mathbb{F}D_{2n})$ over $\mathbb{F}$. Also, in view of the above observations, the dimension of $\mathcal{D}_{\sigma_{1}}(\mathbb{F}D_{2n})$ over $\mathbb{F}$ is $$|\mathcal{S}_{1}| - (r_{0}-1) = 3(\frac{n}{2} - 1) - (\frac{d}{2} - 1) = \frac{1}{2}(3n - d) - 2.$$\vspace{6pt}

\textbf{(b)} $\text{char}(\mathbb{F}) = p$ with $\text{gcd}(n,p) = p$.

Then the equation $nc_{0} = 0$ holds. So $f(a) = c_{0} + c_{1}b$, where $$c_{0} = \sum_{i=1}^{\frac{n}{2} - 1} (\gamma_{i} - \nu_{i})a^{s}(a^{i} - a^{-i}) ~~~~ \text{and} ~~~~ c_{1} = a^{t}(\sum_{i=1}^{\frac{n}{2} - 1} \delta_{i}(a^{i} - a^{-i}) - \sum_{i=1}^{\frac{n}{2} - 1} \mu_{i} (a^{s+i} - a^{s-i})).$$ Again, we have two subcases.\vspace{4pt}

\textbf{(b1)} $\text{gcd}(d,p) = 1$.

The proof for this subcase is on the same lines as part (i) (a). Clearly, the set $\mathcal{S}_{1}'$ (given in the theorem's statement) is a spanning set for $\mathcal{D}_{\sigma_{1}}(\mathbb{F}D_{2n})$ over $\mathbb{F}$ and the dimension of $\mathcal{D}_{\sigma_{1}}(\mathbb{F}D_{2n})$ over $\mathbb{F}$ is $$|\mathcal{S}_{1}'| - (r_{0}-1) = 4(\frac{n}{2} - 1) - (\frac{d}{2} - 1) = 2n - \frac{d}{2} - 3.$$\vspace{4pt}

\textbf{(b2)} $\text{gcd}(d,p) = p$.

As seen in the proof of \th\ref{theorem 4.3} (ii) (b), in this subcase, the equation $$\sum_{j=0}^{n-1} a^{sj}c_{1} = 0$$ holds. Then the set $\mathcal{S}_{1}'$ becomes a basis of $\mathcal{D}_{\sigma_{1}}(\mathbb{F}D_{2n})$ over $\mathbb{F}$ so that the dimension of $\mathcal{D}_{\sigma_{1}}(\mathbb{F}D_{2n})$ over $\mathbb{F}$ is $|\mathcal{S}_{1}'| = 2(n-2)$.\vspace{20pt}

\textbf{(ii)} We skip the proof of this part as the proof is similar to that of part (i). All steps are analogous. One observation is that the roles of $s$, $d$, $l$ and $m$ get replaced by that of $2s$, $2d$, $\frac{l}{2}$ and $\frac{m}{2}$ respectively. 
\end{proof}

If we take $\sigma_{1}$ to be the identity endomorphism, then from \th\ref{theorem 4.4} (ii), we get that the dimension of the $\mathbb{F}$-vector space $\mathcal{D}(\mathbb{F}D_{2n})$ of ordinary derivations of $\mathbb{F}D_{2n}$ when $n$ is even is $3(\frac{n}{2} - 1)$ if $\text{char}(\mathbb{F}) = p$ with $\text{gcd}(n,p) = 1$ and $2(n-2)$ if $\text{char}(\mathbb{F}) = p$ with $\text{gcd}(n,p) = p$. In addition, we also obtain a basis for $\mathcal{D}(\mathbb{F}D_{2n})$ over $\mathbb{F}$.

\subsection{$\sigma$-Derivations of $\mathbb{F}D_{2n}$ when $\text{char}(\mathbb{F}) = 2$}\label{subsection 4.2}

In this subsection, we classify all $\sigma$-derivations (for $\sigma = \sigma_{0}$ when $n$ is odd and $\sigma = \sigma_{1}$ when $n$ is even) of $\mathbb{F}D_{2n}$ over a field $\mathbb{F}$ of characteristic $2$. 

First, let us recall the following definition.
\begin{definition}\label{definition 4.5}
Let $G$ be a group and $R$ be a commutative ring with unity. Let $\beta \in RG$. Then the set $$C(\beta) = \{\alpha \in RG \mid \alpha \beta = \beta \alpha\}$$ is called the centralizer of $\beta$ in $RG$.
\end{definition}
Note that $C(\beta)$ is a subalgebra of $RG$ and if $\beta \in Z(RG)$, then $C(\beta) = RG$.

\begin{lemma}\th\label{lemma 4.6}
Let $\mathbb{F}$ be a field of characteristic $2$.
\begin{enumerate}
\item[(i)] Let $n$ be odd. 
\begin{enumerate}
\item[(a)] The set $$\mathcal{B}(\sigma_{0}(b)) = \{1, ~ a^{t}b\} \cup \{(a^{i} + a^{-i}), ~ a^{t}(a^{i} + a^{-i})b \mid 1 \leq i \leq \frac{n-1}{2}\}$$ is a basis of $C(\sigma_{0}(b))$ over $\mathbb{F}$.
\item[(b)] The set $$\mathcal{B}(\sigma_{0}(ab)) = \{1, ~ a^{s+t}b\} \cup \{(a^{i} + a^{-i}), ~ a^{s+t}(a^{i} + a^{-i})b \mid 1 \leq i \leq \frac{n-1}{2}\}$$ is a basis of $C(\sigma_{0}(ab))$ over $\mathbb{F}$.
\end{enumerate}

\item[(ii)] Let $n$ be even.
\begin{enumerate}
\item[(a)] The set $$\mathcal{B}(\sigma_{1}(b)) = \{1, ~ a^{\frac{n}{2}}, ~ a^{t}b, ~ a^{t + \frac{n}{2}}b\} \cup \{(a^{i} + a^{-i}), ~ a^{t}(a^{i} + a^{-i})b \mid 1 \leq i \leq \frac{n}{2} - 1\}$$ is a basis of $C(\sigma_{1}(b))$ over $\mathbb{F}$.
\item[(b)] The set $$\mathcal{B}(\sigma_{1}(ab)) = \{1, ~ a^{\frac{n}{2}}, ~ a^{s+t}b, a^{s+t + \frac{n}{2}}b\} \cup \{(a^{i} + a^{-i}), ~ a^{s+t}(a^{i} + a^{-i})b \mid 1 \leq i \leq \frac{n}{2} - 1\}$$ is a basis of $C(\sigma_{1}(ab))$ over $\mathbb{F}$.
\end{enumerate}
\end{enumerate}
\end{lemma}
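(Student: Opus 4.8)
The plan is to prove all four assertions at once by computing, for an arbitrary reflection $r = a^{k}b \in D_{2n}$, the centralizer $C(r)$ in $\mathbb{F}D_{2n}$, and then specializing $k$. Since $\sigma(ab) = \sigma(a)\sigma(b) = a^{s}\cdot a^{t}b = a^{s+t}b$, the four statements correspond to $r = a^{t}b$ (yielding $\sigma_{0}(b)$, resp.\ $\sigma_{1}(b)$) and $r = a^{s+t}b$ (yielding $\sigma_{0}(ab)$, resp.\ $\sigma_{1}(ab)$). Write a general element of $\mathbb{F}D_{2n}$ as $\alpha = c_{0} + c_{1}b$ with $c_{0}, c_{1} \in \mathbb{F}\langle a\rangle$, and let $\beta \mapsto \bar{\beta}$ denote the $\mathbb{F}$-linear involution of $\mathbb{F}\langle a\rangle$ with $\overline{a^{i}} = a^{-i}$, so that $b\,c_{0}\,b = \overline{c_{0}}$ for $c_{0} \in \mathbb{F}\langle a\rangle$. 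Using $ba^{k}b = a^{-k}$ one computes $\alpha r = c_{1}a^{-k} + c_{0}a^{k}b$ and $r\alpha = a^{k}\overline{c_{1}} + a^{k}\overline{c_{0}}\,b$, and comparing the part in $\mathbb{F}\langle a\rangle$ with the part in $\mathbb{F}\langle a\rangle b$ shows that $\alpha \in C(r)$ if and only if
\[
c_{0} = \overline{c_{0}} \qquad\text{and}\qquad c_{1} = a^{2k}\,\overline{c_{1}}.
\]

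Next I would make $C(r)$ explicit. The solutions of $c_{0} = \overline{c_{0}}$ are exactly the symmetric elements of $\mathbb{F}\langle a\rangle$, which have $\mathbb{F}$-basis $\{1\} \cup \{a^{i}+a^{-i} : 1 \le i \le \frac{n-1}{2}\}$ when $n$ is odd and $\{1,\, a^{n/2}\} \cup \{a^{i}+a^{-i} : 1 \le i \le \frac{n}{2}-1\}$ when $n$ is even (the middle term $a^{n/2}$ being automatically symmetric). For the twisted condition, the map $c_{1} \mapsto a^{-k}c_{1}$ is an $\mathbb{F}$-linear bijection from $\{c_{1} : c_{1} = a^{2k}\overline{c_{1}}\}$ onto the symmetric elements, with inverse $d \mapsto a^{k}d$; hence the admissible $c_{1}b$ are precisely $a^{k}\cdot(\text{symmetric element})\cdot b$. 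Multiplying the two symmetric bases above by $a^{k}$ and right-multiplying by $b$, and then setting $k = t$ (resp.\ $k = s+t$), produces exactly the displayed sets $\mathcal{B}(\sigma_{0}(b))$, $\mathcal{B}(\sigma_{0}(ab))$ for $n$ odd and $\mathcal{B}(\sigma_{1}(b))$, $\mathcal{B}(\sigma_{1}(ab))$ for $n$ even. Each such set consists of elements with pairwise disjoint supports in $D_{2n}$, hence is $\mathbb{F}$-linearly independent, and the computation just sketched shows it spans $C(r)$; thus it is an $\mathbb{F}$-basis.

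For the ``$ab$'' statements one may argue more quickly by symmetry: let $\theta$ be the $\mathbb{F}$-algebra automorphism of $\mathbb{F}D_{2n}$ with $\theta(a) = a$ and $\theta(b) = a^{s}b$, as used in the proof of Lemma~\ref{lemma 4.2}. Then $\theta(\sigma(b)) = \theta(a^{t}b) = a^{s+t}b = \sigma(ab)$, so $\theta$ restricts to a linear isomorphism from $C(\sigma(b))$ onto $C(\sigma(ab))$; and a direct check shows $\theta$ carries the basis $\mathcal{B}(\sigma(b))$ onto $\mathcal{B}(\sigma(ab))$ term by term. This deduces parts (i)(b) and (ii)(b) from parts (i)(a) and (ii)(a).

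The step requiring care is the index bookkeeping modulo $n$: one must check that $c_{0} = \overline{c_{0}}$ really translates to the coefficient symmetry with $1$ and (for $n$ even) $a^{n/2}$ each counted once, that $c_{1} \mapsto a^{-k}c_{1}$ remains a bijection even when $2k \equiv 0 \pmod{n}$, and that the exponents $k\pm i$ and $k + n/2$ appearing in the basis are all distinct so that the supports are genuinely disjoint. The hypothesis $\text{char}(\mathbb{F}) = 2$ is used only in that it erases the sign in the defining relation --- so here $C(r)$ coincides with the anti-centralizer $\bar{C}(r)$ of Lemma~\ref{lemma 4.2}, now enlarged by $1$ and $a^{k}b$ --- and that no $2$-divisibility obstruction arises; otherwise the argument is routine linear algebra.
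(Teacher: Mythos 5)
Your proposal is correct and follows essentially the same route as the paper, which proves this lemma by the same kind of computation as Lemma \ref{lemma 4.2}: decompose $\alpha = c_{0} + c_{1}b$, compare the two sides of the commutation relation to get the coefficient symmetry conditions, and transfer the $ab$-case via the automorphism $\theta$ with $\theta(a)=a$, $\theta(b)=a^{s}b$. Your packaging via the involution $\overline{\,\cdot\,}$ on $\mathbb{F}\langle a\rangle$ and the bijection $c_{1}\mapsto a^{-k}c_{1}$ onto symmetric elements is a clean, uniform way to handle all four cases (and correctly shows the characteristic plays no real role in the centralizer computation), but it is the same underlying argument.
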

\begin{proof}
Similar to the proof of \th\ref{lemma 4.2}.
\end{proof}

Recall the discussion just before \th\ref{theorem 4.3}. Here since $\text{char}(\mathbb{F}) = 2$, so $$\tilde{f}(b^{2}) = f(b)\sigma(b) + \sigma(b)f(b) = 0$$ if and only if $$f(b)\sigma(b) = \sigma(b)f(b), ~~~~ \text{that is}, ~~~~ f(b) \in C(\sigma(b)).$$ Similarly, $\tilde{f}((ab)^{2}) = 0$ if and only if $\tilde{f}(ab) \in C(\sigma(ab))$. Also, (\ref{eq 4.2}) becomes \begin{equation}\label{eq 4.5}
f(a) = \tilde{f}(ab) \sigma(b) + \sigma(a)f(b)\sigma(b).
\end{equation}

\begin{theorem}\th\label{theorem 4.7}
Let $n$ be odd and $\mathbb{F}$ be a field of characteristic $2$. Denote $|\sigma_{0}(a)| = m$, $d = \frac{n}{m}$ and $t \equiv j_{0}$ (mod $s$) for some $j_{0} \in \{0, 1, ..., s-1\}$. Then the dimension of $\mathcal{D}_{\sigma_{0}}(\mathbb{F}D_{2n})$ over $\mathbb{F}$ is $\frac{1}{2}(3n - d) + 1$ and the set 
\begin{equation*}
\begin{aligned}
\mathcal{S}_{0} & = \{D_{(\sigma_{0}; \bar{a}, \bar{b})} \mid (\bar{a}, \bar{b}) \in \{(a^{t}(1+a^{s})b, 1),    ~~    (0, a^{t}b)\} ~ \cup ~ \{(a^{t}(a^{i}+a^{-i})b, 0),     \\ &\quad     (a^{s+t}(a^{i} + a^{-i})b, (a^{i} - a^{-i})),     ~~      (0, a^{t}(a^{i} + a^{-i})b) \mid 1 \leq i \leq \frac{n-1}{2}\}\}
\end{aligned}
\end{equation*} is a spanning set.
\end{theorem}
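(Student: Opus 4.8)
The plan is to run exactly the same computation as in the proof of Theorem 4.3, but now over a field of characteristic $2$, where the anti-centralizer conditions from Lemma 4.2 get replaced by the centralizer conditions from Lemma 4.6. By Theorem 2.4(N), a map $f:\{a,b\}\to\mathbb{F}D_{2n}$ extends to a $\sigma_0$-derivation iff $\tilde f(a^n)=\tilde f(b^2)=\tilde f((ab)^2)=0$. As recalled just before the statement, in characteristic $2$ the last two conditions become $f(b)\in C(\sigma_0(b))$ and $\tilde f(ab)\in C(\sigma_0(ab))$, and \eqref{eq 4.5} expresses $f(a)$ in terms of $\tilde f(ab)$ and $f(b)$. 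So first I would expand $f(b)$ and $\tilde f(ab)$ in the bases $\mathcal B(\sigma_0(b))$ and $\mathcal B(\sigma_0(ab))$ supplied by Lemma 4.6(i): these now contain the extra ``symmetric'' generators $1$, $a^tb$ (resp. $1$, $a^{s+t}b$) and the sums $(a^i+a^{-i})$, $a^t(a^i+a^{-i})b$ in place of the differences that appeared in characteristic $\neq 2$.

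**Key steps.** (1) Substitute these expansions into \eqref{eq 4.5} to write $f(a)=c_0+c_1b$ with $c_0,c_1\in\mathbb{F}\langle a\rangle$ explicit linear combinations of the expansion coefficients; here the constant-term generator $1\in C(\sigma_0(b))$ is what produces the new basis element $(a^t(1+a^s)b,1)$ in the spanning set, since $\sigma_0(a)=a^s$ forces the ``$1$'' in $f(b)$ to contribute an $a^t(1+a^s)b$ term to $f(a)$ via $-\sigma_0(a)f(b)\sigma_0(b)=\sigma_0(a)f(b)\sigma_0(b)$. (2) Compute $\tilde f(a^n)$ using \eqref{eq 4.1}: since $n$ is odd and $\mathrm{char}(\mathbb{F})=2$, $\mathrm{gcd}(n,2)=1$, so $nc_0=0$ forces $c_0=0$, while the condition $\sum_{j=0}^{n-1}a^{sj}c_1=0$ reduces, after setting $c_1'=a^{-t}c_1$ and using $|a^s|=m$, $d=n/m$, to the system $\sum_{i=0}^{m-1}\lambda_{di+j}=0$ for $j\in\{1,\dots,d\}$ exactly as in Theorem 4.3 — except that now the relevant coefficients come from the symmetric combinations $(a^i+a^{-i})$ rather than antisymmetric ones. (3) Run the same redundancy analysis as in the Claim in the proof of Theorem 4.3: pair the equation for $j$ with the one for $d-j$, observe that in characteristic $2$ the sign flip is invisible so the two equations coincide, and conclude there are exactly $\frac{d-1}{2}$ genuinely independent constraints (the $j=d$ case again being automatic). (4) Assemble the spanning set $\mathcal S_0$: the ``$i$-indexed'' triples are the characteristic-$2$ analogues of those in Theorem 4.3(i), and the two extra elements $(a^t(1+a^s)b,1)$ and $(0,a^tb)$ come from the constant generators of $C(\sigma_0(b))$ and $C(\sigma_0(ab))$. (5) Finally verify $\mathbb{F}$-linearity of $(\bar a,\bar b)\mapsto D_{(\sigma_0;\bar a,\bar b)}$, exactly as in Theorem 4.3, to conclude $\mathcal S_0$ spans; counting gives $|\mathcal S_0|-\frac{d-1}{2}=\bigl(3\cdot\frac{n-1}{2}+2\bigr)-\frac{d-1}{2}=\frac{1}{2}(3n-d)+1$.

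**Main obstacle.** The routine part is clear, so the real work is bookkeeping the two ``new'' basis vectors correctly. In characteristic $2$ the centralizer $C(\sigma_0(b))$ is strictly bigger than the old anti-centralizer: it picks up the central element $1$ and the ``$b$-type'' element $a^tb$, which in odd characteristic contributed nothing. I need to track carefully how $1\in f(b)$ propagates through \eqref{eq 4.5} — it lands in $c_1$, not $c_0$ (since $-\sigma_0(a)\cdot 1\cdot\sigma_0(b)=a^{s+t}b$, and combined with the $a^{s+t}b$ coming from $\tilde f(ab)$'s constant term this is where the $(a^t(1+a^s)b,1)$ generator crystallizes), and similarly how the $a^tb$ in $\tilde f(ab)$ yields the pure generator $(0,a^tb)$ once one checks it imposes no constraint on $\tilde f(a^n)$. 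The dimension count $\frac12(3n-d)+1$ (versus $\frac12(3n-d)-1$ in Theorem 4.3(i)) is precisely the $+2$ from these two extra generators minus nothing extra in the constraint count, and the delicate point is confirming that neither new generator interacts with the $\tilde f(a^n)=0$ system — i.e. that the $\sum_{i=0}^{m-1}\lambda_{di+j}=0$ equations still involve only the $i$-indexed symmetric coefficients, leaving the two ``central'' coefficients free.
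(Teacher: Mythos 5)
Your overall strategy is the same as the paper's (rerun the Theorem \ref{theorem 4.3} computation with Lemma \ref{lemma 4.6} in place of Lemma \ref{lemma 4.2}), but there is a genuine error at the decisive point: your step (3) and your ``main obstacle'' paragraph assert that the $j=d$ equation is again automatic and that the two new ``central'' coefficients do not enter the $\tilde f(a^n)=0$ system, so that they remain free. This is false. Writing $f(b)=\mu_0+\nu_0a^tb+\sum_i\mu_i(a^i+a^{-i})+\sum_i\nu_ia^t(a^i+a^{-i})b$ and $\tilde f(ab)=\delta_0+\gamma_0a^{s+t}b+\sum_i\delta_i(a^i+a^{-i})+\sum_i\gamma_ia^{s+t}(a^i+a^{-i})b$, equation (\ref{eq 4.5}) gives $c_1'=a^{-t}c_1=\delta_0+\mu_0a^s+\sum_i\delta_i(a^i+a^{-i})+\sum_i\mu_i(a^{s+i}+a^{s-i})$. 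Since $s=dl$ is a multiple of $d$, both $\delta_0$ (the coefficient of $a^0=a^n$) and $\mu_0$ (the coefficient of $a^s$) lie in the residue class $j\equiv d \pmod d$, so they both appear in the $j=d$ equation $\sum_{i=0}^{m-1}\lambda_{di}=0$. In odd characteristic that equation held automatically by antisymmetry; in characteristic $2$ the indexed symmetric terms still cancel in pairs, but $\delta_0+\mu_0$ survives, so the equation reduces to the nontrivial constraint $\delta_0=\mu_0$.

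That constraint is exactly what produces the merged generator $(a^t(1+a^s)b,\,1)$: the ``$1$'' in $f(b)$ contributes $\mu_0a^{s+t}b$ to $f(a)$ and the constant $\delta_0$ of $\tilde f(ab)$ contributes $\delta_0a^tb$ (not $a^{s+t}b$, as you wrote), and only after imposing $\delta_0=\mu_0$ do these fuse into $\mu_0a^t(1+a^s)b$. If the central coefficients really were free, as you claim, you would instead get two independent generators (with $\bar a=a^{s+t}b,\ \bar b=1$ and $\bar a=a^tb,\ \bar b=0$), your spanning set would need one more element, and the dimension would come out as $\frac12(3n-d)+2$, contradicting the statement. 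So your final count $|\mathcal S_0|-\frac{d-1}{2}$ matches the theorem only because the set $\mathcal S_0$ already encodes the constraint you declared nonexistent; to repair the argument you must carry out the $j=d$ analysis explicitly, derive $\delta_0=\mu_0$, and count $\frac{d-1}{2}$ paired equations plus this one extra relation, as the paper does.
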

\begin{proof}
Since $f(b) \in C(\sigma_{0}(b))$ and by \th\ref{lemma 4.6} (i) (a), $\mathcal{B}(\sigma_{0}(b))$ is an $\mathbb{F}$-basis of $C(\sigma_{0}(b))$, therefore, \begin{equation*}
f(b) = \mu_{0} + \nu_{0}a^{t}b + \sum_{i=1}^{\frac{n-1}{2}} \mu_{i} (a^{i} + a^{-i}) + \sum_{i=1}^{\frac{n-1}{2}} \nu_{i} a^{t}(a^{i} + a^{-i})b\end{equation*} for some $\mu_{i}, \nu_{i} \in \mathbb{F}$ ($1 \leq i \leq \frac{n-1}{2}$). 

Further, since $\tilde{f}(ab) \in C(\sigma_{0}(ab))$ and by \th\ref{lemma 4.6} (i) (b), $\mathcal{B}(\sigma_{0}(ab))$ is an $\mathbb{F}$-basis of $C(\sigma(ab))$, therefore, \begin{equation*}
\tilde{f}(ab) = \delta_{0} + \gamma_{0}a^{s+t}b + \sum_{i=1}^{\frac{n-1}{2}} \delta_{i} (a^{i} + a^{-i}) + \sum_{i=1}^{\frac{n-1}{2}} \gamma_{i} a^{s+t}(a^{i} + a^{-i})b\end{equation*} for some $\delta_{i}, \gamma_{i} \in \mathbb{F}$ ($1 \leq i \leq \frac{n-1}{2}$). 

As seen in the proof of \th\ref{theorem 4.3}, $\tilde{f}(a^{n}) = 0$ if and only if $$nc_{0} = 0 ~~~~~~ \text{and} ~~~~~~ \sum_{j=0}^{n-1} a^{sj} c_{1}' = 0,$$ where $c_{1}' = a^{-t}c_{1}$. 

Further, if $$c_{1}' = \sum_{i=1}^{n} \lambda_{i}a^{i}$$ for some $\lambda_{i} \in \mathbb{F}$ ($1 \leq i \leq n$) ($\lambda_{0} = \lambda_{n}$), then $$\sum_{j=0}^{n-1} a^{sj}c_{1}' = 0$$ if and only if $$\sum_{i=0}^{m-1} \lambda_{di+j} = 0 ~~~~ \text{for all} ~~~~ 1 \leq j \leq d.$$ 

By (\ref{eq 4.5}), $$c_{0} + c_{1}b = f(a) = \tilde{f}(ab) a^{t}b + a^{s}f(b)a^{t}b.$$ Therefore, $$c_{0} = (\gamma_{0} + \nu_{0})a^{s} + \sum_{i=1}^{\frac{n-1}{2}} (\gamma_{i} + \nu_{i})a^{s}(a^{i} + a^{-i})$$ and $$c_{1} = a^{t}(\delta_{0} + \mu_{0}a^{s} + \sum_{i=1}^{\frac{n-1}{2}} \delta_{i}(a^{i} + a^{-i}) + \sum_{i=1}^{\frac{n-1}{2}} \mu_{i} (a^{s+i} + a^{s-i})).$$ But since $c_{0} = 0$, so we must have that $\gamma_{i} = \nu_{i}$ for all $0 \leq i \leq \frac{n-1}{2}$. Also, $$c_{1}' = a^{-t}c_{1} = \delta_{0} + \mu_{0}a^{s} + \sum_{i=1}^{\frac{n-1}{2}} \delta_{i}(a^{i} + a^{-i}) + \sum_{i=1}^{\frac{n-1}{2}} \mu_{i} (a^{s+i} + a^{s-i}).$$

Now, proceeding exactly as in the proof of \th\ref{theorem 4.3} (Case (i): $\text{char}(\mathbb{F}) = 0$ or $p$ with $\text{gcd}(n,p)=1$) and using the fact that $\text{char}(\mathbb{F})=2$, we get that if a scalar (out of $\delta_{i}, \mu_{i}$ for $0 \leq i \leq \frac{n-1}{2}$) appears in the sum $$\sum_{i=0}^{m-1} \lambda_{di+j},$$ then that scalar also appears in the sum $$\sum_{i=0}^{m-1} \lambda_{n-(di+j)} = \sum_{i=0}^{m-1} \lambda_{di + (d-j)}$$ and vice versa. Therefore, $$\sum_{i=0}^{m-1} \lambda_{di+j} = \sum_{i=0}^{m-1} \lambda_{di + (d-j)}$$ so that the equations $$\sum_{i=0}^{m-1} \lambda_{di+j} = 0$$ (corresponding to $j$) and $$\sum_{i=0}^{m-1} \lambda_{di + (d-j)} = 0$$ (corresponding to $d-j$) are exactly the same. Therefore, there are precisely $\frac{d-1}{2}$ distinct equations for $j \in \{1, ..., d-1\}$. Further, for $j = d$, there is only one equation which is given by 
\begin{equation*}
\begin{aligned}
& \delta_{0} + \mu_{0} + \sum (\text{coefficients corresponding to  $a^{di}$ that appear in the sums $\sum_{i=1}^{\frac{n-1}{2}} \delta_{i}(a^{i} + a^{-i})$ and} \\ &\quad \sum_{i=1}^{\frac{n-1}{2}} \mu_{i}(a^{s+i} + a^{s-i})) = 0.
\end{aligned}
\end{equation*}
Note that the sum of coefficients corresponding to the powers $a^{di}$ that appear in the sums $$\sum_{i=1}^{\frac{n-1}{2}} \delta_{i}(a^{i} + a^{-i}) ~~~~~~ \text{and} ~~~~~~ \sum_{i=1}^{\frac{n-1}{2}} \mu_{i}(a^{s+i} + a^{s-i})$$ is $0$ as for each coefficient $\lambda_{dj}$ (corresponding to $a^{dj}$) appearing in the sum $$\sum_{i=0}^{m-1} \lambda_{di},$$ the coefficient $\lambda_{dj}$ (corresponding to $a^{-dj}$) also appears in the sum $$\sum_{i=0}^{m-1} \lambda_{n-(di+d)} = \sum_{i=0}^{m-1} \lambda_{di + (d-d)} = \sum_{i=0}^{m-1} \lambda_{di}.$$ Therefore, $\delta_{0} + \mu_{0} = 0$ so that $\delta_{0} = \mu_{0}$. Therefore, for $j=d$, we have only one equation, namely, $\delta_{0} + \mu_{0} = 0$.

So overall, we have $r_{0}= \frac{d-1}{2}$ number of distinct equations each containing $m$ number of distinct coefficients (out of $\delta_{i}, \mu_{i}$ for $1 \leq i \leq \frac{n-1}{2}$) together with the equation $\delta_{0} = \mu_{0}$. Also, no two equations out of these $r_{0}$ distinct equations have any coefficient in common. Further, from each of these $r_{0}$ distinct equations, one coefficient can be expressed linearly in terms of other $m-1$ distinct coefficients present in the equation. Also, since $\delta_{0} = \mu_{0}$, so the expression for $c_{1}$ becomes $$c_{1} = a^{t}(\mu_{0}(1 + a^{s}) + \sum_{i=1}^{\frac{n-1}{2}} \delta_{i}(a^{i} + a^{-i}) + \sum_{i=1}^{\frac{n-1}{2}} \mu_{i} (a^{s+i} + a^{s-i})).$$ Therefore, the set $\mathcal{S}_{0}$ (given in the theorem's statement) becomes a spanning set for \\ $\mathcal{D}_{\sigma_{0}}(\mathbb{F}D_{2n})$ over $\mathbb{F}$. Further, in view of the above observations, we have that the dimension of $\mathcal{D}_{\sigma_{0}}(\mathbb{F}D_{2n})$ over $\mathbb{F}$ is $$|\mathcal{S}_{0}| - r_{0} = 3(\frac{n-1}{2}) + 2 - (\frac{d-1}{2}) = \frac{1}{2}(3n - d) + 1.$$
\end{proof}

From \th\ref{theorem 4.7}, we get that when $n$ is odd and $\text{char}(\mathbb{F}) = 2$, then the dimension of the $\mathbb{F}$-vector space $\mathcal{D}(\mathbb{F}D_{2n})$ of ordinary derivations of $\mathbb{F}D_{2n}$ is $\frac{3n + 1}{2}$ by taking $\sigma_{0}$ to be the identity endomorphism.

\begin{theorem}\th\label{theorem 4.8}
Let $n$ be even and $\mathbb{F}$ be a field of characteristic $2$. Denote $|\sigma_{1}(a)| = m$, $d = \frac{n}{m}$ and $t \equiv j_{0}$ (mod $s$) for some $j_{0} \in \{0, 1, ..., s-1\}$. Then the dimension of $\mathcal{D}_{\sigma_{1}}(\mathbb{F}D_{2n})$ over $\mathbb{F}$ is $2(n+2)$ and a basis is 
\begin{equation*}
\begin{aligned}
\mathcal{B}_{1} & = \{D_{(\sigma_{1}; \bar{a}, \bar{b})} \mid (\bar{a}, \bar{b}) \in \{(a^{s+t}b, 1),   ~~   (a^{s+t+\frac{n}{2}}b, a^{\frac{n}{2}}),     ~~      (a^{s}, a^{t}b),    ~~    (a^{s+ \frac{n}{2}}, a^{t + \frac{n}{2}}b),  \\ &\quad   (a^{t}b,0),      ~~ (a^{t + \frac{n}{2}}b,0),    ~~     (1,0),    ~~     (a^{s + \frac{n}{2}},0)\} ~ \cup ~ \{(a^{s+t}(a^{i}+a^{-i})b,  (a^{i}+a^{-i})),     \\ &\quad       (a^{s}(a^{i}+a^{-i}), a^{t}(a^{i}+a^{-i})b),    ~~     (a^{t}(a^{i}+a^{-i})b,0),      ~~      (a^{s}(a^{i}+a^{-i}),0) \mid 1 \leq i \leq \frac{n}{2}-1\}\}.
\end{aligned}
\end{equation*}
\end{theorem}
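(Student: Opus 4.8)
The plan is to re-run the machinery of \th\ref{theorem 4.7}, exploiting the fact that the even-order case in characteristic $2$ is especially rigid: both the relator $a^{n}$ and the ``reflection'' sums that obstructed the odd case collapse. First I would invoke \th\ref{theorem 2.4(N)}: a map $f\colon\{a,b\}\to\mathbb{F}D_{2n}$ extends to a $\sigma_{1}$-derivation of $\mathbb{F}D_{2n}$ exactly when $\tilde f(a^{n})=\tilde f(b^{2})=\tilde f((ab)^{2})=0$. As recalled just before \th\ref{theorem 4.7}, in characteristic $2$ the last two conditions say $f(b)\in C(\sigma_{1}(b))$ and $\tilde f(ab)\in C(\sigma_{1}(ab))$, while (\ref{eq 4.5}) forces $f(a)=\tilde f(ab)\,\sigma_{1}(b)+\sigma_{1}(a)\,f(b)\,\sigma_{1}(b)$. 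Hence a $\sigma_{1}$-derivation is completely determined by the single pair $(f(b),\tilde f(ab))$, subject only to these two centralizer memberships and to $\tilde f(a^{n})=0$.

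The decisive point is that $\tilde f(a^{n})=0$ holds automatically. Writing $f(a)=c_{0}+c_{1}b$ with $c_{0},c_{1}\in\mathbb{F}\langle a\rangle$, the computation from the proof of \th\ref{theorem 4.3} gives
\[
\tilde f(a^{n}) \;=\; n\,c_{0}\,a^{s(n-1)} \;+\; \Bigl(\sum_{j=0}^{n-1}a^{2sj}\Bigr)\,c_{1}\,b\,a^{s(n-1)} .
\]
Since $n$ is even and $\mathrm{char}(\mathbb{F})=2$, the first summand is $0$; for the second, pairing the index $j$ with $j+\tfrac{n}{2}$ gives $a^{2sj}+a^{2s(j+n/2)}=a^{2sj}+a^{2sj}a^{sn}=2a^{2sj}=0$, so $\sum_{j=0}^{n-1}a^{2sj}=0$. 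Thus $\tilde f(a^{n})=0$ regardless of $(f(b),\tilde f(ab))$. This is where the argument departs from \th\ref{theorem 4.7}: when $n$ is odd, $j\mapsto 2j$ is a bijection modulo $n$, so $\sum_{j}a^{2sj}=\sum_{j}a^{sj}$ is a nonzero multiple of the subgroup sum over $\langle\sigma_{1}(a)\rangle$, and a genuine constraint (whose solution space depends on $d$) survives.

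Consequently the assignment $(f(b),\tilde f(ab))\mapsto D_{(\sigma_{1};\,f(a),\,f(b))}$, with $f(a)$ given by (\ref{eq 4.5}), is an $\mathbb{F}$-linear map $C(\sigma_{1}(b))\times C(\sigma_{1}(ab))\to\mathcal{D}_{\sigma_{1}}(\mathbb{F}D_{2n})$; it is onto by the reduction above and injective because $D(b)$ recovers $f(b)$ and then $\tilde f(ab)=\bigl(f(a)-\sigma_{1}(a)f(b)\sigma_{1}(b)\bigr)\sigma_{1}(b)^{-1}$ recovers $\tilde f(ab)$. By \th\ref{lemma 4.6}(ii), $\dim_{\mathbb{F}}C(\sigma_{1}(b))=\dim_{\mathbb{F}}C(\sigma_{1}(ab))=n+2$, so $\dim_{\mathbb{F}}\mathcal{D}_{\sigma_{1}}(\mathbb{F}D_{2n})=2(n+2)$. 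For the explicit basis $\mathcal{B}_{1}$ I would transport the block basis of the product through this isomorphism: for each vector $e$ of the basis $\mathcal{B}(\sigma_{1}(b))$ of \th\ref{lemma 4.6}(ii)(a) take $(e,0)$, giving $D(b)=e$ and $D(a)=\sigma_{1}(a)\,e\,\sigma_{1}(b)=a^{s}ea^{t}b$; for each vector $e'$ of $\mathcal{B}(\sigma_{1}(ab))$ take $(0,e')$, giving $D(b)=0$ and $D(a)=e'\sigma_{1}(b)=e'a^{t}b$. Reducing these products with $ba^{k}=a^{-k}b$ and $b^{2}=1$ (for instance $(1,0)\mapsto(a^{s+t}b,1)$, $(a^{t}b,0)\mapsto(a^{s},a^{t}b)$, $(a^{i}+a^{-i},0)\mapsto(a^{s+t}(a^{i}+a^{-i})b,\,a^{i}+a^{-i})$, and analogously for the remaining vectors of each factor) yields precisely the eight ``short'' pairs and the four $(\tfrac{n}{2}-1)$-indexed families of $\mathcal{B}_{1}$; distinctness and $\mathbb{F}$-linear independence of these pairs are automatic from the injectivity above.

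The main obstacle is the identity $\tilde f(a^{n})=0$ of the second paragraph: recognizing that in this regime the relator $a^{n}$ imposes no condition whatsoever, via the parity collapse $\sum_{j=0}^{n-1}a^{2sj}=0$. After that the remaining work is bounded bookkeeping — carrying the two centralizer bases of \th\ref{lemma 4.6}(ii) through (\ref{eq 4.5}) and matching the outcome, term by term, with $\mathcal{B}_{1}$ — where the only extra care needed (compared with the odd case) is to track the additional basis vectors $a^{n/2}$ that an even $n$ contributes to each centralizer.
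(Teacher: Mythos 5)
Your argument is correct and is essentially the paper's own proof of \th\ref{theorem 4.8}: the paper likewise expands $f(b)$ and $\tilde f(ab)$ in the centralizer bases of \th\ref{lemma 4.6}(ii), observes that $\tilde f(a^{n})=0$ is automatic (it gets $\sum_{j}a^{2sj}=(\sum_{j}a^{sj})^{2}=n\sum_{j}a^{sj}=0$, whereas you pair $j$ with $j+\tfrac{n}{2}$; both are fine), and then reads $f(a)$ off from (\ref{eq 4.5}); your packaging of this as an $\mathbb{F}$-linear isomorphism $\mathcal{D}_{\sigma_{1}}(\mathbb{F}D_{2n})\cong C(\sigma_{1}(b))\times C(\sigma_{1}(ab))$ is just a cleaner bookkeeping of the same freedom in the coefficients, and it has the added merit of making linear independence explicit. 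One caveat about your claim of a term-by-term match with $\mathcal{B}_{1}$: transporting the basis vector $a^{s+t}b\in\mathcal{B}(\sigma_{1}(ab))$ through (\ref{eq 4.5}) produces the pair $(a^{s},0)$, not the pair $(1,0)$ printed in the theorem; indeed $(1,0)$ would force $\tilde f(ab)=a^{t}b$, which lies in $C(\sigma_{1}(ab))=C(a^{s+t}b)$ only if $a^{2s}=1$, impossible for $\sigma_{1}$ since $s\neq 0,\tfrac{n}{2}$. This mismatch traces to an inconsistency in the paper itself (its proof writes $\gamma_{-1}a^{t}b$ in the expansion of $\tilde f(ab)$, contradicting its own \th\ref{lemma 4.6}(ii)(b)), not to a flaw in your construction; but you should then state your basis with $(a^{s},0)$ rather than assert exact agreement with the printed $\mathcal{B}_{1}$.
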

\begin{proof}
Since $f(b) \in C(\sigma_{1}(b))$ and by \th\ref{lemma 4.6} (ii) (a), $\mathcal{B}(\sigma_{1}(b))$ is an $\mathbb{F}$-basis of $C(\sigma_{1}(b))$, therefore, $$f(b) = \mu_{-1} + \mu_{0}a^{\frac{n}{2}} + \nu_{-1}a^{t}b + \nu_{0}a^{t + \frac{n}{2}}b + \sum_{i=1}^{\frac{n}{2} - 1} \mu_{i} (a^{i} + a^{-i}) + \sum_{i=1}^{\frac{n}{2}-1} \nu_{i} a^{t}(a^{i} + a^{-i})b$$ for some $\mu_{i}, \nu_{i} \in \mathbb{F}$ ($1 \leq i \leq \frac{n}{2}-1$).

Further, since $\tilde{f}(ab) \in C(\sigma_{1}(ab))$ and by \th\ref{lemma 4.6} (ii) (b), $\mathcal{B}(\sigma_{1}(ab))$ is an $\mathbb{F}$-basis of $C(\sigma_{1}(ab))$, therefore, $$\tilde{f}(ab) = \delta_{-1} + \delta_{0}a^{\frac{n}{2}} + \gamma_{-1}a^{t}b + \gamma_{0}a^{s + t + \frac{n}{2}}b + \sum_{i=1}^{\frac{n}{2} - 1} \delta_{i} (a^{i} + a^{-i}) + \sum_{i=1}^{\frac{n}{2}-1} \gamma_{i} a^{s+t}(a^{i} + a^{-i})b$$ for some $\delta_{i}, \gamma_{i} \in \mathbb{F}$ ($1 \leq i \leq \frac{n}{2}-1$).

Now, in view of (\ref{eq 4.1}), $\tilde{f}(a^{n}) = 0$ if and only if $$nc_{0} = 0 ~~~~~~ \text{and} ~~~~~~ \sum_{j=0}^{n-1}a^{2sj}c_{1} = 0.$$ Since $n$ is even, $nc_{0} = 0$ holds and $$\sum_{j=0}^{n-1}a^{2sj} = (\sum_{j=0}^{n-1}a^{sj})^{2} = n(\sum_{j=0}^{n-1}a^{sj})$$ so that $$\sum_{j=0}^{n-1}a^{2sj}c_{1} = 0$$ holds. Therefore, $\tilde{f}(a^{n}) = 0$ holds. By (\ref{eq 4.5}), $f(a) = \tilde{f}(ab)a^{t}b + a^{s}f(b)a^{t}b$ so that

\begin{equation*}
\begin{aligned}
f(a) & = \delta_{-1}a^{t}b + \delta_{0}a^{t+\frac{n}{2}}b + \gamma_{-1} + \gamma_{0}a^{s + \frac{n}{2}} + \sum_{i=1}^{\frac{n}{2} - 1} \delta_{i} a^{t}(a^{i} + a^{-i})b  + \sum_{i=1}^{\frac{n}{2}-1} \gamma_{i} a^{s}(a^{i} + a^{-i}) \\ &\quad + \mu_{-1}a^{s+t}b + \mu_{0}a^{s+t+ \frac{n}{2}}b + \nu_{-1}a^{s} + \nu_{0}a^{s + \frac{n}{2}} + \sum_{i=1}^{\frac{n}{2} - 1} \mu_{i} a^{s+t}(a^{i} + a^{-i})b \\ &\quad + \sum_{i=1}^{\frac{n}{2}-1} \nu_{i} a^{s}(a^{i} + a^{-i}).
\end{aligned}
\end{equation*}

Therefore, the set $\mathcal{B}_{1}$ (given in the theorem's statement) is a basis of $\mathcal{D}_{\sigma_{1}}(\mathbb{F}D_{2n})$ over $\mathbb{F}$. Hence, the dimension of $\mathcal{D}_{\sigma_{1}}(\mathbb{F}D_{2n})$ over $\mathbb{F}$ is $|\mathcal{B}_{1}| = 2(n+2)$.
\end{proof}

If we take $\sigma_{1}$ to be the identity endomorphism in  \th\ref{theorem 4.8}, then for $n$ even and $\text{char}(\mathbb{F}) = 2$, we get the dimension of the $\mathbb{F}$-vector space of ordinary derivations of $\mathbb{F}D_{2n}$ to be $2(n+2)$.

\subsection{$\sigma$-Conjugacy Classes of $D_{2n}$}\label{subsection 4.3}
In this subsection, we compute $\sigma$-conjugacy classes of $D_{2n}$, which we will need in the next subsection to determine all inner $\sigma$-derivations of $D_{2n}$.

\begin{theorem}\th\label{theorem 4.9}
Let $n$ be odd. Denote $|\sigma_{0}(a)| = m$, $d = \frac{n}{m}$ and $t \equiv j_{0}$ (\text{mod $s$}) for some $j_{0} \in \{0, 1, ..., s-1\}$. Then $D_{2n}$ has precisely $\frac{1}{2}(n+d)+1$ $\sigma_{0}$-conjugacy classes given by $$\{1\}, ~~ \{a^{k}, ~ a^{-k}\} ~ \text{for} ~ 1 \leq k \leq \frac{n-1}{2}, ~~ (a^{j_{0}}b)_{\sigma_{0}}^{D_{2n}} = \{a^{2si}a^{j_{0}}b \mid 0 \leq i \leq m-1\},$$ $$(a^{u}b)_{\sigma_{0}}^{D_{2n}} = \{a^{2si}a^{u}b, ~ a^{2si}a^{2j_{0}-u}b \mid 0 \leq i \leq m-1\} ~ \text{for} ~ j_{0}+1 \leq u \leq j_{0}+ (\frac{d-1}{2}).$$
\end{theorem}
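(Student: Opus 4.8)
The plan is to compute the $\sigma_0$-conjugates of each element of $D_{2n}$ directly from the explicit formulas $\sigma_0(a^i)=a^{si}$ and $\sigma_0(a^ib)=a^{si+t}b$, and then organise the resulting orbits. First I would record the arithmetic forced by the hypotheses: since $m=|\sigma_0(a)|=|a^s|$ we have $d=n/m=\gcd(n,s)$, and because $n$ is odd both $m$ and $d$ are odd, so $2$ is a unit modulo each of $n$, $m$, $d$. In particular $\gcd(2s,n)=\gcd(s,n)=d$, hence $2s$ has additive order $m$ modulo $n$, and therefore $\{\,2si \bmod n : 0\le i\le m-1\,\}$ is exactly the set of all multiples of $d$ in $\mathbb{Z}/n\mathbb{Z}$. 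I would also note $j_0\equiv t\pmod d$ (since $j_0\equiv t\pmod s$ and $d\mid s$), which is what allows $a^{j_0}b$ to appear as a representative below.

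Next I would split into the rotation and reflection cases. For $x=a^k$, conjugating by $\sigma_0(a^i)=a^{si}$ fixes $a^k$, while conjugating by $\sigma_0(a^ib)=a^{si+t}b$ sends it to $a^{-k}$; hence $(a^k)_{\sigma_0}^{D_{2n}}=\{a^k,a^{-k}\}$. Since $n$ is odd, the sets $\{1\}$ and $\{a^k,a^{-k}\}$ for $1\le k\le\frac{n-1}{2}$ are pairwise distinct and exhaust $\langle a\rangle$, giving $1+\frac{n-1}{2}$ classes. For $x=a^kb$, using $(a^{si+t}b)^{-1}=a^{si+t}b$ one computes $\sigma_0(a^i)\,a^kb\,\sigma_0(a^i)^{-1}=a^{2si+k}b$ and $\sigma_0(a^ib)\,a^kb\,\sigma_0(a^ib)^{-1}=a^{2si+2t-k}b$, so by the fact about $\{2si\bmod n\}$ above, $(a^kb)_{\sigma_0}^{D_{2n}}=R_k\cup R_{2t-k}$, where $R_c=\{a^jb : j\equiv c \pmod d\}$.

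Finally I would analyse these reflection orbits. Two cosets $R_c,R_{c'}$ coincide iff $c\equiv c'\pmod d$ and are otherwise disjoint, so $(a^kb)_{\sigma_0}^{D_{2n}}$ has size $m$ precisely when $k\equiv 2t-k\pmod d$, i.e. (as $2$ is a unit mod $d$) when $k\equiv t\equiv j_0\pmod d$, and size $2m$ otherwise. The involution $r\mapsto 2j_0-r$ on $\mathbb{Z}/d\mathbb{Z}$ has the single fixed point $r\equiv j_0$ and pairs the remaining $d-1$ residues into $\frac{d-1}{2}$ two-element orbits; the residues of $j_0+1,\dots,j_0+\frac{d-1}{2}$ are distinct, avoid $j_0$, and have images $j_0-1,\dots,j_0-\frac{d-1}{2}$ under the involution, so they form a transversal for those orbits. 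Hence $a^{j_0}b$ represents the unique reflection class of size $m$ and $a^{j_0+1}b,\dots,a^{j_0+\frac{d-1}{2}}b$ represent the $\frac{d-1}{2}$ reflection classes of size $2m$, matching the displayed list. Adding up, the number of classes is $\bigl(1+\frac{n-1}{2}\bigr)+\bigl(1+\frac{d-1}{2}\bigr)=\frac12(n+d)+1$, and as a consistency check the orbit sizes sum to $1+2\cdot\frac{n-1}{2}+m+2m\cdot\frac{d-1}{2}=n+md=2n=|D_{2n}|$. The step needing the most care is the modular bookkeeping: identifying $\{2si\bmod n:0\le i\le m-1\}$ with the multiples of $d$ (which is exactly where oddness of $n$ enters), verifying that the listed representatives hit every reflection class exactly once, and keeping straight that $j_0$ is defined as a residue modulo $s$ but only its value modulo $d$ is relevant.
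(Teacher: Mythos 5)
Your proposal is correct and follows the same basic route as the paper's proof — direct computation of the $\sigma_0$-conjugates of rotations and reflections — but it organizes the reflection case differently. The paper shows that the elements $a^{2si}a^{u}b$ and $a^{2sj}a^{2j_0-u}b$ are pairwise distinct for $u \in \{j_0+1,\dots,j_0+\frac{d-1}{2}\}$ by a contradiction argument resting on $d$ being the least positive integer with $a^{d} \in \langle a^{s}\rangle$, and it concludes that the listed classes exhaust $D_{2n}$ by summing their cardinalities to $2n$ and invoking Corollary \ref{corollary 3.3} (with the pairwise disjointness asserted rather briefly). You instead use the oddness of $n$ to identify $\{2si \bmod n : 0 \le i \le m-1\}$ with the set of multiples of $d=\gcd(n,s)$, so each reflection class becomes a union of congruence cosets $R_{u} \cup R_{2j_0-u}$ modulo $d$, and the whole enumeration reduces to the orbits of the involution $r \mapsto 2j_0 - r$ on $\mathbb{Z}/d\mathbb{Z}$: one fixed point $j_0$ and $\frac{d-1}{2}$ two-element orbits with transversal $j_0+1,\dots,j_0+\frac{d-1}{2}$. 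This yields the sizes $m$ and $2m$, the disjointness of the classes, and the exhaustion of $D_{2n}$ simultaneously as a partition statement, with the count $1 + 2\cdot\frac{n-1}{2} + m + 2m\cdot\frac{d-1}{2} = 2n$ serving only as a consistency check, whereas the paper needs that count to certify completeness. Both arguments are sound; yours makes the roles of $d$ and of the invertibility of $2$ modulo $n$ and $d$ (where oddness of $n$ enters) somewhat more transparent, at the cost of a little extra modular bookkeeping up front.
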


\begin{proof}
Clearly $1_{\sigma_{0}}^{D_{2n}} = \{1\}$. Let $k \in \{1, ..., \frac{n-1}{2}\}$. If $g = a^{i}$, then $\sigma_{0}(g)a^{k}(\sigma_{0}(g))^{-1} = a^{k}$ and if $g = a^{i}b$, then $\sigma_{0}(g)a^{k}(\sigma_{0}(g))^{-1} = a^{-k}$. Therefore, $$(a^{k})_{\sigma_{0}}^{D_{2n}} = \{\sigma_{0}(g)a^{k}(\sigma_{0}(g))^{-1} \mid g \in D_{2n}\} = \{a^{k}, a^{-k}\}.$$

Further, let $u \in \{j_{0}, j_{0}+1, ..., j_{0}+r_{0}\}$, where $r_{0}$ is a positive integer such that \\ $d = 1 + \underbrace{2 + ... + 2}_{\text{$r_{0}$-times}}$ (as $d$ is odd). Then 
\begin{eqnarray*}
(a^{u}b)_{\sigma_{0}}^{D_{2n}} & = & \{a^{2si}a^{u}b, ~ a^{2si}a^{2t-u}b \mid 0 \leq i \leq m-1\} \\ & = & \{a^{2si}a^{u}b, ~ a^{2si}a^{2j_{0}-u}b \mid 0 \leq i \leq m-1\}
\end{eqnarray*}
(as $t \equiv j_{0}$ (mod $s$)). 

Note that for $u = j_{0}$, $a^{2si}a^{j_{0}} = a^{2si}a^{2j_{0}-u}$ for all $0 \leq i \leq m-1$ so that $$(a^{j_{0}}b)_{\sigma_{0}}^{D_{2n}} = \{a^{2si}a^{j_{0}}b \mid 0 \leq i \leq m-1\}.$$ So $|(a^{j_{0}}b)_{\sigma_{0}}^{D_{2n}}| = m$. Now, let $u \in \{j_{0}+1, ..., j_{0}+r_{0}\}$ so that $u = j_{0} + k$ for some $1 \leq k \leq r_{0}$. If possible, suppose that $a^{2si_{1}}a^{u} = a^{2si_{2}}a^{2j_{0}-u}$ for some $i_{1}, i_{2} \in \{0, 1, ..., m-1\}$. 

$\Rightarrow a^{2s(i_{2}-i_{1})} = a^{2(u-j_{0})}$.

$\Rightarrow 2s(i_{2}-i_{1}) \equiv 2(u-j_{0})$ (mod $n$).

$\Rightarrow s(i_{2}-i_{1}) \equiv u-j_{0}$ (mod $n$) as $n$ is odd.

$\Rightarrow a^{s(i_{2}-i_{1})} = a^{(u-j_{0})}$. 

$\Rightarrow a^{s(i_{2}-i_{1})} = a^{k}$ as $u = j_{0}+k$. 

$\Rightarrow a^{k} \in \langle a^{s} \rangle$.

Since $|a^{s}| = m$ and $d = \frac{n}{m}$, so $d$ is the smallest positive integer such that $a^{d} \in \langle a^{s} \rangle$. 
Further, since $r_{0} < d = 1 + 2r_{0}$ and $k \in \{1, ..., r_{0}\}$, so $k < d$. So overall, we have got that $a^{k} \in \langle a^{s} \rangle$ and $k$ is a positive integer with $k < d$. But this contradicts that $d$ is the smallest positive integer such that $a^{d} \in \langle a^{s} \rangle$. Therefore, for $u \in \{j_{0}+1, ..., j_{0}+r_{0}\}$, the elements $a^{2si}a^{j_{0}}b$ and $a^{2sj}a^{2j_{0}-u}b$ ($0 \leq i, j \leq m-1$) of the set $(a^{u}b)_{\sigma_{0}}^{D_{2n}}$ are distinct from each other. Therefore, $|(a^{u}b)_{\sigma_{0}}^{D_{2n}}| = 2m$ for all $u \in \{j_{0}+1, ..., j_{0}+r_{0}\}$. 

Also, these sets $(a^{u}b)_{\sigma_{0}}^{D_{2n}}$'s for $u \in \{j_{0}, j_{0}+1, ..., j_{0}+r_{0}\}$ are pairwise disjoint and $$|1_{\sigma_{0}}^{D_{2n}}| + \sum_{k=1}^{\frac{n-1}{2}} |(a^{k})_{\sigma_{0}}^{D_{2n}}| + |(a^{j_{0}}b)_{\sigma_{0}}^{D_{2n}}| + \sum_{k=1}^{r_{0}} |(a^{j_{0}+k}b)_{\sigma_{0}}^{D_{2n}}| = 1 + 2 (\frac{n-1}{2}) + m + 2mr_{0} = 2n = |D_{2n}|$$ as $m+2mr_{0} = n$. Therefore, we get the result in view of \th\ref{corollary 3.3}. Moreover, the number of these distinct $\sigma_{0}$-conjugacy classes is $$1 + (\frac{n-1}{2}) + 1 + r_{0} = \frac{1}{2}(n+d) + 1$$ as $r_{0} = \frac{d-1}{2}$.
\end{proof}

\begin{theorem}\th\label{theorem 4.10}
Let $n$ be even. Denote $|\sigma_{1}(a)| = m$, $d = \frac{n}{m}$ and $t \equiv j_{0}$ (mod $s$) for some $j_{0} \in \{0, 1, ..., s-1\}$. Then, the following statements hold.
\begin{enumerate}
\item[(i)] If $m$ is odd (so that $d$ is even), then $D_{2n}$ has precisely $\frac{1}{2}(n+d) + 2$ $\sigma_{1}$-conjugacy classes given by $$\{1\}, ~~ \{a^{\frac{n}{2}}\}, ~~ \{a^{k}, a^{-k}\} ~ \text{for} ~ 1 \leq k \leq \frac{n}{2} - 1,$$ $$(a^{u}b)_{\sigma_{1}}^{D_{2n}} = \{a^{2si}a^{u}b \mid 0 \leq i \leq m-1\} ~ \text{for} ~ u = j_{0}, \frac{d}{2},$$ $$(a^{u}b)_{\sigma_{1}}^{D_{2n}} = \{a^{2si}a^{u}b, a^{2si}a^{2j_{0}-u}b \mid 0 \leq i \leq m-1\} ~ \text{for} ~ j_{0}+1 \leq u \leq j_{0} + \frac{d}{2}-1.$$

\item[(ii)] If $m$ is even, then $D_{2n}$ has precisely $\frac{n}{2} + d + 2$ $\sigma_{1}$-conjugacy classes given by $$\{1\}, ~~ \{a^{\frac{n}{2}}\}, ~~ \{a^{k}, a^{-k}\} ~ \text{for} ~ 1 \leq k \leq \frac{n}{2} - 1,$$ $$(a^{u}b)_{\sigma_{1}}^{D_{2n}} = \{a^{2si}a^{u}b \mid 0 \leq i \leq m-1\} ~ \text{for} ~ u = j_{0}, d,$$ $$(a^{u}b)_{\sigma_{1}}^{D_{2n}} = \{a^{2si}a^{u}b, a^{2si}a^{2j_{0}-u}b \mid 0 \leq i \leq m-1\} ~ \text{for} ~ j_{0}+1 \leq u \leq j_{0} + d - 1.$$
\end{enumerate}
\end{theorem}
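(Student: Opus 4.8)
The plan is to compute the $\sigma_1$-conjugacy classes of $D_{2n}$ for $n$ even directly from Definition \ref{definition 3.1}, splitting into the two types of elements of $D_{2n}$ (rotations $a^k$ and reflections $a^u b$) and using the explicit formula for $\sigma_1$ given in Section \ref{section 4}, namely $\sigma_1(a) = a^s$ (with $s \neq 0, \frac{n}{2}$) and $\sigma_1(b) = a^t b$. Throughout I would fix the notation $m = |\sigma_1(a)| = |a^s|$, $d = n/m$, and write $t \equiv j_0 \pmod s$ as in the statement, and I would repeatedly use that $\langle a^s\rangle = \langle a^{\gcd(n,s)}\rangle = \langle a^d\rangle$ since $sm = n\ell$ for some $\ell$ forces $s = d\ell$, so $d$ is the least positive integer with $a^d \in \langle a^s\rangle$.

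First I would treat the rotations. For $g = a^i$ one has $\sigma_1(g) = a^{si}$, so $\sigma_1(g) a^k (\tau(g))^{-1} = a^{si} a^k a^{-si} = a^k$; for $g = a^i b$ one has $\sigma_1(g) = a^{si}\sigma_1(b) = a^{si+t}b$, which inverts $a^k$, giving $a^{-k}$. Hence $(a^k)_{\sigma_1}^{D_{2n}} = \{a^k, a^{-k}\}$. Since $n$ is even, the cases $k = 0$ and $k = \frac n2$ give singletons $\{1\}$ and $\{a^{\frac n2}\}$, and for $1 \le k \le \frac n2 - 1$ the classes $\{a^k, a^{-k}\}$ are size two and pairwise disjoint; this accounts for $2 + (\frac n2 - 1)$ classes covering all $n$ rotations.

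Next I would treat the reflections $a^u b$. Conjugating: for $g = a^i$, $\sigma_1(g)(a^u b)(\tau(g))^{-1} = a^{si}a^u b a^{-si} = a^{si+u+si}b = a^{2si+u}b$ (using $b a^{-si} = a^{si}b$); for $g = a^i b$, a similar computation gives $a^{2si}a^{2t-u}b$. Since $t \equiv j_0 \pmod s$, we have $a^{2si}a^{2t-u} = a^{2si'}a^{2j_0 - u}$ for a suitable $i'$, so the class is $\{a^{2si}a^u b,\ a^{2si}a^{2j_0-u}b \mid 0 \le i \le m-1\}$. The two families coincide exactly when $a^k \in \langle a^{s}\rangle$ where $u - j_0 \equiv k$, after dividing the congruence $2s(i_2-i_1) \equiv 2(u-j_0)$ by $2$ — and here the parity of $n$ matters: when $n/2$ is involved one must be careful whether $2$ is cancelable, which is precisely where the split into $m$ odd versus $m$ even comes from. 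When $m$ is odd, $d$ is even, the merge happens for $u \equiv j_0$ and $u \equiv j_0 + \frac d2$, giving size-$m$ classes there and size-$2m$ classes for $j_0+1 \le u \le j_0 + \frac d2 - 1$; when $m$ is even, the merge happens for $u \equiv j_0$ and $u \equiv j_0 + d$ (i.e. only $u \equiv j_0$, traversed twice over the full range), giving size-$m$ classes and size-$2m$ classes for $j_0 + 1 \le u \le j_0 + d - 1$. Finally I would verify the count by a cardinality check: summing $1 + 1 + 2(\frac n2 - 1)$ from rotations plus $2m + 2m(\frac d2 - 1)$ (resp. $2m + 2m(d-1)$) from reflections should equal $2n = |D_{2n}|$, which pins down that we have found all classes, via Corollary \ref{corollary 3.3}; the class counts $\frac12(n+d)+2$ and $\frac n2 + d + 2$ then follow by tallying.

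The main obstacle is the careful bookkeeping in the reflection case: determining exactly when two elements $a^{2si_1}a^u b$ and $a^{2si_2}a^{2j_0 - u}b$ coincide, which reduces to solving $2s(i_2 - i_1) \equiv 2(u - j_0) \pmod n$, and the fact that dividing this by $2$ is legitimate iff $n$ is odd — but here $n$ is even, so the $2$ does not simply cancel and one must instead analyze $a^k \in \langle a^s \rangle$ together with the order of $a^s$ modulo the subgroup structure, which is what forces the dichotomy between $m$ odd ($d$ even, the "extra" merge at $\frac d2$) and $m$ even (merge only at multiples of $d$). Getting the index ranges for $u$ exactly right, and confirming that no reflection class is double-counted, is the delicate part; everything else is routine group arithmetic in $D_{2n}$ combined with the cardinality argument.
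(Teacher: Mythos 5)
Your treatment of the rotations and of part (i) follows essentially the same route as the paper: compute $\sigma_1(g)(a^ub)\sigma_1(g)^{-1}$ for $g=a^i$ and $g=a^ib$, rewrite $a^{2t-u}$ as $a^{2sq}a^{2j_0-u}$ using $t\equiv j_0 \pmod s$, reduce the merging question to whether $a^{2k}\in\langle a^{2s}\rangle$ for $u=j_0+k$, and close with the cardinality check via Corollary \ref{corollary 3.3}. For part (i) your merge locations ($u=j_0$ and $u=j_0+\tfrac d2$), class sizes ($m$ and $2m$), and tally $n+2m+2m(\tfrac d2-1)=2n$ all agree with the paper's proof. One caution even there: your stated criterion ``the families coincide exactly when $a^{k}\in\langle a^{s}\rangle$'' is not the right one for $n$ even (it would predict no merge at $k=\tfrac d2$); the usable criterion is $a^{2k}\in\langle a^{2s}\rangle$, which for $m$ odd equals $\langle a^{s}\rangle=\langle a^{d}\rangle$ and gives the merge precisely at $2k=d$. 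You gesture at this, but the argument should be run with $a^{2k}\in\langle a^{2s}\rangle$ throughout rather than by ``dividing by $2$.''

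There is, however, a genuine error in your part (ii). When $m$ is even, $|a^{2s}|=m/\gcd(2,m)=m/2$, so $\langle a^{2s}\rangle=\langle a^{2d}\rangle$ and the listed set $\{a^{2si}a^ub \mid 0\le i\le m-1\}$ contains each element twice: it has only $m/2$ distinct elements. Consequently the merged classes (at $u=j_0$ and $u=j_0+d$, i.e. $u\equiv j_0 \pmod d$ in the range) have size $m/2$, and the unmerged classes for $j_0+1\le u\le j_0+d-1$ have size $m$ (two disjoint cosets of $\langle a^{2d}\rangle$), not $m$ and $2m$ as you claim. Your own verification exposes this: with your sizes the reflections would contribute $2m+2m(d-1)=2md=2n$, so the total would be $n+2n=3n\neq 2n=|D_{2n}|$, and the covering argument via Corollary \ref{corollary 3.3} would not close. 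The correct tally is $2\cdot\tfrac m2+(d-1)m=n$, giving $2n$ overall and the class count $\tfrac n2+d+2$. Note also that $u=j_0$ and $u=j_0+d$ give two distinct classes (distinct cosets mod $2d$), so the parenthetical ``only $u\equiv j_0$, traversed twice'' should not be read as collapsing them into one. This bookkeeping around $|a^{2s}|=m/2$ is exactly the point where part (ii) differs from part (i), and it needs to be stated and used explicitly for the proof to go through.
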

\begin{proof}
Note that $1_{\sigma_{1}}^{D_{2n}} = \{1\}$ and $(a^{\frac{n}{2}})_{\sigma_{1}}^{D_{2n}} = \{a^{\frac{n}{2}}\}$. Also, $(a^{k})_{\sigma_{1}}^{D_{2n}} = \{a^{k}, a^{-k}\}$ for \\ $1 \leq k \leq \frac{n}{2} - 1$.

\textbf{(i)} We have $$(a^{u}b)_{\sigma_{1}}^{D_{2n}} = \{a^{2si}a^{u}b, ~ a^{2si}a^{2j_{0}-u}b \mid 0 \leq i \leq m-1\}$$ for all $u \in \{j_{0}, j_{0}+1, ..., j_{0}+r_{0}\}$, where $r_{0}$ is a positive integer such that $d = \underbrace{2 + ... + 2}_{\text{$r_{0}$-times}}$ (as $d$ is even). Note that $d$ is the smallest positive integer such that $a^{d} \in \langle a^{s} \rangle = \langle a^{2s} \rangle$.

For $u = j_{0}$, $a^{2si}a^{u}b = a^{2si}a^{2j_{0}-u}b$ for all $0 \leq i \leq m-1$ so that $$(a^{j_{0}}b)_{\sigma_{1}}^{D_{2n}} = \{a^{2si}a^{j_{0}}b \mid 0 \leq i \leq m-1\}.$$ So $|(a^{j_{0}}b)_{\sigma_{1}}^{D_{2n}}| = m$ as $|a^{2s}| = m = |a^{s}|$. Now, let $u \in \{j_{0}+1, ..., j_{0}+r_{0}\}$. Then $u = j_{0} + k$ for some $k \in \{1, ..., r_{0}\}$. Let $i_{1} \in \{0, 1, ..., m-1\}$ be arbitrary but fixed. Then there exists some $i_{2} \in \{0, 1, ..., m-1\}$ such that $$a^{2si_{1}}a^{u}b = a^{2si_{2}}a^{2j_{0}-u}b \Leftrightarrow a^{2s(i_{2}-i_{1})} = a^{2(u-j_{0})} \Leftrightarrow a^{2s(i_{2}-i_{1})} = a^{2k}$$ as $u = j_{0}+k \Leftrightarrow a^{2si_{3}} = a^{2k}$, where $i_{3} = i_{2} - i_{1} \Leftrightarrow a^{2k} \in \langle a^{2s} \rangle$.

As $1 \leq k \leq r_{0}$, so $2 \leq 2k \leq 2r_{0} = d$. So $2k$ is a positive integer $\leq d$. Now, if $a^{2k} \in \langle a^{2s} \rangle$, then since $2k \in \mathbb{N}$ with $2k \leq d$ and $d$ is the smallest positive integer such that $a^{d} \in \langle a^{2s} \rangle$, therefore, the above implications will hold if and only if $2k = d$, that is, $k = \frac{d}{2} = r_{0}$ so that $u = j_{0} + k = j_{0} + r_{0}$. 

Since $a^{2k} \in \langle a^{2s} \rangle$ for $k = r_{0} = \frac{d}{2}$, so in view of the above implications, we will have that for $u = j_{0}+r_{0}$, for every $i_{1} \in \{0, 1, ..., m-1\}$, there will exist some $i_{2} \in \{0, 1, ..., m-1\}$ such that $a^{2si_{1}}a^{u}b = a^{2si_{2}}a^{2j_{0}-u}b$. Therefore, $$\{a^{2si}a^{u}b \mid 0 \leq i \leq m-1\} = \{a^{2si}a^{2j_{0}-u}b \mid 0 \leq i \leq m-1\}$$ so that $$(a^{j_{0}+r_{0}}b)_{\sigma_{1}}^{D_{2n}} = \{a^{2si}a^{j_{0}+r_{0}}b \mid 0 \leq i \leq m-1\}.$$ Then $|(a^{j_{0}+r_{0}}b)_{\sigma_{1}}^{D_{2n}}| = m$. Furthermore, for $2k < d$, that is, for other values of $k$, that is, for $k \in \{1, ..., r_{0}-1\}$, that is, for $u \in \{j_{0}+1, ..., j_{0}+r_{0}-1\}$, we will have that for each $i, j \in \{0, 1, ..., m-1\}$, $a^{2si}a^{u}b$ and $a^{2sj}a^{2j_{0}-u}b$ ($0 \leq i, j \leq m-1$) are distinct elements of $(a^{u}b)_{\sigma_{1}}^{D_{2n}}$. Therefore, $$(a^{u}b)_{\sigma_{1}}^{D_{2n}} = \{a^{2si}a^{u}b, ~ a^{2si}a^{2j_{0}-u}b \mid 0 \leq i \leq m-1\}$$ so that $|(a^{u}b)_{\sigma_{1}}^{D_{2n}}| = 2m$. 

Also, these sets $(a^{u}b)_{\sigma_{1}}^{D_{2n}}$'s for $u \in \{j_{0}, j_{0}+1, ..., j_{0}+r_{0}\}$ are pairwise disjoint. Since $d = 2r_{0}$ and $n = md = 2mr_{0}$, so 
\begin{equation*}
\begin{aligned}
& |1_{\sigma_{1}}^{D_{2n}}| + |(a^{\frac{n}{2}})_{\sigma_{1}}^{D_{2n}}| + \sum_{k=1}^{\frac{n}{2}-1} |(a^{k})_{\sigma_{1}}^{D_{2n}}| + |(a^{j_{0}}b)_{\sigma_{1}}^{D_{2n}}| + \sum_{k=1}^{r_{0}-1}|(a^{j_{0}+k}b)_{\sigma_{1}}^{D_{2n}}| + |(a^{j_{0}+r_{0}}b)_{\sigma_{1}}^{D_{2n}}| \\ & = 1 + 1 + 2(\frac{n}{2} - 1) + m + 2m(r_{0}-1) + m \\ & = 2n = |D_{2n}|.
\end{aligned}
\end{equation*}
Therefore, we get the result in view of \th\ref{corollary 3.3}. Furthermore, the number of these distinct $\sigma_{1}$-conjugacy classes is $$1 + 1 + (\frac{n}{2}-1) + 1 + (r_{0}-1) + 1 = \frac{n}{2} + r_{0} + 2 = \frac{1}{2}(n+d) + 2.$$\vspace{6pt}

\textbf{(ii)} The proof follows on similar lines as that of part (i).
\end{proof}

\subsection{Inner and Outer $\sigma$-Derivations of $\mathbb{F}D_{2n}$}\label{subsection 4.4}
\begin{theorem}\th\label{theorem 4.11}
Let $n$ be odd, and $\mathbb{F}$ be an arbitrary field. Denote $|\sigma_{0}(a)| = m$, $d = \frac{n}{m}$ and $t \equiv j_{0}$ (mod $s$) for some $j_{0} \in \{0, 1, ..., s-1\}$. Then the dimension of $\text{Inn}_{\sigma_{0}}(\mathbb{F}D_{2n})$ over $\mathbb{F}$ is $\frac{1}{2}(3n-d)-1$ and a basis is 
\begin{equation*}
\begin{aligned}
\mathcal{B}_{0} & = \{D_{g} \mid g \in \{a^{k} \mid 1 \leq k \leq \frac{n-1}{2}\} \cup \bigcup_{u=j_{0}}^{j_{0} + (\frac{d-1}{2})} \{a^{2si}a^{u}b \mid 1 \leq i \leq m-1\} \\ &\quad \cup \bigcup_{u=j_{0}+1}^{j_{0} + (\frac{d-1}{2})} \{a^{2si}a^{2j_{0} - u}b \mid 0 \leq i \leq m-1\}\}.
\end{aligned}
\end{equation*}
\end{theorem}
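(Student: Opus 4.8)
The plan is to obtain this theorem as a direct application of Corollary \ref{corollary 3.13(3)} (the field case of Theorem \ref{theorem 3.13}), taken with $\tau=\sigma=\sigma_{0}$ and $G=D_{2n}$, combined with the explicit description of the $\sigma_{0}$-conjugacy classes of $D_{2n}$ ($n$ odd) provided by Theorem \ref{theorem 4.9}. Recall that Corollary \ref{corollary 3.13(3)} asserts: if $G$ is a finite group of order $N$ with exactly $r$ distinct $(\sigma,\tau)$-conjugacy classes $C_{1},\dots,C_{r}$, ordered so that $C_{1},\dots,C_{s}$ are the singletons and $|C_{i}|\ge 2$ for $i>s$, with representatives $x_{1},\dots,x_{r}$, then $\dim_{\mathbb{F}}\text{Inn}_{(\sigma,\tau)}(\mathbb{F}G)=N-r$ and $\{D_{g}\mid g\in\bigcup_{i=s+1}^{r}(C_{i}\setminus\{x_{i}\})\}$ is an $\mathbb{F}$-basis. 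Since a $\sigma_{0}$-derivation is by definition a $(\sigma_{0},\sigma_{0})$-derivation and $\sigma_{0}$-conjugacy is $(\sigma_{0},\sigma_{0})$-conjugacy, everything that remains is to count the singleton classes and to choose the representatives so that the deleted-representative union is literally the set displayed in the statement.

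First I would determine the singletons. By Lemma \ref{lemma 3.7}, a $\sigma_{0}$-conjugacy class has size $1$ exactly when its element lies in $Z_{\sigma_{0}}(D_{2n})$. Since $\sigma_{0}(a)=a^{s}$ with $1\le s\le n-1$ we have $a^{s}\ne 1$, so $m=|\sigma_{0}(a)|\ge 2$; and since $n$ is odd, $a^{2k}\ne 1$ for $1\le k\le\frac{n-1}{2}$. Testing $\sigma_{0}$-commutation of a candidate element with the generators $a$ and $b$ then shows $1$ is the only $\sigma_{0}$-central element, so $s=1$ and $C_{1}=\{1\}$, while every class $\{a^{k},a^{-k}\}$ has size $2$, the class $(a^{j_{0}}b)_{\sigma_{0}}^{D_{2n}}$ has size $m\ge 2$, and each $(a^{u}b)_{\sigma_{0}}^{D_{2n}}$ ($j_{0}+1\le u\le j_{0}+\frac{d-1}{2}$) has size $2m\ge 2$. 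By Theorem \ref{theorem 4.9} the total number of classes is $r=\tfrac12(n+d)+1$, so $\dim_{\mathbb{F}}\text{Inn}_{\sigma_{0}}(\mathbb{F}D_{2n})=|D_{2n}|-r=2n-\bigl(\tfrac12(n+d)+1\bigr)=\tfrac12(3n-d)-1$ (an integer, since $d\mid n$ forces $d$ odd).

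Next I would read off the basis by fixing representatives: $a^{-k}$ for the class $\{a^{k},a^{-k}\}$, the $i=0$ term $a^{j_{0}}b$ for $(a^{j_{0}}b)_{\sigma_{0}}^{D_{2n}}$, and the $i=0$ term $a^{u}b$ for each $(a^{u}b)_{\sigma_{0}}^{D_{2n}}$ with $j_{0}+1\le u\le j_{0}+\frac{d-1}{2}$. With these choices $\bigcup_{i=s+1}^{r}(C_{i}\setminus\{x_{i}\})$ is exactly the indexing set of $\mathcal{B}_{0}$ in the statement: the classes $\{a^{k},a^{-k}\}$ contribute $\{a^{k}\mid 1\le k\le\frac{n-1}{2}\}$; the class $(a^{j_{0}}b)_{\sigma_{0}}^{D_{2n}}$ contributes $\{a^{2si}a^{j_{0}}b\mid 1\le i\le m-1\}$; and each $(a^{u}b)_{\sigma_{0}}^{D_{2n}}$ with $j_{0}+1\le u\le j_{0}+\frac{d-1}{2}$ contributes $\{a^{2si}a^{u}b\mid 1\le i\le m-1\}$ together with $\{a^{2si}a^{2j_{0}-u}b\mid 0\le i\le m-1\}$. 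Because distinct $\sigma_{0}$-conjugacy classes are disjoint (Corollary \ref{corollary 3.3}), all these group elements are pairwise distinct, so Corollary \ref{corollary 3.13(3)} yields that $\mathcal{B}_{0}$ is an $\mathbb{F}$-basis of $\text{Inn}_{\sigma_{0}}(\mathbb{F}D_{2n})$; as a consistency check, the cardinality $\frac{n-1}{2}+\frac{d+1}{2}(m-1)+\frac{d-1}{2}m$ simplifies (using $n=md$) to $\frac{3n-d}{2}-1$, matching the dimension.

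Since $\mathbb{F}$ is arbitrary, no assumption on $\text{char}(\mathbb{F})$ enters — consistent with the fact that inner derivations depend only on the $\sigma_{0}$-conjugacy structure and on $D_{2n}$ being $\mathbb{F}$-linearly independent in $\mathbb{F}D_{2n}$. There is no genuine difficulty here beyond careful bookkeeping: the two points that require attention are verifying that the singleton count is exactly $1$ (so that the formula $N-r$ and the deleted-representative basis of Corollary \ref{corollary 3.13(3)} apply verbatim), and handling the degenerate case $d=1$ (equivalently $\gcd(s,n)=1$), where the second union in the statement collapses to the single index $u=j_{0}$ and the third union is empty; one also has to keep the representative choices consistent across all classes so that the resulting set of group elements coincides exactly with the one written in the statement.
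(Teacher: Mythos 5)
Your proposal is correct and follows essentially the same route as the paper: the paper's proof of Theorem \ref{theorem 4.11} likewise just invokes Theorem \ref{theorem 4.9} for the count $r=\tfrac12(n+d)+1$ of $\sigma_{0}$-conjugacy classes and then applies Corollary \ref{corollary 3.13(3)} to get the dimension $2n-r=\tfrac12(3n-d)-1$ and the basis $\mathcal{B}_{0}$. Your extra bookkeeping (identifying $\{1\}$ as the only singleton class, fixing the representatives, the cardinality check, and the $d=1$ degeneration) merely spells out what the paper dismisses as ``clearly,'' so there is nothing further to add.
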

\begin{proof}
By \th\ref{theorem 4.9}, when $n$ is odd, $D_{2n}$ has $r = \frac{1}{2}(n+d)+1$ $\sigma_{0}$-conjugacy classes. Then, by Corollary \ref{corollary 3.13(3)}, the dimension of $\text{Inn}_{\sigma_{0}}(\mathbb{F}D_{2n})$ over $\mathbb{F}$ is $|D_{2n}| - r = \frac{1}{2}(3n-d)-1$ and the set $\mathcal{B}_{0}$ is clearly an $\mathbb{F}$-basis.
\end{proof}

When $n$ is odd, the dimension of the $\mathbb{F}$-vector space $\text{Inn}(\mathbb{F}D_{2n})$ of ordinary inner derivations of $\mathbb{F}D_{2n}$ is $3(\frac{n-1}{2})$ by taking $\sigma_{0}$ to be the identity endomorphism in \th\ref{theorem 4.11}. In addition, we also obtain an $\mathbb{F}$-basis of $\text{Inn}(\mathbb{F}D_{2n})$.

\begin{theorem}\th\label{theorem 4.12}
Let $n$ be even and $\mathbb{F}$ be an arbitrary field. Denote $|\sigma_{1}(a)| = m$, $d = \frac{n}{m}$ and $t \equiv j_{0}$ (mod $s$) for some $j_{0} \in \{0, 1, ..., s-1\}$. Then, the following statements hold.
\begin{enumerate}
\item[(i)] If $m$ is odd, then the dimension of $\text{Inn}_{\sigma_{1}}(\mathbb{F}D_{2n})$ over $\mathbb{F}$ is $\frac{1}{2}(3n-d) - 2$ and a basis is 
\begin{equation*}
\begin{aligned}
\mathcal{B}_{o,1} & = \{D_{g} \mid g \in \{a^{k} \mid 1 \leq k \leq \frac{n}{2} - 1\} \cup \bigcup_{u = j_{0}}^{j_{0} + \frac{d}{2}} \{a^{2si}a^{u}b \mid 1 \leq i \leq m-1\} \\ &\quad \cup \bigcup_{u = j_{0} + 1}^{j_{0} + \frac{d}{2} - 1} \{a^{2si}a^{2j_{0} - u}b \mid 0 \leq i \leq m-1\}\}.
\end{aligned}
\end{equation*}

\item[(ii)] If $m$ is even, then the dimension of $\text{Inn}_{\sigma_{1}}(\mathbb{F}D_{2n})$ over $\mathbb{F}$ is $\frac{3n}{2} - d - 2$ and a basis is 
\begin{equation*}
\begin{aligned}
\mathcal{B}_{e,1} & = \{D_{g} \mid g \in \{a^{k} \mid 1 \leq k \leq \frac{n}{2} - 1\} \cup \bigcup_{u = j_{0}}^{j_{0} + d} \{a^{2si}a^{u}b \mid 1 \leq i \leq m-1\} \\ &\quad \cup \bigcup_{u = j_{0} + 1}^{j_{0} + d - 1} \{a^{si}a^{2j_{0} - u}b \mid 0 \leq i \leq m-1\}\}.
\end{aligned}
\end{equation*}
\end{enumerate} 
\end{theorem}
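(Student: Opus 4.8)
The plan is to mimic exactly the structure used in the proof of Theorem \ref{theorem 4.11}, since part (ii) of this theorem (the even case) is the complete analogue of the odd case. First I would invoke Theorem \ref{theorem 4.10}, which computes the $\sigma_{1}$-conjugacy classes of $D_{2n}$ when $n$ is even: in the subcase $m$ odd there are $r = \frac{1}{2}(n+d)+2$ classes, and in the subcase $m$ even there are $r = \frac{n}{2}+d+2$ classes. Then I would apply Corollary \ref{corollary 3.13(3)}, which says that $\dim_{\mathbb{F}}\text{Inn}_{\sigma}(\mathbb{F}G) = |G| - r$ for a finite group $G$ with $r$ distinct $(\sigma,\tau)$-conjugacy classes, together with the explicit basis $\mathcal{B}_{0} = \{D_{g} \mid g \in \bigcup_{i=s+1}^{r}(C_{i}\setminus\{x_{i}\})\}$ obtained by deleting one representative from each non-singleton class.

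For part (i), with $m$ odd, plugging $|D_{2n}| = 2n$ and $r = \frac{1}{2}(n+d)+2$ gives the dimension $2n - \frac{1}{2}(n+d) - 2 = \frac{1}{2}(3n-d) - 2$, matching the claim. For part (ii), with $m$ even, $r = \frac{n}{2}+d+2$ gives $2n - \frac{n}{2} - d - 2 = \frac{3n}{2} - d - 2$, again matching. The only remaining task is to translate the abstract basis $\mathcal{B}_{0}$ from Corollary \ref{corollary 3.13(3)} into the concrete sets $\mathcal{B}_{o,1}$ and $\mathcal{B}_{e,1}$ written in the statement. This is bookkeeping: the singleton classes $\{1\}$, $\{a^{n/2}\}$, and the classes $(a^{j_0}b)_{\sigma_1}^{D_{2n}}$ (and $(a^{d/2}b)$ or $(a^{d}b)$, which are also size $m$) contribute, while from each two-element-orbit class $\{a^k,a^{-k}\}$ ($1\le k\le \frac n2-1$) one deletes $a^{-k}$ leaving $a^k$, and from each class $(a^ub)_{\sigma_1}^{D_{2n}}$ of size $2m$ one deletes the representative $a^u b$ (equivalently the element with $i=0$ in one of the two sub-lists) leaving the rest. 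Taking representatives $x_i$ as in Theorem \ref{theorem 4.10} yields precisely the displayed unions over $u$ and over $i$.

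I would then verify the cardinality of $\mathcal{B}_{e,1}$ (and $\mathcal{B}_{o,1}$) as an independent consistency check: in the even-$m$ case one gets $(\frac n2-1)$ from the $a^k$'s, plus $(d+1)(m-1)$ from the $\bigcup_{u=j_0}^{j_0+d}\{a^{2si}a^ub : 1\le i\le m-1\}$ part, plus $(d-1)m$ from the $\bigcup_{u=j_0+1}^{j_0+d-1}\{a^{si}a^{2j_0-u}b : 0\le i\le m-1\}$ part; using $n = md$ and $d = 2r_0$ (from the proof of Theorem \ref{theorem 4.10}) one checks this sums to $\frac{3n}{2}-d-2$. The pairwise disjointness of the conjugacy classes (Corollary \ref{corollary 3.3}) guarantees there is no overcounting across different $u$.

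The main obstacle — really the only nontrivial point — is matching the index ranges in the concrete sets against the class list of Theorem \ref{theorem 4.10}, in particular correctly identifying which of the $(a^ub)$-classes have size $m$ versus $2m$ and deleting the representative accordingly; one must be careful that for the size-$m$ classes one removes only the $i=0$ term (hence the range $1\le i\le m-1$ appearing for $u=j_0$ and for the endpoint value $u=j_0+\frac d2$ or $u=j_0+d$), whereas for the size-$2m$ classes one keeps all $m$ elements of the $a^{2j_0-u}b$ sub-list (range $0\le i\le m-1$) and all but one of the $a^ub$ sub-list. Once that correspondence is set up precisely, the proof is a one-line application of Theorem \ref{theorem 4.10} and Corollary \ref{corollary 3.13(3)}, exactly as in Theorem \ref{theorem 4.11}.

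\begin{proof}
By \th\ref{theorem 4.10}, when $n$ is even, $D_{2n}$ has $r$ distinct $\sigma_{1}$-conjugacy classes, where $r = \frac{1}{2}(n+d)+2$ if $m$ is odd and $r = \frac{n}{2}+d+2$ if $m$ is even. By Corollary \ref{corollary 3.13(3)}, the dimension of $\text{Inn}_{\sigma_{1}}(\mathbb{F}D_{2n})$ over $\mathbb{F}$ is $|D_{2n}| - r = 2n - r$, and an $\mathbb{F}$-basis is obtained by choosing one representative from each $\sigma_{1}$-conjugacy class of size $\geq 2$ and forming $D_{g}$ for every remaining element $g$ of that class.

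\textbf{(i)} If $m$ is odd, then $2n - r = 2n - \left(\frac{1}{2}(n+d)+2\right) = \frac{1}{2}(3n-d)-2$. Taking the representatives of the $\sigma_{1}$-conjugacy classes as in \th\ref{theorem 4.10}(i), namely $1$, $a^{\frac{n}{2}}$, $a^{k}$ (for $1 \leq k \leq \frac{n}{2}-1$), $a^{u}b$ (for $u = j_{0}, \frac{d}{2}$), and $a^{u}b$ (for $j_{0}+1 \leq u \leq j_{0}+\frac{d}{2}-1$), and deleting each such representative from its class, the remaining group elements are precisely those listed in $\mathcal{B}_{o,1}$. Hence $\mathcal{B}_{o,1}$ is an $\mathbb{F}$-basis of $\text{Inn}_{\sigma_{1}}(\mathbb{F}D_{2n})$.

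\textbf{(ii)} If $m$ is even, then $2n - r = 2n - \left(\frac{n}{2}+d+2\right) = \frac{3n}{2}-d-2$. Taking the representatives of the $\sigma_{1}$-conjugacy classes as in \th\ref{theorem 4.10}(ii), namely $1$, $a^{\frac{n}{2}}$, $a^{k}$ (for $1 \leq k \leq \frac{n}{2}-1$), $a^{u}b$ (for $u = j_{0}, d$), and $a^{u}b$ (for $j_{0}+1 \leq u \leq j_{0}+d-1$), and deleting each such representative from its class, the remaining group elements are precisely those listed in $\mathcal{B}_{e,1}$. Hence $\mathcal{B}_{e,1}$ is an $\mathbb{F}$-basis of $\text{Inn}_{\sigma_{1}}(\mathbb{F}D_{2n})$.
\end{proof}
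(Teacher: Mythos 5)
Your proposal is correct and takes essentially the same route as the paper, whose proof of this theorem is literally the one-line citation of Corollary \ref{corollary 3.13(3)} together with Theorem \ref{theorem 4.10}; your dimension computations $2n-\left(\tfrac{1}{2}(n+d)+2\right)$ and $2n-\left(\tfrac{n}{2}+d+2\right)$ and the representative-deletion bookkeeping match what the paper intends.
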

\begin{proof}
Follows from Corollary \ref{corollary 3.13(3)} and \th\ref{theorem 4.10}.
\end{proof}

Taking $\sigma_{1}$ to be the identity endomorphism in \th\ref{theorem 4.12} gives that when $n$ is even, the dimension of the $\mathbb{F}$-vector space $\text{Inn}(\mathbb{F}D_{2n})$ of ordinary inner derivations of $\mathbb{F}D_{2n}$ is $3(\frac{n}{2} - 1)$. We also obtain an $\mathbb{F}$-basis of $\text{Inn}(\mathbb{F}D_{2n})$.

As a consequence of Remark \ref{remark 2.7(NN)} and the theorems obtained in this section, below is a necessary corollary that answers the $\sigma$-twisted derivation problem for the dihedral group algebra $\mathbb{F}D_{2n}$ over a field $\mathbb{F}$ of arbitrary characteristic.

\begin{corollary}\th\label{corollary 4.13}
Let $\mathbb{F}$ be a field of characteristic $0$ or a rational prime $p$ and $\sigma = \sigma_{0}$ or $\sigma_{1}$.
\begin{enumerate}
\item[(i)] If $\text{char}(\mathbb{F}) = 0$ or odd prime $p$ with $\text{gcd}(n,p)=1$, then all $\sigma$-derivations of $\mathbb{F}D_{2n}$ are inner, that is,  $\text{Inn}_{\sigma}(\mathbb{F}D_{2n}) = \mathcal{D}_{\sigma}(\mathbb{F}D_{2n})$. In other words, $\mathbb{F}D_{2n}$ has no non-zero outer $\sigma$-derivations, that is, $\text{Out}_{\sigma}(\mathbb{F}D_{2n}) = \{0\}$.

\item[(ii)] If $\text{char}(\mathbb{F}) = 2$ or $\text{char}(\mathbb{F}) = p$ with $\text{gcd}(n,p) \neq 1$, then $\text{Inn}_{\sigma}(\mathbb{F}D_{2n}) \subsetneq \mathcal{D}_{\sigma}(\mathbb{F}D_{2n})$. In other words, $\mathbb{F}D_{2n}$ has non-zero outer $\sigma$-derivations, that is, $\text{Out}_{\sigma}(\mathbb{F}D_{2n}) \neq \{0\}$.
\end{enumerate}
\end{corollary}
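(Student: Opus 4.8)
The plan is to read off the two conclusions directly by comparing, in each case, the dimension of $\mathcal{D}_{\sigma}(\mathbb{F}D_{2n})$ (computed in Subsections \ref{subsection 4.1} and \ref{subsection 4.2}) with the dimension of $\text{Inn}_{\sigma}(\mathbb{F}D_{2n})$ (computed in Subsection \ref{subsection 4.4}), and then invoking the containment $\text{Inn}_{\sigma}(\mathbb{F}D_{2n}) \subseteq \mathcal{D}_{\sigma}(\mathbb{F}D_{2n})$ together with Remark \ref{remark 2.7(NN)} to translate a dimension equality (resp.\ strict inequality) into the statement that all $\sigma$-derivations are inner (resp.\ that a non-zero outer $\sigma$-derivation exists). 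Since everything in sight is a finite-dimensional $\mathbb{F}$-vector space, equality of dimensions of a subspace and the ambient space forces equality of the spaces, and a strictly smaller dimension forces the quotient $\text{Out}_{\sigma}(\mathbb{F}D_{2n})$ to be non-zero.

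For part (i), i.e.\ $\text{char}(\mathbb{F}) = 0$ or an odd prime $p$ with $\gcd(n,p)=1$: when $n$ is odd, \th\ref{theorem 4.3}(i) gives $\dim \mathcal{D}_{\sigma_0}(\mathbb{F}D_{2n}) = \frac12(3n-d)-1$, while \th\ref{theorem 4.11} gives $\dim \text{Inn}_{\sigma_0}(\mathbb{F}D_{2n}) = \frac12(3n-d)-1$; the two agree, so $\text{Inn}_{\sigma_0}(\mathbb{F}D_{2n}) = \mathcal{D}_{\sigma_0}(\mathbb{F}D_{2n})$, hence $\text{Out}_{\sigma_0}(\mathbb{F}D_{2n}) = \{0\}$. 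When $n$ is even, I split on the parity of $m = |\sigma_1(a)|$: if $m$ is odd, \th\ref{theorem 4.4}(i)(a) gives $\dim \mathcal{D}_{\sigma_1}(\mathbb{F}D_{2n}) = \frac12(3n-d)-2$ and \th\ref{theorem 4.12}(i) gives the same value $\frac12(3n-d)-2$ for $\dim \text{Inn}_{\sigma_1}(\mathbb{F}D_{2n})$; if $m$ is even, \th\ref{theorem 4.4}(ii)(a) gives $\frac{3n}{2}-d-2$ and \th\ref{theorem 4.12}(ii) gives $\frac{3n}{2}-d-2$ for the inner derivations. In every sub-case the dimensions coincide, so the inner $\sigma$-derivations already exhaust $\mathcal{D}_{\sigma}(\mathbb{F}D_{2n})$.

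For part (ii), i.e.\ $\text{char}(\mathbb{F}) = 2$, or $\text{char}(\mathbb{F}) = p$ with $p \mid n$: here I again split on the parity of $n$ and (when $n$ is even) on the parity of $m$, and in each sub-case I exhibit $\dim \text{Inn}_{\sigma}(\mathbb{F}D_{2n}) < \dim \mathcal{D}_{\sigma}(\mathbb{F}D_{2n})$. For instance, when $\text{char}(\mathbb{F})=2$ and $n$ is odd, \th\ref{theorem 4.7} gives $\dim \mathcal{D}_{\sigma_0}(\mathbb{F}D_{2n}) = \frac12(3n-d)+1$ whereas \th\ref{theorem 4.11} (valid over an arbitrary field) gives $\dim \text{Inn}_{\sigma_0}(\mathbb{F}D_{2n}) = \frac12(3n-d)-1$, a deficit of $2$; when $\text{char}(\mathbb{F})=2$ and $n$ is even, \th\ref{theorem 4.8} gives $2(n+2)$ while \th\ref{theorem 4.12} gives $\frac12(3n-d)-2$ or $\frac{3n}{2}-d-2$, which is strictly smaller; and when $\text{char}(\mathbb{F})=p$ with $p\mid n$ one compares \th\ref{theorem 4.3}(ii) (resp.\ \th\ref{theorem 4.4}(i)(b), (ii)(b)) against \th\ref{theorem 4.11} (resp.\ \th\ref{theorem 4.12}). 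In each comparison the strict inequality of dimensions, together with $\text{Inn}_{\sigma}(\mathbb{F}D_{2n}) \subseteq \mathcal{D}_{\sigma}(\mathbb{F}D_{2n})$, yields a non-trivial quotient, so $\text{Out}_{\sigma}(\mathbb{F}D_{2n}) \ne \{0\}$; by Remark \ref{remark 2.7(NN)} this is exactly the assertion that $\mathbb{F}D_{2n}$ has a non-inner $\sigma$-derivation.

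The only real bookkeeping obstacle is making sure the dimension formulas are matched up in the correct sub-case — the subdivisions by $\text{char}(\mathbb{F})$, by the parity of $n$, by the parity of $m$, and by whether $\gcd(d,p)=1$ must be aligned between the "$\mathcal{D}_\sigma$" theorems and the "$\text{Inn}_\sigma$" theorems. There is no genuine mathematical difficulty: once the tables are laid side by side the corollary is immediate. I would present the proof as a short case analysis with a single sentence per sub-case citing the two relevant dimension statements and the one-line linear-algebra conclusion.
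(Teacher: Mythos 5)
Your proposal is correct and is essentially the paper's intended argument: the corollary is stated there precisely as a consequence of Remark \ref{remark 2.7(NN)} and the dimension formulas of Section \ref{section 4}, i.e.\ comparing $\dim \mathcal{D}_{\sigma}(\mathbb{F}D_{2n})$ from Theorems \ref{theorem 4.3}, \ref{theorem 4.4}, \ref{theorem 4.7}, \ref{theorem 4.8} with $\dim \text{Inn}_{\sigma}(\mathbb{F}D_{2n})$ from Theorems \ref{theorem 4.11} and \ref{theorem 4.12}, exactly as you do. Your case-by-case matching (including the strict inequalities in part (ii)) checks out, so nothing further is needed.
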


\section{Application to Coding Theory: IDD Code}\label{section 6}
In {\cite[Example 3.16 and Example 3.17]{Creedon2019}}, the authors showed the applications of derivations to coding theory by constructing the extended binary Golay $[24, 12, 8]$ code and the extended binary quadratic residue 
$[48, 24, 12]$ code as images of derivations of group algebras $\mathbb{F}_{2}C_{24}$ and $\mathbb{F}_{2}C_{48}$ respectively. Here $\mathbb{F}_{2}$ denotes a  characteristic $2$ field of order $2$ and $C_{24}$, $C_{48}$ are cyclic groups of orders $24$, $48$ respectively. For the basics of coding theory, we refer the reader to \cite{dougherty2017algebraic, hill1986first, ling2004coding}.

In this section, we unify the above examples into a theory of Image of Derivation-Derived code, written symbolically as IDD code. Let $R$ be a commutative ring with unity and $G$ be a finite group of order $n$. We give a general construction of a group ring $RG$ code arising from a subset of the image of a twisted derivation. We then construct an equivalent code in $R^{n} (= \underbrace{R \times ... \times R}_{n-\text{times}})$, an $R$-module. As an illustration of the above construction, we give examples by constructing some codes using the twisted derivations of group algebras over finite fields.

\subsection{Construction and Definition}
Let $R$ be a commutative unital ring and $G = \{g_{1}, g_{2}, ..., g_{n}\}$ be a finite group of order $n$ with the given ordering of elements. Let $\sigma, \tau$ be $R$-algebra endomorphisms of the group algebra $RG$. Let $D$ be a $(\sigma, \tau)$-derivation of $RG$.

The range $\text{Im}(D)$ of $D$ is an $R$-submodule of the $R$-module $RG$ with the generating set $D(G) = \{D(g_{i}) \mid 1 \leq i \leq n\}$. It generates an $(n, r)$-code in $RG$, where $r = \text{rank}(\text{Im}(D))$, the rank of the $R$-module $\text{Im}(D)$.

Suppose $S = \{D(g_{i_{1}}), D(g_{i_{2}}), ..., D(g_{i_{s}})\}$ is an $R$-linearly independent subset of $\text{Im}(D)$. Then $S$ generates a submodule of $\text{Im}(D)$ with rank $s$. Thus, $S$ generates an $(n,s)$-code.

\begin{definition}\label{definition 6.1}
Let $D$ be a $(\sigma, \tau)$-twisted derivation of $RG$ and $T = \{g_{k_{1}}, g_{k_{2}}, ..., g_{k_{t}}\}$, $t < n$ be a subset of $G$ such that $D(T)$ is an $R$-linearly independent subset of $RG$. Then, the submodule $W = \langle D(T) \rangle$ generates an $(n,t)$-code and is called an Image of Derivation-Derived Code, written as IDD code. The code $W$ is said to have length $n$ and dimension $t$ which is the rank of the $R$-module $W$.
\end{definition}

\subsection{Equivalent IDD Code in $R^{n}$}
Below, we give the construction of an equivalent code in $R^{n}$.

We use $\underline{\alpha}$ to indicate that $\underline{\alpha}$ is a vector as opposed to an element $\alpha$ of the group ring $RG$. For $\underline{\alpha} = (\lambda_{1}, \lambda_{2}, ..., \lambda_{n}) \in R^{n}$, the mapping $\zeta:R^{n} \rightarrow RG$ defined by $$\zeta(\underline{\alpha}) = \sum_{i=1}^{n}\lambda_{i}g_{i} = \alpha$$ is an element in $RG$ according to the given listing of $G$.

For each $i \in \{1, 2, ..., n\}$, suppose $D(g_{i}) = \sum_{j=1}^{n} \lambda_{ij}g_{j}$.  Then $$\zeta^{-1}(D(g_{i})) = (\lambda_{i1}, \lambda_{i2}, ..., \lambda_{in}).$$

Construct an $n \times n$ matrix $B$ with the first $n$ rows as $$\zeta^{-1}(D(g_{1})), ~ \zeta^{-1}(D(g_{2})), ~ ..., ~ \zeta^{-1}(D(g_{n})).$$ More precisely, $$B = \begin{pmatrix}
\alpha_{11} & \alpha_{12} & \cdots & \alpha_{1n} \\
\alpha_{21} & \alpha_{22} & \cdots & \alpha_{2n} \\
\vdots & \vdots & \ddots & \vdots \\
\alpha_{n1} & \alpha_{n2} & ... & \alpha_{nn} \\
\end{pmatrix}.$$

\begin{theorem}\label{theorem 6.2}
Any rows $i_{1}, i_{2}, ..., i_{t}$ of $B$ are linearly independent if and only if the set $\{D(\theta_{i_{1}}), D(\theta_{i_{2}}), ..., D(\theta_{i_{t}})\}$ is linearly independent.
\end{theorem}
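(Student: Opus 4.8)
The plan is to observe that the matrix $B$ is precisely the coordinate matrix of the family $\{D(g_1),\dots,D(g_n)\}$ with respect to the $R$-basis $G=\{g_1,\dots,g_n\}$ of $RG$, and then transport the notion of $R$-linear (in)dependence back and forth through the coordinate isomorphism $\zeta:R^n\to RG$. Concretely, $\zeta$ is an $R$-module isomorphism (it sends the standard basis vector $e_i$ to $g_i$, and $G$ is an $R$-basis of $RG$ by the definition of the group ring), so $\zeta^{-1}:RG\to R^n$ is also an $R$-module isomorphism. By construction the $i$-th row of $B$ is exactly $\zeta^{-1}(D(g_i))$, i.e.\ the $i$-th row, viewed as a vector in $R^n$, equals $\zeta^{-1}(D(\theta_i))$ in the paper's notation (here $\theta_i=g_i$).

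First I would record the elementary fact that an $R$-module isomorphism carries linearly independent sets to linearly independent sets and linearly dependent sets to linearly dependent sets: if $\varphi:M\to N$ is an $R$-isomorphism and $\sum_k c_k v_k = 0$ in $M$, then $\sum_k c_k \varphi(v_k)=0$ in $N$, and conversely applying $\varphi^{-1}$; moreover $\varphi$ is injective so distinct elements stay distinct. Apply this with $\varphi=\zeta^{-1}$, $M=RG$, $N=R^n$, and the finite family $v_k = D(\theta_{i_k})$ for $k=1,\dots,t$. Then $\{D(\theta_{i_1}),\dots,D(\theta_{i_t})\}$ is $R$-linearly independent in $RG$ if and only if $\{\zeta^{-1}(D(\theta_{i_1})),\dots,\zeta^{-1}(D(\theta_{i_t}))\}$ is $R$-linearly independent in $R^n$. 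Finally I would identify the latter set with the set of rows $i_1,\dots,i_t$ of $B$, which is immediate from the definition of $B$, yielding the claimed equivalence.

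There is essentially no hard step here; the statement is a bookkeeping translation along an isomorphism. The only point deserving a sentence of care is that we are working over an arbitrary commutative unital ring $R$, not a field, so ``linear independence'' must be interpreted in the module sense and one cannot invoke rank/dimension arguments or the fact that a maximal independent set is a basis; but the isomorphism argument above uses nothing beyond $R$-linearity and bijectivity, so it goes through verbatim. I would also note in passing that the same argument shows $\mathrm{rank}$ of the row space of $B$ equals $\mathrm{rank}(\mathrm{Im}(D))$, which is consistent with the discussion preceding Definition \ref{definition 6.1}, though this is not needed for the stated theorem. One small caveat to flag: the theorem as stated writes ``$D(\theta_{i})$'' while the surrounding construction writes ``$D(g_i)$''; I would simply remark at the start of the proof that $\theta_i$ denotes $g_i$ under the fixed ordering of $G$, so there is no ambiguity.
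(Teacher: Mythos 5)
Your proof is correct: the paper states Theorem \ref{theorem 6.2} without proof, and your argument — that the rows of $B$ are exactly the coordinate vectors $\zeta^{-1}(D(g_i))$ and that the $R$-module isomorphism $\zeta$ transports linear (in)dependence in both directions — is precisely the bookkeeping justification the paper leaves implicit. Your care in noting that only $R$-linearity and bijectivity are used (so no field-specific rank arguments are needed), and that $\theta_i$ should be read as $g_i$, is appropriate and resolves the only points of possible ambiguity.
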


Below, we give the steps of construction:
\begin{enumerate}
\item[(i)] Suppose $W$ is an IDD code. Then $W = \langle S \rangle$, for some $R$-linearly independent subset $S$ of $D(G)$, say, $S = \{D(g_{k_{1}}), D(g_{k_{2}}), ..., D(g_{k_{s}})\}$.
\item[(ii)] Pick $\underline{w} \in R^{s}$ and $\underline{w} = (\lambda_{1}, \lambda_{2}, ..., \lambda_{s})$.
\item[(iii)] Write $\underline{w}$ as $\underline{\alpha}$ in $R^{n}$ with $\lambda_{j}$ in position $k_{j}$ and zero elsewhere.
\item[(iv)] $\underline{\alpha}$ can be mapped to an element in $RG$ by $\alpha = \zeta(\underline{\alpha}) = \sum_{j=1}^{s}\lambda_{j}g_{k_{j}}$ and a codeword $$D(\alpha) = D(\zeta(\underline{\alpha})) = D(\sum_{j=1}^{s}\lambda_{j}g_{k_{j}}) = \sum_{j=1}^{s}\lambda_{j}D(g_{k_{j}})$$ equated with a codeword in $\mathcal{E}$ given by $$\zeta^{-1}(D(\alpha)) = \underline{\alpha}B.$$
\item[(v)] Thus, in view of Theorem \ref{theorem 6.2}, we obtain an $(n,s)$ code $$\mathcal{E} = \{\underline{x}B \mid \underline{x} ~ \text{extends} ~ \underline{w} \in \mathbb{F}^{s} ~ \text{to} ~ \mathbb{F}^{n}\}.$$
\end{enumerate}

\subsection{Examples}
In this subsection, we give some examples by constructing some IDD codes using SAGEMATH, thus illustrating the above construction.

\begin{example}
Consider a finite field $\mathbb{F}_{2^{t}}$ of characteristic $2$ and $G = C_{18} = \langle x \mid x^{18} = 1\rangle = \{1, x, ..., x^{17}\}$, a cyclic group of order $18$.\vspace{30pt}

\textbf{(i)} The map $D:\mathbb{F}_{2^{t}}C_{18} \rightarrow \mathbb{F}_{2^{t}}C_{18}$ defined by $$D(x) = 1 + x + x^2 + x^3 + x^4 + x^5 + x^8 + x^{11}$$ and $$D(x^{k}) = k(\sigma(x))^{k-1}D(x)$$ $(1 \leq k \leq 17$) is a $\sigma$-derivation of $\mathbb{F}_{2^{t}}C_{18}$ for every group endomorphism $\sigma$ of $C_{18}$ (after extending it $\mathbb{F}_{2^{t}}$-linearly to the whole of $\mathbb{F}_{2^{t}}C_{18}$), in view of Theorem \ref{theorem 2.9}.

Below, we take $\sigma = id$ (identity map), and we obtain the following binary IDD codes.
\begin{center}
\begin{longtable}{|c|c|c|c|}
\hline 
\textbf{Basis $S_{i}$} & \thead{\textbf{Code} \\ \textbf{Description:} \\ $[n.k.d]$} & \thead{\textbf{Code} \\ \textbf{Properties}} & \thead{\textbf{Dual Code} \\ \textbf{Description:} \\ $[n,k,d]$} \\ 
\hline \hline
$S_{1} = \{D(x^{i}) \mid i \in \{1, 5, 7, 9, 11, 13, 15, 17\}\}$ & $[18,8,6]$ & \thead{Optimal \& \\ non-LCD} & $[18,10,4]$ \\ 
\hline 
$S_{1} \setminus \{D(x^{13})\}$ & $[18,7,6]$ & non-LCD & $[18,11,3]$ \\ 
\hline 
$S_{2} \setminus \{D(x^{7})\}$ & $[18,6,6]$ & non-LCD & $[18,12,2]$ \\ 
\hline 
$S_{3} \setminus \{D(x^{9})\}$ & $[18,5,6]$ & non-LCD & $[18,13,2]$ \\ 
\hline
$S_{1} = \{D(x^{i}) \mid i \in \{1, 5, 9, 13\}\}$ & $[18,4,8]$ & \thead{Optimal \& \\ non-LCD} & \thead{$[18,14,2]$ \\ (optimal)} \\
\hline 
\end{longtable}
\end{center}

An $8 \times 18$ generator matrix for the binary $[18,8,6]$ optimal non-LCD IDD code generated using basis $S_{1}$ is $$\begin{pmatrix}
1 & 1 & 1 & 1 & 1 & 1 & 0 & 0 & 1 & 0 & 0 & 1 & 0 & 0 & 0 & 0 & 0 & 0 \\
0 & 0 & 0 & 0 & 1 & 1 & 1 & 1 & 1 & 1 & 0 & 0 & 1 & 0 & 0 & 1 & 0 & 0 \\
0 & 0 & 0 & 0 & 0 & 0 & 1 & 1 & 1 & 1 & 1 & 1 & 0 & 0 & 1 & 0 & 0 & 1 \\
0 & 1 & 0 & 0 & 0 & 0 & 0 & 0 & 1 & 1 & 1 & 1 & 1 & 1 & 0 & 0 & 1 & 0 \\
1 & 0 & 0 & 1 & 0 & 0 & 0 & 0 & 0 & 0 & 1 & 1 & 1 & 1 & 1 & 1 & 0 & 0 \\
0 & 0 & 1 & 0 & 0 & 1 & 0 & 0 & 0 & 0 & 0 & 0 & 1 & 1 & 1 & 1 & 1 & 1 \\
1 & 1 & 0 & 0 & 1 & 0 & 0 & 1 & 0 & 0 & 0 & 0 & 0 & 0 & 1 & 1 & 1 & 1 \\
1 & 1 & 1 & 1 & 0 & 0 & 1 & 0 & 0 & 1 & 0 & 0 & 0 & 0 & 0 & 0 & 1 & 1
\end{pmatrix}.$$
\vspace{60pt}

\textbf{(ii)} Similarly, the map $D_{2}:\mathbb{F}_{2^{t}}C_{18} \rightarrow \mathbb{F}_{2^{t}}C_{18}$ defined by $$D(x) = 1 + x + x^3 + x^4 + x^7 + x^9 + x^{13} + x^{16} + x^{17}$$ will be a $\sigma$-derivation of $\mathbb{F}_{2^{t}}C_{18}$ for every group endomorphism $\sigma$ of $C_{18}$.

Consider $\sigma$ defined by $\sigma(x) = x^{2}$. Then, we have the following table of binary IDD codes.

\begin{center}
\begin{longtable}{|c|c|c|c|}
\hline 
\textbf{Basis $S_{i}$} & \thead{\textbf{Code} \\ \textbf{Description:} \\ $[n.k.d]$} & \thead{\textbf{Code} \\ \textbf{Properties}} & \thead{\textbf{Dual Code} \\ \textbf{Description:} \\ $[n,k,d]$} \\ 
\hline \hline
$S_{1} = \{D(x^{i}) \mid i \in \{1, 3, 5, 7, 9, 11, 13, 15, 17\}\}$ & $[18,9,5]$ & non-LCD & $[18,9,5]$ \\ 
\hline
$S_{2} = S_{1} \setminus \{D(x^{15})\}$ & $[18,8,5]$ & non-LCD & $[18,10,3]$ \\ 
\hline 
$S_{3} = S_{2} \setminus \{D(x^{11})\}$ & $[18,7,5]$ & LCD & $[18,11,3]$ \\ 
\hline 
$S_{4} = S_{3} \setminus \{D(x^{3})\}$ & $[18,6,5]$ & non-LCD & $[18,12,3]$ \\ 
\hline 
$S_{5} = S_{4} \setminus \{D(x^{7})\}$ & $[18,5,5]$ & LCD & \thead{$[18,13,3]$ \\ (optimal)} \\ 
\hline 
$S_{6} = S_{5} \setminus \{D(x^{17})\}$ & $[18,4,8]$ & \thead{Optimal \& \\ non-LCD} & \thead{$[18,14,2]$ \\ (optimal)} \\ 
\hline 
$S_{7} = S_{4} \setminus \{D(x^{17})\}$ & $[18,5,6]$ & non-LCD & $[18,13,2]$ \\ 
\hline 
$S_{8} = S_{7} \setminus \{D(x^{13})\}$ & $[18,4,7]$ & non-LCD & \thead{$[18,14,2]$ \\ (optimal)} \\ 
\hline 
$S_{9} = S_{6} \setminus \{D(x^{13})\}$  & $[18,3,9]$ & LCD & $[18,15,1]$ \\ 
\hline 
$S_{10} = S_{8} \setminus \{D(x)\}$ or $S_{8} \setminus \{D(x^{5})\}$ & $[18,3,8]$ & LCD & $[18,15,1]$ \\ 
\hline 
$S_{11} = S_{8} \setminus \{D(x^{9})\}$ & $[18,3,7]$ & LCD & $[18,15,1]$ \\ 
\hline 
$S_{12} = S_{11} \setminus \{D(x^{7})\}$ & $[18,2,9]$ & LCD & $[18,16,1]$ \\ 
\hline 
\end{longtable} 
\end{center}

A $9 \times 18$ generator matrix for the binary $[18,9,5]$ non-LCD IDD code generated using basis $S_{1}$ is $$\begin{pmatrix}
1 & 1 & 0 & 1 & 1 & 0 & 0 & 1 & 0 & 1 & 0 & 0 & 0 & 1 & 0 & 0 & 1 & 1 \\
0 & 0 & 1 & 1 & 1 & 1 & 0 & 1 & 1 & 0 & 0 & 1 & 0 & 1 & 0 & 0 & 0 & 1 \\
0 & 0 & 0 & 1 & 0 & 0 & 1 & 1 & 1 & 1 & 0 & 1 & 1 & 0 & 0 & 1 & 0 & 1 \\
0 & 1 & 0 & 1 & 0 & 0 & 0 & 1 & 0 & 0 & 1 & 1 & 1 & 1 & 0 & 1 & 1 & 0 \\
0 & 1 & 1 & 0 & 0 & 1 & 0 & 1 & 0 & 0 & 0 & 1 & 0 & 0 & 1 & 1 & 1 & 1 \\
1 & 1 & 1 & 1 & 0 & 1 & 1 & 0 & 0 & 1 & 0 & 1 & 0 & 0 & 0 & 1 & 0 & 0 \\
0 & 1 & 0 & 0 & 1 & 1 & 1 & 1 & 0 & 1 & 1 & 0 & 0 & 1 & 0 & 1 & 0 & 0 \\
0 & 1 & 0 & 0 & 0 & 1 & 0 & 0 & 1 & 1 & 1 & 1 & 0 & 1 & 1 & 0 & 0 & 1 \\
1 & 0 & 0 & 1 & 0 & 1 & 0 & 0 & 0 & 1 & 0 & 0 & 1 & 1 & 1 & 1 & 0 & 1
\end{pmatrix}.$$
\end{example}\vspace{20pt}

\begin{example}
Consider a finite field $\mathbb{F}_{2^{t}}$ of characteristic $2$ and $G = C_{14} = \langle x \mid x^{14} = 1\rangle = \{1, x, ..., x^{13}\}$, a cyclic group of order $14$.

Each of the following maps $D_{i}$ define a $\sigma$-derivation of $\mathbb{F}_{2^{t}}C_{14}$, in view of Theorem \ref{theorem 2.9}, for every endomorphism $\sigma$ of $C_{14}$.

Below we take $\sigma = id$ (the identity map). The table below contains some IDD codes of different parameters and properties using different derivations of the group ring $\mathbb{F}_{2^{t}}C_{14}$.

\begin{center}
\begin{longtable}{|c|c|c|c|}
\hline 
\textbf{$D_{i}(x)$ with Basis} & \thead{\textbf{Code} \\ \textbf{Description:} \\ $[n,k,d]$} & \thead{\textbf{Other} \\ \textbf{Properties} \\ \textbf{of Code}} & \thead{\textbf{Dual Code} \\ \textbf{Description:} \\ $[n,k,d]$} \\ 
\hline \hline
\thead{$1 + x + x^2 + x^3 + x^4 + x^6 + x^9$ \\ $\text{Basis} = \{D(x^{i}) \mid i \in \{1, 3, 5, 7, 9, 11, 13\}\}$} & $[14,7,4]$ & \thead{Optimal \& \\ LCD} & \thead{$[14,7,4]$ \\ (optimal)} \\ 
\hline 
\thead{$1 + x + x^{2} + x^{3} + x^{4} + x^{9}$ \\ $\text{Basis} = \{D(x^{i}) \mid i \in \{1, 3, 5, 7, 9, 11, 13\}\}$} & $[14,7,4]$  & \thead{Optimal \& \\ non-LCD} & \thead{$[14,7,4]$ \\ (optimal)} \\ 
\hline 
\thead{$1 + x + x^{2} + x^{3} + x^{5} + x^{8} + x^{11}$ \\ $\text{Basis} = \{D(x^{i}) \mid i \in \{1, 3, 5, 7, 9, 11, 13\}\}$} & $[14,7,3]$ & LCD & $[14,7,3]$ \\ 
\hline 
\thead{$1 + x + x^{3} + x^{4} + x^{5} + x^{6} + x^{9}$ \\ $\text{Basis} = \{D(x^{i}) \mid i \in \{1, 3, 5, 7\}\}$} & $[14,4,7]$ & \thead{Optimal \& \\ non-LCD} & \thead{$[14,10,3]$ \\ (optimal)} \\ 
\hline 
\thead{$1 + x + x^{2} + x^{4} + x^{5} + x^{8}$ \\ $\text{Basis} = \{D(x^{i}) \mid i \in \{1, 3, 5, 7, 9, 11\}\}$} & $[14,6,4]$ & LCD & $[14,8,3]$ \\ 
\hline 
\thead{$1 + x + x^{2} + x^{4} + x^{5} + x^{7} + x^{10}$ \\ $\text{Basis} = D(x^{i}) \mid \{i \in \{1, 3, 5, 7, 9, 11, 13\}\}$} & $[14,7,3]$ & non-LCD & $[14,7,3]$ \\ 
\hline 
\end{longtable} 
\end{center}

A $7 \times 14$ generator matrix for the binary $[14,7,4]$ optimal LCD IDD code obtained using $D_{1}$ is $$\begin{pmatrix}
1 & 1 & 1 & 1 & 1 & 0 & 1 & 0 & 0 & 1 & 0 & 0 & 0 & 0 \\
0 & 0 & 1 & 1 & 1 & 1 & 1 & 0 & 1 & 0 & 0 & 1 & 0 & 0 \\
0 & 0 & 0 & 0 & 1 & 1 & 1 & 1 & 1 & 0 & 1 & 0 & 0 & 1 \\
0 & 1 & 0 & 0 & 0 & 0 & 1 & 1 & 1 & 1 & 1 & 0 & 1 & 0 \\
1 & 0 & 0 & 1 & 0 & 0 & 0 & 0 & 1 & 1 & 1 & 1 & 1 & 0 \\
1 & 0 & 1 & 0 & 0 & 1 & 0 & 0 & 0 & 0 & 1 & 1 & 1 & 1 \\
1 & 1 & 1 & 0 & 1 & 0 & 0 & 1 & 0 & 0 & 0 & 0 & 1 & 1
\end{pmatrix}.$$

Further, a $7 \times 14$ generator matrix for the binary $[14,7,3]$ LCD IDD code obtained using $D_{3}$ is $$\begin{pmatrix}
1 & 1 & 1 & 1 & 0 & 1 & 0 & 0 & 1 & 0 & 0 & 1 & 0 & 0 \\
0 & 0 & 1 & 1 & 1 & 1 & 0 & 1 & 0 & 0 & 1 & 0 & 0 & 1 \\
0 & 1 & 0 & 0 & 1 & 1 & 1 & 1 & 0 & 1 & 0 & 0 & 1 & 0 \\
1 & 0 & 0 & 1 & 0 & 0 & 1 & 1 & 1 & 1 & 0 & 1 & 0 & 0 \\
0 & 0 & 1 & 0 & 0 & 1 & 0 & 0 & 1 & 1 & 1 & 1 & 0 & 1 \\
0 & 1 & 0 & 0 & 1 & 0 & 0 & 1 & 0 & 0 & 1 & 1 & 1 & 1 \\
1 & 1 & 0 & 1 & 0 & 0 & 1 & 0 & 0 & 1 & 0 & 0 & 1 & 1
\end{pmatrix}.$$
\vspace{40pt}

\textbf{(i)} Consider $D_{1}$. We obtain the following binary IDD codes.

\begin{center}
\begin{longtable}{|c|c|c|c|}
\hline 
\textbf{Basis $S_{i,1}$} & \thead{\textbf{Code} \\ \textbf{Description:} \\ $[n,k,d]$} & \thead{\textbf{Other} \\ \textbf{Properties} \\ \textbf{of Code}} & \thead{\textbf{Dual Code} \\ \textbf{Description:} \\ $[n,k,d]$} \\ 
\hline \hline
$\{D(x^{i}) \mid i \in \{1, 3, 5, 7, 11, 13\}\}$ & $[14,6,4]$ & LCD & $[14,8,3]$ \\ 
\hline
\thead{$\{D(x^{i}) \mid i \in \{1, 3, 5, 7, 13\}\}$, \\ $\{D(x^{i}) \mid i \in \{1, 3, 5, 11, 13\}\}$} & $[14,5,5]$ & LCD & $[14,9,3]$ \\ 
\hline
\thead{$\{D(x^{i}) \mid i \in \{1, 3, 5, 7\}\}$, \\ $\{D(x^{i}) \mid i \in \{1, 3, 5, 13\}\}$} & $[14,4,6]$ & non-LCD & $[14,10,2]$ \\ 
\hline
$\{D(x^{i}) \mid i \in \{1, 5, 13\}\}$ & $[14,3,6]$ & non-LCD & $[14,11,1]$ \\ 
\hline
$\{D(x^{i}) \mid i \in \{1, 3, 5\}\}$ & $[14,3,6]$ & LCD & $[14,11,1]$ \\ 
\hline
$\{D(x^{i}) \mid i \in \{1, 3, 7, 11\}\}$ & $[14,4,4]$ & non-LCD & $[14,10,2]$ \\ 
\hline
$\{D(x^{i}) \mid i \in \{1, 3, 9, 13\}\}$ & $[14,4,5]$ & LCD & $[14,10,2]$ \\ 
\hline
$\{D(x^{i}) \mid i \in \{1, 5\}\}$ & $[14,2,7]$ & LCD & $[14,12,1]$ \\ 
\hline
\end{longtable} 
\end{center}

\textbf{(ii)} Consider $D_{3}$. We obtain the following binary IDD codes.

\begin{center}
\begin{longtable}{|c|c|c|c|}
\hline 
\textbf{Basis $S_{i,3}$} & \thead{\textbf{Code} \\ \textbf{Description:} \\ $[n,k,d]$} & \thead{\textbf{Other} \\ \textbf{Properties} \\ \textbf{of Code}} & \thead{\textbf{Dual Code} \\ \textbf{Description:} \\ $[n,k,d]$} \\ 
\hline \hline
$\{D(x^{i}) \mid i \in \{1, 3, 5, 7, 9, 13\}\}$ & $[14,6,3]$ & LCD & $[14,8,2]$ \\ 
\hline
$\{D(x^{i}) \mid i \in \{1, 3, 5, 9, 13\}\}$ & $[14,5,3]$ & LCD & $[14,9,2]$ \\ 
\hline
$\{D(x^{i}) \mid i \in \{1, 3, 5\}\}$ & $[14,3,7]$ & LCD & \thead{$[14,11,2]$ \\ (optimal)} \\ 
\hline
$\{D(x^{i}) \mid i \in \{1, 3, 5, 9\}\}$ & $[14,4,4]$ & LCD & $[14,10,2]$ \\ 
\hline
$\{D(x^{i}) \mid i \in \{1, 3, 11, 13\}\}$ & $[14,4,3]$ & LCD & $[14,10,2]$ \\ 
\hline
$\{D(x^{i}) \mid i \in \{1, 5\}\}$ & $[14,2,7]$ & LCD & $[14,12,1]$ \\ 
\hline
\end{longtable} 
\end{center}
\end{example}\vspace{20pt}

\begin{example}
Consider a finite field $\mathbb{F}_{3^{t}}$ of characteristic $3$ and $G = C_{24} = \langle x \mid x^{24} = 1\rangle = \{1, x, ..., x^{23}\}$, a cyclic group of order $24$.

The map $D:\mathbb{F}_{3^{t}}C_{24} \rightarrow \mathbb{F}_{3^{t}}C_{24}$ defined by $$D(x) = 1 + x + x^3 + x^4 + x^5 + x^7 + x^9 + x^{12} + x^{14}$$ and $$D(x^{k}) = k(\sigma(x))^{k-1}D(x)$$ $(1 \leq k \leq 23$) is a $\sigma$-derivation of $\mathbb{F}_{3^{t}}C_{24}$ for every group endomorphism $\sigma$ of $C_{24}$ (after extending it $\mathbb{F}_{3^{t}}$-linearly to the whole of $\mathbb{F}_{3^{t}}C_{24}$), in view of Theorem \ref{theorem 2.9}.

Below, we take $\sigma(x) = x^5$. We obtain the following ternary IDD codes.

\footnotesize
\begin{center}
\begin{longtable}{|c|c|c|c|}
\hline 
\textbf{Basis $S_{i}$} & \thead{\textbf{Code} \\ \textbf{Description:} \\ $[n,k,d]$} & \thead{\textbf{Other} \\ \textbf{Properties} \\ \textbf{of Code}} & \thead{\textbf{Dual Code} \\ \textbf{Description:} \\ $[n,k,d]$} \\ 
\hline \hline
$S_{1} = \{D(x^{i}) \mid 1 \leq i \leq 23, ~ i \notin \{3, 6, 9, 12, 15, 18, 21\}\}$ & $[24,16,3]$ & LCD & $[24,8,7]$ \\ 
\hline
$S_{2} = S_{1} \setminus \{D(x)\}$ & $[24,15,4]$ & LCD & $[24,9,7]$ \\ 
\hline
$S_{3} = S_{1} \setminus \{D(x^{2})\}$ & $[24,15,3]$ & LCD & $[24,9,7]$ \\ 
\hline
$S_{4} = S_{1} \setminus \{D(x), D(x^{2})\}$ & $[24,14,4]$ & non-LCD & $[24,10,6]$ \\ 
\hline
$S_{5} = S_{1} \setminus \{D(x), D(x^{4})\}$ & $[24,14,4]$ & LCD & $[24,10,7]$ \\ 
\hline
$S_{6} = S_{1} \setminus \{D(x), D(x^{7})\}$ & $[24,14,5]$ & LCD & $[24,10,7]$ \\ 
\hline
$S_{7} = S_{1} \setminus \{D(x), D(x^{19})\}$ & $[24,14,5]$ & non-LCD & $[24,10,7]$ \\ 
\hline
$S_{8} = S_{1} \setminus \{D(x), D(x^{2}), D(x^{7})\}$ & $[24,13,5]$ & non-LCD & $[24,11,5]$ \\ 
\hline
$S_{9} = S_{1} \setminus \{D(x), D(x^{4}), D(x^{7})\}$ & $[24,13,5]$ & non-LCD & $[24,11,7]$ \\ 
\hline
$S_{10} = S_{1} \setminus \{D(x^{7}), D(x^{11}), D(x^{14})\}$ & $[24,13,5]$ & LCD & $[24,11,6]$ \\ 
\hline
$S_{11} = S_{1} \setminus \{D(x), D(x^{4}), D(x^{7}), D(x^{11})\}$ & $[24,12,6]$ & non-LCD & $[24,12,6]$ \\ 
\hline
$S_{12} = S_{1} \setminus \{D(x), D(x^{4}), D(x^{7}), D(x^{14})\}$ & $[24,12,6]$ & LCD & $[24,12,6]$ \\ 
\hline
$S_{13} = S_{1} \setminus \{D(x), D(x^{4}), D(x^{7}), D(x^{23})\}$ & $[24,12,5]$ & LCD & $[24,12,5]$ \\ 
\hline
$S_{14} = S_{1} \setminus \{D(x^{4}), D(x^{7}), D(x^{16}), D(x^{23})\}$ & $[24,12,4]$ & LCD & $[24,12,4]$ \\ 
\hline
$S_{15} = S_{1} \setminus \{D(x), D(x^{2}), D(x^{4}), D(x^{7}), D(x^{14})\}$ & $[24,11,7]$ & non-LCD & $[24,13,5]$ \\ 
\hline
$S_{16} = S_{1} \setminus \{D(x), D(x^{4}), D(x^{7}), D(x^{14}), D(x^{23})\}$ & $[24,11,6]$ & LCD & $[24,13,5]$ \\ 
\hline
$S_{17} = S_{1} \setminus \{D(x), D(x^{2}), D(x^{4}), D(x^{5}), D(x^{7}), D(x^{14})\}$ & $[24,10,7]$ & LCD & $[24,14,5]$ \\ 
\hline
$S_{18} = \{D(x^{i}) \mid i \in \{8, 10, 11, 16, 17, 19, 20, 22, 23\}\}$ & $[24,9,8]$ & LCD & $[24,15,3]$ \\ 
\hline
$S_{19} = \{D(x^{i}) \mid i \in \{8, 10, 11, 13, 16, 19, 20, 22, 23\}\}$ & $[24,9,7]$ & LCD & $[24,15,3]$ \\ 
\hline
$S_{20} = \{D(x^{i}) \mid i \in \{8, 10, 11, 16, 17, 19, 20, 23\}\}$ & $[24,8,8]$ & LCD & $[24,16,3]$ \\ 
\hline
$S_{21} =  \{D(x^{i}) \mid i \in \{8, 11, 16, 17, 19, 20, 23\}\}$ & $[24,7,9]$ & LCD & $[24,17,2]$ \\ 
\hline
$S_{22} = \{D(x^{i}) \mid i \in \{11, 16, 17, 19, 20, 23\}\}$ & $[24,6,9]$ & LCD & $[24,18,2]$ \\ 
\hline
$S_{23} = \{D(x^{i}) \mid i \in \{11, 16, 17, 19,  23\}\}$ & $[24,5,9]$ & LCD & $[24,19,1]$ \\ 
\hline
$S_{43} = \{D(x^{i}) \mid i \in \{16, 17, 19,  23\}\}$ & $[24,4,9]$ & LCD & $[24,20,1]$ \\ 
\hline
\end{longtable} 
\end{center}

A $16 \times 24$ generator matrix for the ternary $[24,16,3]$ LCD IDD code obtained using the basis $S_{1}$ is $$\begin{pmatrix}
1 & 1 & 0 & 1 & 1 & 1 & 0 & 1 & 0 & 1 & 0 & 0 & 1 & 0 & 0 & 0 & 0 & 0 & 0 & 0 & 0 & 0 & 0 & 0 \\
0 & 0 & 0 & 0 & 0 & 2 & 2 & 0 & 2 & 2 & 2 & 0 & 2 & 0 & 2 & 0 & 0 & 2 & 0 & 2 & 0 & 0 & 0 & 0 \\
1 & 0 & 0 & 1 & 0 & 1 & 0 & 0 & 0 & 0 & 0 & 0 & 0 & 0 & 0 & 1 & 1 & 0 & 1 & 1 & 1 & 0 & 1 & 0 \\
2 & 2 & 0 & 2 & 0 & 2 & 0 & 0 & 2 & 0 & 2 & 0 & 0 & 0 & 0 & 0 & 0 & 0 & 0 & 0 & 2 & 2 & 0 & 2 \\
0 & 0 & 0 & 0 & 0 & 0 & 1 & 1 & 0 & 1 & 1 & 1 & 0 & 1 & 0 & 1 & 0 & 0 & 1 & 0 & 1 & 0 & 0 & 0 \\
0 & 2 & 0 & 0 & 0 & 0 & 0 & 0 & 0 & 0 & 0 & 2 & 2 & 0 & 2 & 2 & 2 & 0 & 2 & 0 & 2 & 0 & 0 & 2 \\
1 & 1 & 1 & 0 & 1 & 0 & 1 & 0 & 0 & 1 & 0 & 1 & 0 & 0 & 0 & 0 & 0 & 0 & 0 & 0 & 0 & 1 & 1 & 0 \\
0 & 0 & 2 & 2 & 0 & 2 & 2 & 2 & 0 & 2 & 0 & 2 & 0 & 0 & 2 & 0 & 2 & 0 & 0 & 0 & 0 & 0 & 0 & 0 \\
1 & 0 & 1 & 0 & 0 & 0 & 0 & 0 & 0 & 0 & 0 & 0 & 1 & 1 & 0 & 1 & 1 & 1 & 0 & 1 & 0 & 1 & 0 & 0 \\
2 & 0 & 2 & 0 & 0 & 2 & 0 & 2 & 0 & 0 & 0 & 0 & 0 & 0 & 0 & 0 & 0 & 2 & 2 & 0 & 2 & 2 & 2 & 0 \\
0 & 0 & 0 & 1 & 1 & 0 & 1 & 1 & 1 & 0 & 1 & 0 & 1 & 0 & 0 & 1 & 0 & 1 & 0 & 0 & 0 & 0 & 0 & 0 \\
0 & 0 & 0 & 0 & 0 & 0 & 0 & 0 & 2 & 2 & 0 & 2 & 2 & 2 & 0 & 2 & 0 & 2 & 0 & 0 & 2 & 0 & 2 & 0 \\
0 & 1 & 0 & 1 & 0 & 0 & 1 & 0 & 1 & 0 & 0 & 0 & 0 & 0 & 0 & 0 & 0 & 0 & 1 & 1 & 0 & 1 & 1 & 1 \\
2 & 0 & 2 & 2 & 2 & 0 & 2 & 0 & 2 & 0 & 0 & 2 & 0 & 2 & 0 & 0 & 0 & 0 & 0 & 0 & 0 & 0 & 0 & 2 \\
0 & 0 & 0 & 0 & 0 & 0 & 0 & 0 & 0 & 1 & 1 & 0 & 1 & 1 & 1 & 0 & 1 & 0 & 1 & 0 & 0 & 1 & 0 & 1 \\
0 & 0 & 2 & 0 & 2 & 0 & 0 & 0 & 0 & 0 & 0 & 0 & 0 & 0 & 2 & 2 & 0 & 2 & 2 & 2 & 0 & 2 & 0 & 2
\end{pmatrix}.$$
\end{example}
\vspace{20pt}

\begin{example}
Consider a finite field $\mathbb{F}_{2^{t}}$ of characteristic $2$ and $$G = D_{12} = \langle a, b \mid a^{6} = 1 = b^{2}, b^{-1}ab = a^{-1}\rangle = \{1, a, a^{2}, a^{3}, a^{4}, a^{5}, b, ab, a^{2}b, a^{3}b, a^{4}b, a^{5}b\},$$ dihedral group of order $12$.

By Theorem \ref{theorem 4.8}, the map $D:\mathbb{F}_{2^{t}}D_{12} \rightarrow \mathbb{F}_{2^{t}}D_{12}$ defined by $$D(a) = 1 + a + a^{3} + a^{4} + ab + a^{2}b + a^{4}b + a^{5}b$$ and $$D(b) = a + a^{2} + a^{4} + a^{5} + b + a^{2}b + a^{3}b + a^{5}b$$ is a $\sigma_{1}$-derivations of $D_{2n}$ for the group endomorphism $\sigma_{1}$ of $D_{12}$ defined by $\sigma_{1}(a) = a^{2}$ and $\sigma_{1}(b) = ab$.

Each of the following sets 
\begin{center}
\begin{tabular}{|c|}
\hline 
\textbf{Basis $S_{i}$} \\ 
\hline \hline
$S_{1} = \{D(a), D(a^{2}), D(a^{3}), D(b)\}$ \\ 
\hline 
$S_{2} = \{D(a^{2}), D(a^{3}), D(a^{5}), D(a^{2}b)\}$ \\ 
\hline 
$S_{3} = \{D(a), D(a^{2}), D(a^{5}), D(b)\}$ \\ 
\hline 
$S_{4} = \{D(a^{3}), D(a^{5}), D(b), D(a^{2}b)\}$ \\ 
\hline 
$S_{5} = \{D(a), D(a^{5}), D(b), D(a^{2}b)\}$ \\ 
\hline 
\end{tabular} 
\end{center}
generate a $[12,4,4]$ binary self-orthogonal non-LCD IDD code.
\end{example}

\section{Conclusion}\label{section 7}
In this article, we have studied the $(\sigma, \tau)$-derivations of $RG$ for a commutative unital ring $R$ and a group $G$. In Section \ref{section 2}, we first obtained a necessary and sufficient condition under which a map defined on a generating set of a group can be extended to a $\sigma$- and $(\sigma, \tau)$-derivation of the group ring and then characterized $\sigma$-derivations of commutative group algebras over a field of positive characteristic. In Section \ref{section 3}, we studied inner $(\sigma, \tau)$-derivations of group algebras using $(\sigma, \tau)$-conjugacy classes in a group. We characterized all inner $(\sigma, \tau)$-derivations of the group algebra $RG$ of an arbitrary group $G$ over an arbitrary commutative ring $R$ in terms of the rank and a basis of the corresponding $R$-module of all inner $(\sigma, \tau)$-derivations. Also, we saw that a $(\sigma, \tau)$-derivation of a group ring $RG$ becomes inner if the order of the group $G$ is invertible in the (unital) ring $R$. Finally, in Section \ref{section 4}, we applied the results obtained in Sections \ref{section 2} and \ref{section 3} to explicitly classify all inner and outer $\sigma$-derivations of dihedral group algebras over a field of arbitrary characteristic. Hence, we answered the $\sigma$-twisted derivation problem in dihedral group algebras. In Section \ref{section 6}, we have given the notion (definition with construction) of an IDD code and have illustrated it by implementing it in SAGEMATH and constructing codes with various parameters. \vspace{20pt}

\noindent \textbf{Acknowledgements}\vspace{8pt}

\noindent The authors thank and are deeply grateful to the referees and the editor for their critical reviews, comments, and suggestions, which have greatly improved the presentation and quality of the paper. The second author is the ConsenSys Blockchain chair professor. He thanks ConsenSys AG for that privilege.

\bibliographystyle{plain}
\end{document}